\documentclass[11pt]{article}
\usepackage[margin=1in, centering]{geometry}

\usepackage{mathtools}
\usepackage{amssymb}
\usepackage{amsthm}
\usepackage{xcolor}
\usepackage{bbm}
\usepackage[normalem]{ulem}
\usepackage[parfill]{parskip}
\usepackage{verbatim}

\usepackage[colorlinks=true, citecolor=blue]{hyperref}
\usepackage[capitalise]{cleveref}

\newtheorem{thm}{Theorem}[section]
\newtheorem{prop}[thm]{Proposition}
\newtheorem{lem}[thm]{Lemma}
\newtheorem{cor}[thm]{Corollary}
\newtheorem{claim}[thm]{Claim}
\newtheorem{problem}[thm]{Problem}
\newtheorem{example}[thm]{Example}
\newtheorem{remark}[thm]{Remark}
\newtheorem{observation}[thm]{Observation}
\newtheorem{question}[thm]{Question}
\newtheorem{tech-question}[thm]{Technical Question}

\newtheorem{definition}[thm]{Definition}

\newcommand{\F}{\mathbb{F}}  
  
\newcommand{\R}{\mathbb{R}}  
\newcommand{\Z}{\mathbb{Z}}
\newcommand{\N}{\mathbb{N}}  
\newcommand{\C}{\mathbb{C}}     
\DeclareMathOperator*{\E}{\mathbb{E}}  
\renewcommand{\P}{\mathbb{P}} 
\newcommand{\1}{ \mathbbm{1}}
\newcommand{\eps}{ \varepsilon}  

\newcommand{\set}[2]{\left\{#1 \: : \: #2\right\}}
\newcommand{\ip}[2]{\left\langle#1 ,#2 \right\rangle}
\newcommand{\abs}[1]{\left|#1 \right|}
\newcommand{\norm}[1]{\left\|#1 \right\|}
\newcommand{\paren}[1]{\left(#1 \right)}

\newcommand{\ind}[1]{\1 \! \! \paren{#1}}
\newcommand{\prob}[1]{\P \!  \paren{#1}}

\newcommand{\Bohr}{\textnormal{Bohr}}
\renewcommand{\hat}{\widehat}

\providecommand{\Comments}{1}
\ifnum\Comments=1
\usepackage[colorinlistoftodos,prependcaption,textsize=scriptsize]{todonotes}
\else
\usepackage[disable]{todonotes}
\fi
\newcommand{\mytodo}[1]{\ifnum\Comments=1{#1}\fi}

\newcommand{\tableoftodos}{\ifnum\Comments=1 \listoftodos[Comments/To Do's] \fi}

\newcommand{\ignore}[1]{}

\title{Strong Bounds for 3-Progressions}
\author{Zander Kelley\thanks{
Department of Computer Science, University of Illinois at Urbana-Champaign. Supported by NSF grant CCF-1814788, and CAREER award 2047310. Email: \texttt{awk2@illinois.edu}
}, 
Raghu Meka\thanks{
Department of Computer Science, University of California, Los Angeles. Supported by NSF AF 2007682. Email: \texttt{raghum@cs.ucla.edu.}}}
\date{}
\begin{document}

\maketitle

\begin{abstract}
    We show that for some constant $\beta > 0$, any subset $A$ of integers $\{1,\ldots,N\}$ of size at least $2^{-O((\log N)^\beta)} \cdot N$ contains a non-trivial three-term arithmetic progression. Previously, three-term arithmetic progressions were known to exist only for sets of size at least $N/(\log N)^{1 + c}$ for a constant $c > 0$. 

    Our approach is first to develop new analytic techniques for addressing some related questions in the finite-field setting and then to apply some analogous variants of these same techniques, suitably adapted for the more complicated setting of integers.
\end{abstract}

\section{Introduction} \label{introduction}

A 3-progression is a triple of integers of the form $(a, a + b, a + 2b)$, and it is said to be \textit{nontrivial} if $b \neq 0$. 
It is straightforward to check that a triple $(x,z,y)$ is of this form if and only if $x + y = 2z$, and that, in this case, the progression is trivial only if $x = y$.
We consider the problem of finding a nontrivial 3-progression within a set 
$A \subseteq [N] \subseteq \Z$, 
where we assume only that the set $A$ has somewhat large density inside $[N]$. 
Regarding this problem, we prove the following.

\begin{thm}\label{3-progs}
The following holds for some absolute constant exponent $\beta > 0$.
Suppose $A \subseteq [N]$ has density $\delta = |A|/N.$ Then, either $A$ contains a nontrivial 3-progression, or else
$$ \delta \leq 2^{-\Omega( \log(N)^{\beta} )} .
$$
\end{thm}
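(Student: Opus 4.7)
The plan is to iterate a strong density-increment step. The target is a lemma of the following form: if $A \subseteq [N]$ has density $\delta$ and contains no nontrivial $3$-progression, then there is an arithmetic progression $P \subseteq [N]$ of length $|P| \geq N \cdot \exp(-(\log(1/\delta))^{C})$ on which $A$ has density at least $(1+c)\delta$, for absolute constants $C,c > 0$. The density can at most double $O(\log(1/\delta))$ times before exceeding $1$; tracking the ambient length over all iterations, the process can continue only as long as $\log N \gtrsim (\log(1/\delta))^{C+1}$, which gives $\delta \leq 2^{-\Omega((\log N)^{\beta})}$ with $\beta = 1/(C+1)$.

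\textbf{Analytic strategy.} Embed $A$ into $G = \Z/N'\Z$ with $N' \approx 3N$, and let $f = 1_A - \delta \cdot 1_{[N]}$. The 3-AP count can be expanded as
$$ \sum_{x,r} 1_A(x) 1_A(x+r) 1_A(x+2r) = {N'}^{2} \sum_{\xi \in \hat G} \hat{1_A}(\xi)^{2} \, \hat{1_A}(-2\xi). $$
The vanishing of this count forces a bound of the form $\sum_{\xi} |\hat f(\xi)|^{2} |\hat{1_A}(-2\xi)| \geq \Omega(\delta^{3})$. I would then convert this spectral lower bound into a pointwise estimate on a convolution: by passing to a high convolution power $(1_A)^{*k}$ with $k \sim \log(1/\delta)$, the mass of $|\hat{1_A}|^{2k}$ becomes concentrated on a small set of frequencies $\Gamma$, and a Chang-style structural theorem then exhibits a Bohr set $B$ of dimension $d = (\log(1/\delta))^{O(1)}$ and radius $\rho = \exp(-(\log(1/\delta))^{O(1)})$ such that $1_A \ast \mu_B$ is large on some coset. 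This coset supplies the density increment, and the Bohr set is converted to a long arithmetic progression inside $[N]$ by a standard geometry-of-numbers argument.

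\textbf{Main obstacle.} The crux is to achieve a density increment by a \emph{constant} factor $(1+c)$ rather than the Bogolyubov-style $(1+\delta)$ factor that appears in classical Fourier arguments and that ultimately limits prior work to the $N/(\log N)^{1+c}$ threshold. The approach I would pursue is first to work the entire argument out in the finite-field model $\F_{p}^{n}$ (with small $p$), where Bohr sets are honest subspaces and number-theoretic losses vanish. In that setting the key lemma to establish is a ``spreading'' or almost-periodicity statement: the convolution $1_A \ast 1_{-A}$, after averaging against the uniform measure on a structured subspace, is pointwise close to its mean, converting any spectral excess directly into a constant-strength density increment. Transplanting this strategy to $\Z$ then amounts to replaying the estimates on Bohr sets and carefully tracking parameters, which is technical but should not introduce further losses in the main exponents. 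I expect the spreading estimate itself to be the hardest single step, because naive iteration of Chang's lemma loses a factor of $\log(1/\delta)$ per round and kills the desired quasi-polynomial savings; the remedy is to apply such a structural theorem only once, after passing to the high convolution power that makes the spectrum sufficiently concentrated in a single shot.
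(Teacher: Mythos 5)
Your density-increment skeleton is correct, and you've also correctly anticipated the finite-field-first strategy and the use of Croot-Sisask/Chang-type almost-periodicity. But the central mechanism you propose for obtaining a \emph{constant-factor} density increment on a low-complexity container does not deliver the goods, and the genuinely new ingredients of the paper are absent.

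The specific step you describe — ``passing to a high convolution power $(1_A)^{*k}$ with $k \sim \log(1/\delta)$, the mass of $|\widehat{1_A}|^{2k}$ becomes concentrated on a small set of frequencies $\Gamma$, and a Chang-style structural theorem then exhibits a Bohr set'' — is essentially the Bateman--Katz approach, and it is not known to yield constant-strength increments on polylogarithmic-rank structures. Even granting Fourier concentration on $\Gamma$ and a low-rank span via Chang, the resulting density increment on the annihilator Bohr set degrades with $\delta$ and recovers at best a $1/(\log N)^{1+c}$ threshold. The paper's new tool that breaks this barrier is the \emph{sifting lemma}, and it works in physical space rather than frequency space: starting from $\|A \star A\|_k \geq 1 + \Omega(1)$, one finds a subset $A' = A \cap (A+s_1) \cap \cdots \cap (A+s_{k-1})$ with $|A'| \geq 2^{-O(dk)}|A|$ such that $A' \star A'$ robustly witnesses the super-level set $f := \1\{A \star A \geq 1 + \Omega(1)\}$, i.e.\ $\ip{A' \star A'}{f} \approx 1$. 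Only then is Sanders' invariance (Croot--Sisask $+$ Chang) applied — to $f$ and the convolution $A' \star A'$ of two \emph{large} sets — yielding a subspace $V$ of codimension $\mathrm{poly}(d,k)$ with $\ip{V}{f} \approx 1$ and hence $\ip{V}{A \star A} \geq 1 + \Omega(1)$. This is what converts spectral excess into a constant-strength increment; your ``averaging $1_A * 1_{-A}$ against a structured subspace'' heuristic gestures at the invariance step but provides no analogue of sifting to produce the objects being averaged.

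Two further gaps. First, your argument proves only the contrapositive increment step and does not address what happens at the end of the iteration; in the paper, the terminal ``spread'' set must be shown to have \emph{many} 3-progressions, which requires a two-sided bound $\|A*B - 1\|_k = O(\eps)$. The upgrade from the one-sided bound $\|A \star A\|_k \leq 1+\eps$ to the two-sided bound hinges on \emph{spectral positivity}: $A \star A - 1$ has nonnegative odd moments, so downward deviations force comparable upward deviations. Without this observation, a spread set could still conceivably have a $k$-norm deficit concentrated entirely in the downward direction (the paper gives the explicit counterexample for generic densities), and the lower bound on the 3-AP count would fail. Second, the claim that transplanting the finite-field argument to $\Z$ ``should not introduce further losses'' substantially underestimates the difficulty: the paper's integer argument does not simply replay estimates on Bohr sets but rather iterates density increments via Freiman 2-homomorphisms into $\prod_i [N_i] \subseteq \Z^r$, using a new notion of ``safe'' sets to guarantee the embedding into a product of cyclic groups stays a 2-homomorphism, and invokes a Schoen--Sisask local translation-invariance lemma in place of Sanders'. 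These are not bookkeeping; they are where the exponent changes from $9$ to $12$.
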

In \cite{roth53}, Roth proved a statement of the same form: that dense enough sets $A \subseteq [N]$ must contain a nontrivial 3-progression, for density threshold $\delta \approx 1 / \log \log N.$ This was improved by Heath-Brown and Szemer{\'e}di to $\delta \approx 1 / \log(N)^c$ for some small $c > 0$ \cite{heath87,szemeredi90}. This bound was further refined in the works of Bourgain, and Sanders \cite{bourgain99, bourgain08, sanders12certain} where it is shown that one can take $c = 1/2$, $c = 2/3$, and then $c = 3/4$. Sanders then obtained a density-threshold of the form $\delta \approx (\log \log N)^{6} / \log N$ \cite{sanders11}.\footnote{See also the work of Bloom and Sisask \cite{bs19}, which obtains a comparable result by some different methods which are closely related to the approach taken here.} This was further sharpened by a factor $(\log \log N)^2$ by Bloom and then again by another factor $\log \log N$ by Schoen \cite{bloom16, schoen21}. The best bound previously available is due to Bloom and Sisask \cite{bs20}, who show that a set $A \subseteq [N]$ with no 3-progressions must have density
$$\delta \leq  O \left( \frac{1}{\log(N)^{1+c}} \right)$$
for some small $c > 0$. We also refer to \cref{comparison} for a more detailed discussion of some previous approaches and how our methods relate to them.

In the other direction, it was shown by Behrend\footnote{\cite{behrend46}.  See also the works of Elkin \cite{elkin10}, Green and Wolf \cite{gw10}, and O'Bryant \cite{obryant11} for some refinements.}  that for infinitely many values $N$ there are indeed subsets $A \subseteq [N]$ of density roughly
$ \delta \approx 2^{-\log(N)^{1/2}} $
which have no nontrivial 3-progressions.

More specifically, we establish the following.
\begin{thm}\label{many-3-progs}
Suppose $A \subseteq [N]$ has density at least $2^{-d}$. Then the number of triples $(x,y,z) \in A^3$ with $x + y = 2z$ is at least
$$ 2^{-O(d^{12})} N^2 .$$
\end{thm}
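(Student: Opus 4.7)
The plan is a Fourier-analytic density-increment argument, of the same general shape used in the line of work from Meshulam through Bloom--Sisask, but quantitatively much more efficient because of the new analytic techniques the authors advertise. First I would embed $[N]$ inside a cyclic group $\Z_M$ with $M \approx 3N$ (to avoid wraparound when we look at progressions $x+y=2z$), and encode $A$ by its balanced function $f = 1_A - \delta \cdot 1_{[N]}$. The 3-AP count can then be written as a convolution/Fourier expression, roughly $T(A) = \delta^3 N^2 + \text{(error term involving } \hat{f}\text{)}$, and the goal becomes to show that either the error term is dominated by $\delta^3 N^2$, in which case we are done immediately (and $2^{-O(d)} N^2$ already beats our target bound), or the Fourier spectrum of $A$ has exploitable structure.

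The key dichotomy I expect to prove is the following: either $T(A) \gtrsim \delta^C N^2$ for some polynomial-in-$d$ loss, or there is a "large spectrum" phenomenon that can be converted into an arithmetic sub-structure. Concretely, the new technique suggested in the abstract is probably an $L^p$-type estimate on the large-spectrum $\{ \chi : |\hat{A}(\chi)| \geq \eta \delta \}$, or an almost-periodicity/Croot--Sisask type lemma with sharper parameters, sufficient to extract a Bohr set (or, via Freiman/Bogolyubov-style arguments, a genuine arithmetic progression) $P \subseteq [N]$ of size $|P| \geq \exp(-\mathrm{poly}(d)) \cdot N$, on which $A$ has density $\delta' \geq \delta \cdot (1 + \Omega(1/\mathrm{poly}(d)))$. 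This is the core density-increment lemma; transferring it from the finite-field analogue (which the paper says to develop first) to $[N]$ is the main technical point.

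With the density increment in hand, the counting bound follows by iteration and careful bookkeeping. Each increment improves the density by a factor $(1 + 1/d^{a})$ for some fixed $a$, so after $O(d \cdot d^{a}) = O(d^{a+1})$ steps we reach a dense ($\geq 1/2$, say) subset of a progression $P^*$, at which point a direct supersaturation argument gives $\Omega(|P^*|^2)$ three-term progressions, all of which correspond to genuine 3-APs in the original $A$ because $P^* \subseteq [N]$. The size $|P^*|$ shrinks by an $\exp(-\mathrm{poly}(d))$ factor at each step, so tracking the multiplicative losses across $O(d^{a+1})$ iterations yields $|P^*| \geq \exp(-d^{b}) N$ for some $b$; the exponent $12$ in the statement suggests that the bookkeeping produces something like $b = 12$, with $a$ being roughly $9$ or $10$.

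The main obstacle, as in all prior work on this problem, is proving the density-increment step with only polynomial loss in $d$. In the finite-field model the large-spectrum set lives inside a vector space and one can pass to a subspace of codimension $\mathrm{poly}(d)$, keeping $|P| \geq 3^{-\mathrm{poly}(d)} \cdot |\F_3^n|$; over the integers the analogous Bohr-set $B$ has dimension $\mathrm{poly}(d)$, and the quantitative challenge is to find a genuine arithmetic progression inside $B$ that is both long enough and on which $A$ genuinely is denser, without paying the exponential-in-dimension cost that a naive Bogolyubov argument would incur. Avoiding that loss is exactly what the "analogous variants suitably adapted for the more complicated setting of integers" in the abstract is aiming to do, and it is where I would expect essentially all the technical work to be concentrated.
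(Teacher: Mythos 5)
Your high-level frame -- a density-increment argument powered by a sharper almost-periodicity estimate -- is in the right spirit, and you are right that the entire battle is the density-increment step. But your proposed iteration scheme would not close, and it differs from the paper's in a way that is load-bearing, not cosmetic.

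First, the increment the paper obtains is a constant multiplicative factor $(1+\Omega(1))$, not $(1+1/\mathrm{poly}(d))$. This is essential: it means only $O(d)$ iterations are ever needed, not $O(d\cdot\mathrm{poly}(d))$, and each iteration's exponential-in-$\mathrm{poly}(d)$ size loss then multiplies out to $2^{-O(d^{12})}$ rather than something worse. With the $(1+1/d^a)$-per-step rate you posit, you would need $\Theta(d^{a+1})$ iterations and the compounded size loss would overwhelm the target bound unless the per-step loss were far milder than anything that can come out of a Bohr-set/Bogolyubov step.

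Second and more fundamentally, you propose to land on an arithmetic progression $P\subseteq[N]$ after each step and then iterate inside $P$. That is precisely the ``early'' Roth/Heath-Brown/Szemer\'edi approach, and it has a well-known quantitative bottleneck that you yourself name -- pulling a Bohr set of rank $\mathrm{poly}(d)$ back to a \emph{genuine} rank-one progression in $\Z$ forces you to pay a size loss that is exponential in the rank (already $\sqrt{N}$ for rank one, catastrophically worse for higher rank). You flag this as ``where essentially all the technical work would be concentrated'' but leave it unresolved; the paper's resolution is to refuse to do it. Instead of iterating within a single progression $P\subseteq[N]$, the paper works with configurations $A\subseteq[N_1]\times\cdots\times[N_r]\subseteq\Z^r$ and lets the rank $r$ grow by $O(d^8)$ per step, tracking the Freiman $2$-homomorphism explicitly (\cref{spreadness} and \cref{pass-to-spread}). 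Each step still finds a generalized progression inside a low-rank Bohr set, paying the $(\rho/2\pi r)^r$ factor of \cref{progression_in_bohr_set}, but since the density increment is $\Omega(1)$ there are only $O(d)$ steps, so the final rank is $O(d^9)$ and the cumulative loss is $2^{-O(d^{12})}$. There is also a nontrivial issue you do not mention: the embedding of the configuration into a cyclic group must remain a Freiman $2$-homomorphism after restricting to the increment set, which the paper handles with the ``safe set'' machinery (\cref{t-safe}, \cref{safe_bohr}) rather than by fixing an interval up front.

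Finally, the mechanism producing the increment is not a large-spectrum $L^p$ estimate as you guess; it is the sifting lemma (\cref{sifting-lem-density-formulation}) applied to $\|A\star A\|_k$, combined with spectral positivity (\cref{odd-moments}) to convert the resulting one-sided upper-bound control into the two-sided bound $\|\phi(A)*\phi(A)-1\|_k\geq\Omega(1)$, and then Schoen--Sisask translation invariance (\cref{schoen-sisask}) to pass to a Bohr set. That is a genuinely different engine from the large-spectrum analysis of Bateman--Katz or Bloom--Sisask, which the paper contrasts itself against in \cref{comparison}.
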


Since there are only $|A| \leq N$ trivial 3-progressions, we do indeed obtain a nontrivial 3-progression, unless $\log N \leq {O(d^{12})}$. 

We also consider a similar problem where $A \subseteq \F_q^n$ is a subset of a vector space over some finite field. In this setting, we prove the following.

\begin{thm}\label{3-progs-in-finite-fields}
Suppose $A \subseteq \F_q^n$ has density at least $2^{-d}$.
Then the number of triples $(x,y,z) \in A^3$ with $x + y = 2z$ is at least
$$ q^{-O(d^{9})}  |\F_q^n|^2 .$$
\end{thm}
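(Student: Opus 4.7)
The plan is to reduce \cref{3-progs-in-finite-fields} to a density-increment lemma and iterate. Concretely, I would aim to prove: if $A \subseteq \F_q^n$ has density $\delta = 2^{-d}$ and the number of solutions to $x+y=2z$ with $x,y,z \in A$ is at most, say, $(\delta^3/2)\,|\F_q^n|^2$, then there is an affine subspace $v + V$ with $\mathrm{codim}(V) \leq O(d^8)$ on which $A$ has relative density at least $2\delta$. Starting from $\delta = 2^{-d}$ and doubling at each step, after at most $d$ iterations the density would exceed $1$, forcing an aggregate codimension bill of $O(d^9)$; provided $n$ is at least this large, the iteration terminates in a contradiction, and otherwise the claimed lower bound $q^{-O(d^9)} |\F_q^n|^2$ is already vacuous. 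One may restrict attention to odd characteristic, since in characteristic $2$ the equation $x+y=2z$ degenerates to $y=x$ (with $z$ free), giving $\delta^2 |\F_q^n|^2$ solutions trivially.

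For the single-step increment, I would start from the Fourier expansion $1_A = \delta + f$ with $\widehat{f}(0)=0$, and write the count as $T_3(A) = \delta^3 + \sum_{\chi \neq 0} \widehat{f}(\chi)^2 \overline{\widehat{f}(-2\chi)} + (\text{lower-order cross terms})$. If $T_3(A)$ is small, then some Fourier bilinear form in $f$ must be large, e.g.\ $|\langle f \ast f,\, 1_A(2\cdot) \rangle| \gtrsim \delta^3$. The classical Meshulam route extracts $\|\widehat{f}\|_\infty \gtrsim \delta^2$, which loses a factor of $\delta$ at each iteration and produces only the $1/\log$-type bound. Instead, I would apply Hölder with exponent $p = \Theta(d)$ to deduce that $\|\widehat{f}\|_{2p}^{2p}$ is large, meaning the Fourier mass of $A$ is \emph{spread} across many characters of roughly comparable size. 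Dualizing back to physical space, the convolution $g = 1_A \ast 1_{-A}$ is then forced to take values at least $(1+c)\delta^2$ on a set of density $2^{-\Theta(d)}$ --- a \emph{relative} excess of $\Omega(1)$, not $\Omega(\delta)$. This is the crucial \emph{unbalancing} step where the $\delta$-loss is avoided.

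Finally, I would run a sifting / almost-periodicity argument --- a Croot--Sisask style dependent-random-choice argument in $\F_q^n$ applied to iterated convolutions of $1_A$ --- to locate a subspace $V$ of codimension $\mathrm{poly}(d)$ on which $g$ is approximately constant along cosets. Combined with the unbalanced pointwise excess for $g$, this yields a coset of $V$ on which $1_A$ itself has density at least $(1+c)\delta$; iterating the increment $O(\log(1/\delta)) = O(d)$ times then finishes. I expect the main obstacle to be the quantitative control of both the unbalancing and the sifting: one must convert an $L^{2p}$-largeness statement on the Fourier side into a genuinely \emph{pointwise} large-deviation statement for $g$ with only an $O(1)$ loss, and the dependent-random-choice step must give codimension $\mathrm{poly}(d)$ (i.e.\ $\mathrm{polylog}(1/\delta)$) rather than the naive $\mathrm{poly}(1/\delta)$. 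Balancing the exponent $p$, the target pointwise excess, and the size of the ``spread'' set of large characters is what produces the exponent $8$ per step (and hence the $d^9$ in the final estimate), and is the part requiring the most delicate accounting; all remaining steps --- the Fourier expansion, the iteration, and the final codimension count --- are bookkeeping once these two engines are in place.
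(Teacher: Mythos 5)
Your high-level plan --- a density-increment iteration whose single step combines a Fourier ``spreadness'' argument with a Croot--Sisask-style almost-periodicity argument --- shares its outermost skeleton with the paper, but the ``unbalancing'' step you identify as requiring ``the most delicate accounting'' is in fact a conversion whose very possibility the paper flags as open. You want to pass from ``$\|\hat{f}\|_{2p}^{2p}$ is large'' (for $p = \Theta(d)$) to ``$g = 1_A * 1_{-A}$ exceeds $(1+c)\delta^2$ on a set of density $2^{-\Theta(d)}$.'' But $\sum_\chi |\hat{f}(\chi)|^{2p}$ is a \emph{diagonal} Fourier sum, whereas a physical-space $L^k$-type deviation statement for $g$ corresponds to a sum over all $k$-tuples of frequencies summing to zero; the two quantities are not directly comparable, and one cannot pass between them ``just by dualizing.'' In Appendix A the paper poses a close variant of exactly this implication --- whether $\sum_{\alpha \neq 0} |\hat{A}(\alpha)|^3 \geq \Omega(1)$ forces $\|A \star A - 1\|_{k} \geq \Omega(1)$ for some $k \leq \textnormal{poly}(d)$ --- as an explicit open question, remarking that it remains unclear whether this Fourier-moment route is viable for bounds this strong.

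What the paper does instead sidesteps the Fourier detour entirely. From the hypothesis that the 3-progression count is far from $\delta^3 |\F_q^n|^2$, a direct physical-space H\"older inequality already gives $\|A * B - 1\|_k \geq \Omega(1)$ for some $k = O(d)$. A Cauchy--Schwarz decoupling inequality (\cref{decoupling-inequality}) then reduces this to a deviation of a self-convolution, $\|A \star A - 1\|_k \geq \Omega(1)$. The piece your sketch is missing entirely is \emph{spectral positivity}: since $A \star A - 1$ has nonnegative Fourier coefficients, its odd moments are nonnegative (\cref{odd-central-moments}), which forces any downward deviation of $A \star A$ from $1$ to be matched by an upward deviation in $L^{k'}$ for $k' = O(k/\eps)$, yielding the crucial upper-bound exceedance $\|A \star A\|_{k'} \geq 1 + \Omega(1)$. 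Only then can sifting begin: the sifting lemma (\cref{sifting-lem-density-formulation}) --- a combinatorial device distinct from Croot--Sisask, which replaces the density proportional to $(A \star A)^{k'}$ by a convolution $A' \star A'$ of a subset $A' \subseteq A$ obtained as a $(k'-1)$-fold intersection of translates of $A$ --- produces a robust witness concentrated on the super-level set $\{(A \star A) \geq 1 + \Omega(1)\}$, and only \emph{then} is Sanders' invariance lemma (Croot--Sisask plus Chang) applied to convert that convolution witness into an affine subspace. Your sketch conflates the sifting lemma with Croot--Sisask, omits the decoupling inequality, and, most critically, omits spectral positivity, without which the two-sided deviation $\|A \star A - 1\|_k \geq \Omega(1)$ gives no useful control on the super-level set that sifting needs. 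Finally, the paper packages the increments into a structural lemma (\cref{robust-sunflower}) about a subset $A'$ whose self-convolution is nearly uniform over its own span, and derives the 3-progression count from that in a few lines.
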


As a standalone result, we point out that this theorem is strictly worse than the state-of-the-art bound, which can be obtained from algebraic techniques pioneered by \cite{clp17} and \cite{eg17}, which gave a strong resolution to the cap-set problem.
Indeed, Fox and Lov\'asz obtain strong bounds for Green's ``arithmetic removal lemma'' by combining the algebraic results of \cite{kss18} with some additional combinatorial arguments. In particular, their results imply the following.

\begin{thm}[Special case of Theorem 3 in \cite{fl17}]
Suppose $A \subseteq \F_q^n$ has density at least $2^{-d}$. 
Then the number of triples $(x,y,z) \in A^3$ with $x + y = 2z$ is at least
$ q^{-O(d)}  |\F_q^n|^2.$
\end{thm}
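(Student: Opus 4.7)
The plan is to derive this special case of the Fox--Lov\'asz result by combining two ingredients: the Ellenberg--Gijswijt cap-set bound and a polynomial arithmetic removal lemma (which is the real content of \cite{fl17}). Write $\eta = \eta(q) > 0$ for the constant such that every $3$-AP-free $B \subseteq \F_q^n$ satisfies $|B| \le q^{(1-\eta)n}$; this is guaranteed by Ellenberg--Gijswijt and can be sharpened via Kleinberg--Sawin--Speyer. The removal lemma I would invoke asserts that if $A \subseteq \F_q^n$ contains at most $\eps \cdot q^{2n}$ triples $(x,y,z) \in A^3$ with $x + y = 2z$, then some subset $B \subseteq A$ with $|A \setminus B| \le \eps^{\kappa} \cdot q^n$ is $3$-AP-free, for an absolute constant $\kappa = \kappa(q) > 0$. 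I would treat this lemma as a black box rather than reproving it.

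Given these two pieces, suppose for contradiction that $|A| \ge 2^{-d} q^n$ while the number of triples $T(A)$ satisfies $T(A) < q^{-Cd} \cdot q^{2n}$ for a large constant $C = C(q)$ to be chosen. I would split on the size of $d$ relative to $n$. In the regime $d \ge (\eta \log q) n / 2$, the diagonal triples $(x,x,x)$ with $x \in A$ already give $T(A) \ge |A| \ge 2^{-d} q^n$, and for $C$ large this beats $q^{-Cd} q^{2n}$: the inequality $q^{Cd - n} \ge 2^d$ reduces to $(C \log q - 1) d \ge n \log q$, which holds throughout this range.

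In the complementary regime $d \le (\eta \log q) n / 2$, I would apply the removal lemma with $\eps = q^{-Cd}$ to obtain a $3$-AP-free $B \subseteq A$ with $|A \setminus B| \le q^{-\kappa C d} \cdot q^n$. Then Ellenberg--Gijswijt gives $|B| \le q^{(1-\eta)n}$, so $|A| \le q^{-\kappa C d} q^n + q^{(1-\eta)n}$. Each term is at most $2^{-d-1} q^n$ once $C$ is chosen sufficiently large, using the case assumption $\eta n \log q \ge 2d$ to handle the second term, contradicting $|A| \ge 2^{-d} q^n$.

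The genuine obstacle here is the polynomial arithmetic removal lemma itself: the Fox--Lov\'asz proof uses a delicate iterated product/tensoring argument that leverages the cap-set bound, and I would not attempt to reprove it within this excerpt. Everything else above is a routine case split and an unpacking of inequalities around those two black boxes, so the proof effectively reduces to correctly assembling and citing them.
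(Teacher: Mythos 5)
The paper does not prove this theorem; it is stated purely as a cited consequence of Fox--Lov\'asz Theorem~3, so there is no in-paper argument to compare against --- only your blind derivation.

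Your derivation is correct, but it takes an avoidable detour, and it has one small hole. The detour: Fox--Lov\'asz prove their removal lemma in a three-set form --- if the number of triples in $A_1\times A_2\times A_3$ solving $x+y-2z=0$ is below $\delta q^{2n}$, one may delete at most $\eps q^n$ elements from each $A_i$ (with $\eps$ and $\delta$ related polynomially) so that no solutions survive in $A_1'\times A_2'\times A_3'$. Applying this with $A_1=A_2=A_3=A$, any point of $A_1'\cap A_2'\cap A_3'$ would produce a trivial solution $(x,x,x)$, so the intersection is forced to be empty and $|A|\le 3\eps q^n$ follows at once. With $\mu=2^{-d}$, taking $\delta=(\mu/4)^{O(1)}=q^{-O(d)}$ then makes the removal hypothesis incompatible with the density, giving $T(A)\ge\delta q^{2n}$ directly. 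In this reading the trivial triples are the engine of the argument rather than a nuisance to case-split around, and the Ellenberg--Gijswijt bound is never needed at the outer level (it is already hidden inside the Fox--Lov\'asz exponent, via Kleinberg--Sawin--Speyer). Your route still lands in the right place --- the ``remove-to-kill-nontrivial-3-APs'' single-set lemma you black-box is a weaker consequence of the density bound just described, so nothing you wrote is false --- but the regime split and the second cap-set invocation are an extra trip around the same loop.

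The small hole: you implicitly assume $q$ is odd, which is required both for the cap-set bound and so that the coefficient $-2$ in $x+y-2z$ is a unit. When $q$ is even, $2z=0$ and the equation degenerates to $y=x$, whence $T(A)=|A|^2\ge 2^{-2d}q^{2n}\ge q^{-O(d)}q^{2n}$ trivially; that case should be noted and dispatched separately. With that added, your regime arithmetic checks out as written (up to replacing $2^{-d-1}$ by, say, $2^{-d-2}$ to get a strict contradiction).
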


Instead, our interest in \cref{3-progs-in-finite-fields} is that we can obtain it using purely analytic techniques and that these same techniques can then be slightly modified and extended to prove \cref{many-3-progs}. 
Partially for expository reasons, we devote a substantial portion of this paper to first addressing the finite field case. This will allow us to present most of the key ideas needed to prove our main theorem while ignoring additional complications arising in the setting $A \subseteq \Z$. We also remark that techniques based on additive combinatorics have found several applications within computer science (going all the way back to the paper of \cite{CFL} who used Behrend's construction for a communication protocol to more recent applications such as \cite{GHZadd}). We believe the new analytic techniques will have other applications. 

\subsection{Structural Results in the Finite Field Setting}

This section highlights a structural result in the finite field setting that follows from our analytic techniques but is not known to follow from algebraic methods. To describe it, we recall the notation for sumsets:
$$A + B := \set{a + b}{a \in A,  b \in B}.$$
For a set $A \subseteq \F_q^n$, we also use the notation $\textnormal{Span}^*(A)$ to denote the minimal affine subspace containing $A$ -- that is, $\textnormal{Span}^*(A)$ refers to the common intersection of all affine subspaces containing $A$, which is itself an affine subspace.

The following structural result says that for any dense set $A \subseteq \F_q^n$, there is a reasonably large subset $A' \subseteq A$, which is tightly contained in its own span (in a specific technical sense).

\begin{lem} [A large subset tightly contained in its own span] \label{sunflower}
Fix a parameter $\tau \in \left[0,\tfrac{1}{2}\right]$.
Suppose that $A \subseteq \F_q^n$ has size $|A| \geq 2^{-d} |\F_q^n|$. 
Then, there is a subset $A' \subseteq A$ with
\begin{enumerate}
\item[i.] $\frac{|A'| }{ |\textnormal{Span}^*(A')|} \geq \frac{ |A| }{ |\F_q^n|}$ and
\item[ii.] $\textnormal{Codim}( \textnormal{Span}^*(A')) \leq O(d^5 \log(1/\tau)^{4})$
\end{enumerate}
such that $ |A' + A'| \geq (1 -\tau) |\textnormal{Span}^*(A')|$.
\end{lem}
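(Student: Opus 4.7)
The plan is an iterative density-increment argument. After translating so that $0 \in A$, we may regard $V_0 := \textnormal{Span}^*(A)$ as a linear subspace and set $A_0 := A$. At iteration $i$ we maintain an affine subspace $V_i \leq \F_q^n$ and a subset $A_i \subseteq A$ with $A_i \subseteq V_i$, whose density $\alpha_i := |A_i|/|V_i|$ is at least the original $|A|/|\F_q^n|$. If $|A_i + A_i| \geq (1-\tau)|V_i|$ we output $A' := A_i$; in this case $V_i = \textnormal{Span}^*(A_i)$ automatically, since $\tau \leq \tfrac12$ rules out $A_i$ being contained in any strictly smaller affine subspace of $V_i$.

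Suppose instead $|A_i + A_i| < (1-\tau)|V_i|$. By Cauchy--Schwarz applied to the convolution $1_{A_i} * 1_{A_i}$,
\[
E(A_i, A_i) \;\geq\; \frac{|A_i|^4}{|A_i + A_i|} \;\geq\; (1+\tau)\,\frac{|A_i|^4}{|V_i|},
\]
and Parseval on $\hat{V_i}$ rewrites this as $\sum_{\xi \neq 0} |\hat{1_{A_i}}(\xi)|^4 \geq \tau \alpha_i^4$. I would then apply Chang's lemma to the $\rho$-spectrum $\{\xi : |\hat{1_{A_i}}(\xi)| \geq \rho \alpha_i\}$: it is contained in the annihilator of a subspace $U_i \leq V_i$ of codimension $O(d/\rho^2)$. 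A standard averaging step (using $\max_y |A_i \cap (U_i + y)|/|U_i| \geq \sum_{\xi \in U_i^\perp} |\hat{1_{A_i}}(\xi)|^2/\alpha_i$) then produces an affine translate $V_{i+1}$ of $U_i$ on which $A_i$ has density at least $\alpha_i(1 + \eta)$. Set $A_{i+1} := A_i \cap V_{i+1} \subseteq A$ and iterate.

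Because $\alpha_i \leq 1$ while each step multiplies the density by $(1+\eta)$, the process terminates after $T = O(\log(1/\alpha_0)/\eta) = O(d/\eta)$ iterations, each losing at most $O(d/\rho^2)$ dimensions from the affine hull. Tuning $\rho$ and $\eta$ so that $T$ times the per-step codimension loss matches the claimed $O(d^5 \log(1/\tau)^4)$ is the parameter-balancing part of the argument. Property $(i)$ of the lemma is maintained throughout because $\alpha_i$ only grows, and $\textnormal{Span}^*(A_{i+1}) \subseteq V_{i+1}$ can only increase the density beyond $\alpha_{i+1}$.

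The main obstacle is extracting a per-step multiplicative increment $\eta$ that depends only polylogarithmically on $\tau$, and \emph{not} on $\alpha_i$. A single-frequency argument yields only $\eta = \Omega(\tau \alpha_i)$, forcing $T \sim 1/\alpha_0 = 2^d$ and blowing the codimension bound up exponentially in $d$. Matching the stated $\log(1/\tau)^4$ factor instead requires capturing essentially the whole deficit mass $\tau \alpha_i^4$ inside a subspace of codimension $\mathrm{polylog}(1/\tau) \cdot \mathrm{poly}(d)$ at once, presumably via an iterated Chang-type argument or a Croot--Sisask / almost-periodicity replacement of the kind the paper develops for its main theorems. Once such an $\alpha$-independent increment is secured, the remaining codimension accounting is routine.
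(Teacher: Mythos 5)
You have correctly diagnosed the gap, and that diagnosis is the point: the step you flag in your last paragraph is not a routine calibration but the lemma's entire content, and your proposal does not supply it. With a single-frequency increment the codimension loss per step is $1$ but the density gain is only $\eta \sim \sqrt{\tau\alpha_i}$, so the iteration count is $2^{\Omega(d)}$; with a Chang step at threshold $\rho \sim \sqrt{\tau\alpha_i}$ (chosen so at least half the $\ell^4$-mass sits inside the spectrum) the gain improves to $\eta \sim \tau$, but the codimension loss is $O(d/\rho^2) = O(d/(\tau\alpha_i))$, already $2^{\Omega(d)}$ at the very first step. Either way the running density $\alpha_i$ enters at a polynomial power, and nothing in the purely Fourier set-up you describe removes it. This is precisely the barrier that kept classical Roth--Meshulam-type density-increment arguments at bounds of shape $\delta \gtrsim 1/\textnormal{polylog}\,N$, and your conjecture that a Croot--Sisask-type ingredient is needed is correct --- but conjecturing it is not proving it, and the proposal as written does not establish \cref{sunflower}.

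For comparison, the paper obtains \cref{sunflower} as a special case of \cref{robust-sunflower}, proved in \cref{finitefield-mainproofs}. The outer iteration is organized a little differently than yours: via \cref{greedy} one takes \emph{all} available increments onto low-codimension subspaces up front, producing a subset that is $(1+\eps,r)$-spread relative to its span, and only then analyzes the terminal state. The decisive ingredient is \cref{near-uniformaity-from-spreadness}: spread sets satisfy $\|A' * A' - 1\|_k = O(\eps)$ once $r \gtrsim d^4 k^4/\eps^{O(1)}$, which for suitable $k$ immediately yields $|A'+A'| \geq (1-\tau)|\textnormal{Span}^*(A')|$. That implication is not proved by inspecting Fourier coefficients of $A'$; it is a chain of three moves. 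First, the decoupling inequality together with spectral positivity (\cref{I}) reduces two-sided control of $A'*A'$ to an upper bound on $\|A'\star A'\|_{k'}$. Second, the sifting lemma (\cref{sifting-lem-density-formulation}) converts a putative lower bound $\|A'\star A'\|_{k'} \geq 1+\Omega(\eps)$ into a convolution $A''\star A''$ of a subset of relative density $2^{-O(dk')}$ that concentrates on the super-level set $\{A'\star A' \geq 1+\Omega(\eps)\}$. Third, Sanders' invariance lemma (\cref{sanders-invar-appendix}, Croot--Sisask plus Chang) replaces the witness $A''\star A''$ by a subspace of codimension $\textnormal{poly}(d,k)$. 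It is the sifting step that breaks the $\alpha$-dependence you are stuck with: it produces a structured witness whose quality degrades only polynomially in $d$ and $k$, never in the running density $\alpha_i$.
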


Taken together, conditions (i) and (ii) roughly say simply that $A'$ is large: for example, they imply that $|A'| \geq q^{-O(d^5 \log(1/\tau)^{4})} |\F_q^n|$. However, we are additionally guaranteed (by (i)) that the density of $A'$ inside its span is no worse than the original density of $A$ inside $\F_q^{n}$. To unpack the conclusion, let us write $\textnormal{Span}^*(A') =: V + \theta$ for some linear subspace $V$ and some shift $\theta \in \F_q^n$. Also let $B := A' - \theta$. Clearly, we have $\textnormal{Span}(B) = V$. That is, $B$ is contained in $V$, and it also ``eventually" generates $V$: for instance we can surely say that $ \set{c_1 b_1 + c_2 b_2 \cdots + c_n b_n}{c_i \in \F_q, b_i \in B} = V$.
On the other hand, we can compare this to our conclusion, which says that merely taking
$$ B + B = \set{b_1 + b_2}{b_1, b_2 \in B} $$
is already enough to generate all but a tiny fraction of $V$: we have $|B + B| \geq (1- \tau) |V|$ and so $|V \setminus (B + B)| \leq \tau |V|$. 

We proceed to compare this result with some related results in the literature: 
Specifically, Sanders' quasipolynomial Bogolyubov–Ruzsa lemma (specialized to the finite field setting) and the critical technical lemma underlying its proof.

Let us first consider what we'll call the \textit{Bogolyubov–Ruzsa Problem}. We recall the notation for sumsets: $2A$ denotes $A + A$ and $tA$ denotes the set of all sums $a_1 + a_2 + \ldots + a_t$ with $a_i \in A$. 

\begin{problem}[Bogolyubov-Ruzsa Problem over $\F_p^n$]
Let $A \subseteq \F_p^n$ be a set of points in a vector space over a prime field. 
Fix $t \in \N$ and assume that $|A + A| \leq K |A|$.
Find (or rather, prove the existence of) an affine subspace $V$ which is as large as possible and contained in $t A$. 
\end{problem}

Note that any sufficiently dense set (i.e., one of size $|A| \geq K^{-1} |\F_p^n|$) trivially satisfies $|A + A| \leq |\F_q^n| \leq K |A|$, so to solve the Bogolyubov-Ruzsa problem one must at least handle the special case of finding a large subspace in $tA$ for all dense sets $A$. Furthermore, solving the problem in this special case is sufficient to obtain very similar parameters for the general case. The reduction can be summarized as follows: for any $t \leq O(1)$, a set $A \subseteq \F_p^n$ with $|A + A| \leq K |A|$ can be embedded by some linear map $\phi$ from $\F_p^n$ into $\F_p^m$ in such way that
\begin{itemize}
\item $\phi$ is injective on $A$ (and indeed, even on $tA$), 
\item $|\phi(A)| \geq K^{-O(1)} |\F_p^m|$, and
\item The set  $t \phi(A) = \phi(t A)$ contains an affine subspace $V \subseteq \F_p^m$ only if the preimage $\phi^{-1}(V)$ is an affine subspace in $\F_p^n$.
\end{itemize}
For a more detailed explanation, see, e.g., \ the nice description given in \cite{lovett15}.
In light of this reduction to the dense case, we restrict our attention to the following.\footnote{
We note that the reduction from the more general Bogolyubov-Ruzsa problem for vector spaces over prime fields $\F_p$ to the more specific Bogolyubov problem over $\F_p$ uses the fact that subspaces in $\F_p^n$ can be precisely characterized as those subsets which are closed under addition -- i.e., \ additive subgroups. However, it is sensible to consider the Bogolyubov problem itself over general finite fields $\F_q$. 
}

\begin{problem}[Bogolyubov Problem over $\F_q^n$]
Let $A \subseteq \F_q^n$.
Fix $t \in \N$ and assume that $|A| \geq 2^{-d} |\F_q^n|$.
Find an affine subspace $V$ which is as large as possible and contained in $t A$. 
\end{problem}

Regarding this problem, Sanders proves the following.

\begin{thm}[\cite{sanders12}]
Suppose $A \subseteq \F_q^n$ has size $|A| \geq 2^{-d} |\F_q^n|$. 
Then there is an affine subspace $V \subseteq 4A$ of codimension at most $O(d^4)$ in $\F_q^n$. 
\end{thm}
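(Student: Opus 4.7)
The plan is to prove this via Fourier analysis combined with Croot--Sisask almost periodicity, following Sanders' approach. Write $\alpha = |A|/|\F_q^n|$ and $d = \log(1/\alpha)$, and set $f = \mathbf{1}_A$. The starting point is the classical Bogolyubov identity
\[
(f * f * \tilde{f} * \tilde{f})(x) \;=\; |\F_q^n|^3 \sum_{\chi} |\hat{f}(\chi)|^4 \, \chi(x),
\]
whose support lies in $2A - 2A$ (which, after a standard translation, sits inside $4A$). If $V$ is a subspace on which every character in the large spectrum $\Lambda = \mathrm{Spec}_\rho(f) = \{\chi : |\hat f(\chi)| \geq \rho \alpha\}$ acts trivially, then for $x \in V$ the contribution from $\Lambda$ is nonnegative and includes the trivial-character term $\alpha^4$, while Parseval bounds the remainder by $\rho^2 \alpha^2 \cdot \sum_\chi |\hat f(\chi)|^2 = \rho^2 \alpha^3$. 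Choosing $\rho \approx \alpha^{1/2}$ makes the convolution strictly positive on $V$, forcing $V \subseteq 2A - 2A$. Thus the task reduces to finding a subspace of small codimension that annihilates $\Lambda$.

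A direct application of Chang's theorem gives such a subspace of codimension $O(\rho^{-2} d) = O(2^d d)$, which is exponential in $d$ and far too weak. The strengthening to $O(d^4)$ relies on Croot--Sisask almost periodicity: for parameters $\eps > 0$ and $p \geq 2$, there exists a set $T \subseteq \F_q^n$ with $|T| \geq \alpha^{O(p/\eps^2)} |\F_q^n|$ such that $\|f*f(\cdot + t) - f*f\|_p \leq \eps \|f*f\|_p$ for every $t \in T$. Taking $p \approx d$ and $\eps$ small, H\"older upgrades this $L^p$ bound into a pointwise statement showing that $|1 - \chi(t)| \lesssim \eps/\rho$ for every $\chi \in \Lambda$ and every $t$ in a suitable sumset of copies of $T$. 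Hence a large sumset of $T$ almost annihilates $\Lambda$.

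To convert this near-annihilation into exact annihilation, one iterates: each sumset $kT - kT$ has bounded doubling, and restricting to its affine span one reapplies the argument to refine character-control further, reducing codimension by a polynomial factor in $d$ at each stage. After a carefully bounded number of iterations, with $\eps$, $p$, and the number of stages optimized against one another, the final subspace $V$ annihilates all of $\Lambda$, yielding $V \subseteq 4A$ of codimension $O(d^4)$.

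The principal obstacle is parameter bookkeeping. Each Croot--Sisask call costs a factor of $p/\eps^2$ in the log-density of the next iterate; the Bogolyubov step forces $\rho \lesssim \alpha^{1/2}$ to control the Parseval tail; and the iteration requires the intermediate subspaces to remain dense enough for further induction. Balancing these competing constraints is exactly what pins the final exponent at $4$; a less careful analysis easily yields $d^5$ or worse.
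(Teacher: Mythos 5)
Your overall strategy—find a subspace annihilating the large spectrum $\Lambda = \mathrm{Spec}_\rho(\mathbf{1}_A)$ with $\rho \approx \alpha^{1/2}$, then use the Bogolyubov identity—has a fundamental quantitative flaw that Croot--Sisask, applied as you apply it, cannot repair. The threshold $\rho \approx \alpha^{1/2} = 2^{-d/2}$ is exponentially small in $d$. In the ``H\"older upgrade'' step you claim $|1 - \chi(t)| \lesssim \eps/\rho$, but the natural calculation gives $\eps/\rho^2$ (since $|\widehat{f*f}(\chi)| = |\hat f(\chi)|^2 \geq \rho^2\alpha^2$), and in either case you would need $\eps$ to be exponentially small in $d$ to make this useful, which forces $|T| \geq \alpha^{O(p/\eps^2)}$ to be doubly-exponentially small. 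You have simply pushed the exponential barrier from Chang's inequality into the Croot--Sisask parameters; the barrier is not gone. The closing ``iteration'' paragraph does not describe a concrete argument: $T$ is dense so its affine span is all of $\F_q^n$, ``reducing codimension at each stage'' runs the wrong direction, and in $\F_q^n$ near-annihilation by a subspace already forces exact annihilation (characters take values in discrete $q$-th roots of unity), so no iteration to pass from near to exact annihilation should be needed at all.

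Sanders' actual argument never tries to annihilate $\Lambda$. The decisive move, which your sketch misses, is to apply Chang's inequality to the large spectrum of the Croot--Sisask set $T$ itself rather than to the spectrum of $A$. Since $T$ has density $\alpha^{O(p/\eps^2)}$, its log-density is only polynomial in $d$ (for $p = O(d)$ and $\eps = \Omega(1)$), so $\mathrm{Span}(\mathrm{Spec}_{1/2}(T))$ has dimension $\mathrm{poly}(d)$ and one sets $V = \mathrm{Span}(\mathrm{Spec}_{1/2}(T))^\perp$. One then shows $V * (A*A)$ is close to $A*A$ pointwise---an invariance statement about the convolution, not an annihilation statement about $\widehat{\mathbf{1}_A}$. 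This is exactly the route the present paper takes: the approximate Bogolyubov theorem (a subspace $V$ of codimension $O(d^4/\tau^2)$ with $|V \cap (A+A)| \geq (1-\tau)|V|$) is obtained from Sanders' invariance lemma, and then the containment $V \subseteq 4A$ follows with $\tau < 1/2$ from the elementary observation that $|B \cap V| > |V|/2$ forces $V \subseteq B + B$ applied with $B = A+A$. Your proposal would need to be reorganized around this invariance-of-convolutions viewpoint to close the gap.
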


Sanders' approach to the Bogolyubov Problem for $4A$ is first to address what we'll call the \textit{Approximate Bogolyubov Problem} for $2A$:

\begin{problem}[Approximate Bogolyubov Problem over $\F_q^n$]
Let $A \subseteq \F_q^n$.
Fix $t \in \N$ and assume that $|A| \geq 2^{-d} |\F_q^n|$.
Find an affine subspace $V$ which is as large as possible and is ``mostly" contained in $t A$, in the sense that
$$ |V \cap (t A)| \geq (1-\tau)|V|$$
for some $\tau$ as small as possible. 
\end{problem}

The connection between the Approximate Bogolyubov Problem for $tA$ and the Exact Bogolyubov Problem for $2tA$ is due to the following simple claim.

\begin{claim}
Suppose $B \subseteq \F_q^n$ is a set with a large intersection with some linear subspace $V$: $ |B \cap V| > \frac{1}{2} |V|.$
Then $B + B$ contains $V$.
\end{claim}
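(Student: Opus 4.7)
The plan is to prove this by a simple pigeonhole / inclusion–exclusion argument on the subspace $V$. Fix an arbitrary target $v \in V$; the goal is to exhibit $b_1, b_2 \in B$ with $b_1 + b_2 = v$, which will show $v \in B+B$ and hence $V \subseteq B+B$.

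To produce such a pair, I would look at the two sets
\[
S_1 := B \cap V \qquad \text{and} \qquad S_2 := v - (B \cap V) = \{v - b : b \in B \cap V\}.
\]
The key observations are that (a) both $S_1$ and $S_2$ are subsets of $V$, since $v \in V$ and $V$ is closed under negation and addition, and (b) they have the same cardinality, namely $|B \cap V|$, since translation/reflection inside $V$ is a bijection. By the hypothesis, $|S_1| = |S_2| > \tfrac{1}{2}|V|$, so $|S_1| + |S_2| > |V|$, and therefore $S_1 \cap S_2 \neq \emptyset$ by inclusion–exclusion inside $V$.

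Any element $w$ of $S_1 \cap S_2$ yields the desired decomposition: writing $w = b_1 = v - b_2$ with $b_1, b_2 \in B \cap V$, we obtain $b_1 + b_2 = v$. Since $v \in V$ was arbitrary, this proves $V \subseteq B + B$.

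There is no real obstacle here; the statement is essentially a one-line pigeonhole on $V$. The only thing to be careful about is that the argument genuinely lives inside $V$ (so that the two sets $S_1$ and $S_2$ are comparable to $|V|$ rather than to $|\F_q^n|$), which is exactly why the hypothesis is stated relative to $V$ and why $V$ needs to be (at least) an affine subspace containing $0$ for $v - (B \cap V)$ to stay in $V$.
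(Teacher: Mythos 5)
Your proof is correct and is essentially the same argument as the paper's, just phrased deterministically via pigeonhole/inclusion--exclusion rather than probabilistically: the paper writes $x = (x+y) - y$ for a uniformly random $y \in V$ and applies a union bound, which is precisely the observation that the two sets $B \cap V$ and $x - (B \cap V)$ each occupy more than half of $V$ and so must intersect.
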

\begin{proof}
Let $x \in V$. We may trivially write $x = (x + y) - y$ for any element $y \in V$. Consider doing so for a uniformly random choice of $y \in V.$ By a union bound, the chance that the events $x + y \in B$ and $-y \in B$ occur simultaneously is nonzero. 
\end{proof}

Sanders provides the following solution to the Approximate Bogolyubov Problem for sumsets $2A$ over $\F_q^n$.

\begin{thm}[\cite{sanders12}] \label{sanders-approx-bogolyubov}
Suppose $A \subseteq \F_q^n$ has size $|A| \geq 2^{-d} |\F_q^n|$. 
Then for any $\tau \geq 2^{-d}$, there is an affine subspace $V$ of codimension at most $O(d^4 / \tau^2)$ with
$$ |V \cap (A + A)| \geq (1 -\tau) |V|.$$
\end{thm}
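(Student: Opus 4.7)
The plan is to use Fourier analysis on $\F_q^n$ together with Chang's theorem (or a sharpened variant) on the large Fourier spectrum, followed by an iterative refinement to reach the polynomial-in-$d$ codimension bound. The target is to produce an affine subspace $V$ on which a suitable non-negative convolution (whose support is contained in $A+A$) is bounded away from zero on most of $V$.

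First I would fix a threshold $\eta > 0$ (to be calibrated in terms of $\tau$ and $\delta$) and consider the large Fourier spectrum
$$\Lambda_\eta := \set{\chi \neq 0}{|\hat{1_A}(\chi)| \geq \eta \delta}.$$
By Chang's theorem, $\Lambda_\eta$ is contained in the $\F_q$-span of $O(d/\eta^2)$ characters; let $V$ be the annihilator of this span, a linear subspace of codimension $O(d/\eta^2)$ on which every $\chi \in \Lambda_\eta$ acts trivially. The natural function to study is the autocorrelation $1_A * 1_{-A}$, whose Fourier coefficients $|\hat{1_A}(\chi)|^2$ are non-negative (unlike those of $1_A * 1_A$, which are complex in general). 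A direct Fourier expansion on $V$ shows that for $x \in V$ one has $(1_A * 1_{-A})(x) \geq \delta^2 - \eta^2 \delta^3$, which is positive provided $\eta$ is a small multiple of $\delta$, yielding an exact Bogolyubov-type inclusion $V \subseteq A - A$. For the target statement about $A+A$, one either works through $2A - 2A$ (whose analysis uses the manifestly non-negative $|\hat{1_A}(\chi)|^4$) and then translates, or applies a symmetrized convolution directly — in either case at modest cost in the parameters.

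The main obstacle is that the naive Chang-plus-pointwise-Fourier argument above only produces codimension exponential in $d$: choosing $\eta$ a small multiple of $\delta \geq 2^{-d}$ forces $\mathrm{codim}\, V = O(d/\eta^2)$, which in the worst case is exponential in $d$. Sanders' contribution is an iterative density-increment argument that reduces this to the claimed polynomial $O(d^4/\tau^2)$ bound. The idea is that if the approximate-containment conclusion fails at some candidate $V$, then a nontrivial portion of the Fourier energy of $1_A$ must lie on characters transverse to $V^\perp$, and Plancherel then yields an affine sub-subspace within $V$ on which $A$ has relative density bounded below by $(1+\Omega(\tau))\delta$. Since density is at most $1$, this iteration runs for at most $O(d/\tau)$ steps; the per-step Chang cost, with a uniform threshold $\eta = \tilde{\Theta}(\tau)$, is $O(d/\tau^2)$, and telescoping gives the claimed $O(d^4/\tau^2)$ codimension. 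The delicate point is ensuring that the density increments compound cleanly — that Chang's theorem re-applies at each step within the current subspace, and that the total dimension cost remains polynomial in both $d$ and $1/\tau$.
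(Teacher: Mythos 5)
Your central claim --- that Sanders' quasipolynomial Bogolyubov bound is obtained by an ``iterative density-increment argument'' --- is not what Sanders does, and the paper itself records the actual route (see the discussion preceding \cref{bootstrap}). Sanders' proof is a \emph{single} application of the Croot--Sisask almost-periodicity lemma together with Chang's inequality, packaged as the invariance lemma (\cref{sanders-invar-appendix}); there is no density-increment iteration. Concretely, one notes that $\|\1_{A+A}\|_{*,d} = 1$, witnessed by the convolution $A * A$ whose support is precisely $A+A$, and \cref{bootstrap} converts this witness into $\|\1_{A+A}\|_{\perp, r} \geq 1 - O(\tau)$ for some $r = O(d^4/\tau^2)$, which is exactly the stated conclusion. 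The exponent pattern $d^4/\tau^2$ drops directly out of the Croot--Sisask parameters and a single application of Chang's inequality to the resulting set of approximate periods, not from telescoping increments.

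The iterative route you sketch also does not add up and has an unjustified step. Quantitatively, $O(d/\tau)$ increments at codimension cost $O(d/\tau^2)$ each would total $O(d^2/\tau^3)$, not $O(d^4/\tau^2)$. More seriously, the claim that failure of $|V\cap(A+A)| \geq (1-\tau)|V|$ forces a relative density increment of $A$ inside $V$ by a factor $(1+\Omega(\tau))$ at polynomial codimension cost is not a routine Plancherel deduction: passing from information about where $A * A$ vanishes to a multiplicative increment of $A$ on a low-codimension subspace is precisely the hard inverse step that the present paper develops machinery (sifting, spectral positivity) to handle, and the paper's own density-increment analogue of Sanders' theorem, \cref{sunflower}, lands at the incomparable bound $O(d^5\log(1/\tau)^4)$ rather than $O(d^4/\tau^2)$. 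As a minor note, your pointwise estimate $\delta^2 - \eta^2\delta^3$ for $1_A * 1_{-A}$ on the annihilator of the span of the large spectrum uses the tail bound appropriate to the $|\hat{1_A}(\chi)|^4$ weights of $2A-2A$; for $A-A$ the weights $|\hat{1_A}(\chi)|^2$ have tail bounded only by $\delta$, so the naive Chang argument does not give exact Bogolyubov for $A-A$.
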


It is useful to compare this with our structural result above, \cref{sunflower}, which accomplishes something similar in spirit. Indeed, given $A$ we may find $A' \subseteq A$ and consider its container $V := \textnormal{Span}^*(A')$, which in our case has codimension $O(d^5 \log(1/\tau)^4)$. We note that $2V$ is again an affine subspace of the same size as $V$ ($2V$ is just a translate of $V$) which contains $A' + A'$, and that
$$ |2V \cap (A + A)| \geq |2V \cap (A' + A')| = |A' + A'| \geq (1 -\tau) |2V|.$$
Thus, our result can also be understood as a solution to the Approximate Bogolyubov Problem for $2A$; compared with Sanders' solution, we obtain codimension which is moderately worse in terms of its dependence of the density parameter $d$ in exchange for substantially improved dependence on the error parameter $\tau$. This answers a question posed by Hatami, Hosseini, and Lovett in \cite{hhl18} -- they ask whether it is possible to obtain codimension $d^{O(1)} \log(1/\tau)^{O(1)}$ for the Approximate Bogolyubov Problem for $2A$, and we confirm that indeed it is. 

We can also answer a related question posed by Schoen and Sisask in Section 9 of \cite{schoen-sisask}: they ask whether it is possible to obtain codimension $d^{O(1)}$ for the Exact Bogolyubov Problem for $3A$, as Sanders does for $4A$. The following result confirms this.

\begin{cor}[Direct corollary of \cref{sunflower}]
Suppose $A \subseteq \F_q^n$ has size $|A| \geq 2^{-d} |\F_q^n|$. 
Then, there is a subset $A' \subseteq A$ with size
$$ |A'| \geq q^{-O(d^9)} |\F_q^n| $$
such that
$ A' + A' + A' $
is an affine subspace. 
\end{cor}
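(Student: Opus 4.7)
My plan is to apply \cref{sunflower} directly, with the error parameter $\tau$ chosen small enough relative to the density $2^{-d}$ so that a pigeonhole argument upgrades the near-covering guarantee ``$|A'+A'| \geq (1-\tau)|\textnormal{Span}^*(A')|$'' into the exact equality $A'+A'+A' = \textnormal{Span}^*(A') + \theta'$ for an appropriate translate.

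More concretely, write the affine span $W := \textnormal{Span}^*(A') = V + \theta$, where $V$ is a linear subspace of $\F_q^n$ and $\theta \in \F_q^n$. Since $A' \subseteq W$, both $A'+A'$ and $-A' + y$ (for any $y \in V + 3\theta$) are subsets of the translate $V + 2\theta$. The lemma gives $|A'+A'| \geq (1-\tau)|V|$, so if $|A'| > \tau |V|$, then simple inclusion–exclusion within the coset $V + 2\theta$ forces the intersection $(A'+A') \cap (y - A')$ to be nonempty, witnessing $y \in A' + A' + A'$. Ranging over all $y \in V + 3\theta$ gives $A'+A'+A' \supseteq V + 3\theta$; the reverse containment is automatic since $A' \subseteq V + \theta$, so $A'+A'+A' = V + 3\theta$ is an affine subspace, as desired.

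The parameter choice is then forced: condition (i) of \cref{sunflower} already guarantees $|A'|/|V| \geq |A|/|\F_q^n| \geq 2^{-d}$, so taking $\tau := 2^{-d-1}$ makes the pigeonhole step work. With this choice, $\log(1/\tau) = O(d)$, and condition (ii) bounds the codimension of $W$ by $O(d^5 \cdot d^4) = O(d^9)$, so $|V| \geq q^{-O(d^9)} |\F_q^n|$. Combining with $|A'| \geq 2^{-d} |V|$ and absorbing the $2^{-d}$ factor into $q^{-O(d^9)}$ (valid for $q \geq 2$) yields the desired size bound $|A'| \geq q^{-O(d^9)} |\F_q^n|$.

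Since all of the structural work is already done inside \cref{sunflower}, there is no real obstacle here; the only thing to be careful about is that we want $A'+A'+A'$ to equal an affine subspace on the nose (not merely contain one), which is exactly why we need the three-term sumset $A'+A'+A'$ rather than $2A'$, and why the constraint $\tau < |A'|/|V|$ (rather than $\tau < 1/2$) is the operative one.
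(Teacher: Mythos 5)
Your proof is correct and is essentially the same as the paper's: both apply \cref{sunflower} with $\tau$ slightly below $2^{-d}$ and then use the pigeonhole argument that, for any target $y$ in the affine span of $A'+A'+A'$, the sets $A'+A'$ and $y-A'$ live in a common coset of size $|V|$ and together exceed it in measure, hence must intersect. The only cosmetic difference is that the paper first translates by $\theta$ to work with $B := A' - \theta$ inside the linear subspace $V$, whereas you run the same inclusion–exclusion argument directly among translates of $V$.
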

\begin{proof}
We apply \cref{sunflower} with parameter $\tau$ set slightly smaller than $2^{-d}$. 
We consider the resulting subset $A'$ and affine subspace $\textnormal{Span}^*(A')$, which we write as $V + \theta$ for some linear subspace $V$ and some shift $\theta$. Consider the set $B := A' - \theta \subseteq V$. We argue that $3B = V,$ which proves that $3A' = V + 3\theta$. 
Indeed, if $3B \neq V$, then there is some $x \in V$ such that the set $x - B$ does not intersect with $B + B$. However, $x - B$ is a set in $V$ of size $|x - B| = |B| \geq 2^{-d} |V|$, and we also have $|B + B| \geq (1 -\tau) |V|$, so this is not possible. 
\end{proof}

\subsection{A robust structural lemma}

The structural result given in the previous section is in fact a special case of a more analytically robust variant which we proceed to describe. We recall the notation
$$ R_A(x) := \left|\set{(a_1, a_2) \in A^2}{a_1 + a_2 = x} \right|,$$
which counts the number of ``representations" of $x$ as a sum of two elements of $A$.
We also consider the alternative normalization
$$ r_A(x) := \frac{R_A(x)}{|A|^2}$$
which is normalized as a distribution function on $\F_q^n$ -- that is, $\sum_{x \in \F_q^n} r_A(x) = 1$.
Recall that \cref{sunflower} says that for any dense set $A \subseteq \F_q^n,$ there is a reasonably large subset $A ' \subseteq A$ which is tightly contained in its own span, in the sense that 
$$ |A' + A'| \geq (1 - \tau) |\textnormal{Span}^*(A')|. $$
For the sake of clarity let us write $\textnormal{Span}^*(A') = V + \theta$ where $V$ is a linear subspace. We have of course that $|V| = |\textnormal{Span}^*(A')|$.
Note that the distribution $r_{A'}(x)$ is supported on the affine subspace $2\textnormal{Span}^*(A') = V + 2 \theta.$
Our robust variant of \cref{sunflower} states that in fact the distribution $r_{A'}(x)$ is very close to the uniform distribution on $V + 2\theta$. The fact that the support of $r_{A'}$, i.e.\ $A' + A'$, is large follows as a corollary. 

To state this result we need the following notion for measuring  closeness of distributions. In what follows, a ``distribution" $\pi : \Omega \rightarrow \R$ is a nonnegative function on $\Omega$ with $\sum_{x \in \Omega} \pi(x) = 1$.

\begin{definition}

Given $k \geq 1$ and two distributions $\pi, \pi' : \Omega \rightarrow \R$, define the $k$-norm divergence of $\pi$ from $\pi'$ as the quantity
$$  \left(  \frac{ \sum_{x \in \Omega} |\pi(x) - \pi'(x)|^k  }{  \sum_{x \in \Omega} \pi'(x)^k } \right)^{1/k}.
$$  
In the case that $\pi'$ corresponds to some flat distribution $ \pi'(x) = \frac{\1_S(x)}{|S|}$,
uniform over some subset $S \subseteq \Omega$, and $\pi$ is supported on $S$, this can be equivalently expressed as
$$ \left( \E_{x \in S} \left| \frac{\pi(x)}{\pi'(x)} - 1 \right|^k \right)^{1/k}.$$
\end{definition}

\begin{lem}[A robust variant of \cref{sunflower}] \label{robust-sunflower}
Fix a constant $\eps \in (0,1)$ and a parameter $k \geq 1$.
Suppose that $A \subseteq \F_q^n$ has size $|A| \geq 2^{-d} |\F_q^n|$. 
Then, there is a subset $A' \subseteq A$ with
\begin{enumerate}
\item[i.] $\frac{|A'| }{ |\textnormal{Span}^*(A')|} \geq \frac{ |A| }{ |\F_q^n|}$ and
\item[ii.] $\textnormal{Codim}( \textnormal{Span}^*(A')) \leq O_{\eps}(d^5 k^4)$
\end{enumerate}
such that the distribution $r_{A'}(x)$ has $k$-norm divergence at most $\eps$ from the uniform distribution on $2 \textnormal{Span}^*(A')$. 
\end{lem}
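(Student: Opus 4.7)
The plan is to prove \cref{robust-sunflower} by a density-increment iteration on affine subspaces, in the spirit of Sanders' arguments underlying \cref{sanders-approx-bogolyubov}. We maintain an affine subspace $W \subseteq \F_q^n$ and a subset $A_W := A \cap W$ with the density-preservation property $|A_W|/|W| \geq |A|/|\F_q^n|$; initially take $W := \F_q^n$ and $A_W := A$. At each stage we translate $A_W$ so that $\textnormal{Span}^*(A_W) = V$ is a linear subspace of (the linear part of) $W$, and then dichotomize: either $r_{A_W}$ is within $\eps$ in $k$-norm divergence of the uniform distribution on $2V$, in which case we output $A' := A_W$ (translated back), or we extract from this failure a density increment onto a strictly smaller affine subspace and iterate.

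The main technical ingredient is a single-step density-increment claim: if $B \subseteq V$ satisfies $\textnormal{Span}(B) = V$ and $|B|/|V| = 2^{-d'}$ while $r_B$ has $k$-norm divergence exceeding $\eps$ from uniform on $V$, then there exists an affine subspace $U + \theta' \subseteq V$ with $\textnormal{Codim}_V(U) \leq O_{\eps}((d')^{4} k^{4})$ such that $|B \cap (U + \theta')|/|U| \geq (1 + \Omega_{\eps}(1)) \cdot |B|/|V|$. To prove this claim, one translates the $k$-norm deviation into Fourier information about $\mu_B$ via the identity $|\hat{r_B}(\chi)| = |\hat{\mu_B}(\chi)|^{2}$: a failure of $k$-norm closeness forces non-trivial Fourier mass of $\mu_B$ to live on characters that are non-trivial on $V$. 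A Chang-type lemma adapted to higher moments then localizes this spectrum inside the dual of an affine subspace of codimension $O_{\eps}((d')^{4}k^{4})$, on which a suitable translate of $B$ acquires a multiplicatively larger density.

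Given this claim, the overall iteration terminates after at most $O_{\eps}(d)$ steps, since the multiplicative gain $(1+\Omega_{\eps}(1))$ can be applied only $O_{\eps}(d)$ times before the density exceeds $1$. At each step the current density parameter $d'$ is at most $d$, so the codimension added is at most $O_{\eps}(d^{4}k^{4})$, and the total codimension accumulated in $\F_q^n$ is $O_{\eps}(d) \cdot O_{\eps}(d^{4}k^{4}) = O_{\eps}(d^{5}k^{4})$, which is exactly the bound demanded by condition (ii). Condition (i) is preserved throughout because the iteration only ever restricts $A$ to smaller affine subspaces on which its relative density has strictly increased over $|A|/|\F_q^n|$.

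The main obstacle is the higher-moment Chang-type lemma underlying the density-increment claim: extending the standard $\ell^2$ spectral machinery to extract a usable codimension-$O_{\eps}((d')^{4}k^{4})$ subspace from $\ell^k$-scale deviations of the convolution square $r_B$, with the correct $k^{4}$ dependence. Once this single-step claim is established, the density-increment iteration itself is essentially routine and closely mirrors the structure of Sanders' proof of \cref{sanders-approx-bogolyubov}, with the $k$-norm replacing the $\ell^2$ norm throughout.
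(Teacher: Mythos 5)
Your macroscopic iteration structure — restrict to smaller and smaller affine subspaces, check whether $r_{A_W}$ is $k$-norm-close to uniform, and if not extract a density increment; terminate after $O_\eps(d)$ steps with codimension growing by $O_\eps(d^4 k^4)$ per step — is morally the same as the paper's. The paper packages it slightly differently (\cref{greedy} does all density increments up front to reach a \emph{spread} configuration, and only afterward argues once that spreadness forces near-uniformity via \cref{II} and \cref{I}), but these are contrapositives of each other and the bookkeeping is the same.

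The genuine gap is in the single-step density-increment claim, which you correctly identify as the main technical ingredient and then propose to prove via ``a Chang-type lemma adapted to higher moments'' applied directly to $\hat{\mu_B}$. This is not how the paper proves the corresponding step (\cref{II-part-2}), and as sketched it is unlikely to give the required polynomial-in-$d$ codimension bound. Chang's inequality controls the number of large Fourier coefficients of a \emph{bounded} function (an indicator), and degrades badly when applied to a function such as $\mu_B$ or $r_B$ whose $L^\infty$-norm is of order $2^{d}$: a $k$-norm deviation of $r_B$ from uniform can be dispersed across exponentially many small Fourier coefficients, and no subexponential-codimension subspace captures the deviation in Fourier space directly. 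Indeed, the paper discusses exactly this issue in \cref{comparison} (the ``Fourier sum of cubes'' paragraph), where the analogous direct-Fourier route is explicitly posed as an open technical question, and the Roth--Meshulam-style spectral argument is shown to stall at $d \approx \log n$. What the paper actually does is quite different: it first \emph{sifts} — a purely physical-space, Cauchy--Schwarz/pigeonhole argument (\cref{sifting-lem-density-formulation}, \cref{sift-robust-witness}) — to produce a convolution $A' \star A'$ of two large sets robustly witnessing the super-level set $f := \ind{A\star A \geq 1 + \Omega(\eps)}$, and only then applies Sanders' invariance lemma (Croot--Sisask plus Chang's inequality, but applied to the \emph{bounded} indicator $f$, not to the density itself) to turn that witness into a large affine subspace $V$ with $\ip{V}{A \star A} \geq 1 + \Omega(\eps)$. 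This detour through a bounded witness is precisely what lets Chang's inequality yield codimension polynomial in $d$ and $k$, and it has no analogue in your sketch.
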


We note that \cref{sunflower} does indeed follow as a special case by invoking \cref{robust-sunflower} e.g.\ with $\eps = 1/2$. This is because, if $\pi$ is a distribution supported on some set $B \subseteq B'$, and $|B' \setminus B| \geq \tau |B'|$, then the $k$-norm divergence from $\pi$ to the uniform distribution on $B'$ is at least $\tau^{1/k}$.

\section{Technical introduction} \label{technical-introduction}

The techniques and applications considered in this work are centered around the following question. Let $A, B$, and $C$ be subsets of a finite abelian group $G$ of size $N$, each with size at least $2^{-d} N$. We ask: what sort of generic ``pseudorandom" conditions on the sets $A, B, C$ are sufficient to ensure that the number of solutions to the equation
$ a + b = c, $
with $(a,b,c)\in A \times B \times C$,
is off by only a small multiplicative factor $(1 \pm \eps)$ from the the ``expected" number, $ |A||B||C| / N $?

We are particularly concerned with the following three conditions on a set $A \subseteq G$, which each attempts to quantify the ``additive pseudorandomness" of $A$ in some capacity. For concreteness, we will first state the three conditions -- spreadness, regularity, and self-regularity -- in the counting measure. However, we will shortly return to restate each of them using alternative normalization conventions which will greatly increase their interpretability. We recall some standard notations for the number of ``representations" of $x$ as a sum or difference of elements of some sets $A$ and $B$: $R_{A,B}(x) := \left | \set{(a,b) \in (A \times B)}{a + b = x}\right|$,
$R_{A,B}^{-}(x) := \left | \set{(a,b) \in (A \times B)}{a - b = x}\right|$,
$R_{A}(x) := R_{A,A}(x)$, and 
$R_{A}^{-}(x) := R_{A,A}^{-}(x)$.

\begin{definition}[$(\gamma, r)$-spread] \label{spread}
Let $A$ be a subset of $\F_q^n$. For $\gamma, r \geq 1$, we say $A$ is $(\gamma,r)$-spread if, 
$$  \frac{|A \cap V|}{|V|} \leq  \frac{ \gamma |A|}{|\F_q^n|}  .
$$
for all affine subspaces $V \subseteq \F_q^n$ of codimension at most $r$.
\end{definition}

Our first condition -- spreadness -- applies only to the case $G = \F_q^n$; however, we will eventually return to consider possible substitutes for when we are working in a different group.\footnote{Specifically, we will consider some alternatives where a Bohr set, or a generalized arithmetic progression, instead plays the role of the subspace.} Spreadness is important to us because it is \textit{easy to obtain}, for instance, by a density-increment argument. 

\begin{definition}[$(\gamma,k)$-regular] \label{regular}
 Let $A$ be a subset of a finite abelian group $G$. For $\gamma, k \geq 1$, 
 We say that $A$ is $(\gamma,k)$-regular if
 $$  \sum_{a \in A} R_{B,C}(a) \leq \frac{\gamma |A| |B| |C|}{|G|}
 $$
 for all sets $B,C \subseteq G$ with size at least $|B|,|C| \geq 2^{-k} |G|$.
\end{definition}

\begin{definition}[$(\gamma,k)$-self-regular] \label{self-regular}
 We say that $A \subseteq G$ is $(\gamma,k)$-self-regular if
 $$ \sum_{x \in G} R_{A}^{-}(x)^k \leq \frac{\gamma |A|^{2k} }{|G|^{k-1}} .
 $$
\end{definition}

Our latter two conditions -- regularity and self-regularity -- apply to general finite abelian groups. In contrast to spreadness, regularity is important to us because it is \textit{useful to have}, although it is not clear that it is easy to obtain.

We can summarize the overall argument for our structural result, \cref{robust-sunflower}, into three steps:
\begin{enumerate}
    \item One can use density increments to obtain spreadness. This follows from a   simple greedy argument and is formalized in \cref{greedy}.
    \item One can (informally speaking) obtain regularity from spreadness. This is a core ingredient of our approach and is formalized in \cref{II}.
    \item One can obtain strong \emph{two-sided} bounds on the number of solutions to $a + b = c$ from regularity. This is formalized in \cref{I}. 
\end{enumerate}

Points 2 and 3 above can be considered the two main technical contributions of our work. Taken together, they give the following answer to 
the central question described above (for the setting $G = \F_q^n$). To control the number of solutions to $a + b + c = 0$ with $a,b,c \in A,B,C$, to within a factor $(1 \pm O(\eps))$ of the expected number $|A||B||C|/|G|$, it suffices that any two of the three sets are $(1+\eps,\textnormal{poly}(d,1/\eps))$-spread.

\ignore{
Let us now outline our plan for the remainder of the paper. In the continuation of our introduction, we discuss the following topics.
\begin{itemize}

\item In \cref{density-formulation}, we take a moment to switch to some alternative normalization conventions. These improve the interpretability of our three conditions (spreadness, regularity, and self-regularity) and clarify their relation to each other. 
\item In \cref{regularity-from-spreadness}, we present and discuss \cref{II}. 

\item In \cref{2-sided}, we present and discuss \cref{I}. 

\item In \cref{finitefield-mainproofs}, 
 we discuss the well-known density-increment framework and describe how it can be used to find a large subset $A' \subseteq A$ which is spread relative to its span.  We use this to complete the proof our main result for finite field setting: the structural lemma, \cref{robust-sunflower}.
\end{itemize}

In the middle part of the paper, we develop our main tools used to prove  \cref{II} and \cref{I}: ``sifting" and ``spectral positivity," respectively. Both tools apply to general finite abelian groups $G$. We also establish some ``local" variants of these techniques in preparation for later when we will consider subsets $A \subseteq \Z$, as well as subsets of cyclic groups.

\begin{itemize}
\item In \cref{prelims}, we go over some definitions and conventions related to normalization, convolutions, and Fourier analysis. 
\item In \cref{sec:sifting-and-proof-of-II}, we prove the ``sifting lemma" (\cref{sifting-lem-density-formulation}) and use is to finish the proof of \cref{II}.
\item In \cref{spectral positivity}, we develop the simple tools needed to prove \cref{I}. These include various $k$-norm inequalities for convolutions and self-convolutions, the latter of which relies on the notion of spectral positivity. 
\end{itemize}

In the final part of the paper, we consider the setting $A \subseteq [N] \subseteq \Z$. 

\begin{itemize}
\item In \cref{integers-overview}, we give an overview of our plan to establish the existence of many 3-progressions in the integer case, discussing what changes must be made to our approach which handles the finite field case. Most notably, we consider some potential replacements for the notion of ``spreadness", where some kind of approximate subgroup, such as a generalized arithmetic progression of bounded rank or a Bohr set of bounded rank, instead plays the role of the subspace of bounded codimension. 
\item In \cref{prelims-integers}, we go over some definitions and basic properties related to generalized progressions, Bohr sets, and Freiman homomorphisms. We also introduce a potentially new device related to the notion of a Freiman homomorphism which we call a ``safe" set. 
\item In \cref{proof-of-3-progs-in-the-integers}, we complete our proof that dense sets $A \subset [N]$ have many 3-progressions, which relies on tools including sifting, spectral positivity, safe sets, as well as a translation-invariance lemma which is due to Schoen and Sisask.
\end{itemize}}

\subsection{Density formulation} \label{density-formulation}

We take a moment to switch to a more ``analytic" language for expressing the three conditions introduced above which both eases their interpretability and clarifies their importance for controlling the number of solutions to $a + b = c$. Here in the introduction, we avoid an extended discussion of details regarding, e.g., \ normalization conventions  (a more detailed description can be found in \cref{prelims}). However, we highlight the following key points.

\begin{itemize}
\item We use the notation $\|f\|_{k} :=  \left( \E_{x \in G} |f(x)|^k \right)^{1/k}$ for $k \geq 1$ and functions $f : G \rightarrow \R$, as well as $\|f\|_{\infty} := \max_{x \in G} |f(x)| $.
\item We consider \textit{density functions} on $G$, which are simply nonnegative functions $F : G \rightarrow \R$ with $\| F \|_1 = 1$. 
\item Given a set $A \subseteq G$, we abuse notation and let $A(x)$ denote the \textit{density function} which corresponds to the uniform density on $A$. Under this normalization, a set of size $|A| = 2^{-d} |G|$ has corresponding density function $A(x)$ with $\|A\|_{\infty} =  2^{d}.$
\item Given density functions $A(x)$ and $B(x)$, we consider both the \textit{convolution} $(A * B)(x)$ and the \textit{cross-correlation} $(A \star B)(x)$, both of which are again density functions. Indeed, they are the density functions that are proportional to the representation-counting functions $R_{A, B}(x)$ and $R_{B, A}^{-}(x)$. 
\end{itemize}

Finally, let us introduce the following notations relevant to capturing the spreadness and regularity conditions. For nonnegative $f : \F_q^n \rightarrow \R$, we define
\begin{equation} \label{perp-norm}
\|f\|_{\perp, r} := \max_{\substack{V \subseteq \F_q^n \\ \textnormal{Codim}(V) \leq r}} \ip{V}{f},
\end{equation}
where the maximization is quantified over affine subspaces $V \subseteq \F_q^n$ of codimension at most  $r$. For nonnegative $f : G \rightarrow \R$, we define
\begin{equation} \label{star-norm}
\|f\|_{*,k} := \max_{\substack{B,C \subseteq G \\ \|B\|_{\infty}, \|C\|_{\infty} \leq 2^{k}}} \ip{B * C}{f},
\end{equation}

where the maximization is quantified over subsets $B,C \subseteq G$ of sizes $|B|,|C| \geq 2^{-k} |G|$. 
We can restate each of our three conditions above in the density formulation.

\begin{definition}[Spreadness -- density formulation]
We say that $A \subseteq \F_q^n$ is $(\gamma,r)$-spread if
$$ \|A\|_{\perp, r} \leq \gamma.$$
\end{definition}

\begin{definition}[Regularity -- density formulation]  
 We say that $A$ is $(\gamma,k)$-regular if
 $$ \|A\|_{*, k} \leq \gamma.
 $$
\end{definition}

\begin{definition}[Self-regularity -- density formulation] 
 We say that $A$ is $(\gamma,k)$-self-regular if
 $$ \|A \star A \|_{k} \leq \gamma. $$
\end{definition}

Generally, for all three conditions, we are most interested in the regime where $\gamma = 1 + \eps$ and $\eps$ is a small constant.

\subsection{Regularity from spreadness; Upper-bounds from regularity} \label{regularity-from-spreadness}

We can also restate our central question in the density formulation as follows.

\begin{question} \label{central-question}
    Let $A, B, C \subseteq G$. Suppose that the corresponding density functions satisfy the (min-)entropy deficit condition $\|A\|_{\infty}, \|B\|_{\infty}, \|C\|_{\infty} \leq 2^{d}$. 
    What sort of generic pseudorandom conditions on $A(x),B(x),C(x)$ are sufficient to ensure that
    $$ \ip{A * B}{C} = 1 \pm O(\eps),$$
    where $\eps$ is some small constant?
\end{question}

We note that (basically by definition) imposing the condition of $(1+\eps,d)$-regularity on the set $A$ is sufficient to obtain good \textit{upper bounds} for this question (and also necessary if we wish to avoid imposing any further conditions on the sets $B$ and $C$ beyond the given constraints on their sizes) as we have $\ip{A * B}{C} = \ip{A}{B \star C}$. 
Thus, if this quantity is at least $1 + \eps$ then the fact that $\|A\|_{*,d} \geq 1 + \eps$ is witnessed by the convolution of $C$ with $-B$.  

Our first main technical result says that spread sets are regular.
\begin{thm}[Structure vs.\ Pseudorandomness in $\F_q^n$ -- simplified combination of \cref{II-part-2} and \cref{II-part-1}] \label{II}
Suppose $A \subseteq \F_q^n$ has size $|A| \geq 2^{-d} |\F_q^n|.$
If $ \|A\|_{*,k} \geq 1 + \eps$, 
then
$ \|A\|_{\perp,r} \geq 1 + \frac{\eps}{8}$ for some $r \leq O_{\eps}(k^7 d) .$
Similarly, if
$ \|A \star A\|_{k} \geq 1 + \eps,$
then
$ \|A\|_{\perp,r} \geq 1 + \frac{\eps}{4}.$
for some $r \leq O_{\eps}(k^4 d^4)$. 
\end{thm}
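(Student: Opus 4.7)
The plan is to prove both parts by applying a single analytic engine---the sifting lemma of \cref{sec:sifting-and-proof-of-II}---which converts a quantitative failure of pseudorandomness for $A$ into a density increment for $A$ on an affine subspace of small codimension. In broad terms, sifting takes as input a density function of the form $g = B * C$ with $\|B\|_\infty, \|C\|_\infty \le 2^k$ that satisfies $\ip{g}{A} \ge 1+\eps$, and returns an affine subspace $V \subseteq \F_q^n$ with $\ip{V}{A} \ge 1 + \eps/8$ whose codimension is polynomial in $k$, $d$, and $1/\eps$.

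For part (i), the hypothesis $\|A\|_{*,k} \ge 1+\eps$ produces sets $B,C$ of density at least $2^{-k}$ with $\ip{B*C}{A} \ge 1+\eps$ essentially by unpacking \eqref{star-norm}, so I would plug these directly into sifting and carefully tally the parameters through the iteration to conclude $r \le O_\eps(k^7 d)$.

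For part (ii), I would first reduce $\|A\star A\|_k \ge 1+\eps$ to the format of part (i). Writing $\|A\star A\|_k^k = \ip{A\star A}{(A\star A)^{k-1}}$ and normalizing $F := (A\star A)^{k-1}$ to a density function, a standard adjoint manipulation rearranges this into $\ip{A * F'}{A} \gtrsim 1 + \eps/2$ for a related density function $F'$ with $\|F'\|_\infty \lesssim 2^{(k-1)d}$ (inherited from $(k{-}1)$-fold powers of $A\star A$). Feeding this into sifting naively would give codimension scaling like $(kd)^7$, but one can save substantially by exploiting that $F'$ retains strong convolutional structure---specifically, an application of Chang's lemma to its already concentrated Fourier support replaces a portion of the sifting iteration and yields the tighter bound $r \le O_\eps(k^4 d^4)$.

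The main obstacle is the sifting lemma itself: obtaining codimension polynomial in $k$ and $d$, rather than exponential, demands more than a single Fourier or Chang-style application to the large spectrum of $B*C$. Instead, one must combine a translation-invariance property of iterated convolutions with an iterative density-increment strategy that shrinks the ambient subspace by only a small codimension at each step, while preserving the lower bound on the $A$-correlation. Carrying this out with polynomial quantitative control, and handling the extra bookkeeping required by the $F'$ reduction in part (ii) so that the exponent drops from $7$ to $4$, is where the bulk of the technical work lies.
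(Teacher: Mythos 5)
The proposal mislocates the main tool. In the paper, the ``sifting lemma'' (\cref{sifting-lem-density-formulation}) does not return an affine subspace: it returns a \emph{subset} $A'\subseteq A$, obtained by intersecting $A$ with $k-1$ translates, so that $A'\star A'$ ``robustly witnesses'' the deviation of $A\star A$ from uniform. The affine subspace is produced by an entirely separate device -- Sanders' invariance lemma (\cref{sanders-invar-appendix}), which is Croot--Sisask plus Chang's inequality -- applied to the robust witness $A'\star A'$ with test function $f=\ind{A\star A\ge 1+\Omega(\eps)}$. Your first paragraph collapses these two steps into one black box, and your final paragraph then hypothesizes an ``iterative density-increment strategy'' inside the proof of \cref{II}, but neither is how the argument actually goes: the proof of \cref{II} is one-shot (sift once, invoke Sanders once, translate), and the iteration happens only at the outer level in \cref{greedy} / \cref{robust-sunflower}, not here.

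The reduction you propose for part (ii) is also backwards and not concrete enough to carry the bound. In the paper, $\|A\star A\|_k\ge 1+\eps$ is handled \emph{directly}: one sifts $A\star A$ itself (via \cref{sift-robust-witness}), yielding a witness $A'\star A'$ where both factors have entropy deficit $O(dk)$, and Sanders then gives codimension $O(d^4k^4/\eps^2)$. Part (i) is the derived case: $\|A\|_{*,k}\ge 1+\eps$ is first converted by H\"older into $\|A\star B\|_p\ge 1+\eps/2$ with $p=O(k/\eps)$, then one sifts the general convolution $A\star B$ (\cref{sifting-general}) and again applies Sanders, landing at the weaker $O_\eps(k^7d)$. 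Your plan for (ii) -- rewriting $\|A\star A\|_k^k=\ip{A\star A}{(A\star A)^{k-1}}$, renormalizing $(A\star A)^{k-1}$ to a density $F'$, and feeding $\ip{A*F'}{A}$ to the part-(i) machinery -- does not exhibit $F'$ as a convolution $B*C$ of two $2^k$-bounded densities, which is exactly the input form sifting/Sanders requires; the appeal to ``Chang on $F'$'s concentrated Fourier support'' to recover the factor-of-three drop in the $k$-exponent is not an argument but a wish. The quantitative separation between $k^7d$ and $k^4d^4$ comes from bookkeeping (the H\"older loss $p=O(k/\eps)$ in part (i) versus no such loss in part (ii), and the asymmetric entropy deficits of the two sifted factors), not from any additional Fourier-analytic step.

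In short: you have correctly identified sifting as an ingredient, but you have conflated it with the Croot--Sisask/Chang (Sanders' invariance) step that actually produces the subspace, and your reduction for the self-convolution case goes in the wrong direction and leaves the key quantitative improvement unexplained.
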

We can offer a couple of different interpretations of this result. 
Firstly we can interpret it analytically as follows. Suppose the density function $A(x)$ has a deviation upwards from the uniform density function, which can be detected, or ``witnessed", by some convolution of two large sets (i.e., \ by a density corresponding to a rather weak additive structure). Then, the above theorem says that this deviation can be witnessed almost as well by the uniform density over a reasonably large affine subspace (i.e., \ a set with a very strict additive structure). Alternatively, we can interpret the contrapositive in the well-known structure vs.\ pseudorandomness paradigm. Specifically, we have the following dichotomy: a given set $A$ is either \textit{pseudorandom}, in the sense that it is $(1+\eps,k)$-regular, or it has some non-negligible amount of additive structure which can be detected by some large affine subspace $V$,
$$ \ip{V}{A} \geq 1 + \frac{\eps}{8} ,$$
and we obtain the resulting  \textit{density increment} of $A$ onto $V$:
$$ \frac{|A \cap V|}{|V|} \geq \left(1 + \frac{\eps}{8}\right) \frac{|A|}{|\F_q^n|}. $$

We give the following quick summary of the proof of \cref{II} which is intended for experts.
Alternatively, a more detailed overview can be found in \cref{extended-proof-overview}, which in particular gives some additional discussion of the tools involved, as well as an account of how the proof is arrived at ``naturally" as the result of a sequence of observations.

For brevity, we discuss only the most important claim made in \cref{II}: that $\|A \star A\|_{k} \geq 1 + \Omega(1)$ implies $\|A\|_{\perp,r} \geq 1 + \Omega(1)$ for some $r \leq O(k^4 d^4)$.
We will establish the stronger claim
$ \|A \star A\|_{\perp, r} \geq 1 + \Omega(1), $
which is sufficient.
Our starting point is the following consequence of Sanders' invariance lemma.\footnote{
By this we mean the result one obtains by combining the Croot-Sisask lemma with Chang's inequality -- an idea which first appeared in the work of Sanders \cite{sanders12}. See \cref{sanders-invar-appendix} for the specific form of the result used here.}

\begin{prop}
Let $k \geq 1$ and $\eps \in [2^{-k},1]$. 
For any bounded function $f : \F_q^n \rightarrow [0,1]$, we have
$$ \|f\|_{\perp,k^4/\eps^2} \geq \|f\|_{*,k} - O(\eps). $$
\end{prop}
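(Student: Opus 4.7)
Let $\alpha := \|f\|_{*,k}$ and fix sets $B, C \subseteq \F_q^n$ with densities at least $2^{-k}$ achieving $\langle B*C, f\rangle = \alpha$. The strategy has two parts: (i) invoke Sanders' invariance lemma to approximate $B*C$ by a density that is translation-invariant under a large subspace $V$, and (ii) extract a single coset of $V$ by an averaging argument.

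For part (i), apply the invariance lemma cited in \cref{sanders-invar-appendix} to the convolution $B*C$. Since the two factors have $L^\infty$-norms at most $2^k$, it supplies a linear subspace $V \leq \F_q^n$ of codimension at most $k^4/\eps^2$ such that
\[
\|B*C - (B*C)*V\|_1 \leq O(\eps),
\]
where $V$ on the left denotes the uniform density on $V$. Pairing against $f$ and using $\|f\|_\infty \leq 1$ gives
\[
\langle (B*C)*V, f\rangle \geq \langle B*C, f\rangle - \|B*C - (B*C)*V\|_1 \|f\|_\infty \geq \alpha - O(\eps).
\]

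For part (ii), observe that $g := (B*C)*V$ satisfies $g(x+v) = g(x)$ for every $v \in V$, since it is a convolution with the $V$-uniform density. Hence $g$ is a convex combination of the uniform densities on the cosets of $V$: writing $g = \sum_\theta p_\theta (V+\theta)$ with $p_\theta \geq 0$ and $\sum_\theta p_\theta = 1$, we obtain
\[
\alpha - O(\eps) \leq \langle g, f\rangle = \sum_\theta p_\theta \langle V+\theta, f\rangle,
\]
so some coset $V + \theta^\ast$ achieves $\langle V + \theta^\ast, f\rangle \geq \alpha - O(\eps)$. Since $V + \theta^\ast$ is an affine subspace of codimension at most $k^4/\eps^2$, this yields $\|f\|_{\perp, k^4/\eps^2} \geq \alpha - O(\eps) = \|f\|_{*,k} - O(\eps)$, as required.

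The only genuinely substantive step is the appeal to the invariance lemma — everything else is an inner-product manipulation and an averaging argument. The main point to check when unpacking that lemma is that its approximation bound is supplied in a norm compatible with pairing against $f \in [0,1]$; any $L^p$ bound for $p \geq 1$ suffices by H\"older, since $\|f\|_{p'} \leq 1$ for all $p' \geq 1$. In particular, the hypothesis $\eps \geq 2^{-k}$ is exactly the regime in which the Croot--Sisask/Chang combination underlying the lemma still yields the codimension $k^4/\eps^2$ advertised here.
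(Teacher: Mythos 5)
Your approach is the same as the paper's: invoke Sanders' invariance lemma (\cref{sanders-invar-appendix}) on a near-extremal pair $B,C$ achieving $\|f\|_{*,k}$, both of density at least $2^{-k}$, to get a subspace of codimension $O(k\cdot k^3/\eps^2)=O(k^4/\eps^2)$, and then pigeonhole over cosets of $V$. One correction to the reasoning, though not to the conclusion: the invariance lemma does \emph{not} supply the $L^1$ approximation $\|B*C - (B*C)*V\|_1 \leq O(\eps)$ that you claim. As the paper explicitly stresses in its discussion following the statement, the subspace $V$ is allowed to depend on the dual witness $f$, so the lemma only yields the pairing bound $\left|\ip{V*B*C - B*C}{f}\right| \leq \eps$ for the specific bounded $f$ at hand; a genuine $L^1$ bound at this codimension would amount to picking one $V$ uniformly over all $[-1,1]$-valued witnesses, which is strictly stronger than what the Croot--Sisask/Chang machinery delivers. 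Your part (i) survives this correction unchanged, since you pair against $f$ immediately and that pairing bound is exactly the lemma's conclusion; but the closing remark that ``any $L^p$ bound suffices by H\"older'' presupposes a norm bound you do not actually have and do not need.
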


Suppose $\|A \star A\|_{k} \geq 1 + 2\eps$ for some small constant $\eps$, and consider the super-level set indicator
$$ f(x) := \ind{(A \star A)(x) \geq 1 + \eps}. $$
Our plan is to argue that $\|f\|_{*,k'} \geq 1 - \frac{\eps}{8}$ for some $k' \leq O(kd)$. 
From this it follows that
$$ \|A \star A\|_{\perp, O(k^4 d^4)} \geq (1 + \eps) \|f\|_{\perp, O(k^4 d^4)} \geq (1 + \eps)(1 - \tfrac{\eps}{4}) \geq 1 + \Omega(1),$$
as desired. To argue that we indeed have $\|f\|_{*,k'} \approx 1$ for some value $k'$ which is not too large, we develop the following tool (which is applicable more generally to any finite abelian group $G$).

\begin{lem}[Sifting lemma]
Suppose $A \subseteq G$ has size $|A| = \delta |G|$.
Fix a nonnegative function $h : G \rightarrow \R_{\geq 0}$ and an integer $k \geq 2$.
Let $D(x)$ 
denote the unique density function which is proportional to $(A \star A)(x)^k$, and suppose that
$ \ip{D}{h} = \eta.$

Then there is a subset $A' \subseteq A$ with
$ \ip{A' \star A'}{h} \leq 2 \cdot \eta $
and
$ \frac{|A'|}{|G|} \geq \tfrac{1}{2} \cdot \delta^{k}. $
\end{lem}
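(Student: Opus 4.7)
The plan is to construct $A'$ by a randomized ``sifting'' procedure and then extract a good realization by a weighted first-moment argument. Sample $s_1,\ldots,s_{k-1} \in G$ independently and uniformly at random and set
\[
A'_{\vec s} \;:=\; A \cap (A + s_1) \cap \cdots \cap (A + s_{k-1}),
\]
so that $a \in A'_{\vec s}$ precisely when each of $a, a - s_1, \ldots, a - s_{k-1}$ lies in $A$. Write $X_{\vec s} := |A'_{\vec s}|^2$ and $Y_{\vec s} := \sum_{a,b \in A'_{\vec s}} h(a-b)$, and note that whenever $X_{\vec s} > 0$, $\ip{A'_{\vec s} \star A'_{\vec s}}{h} = Y_{\vec s}/X_{\vec s}$, while $|A'_{\vec s}|/|G| = \sqrt{X_{\vec s}}/|G|$.

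Next I would do a direct first-moment calculation. Using $\E_{s_i}[\1_A(a - s_i)\1_A(b - s_i)] = R_A^-(a - b)/|G|$ together with independence across the $s_i$'s,
\[
\E_{\vec s}\, X_{\vec s} \;=\; \sum_{a,b \in A} \Bigl(\tfrac{R_A^-(a-b)}{|G|}\Bigr)^{k-1} \;=\; \frac{1}{|G|^{k-1}} \sum_x R_A^-(x)^k \;=\; \delta^{2k}|G|^2\,\|A \star A\|_k^k,
\]
and by the same computation $\E_{\vec s}\, Y_{\vec s} = \frac{1}{|G|^{k-1}} \sum_x h(x) R_A^-(x)^k$. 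Unpacking the hypothesis $\ip{D}{h} = \eta$ together with $D(x) = (A \star A)(x)^k/\|A \star A\|_k^k$ gives $\sum_x h(x) R_A^-(x)^k = \eta \sum_x R_A^-(x)^k$, yielding the clean identity $\E Y = \eta \cdot \E X$.

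The final step---which I expect to be the main obstacle---is to produce a single $\vec s$ satisfying both halves of the conclusion simultaneously, since a naive Markov bound on the ratio $Y_{\vec s}/X_{\vec s}$ does not control $|A'_{\vec s}|$ from below. The plan here is a weighted-averaging trick. Set $E := \{\vec s : Y_{\vec s} \leq 2\eta X_{\vec s}\}$, so that $2\eta X - Y < 0$ off of $E$. Combined with $\E[2\eta X - Y] = \eta \E X$, this yields $\E[(2\eta X - Y)\1_E] \geq \eta \E X$, and then $Y \geq 0$ gives $\E[X \1_E] \geq \E X / 2$. In particular there exists $\vec s \in E$ with $X_{\vec s} \geq \E X / 2 \geq \tfrac{1}{2}\delta^{2k}|G|^2$, where the last inequality uses $\|A \star A\|_k \geq \|A \star A\|_1 = 1$. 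Setting $A' := A'_{\vec s}$ for this $\vec s$ then yields $|A'|/|G| \geq \delta^k/\sqrt{2} \geq \tfrac{1}{2}\delta^k$ and $\ip{A' \star A'}{h} = Y_{\vec s}/X_{\vec s} \leq 2\eta$, completing the proof.
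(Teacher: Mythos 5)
Your proposal is correct, and the construction is identical to the paper's: sift via $A'(s) = A \cap (A+s_1) \cap \cdots \cap (A+s_{k-1})$, compute first moments of $X_s = |A'(s)|^2$ and $Y_s = \sum_{a,b\in A'(s)} h(a-b)$, and extract a good $s$ by a weighted pigeonhole. The paper in fact explicitly notes, mid-proof, that for a dense subset of a finite group the median-style bound you use (their ``weighted pigeonhole principle,'' part (ii)) already suffices; your weighted-averaging trick is precisely a proof of that part. The one thing to flag is that the paper does not stop there: it goes on to use the additional identity $\sum_s |A'(s)| = |A|^k$ to show $\sum_{s:|A'(s)|\leq M}|A'(s)|^2 \leq M|A|^k$, which produces the stronger guarantee $|A'| \geq \tfrac12\,\delta^k\,|G|\,\|A\star A\|_k^k$ rather than your $|A'| \geq \tfrac{1}{\sqrt 2}\,\delta^k\,|G|\,\|A\star A\|_k^{k/2}$. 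Both imply the $\tfrac12\delta^k$ stated in the lemma, but the sharper version (linear rather than square-root dependence on $\|A\star A\|_k^k$, and a formulation valid in the counting measure for possibly infinite $G$) is what the paper needs for its Extended Pre-BSG Lemma, so it is worth knowing that the extra identity is available. You also silently assume $\eta>0$ when dividing by $\eta$; the $\eta=0$ case is trivial (then $Y\equiv 0$) but should be mentioned.
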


Applying this to $h := 1-f$ shows that $\|f\|_{*,O(kd)} \approx 1$. This completes the argument; it remains only to prove the sifting lemma and work out the quantitative details -- this is done in \cref{sec:sifting-lemma}.
We remark briefly that the sifting lemma can be seen as an extension of Schoen's ``Pre-BSG" lemma -- this is discussed in more detail in \cref{the-pre-BSG-lemma-and-sifting}.

\subsection{Strong two-sided bounds from self-regularity} \label{2-sided}

We have seen that spreadness is sufficient to give good \textit{upper bounds} for our central question,  \cref{central-question}. However, for applications, typically, one is more interested in \textit{lower bounds} -- in particular, in settings where we would like to establish the existence of solutions.
For this we turn to the following: our second main technical contribution, which says that if $A$ and $B$ are both self-regular, then $A * B$ is near-uniform. 

\begin{thm}[Strong two-sided bounds from self-regularity] \label{I}
Let $A, B$ be subsets of a finite abelian group $G$.
Suppose that $A$ and $B$ are both $(1 + \eps, \lceil k/\eps \rceil )$-self-regular for some $\eps \in [0,\frac{1}{4}]$ and some even integer $k \geq 2$. Then
$$ \|A * B - 1\|_{k} \leq 2 \eps. $$
In particular,
$ \ip{A * B}{C} = 1 \pm O(\eps) $
for any set $C$ of size $|C| \geq 2^{-k}|G|$.
\end{thm}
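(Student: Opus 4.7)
The argument has two main ingredients: a Cauchy--Schwarz in Fourier that reduces $\|A*B - 1\|_k$ to the $k$-norms of the self-correlations $a \star a$ and $b \star b$, and a spectral-positivity-based extraction of $\|a\star a\|_k$ from the self-regularity hypothesis $\|A \star A\|_K \leq 1 + \eps$ (where $K := \lceil k/\eps\rceil$).

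Work in the density formulation and set $a := A - 1$, $b := B - 1$. Since $\E a = \E b = 0$ we have $a * 1 = 1 * b = 0$, so $A*B - 1 = a*b$, and the goal reduces to $\|a*b\|_k \leq 2\eps$. For even $k$, apply the Parseval-type identity $\E f^k = \sum_{\xi_1 + \cdots + \xi_k = 0}\prod_i \hat f(\xi_i)$ (valid for real $f$) to $f = a*b$, using $\widehat{a*b} = \hat a\, \hat b$. A Cauchy--Schwarz over the hyperplane $\{\vec\xi \in G^k : \sum_i \xi_i = 0\}$ separates the $\hat a$-factors from the $\hat b$-factors; combined with the spectral-positivity identity $|\hat a(\xi)|^2 = \widehat{a \star a}(\xi)$ this yields the key inequality
\[
\|a * b\|_k^2 \;\leq\; \|a \star a\|_k \cdot \|b \star b\|_k.
\]
So it suffices to prove $\|a\star a\|_k \leq 2\eps$ (and analogously for $B$).

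To bound $\|a\star a\|_k$, expand the hypothesis $\|A\star A\|_K \leq 1+\eps$ by the same Parseval identity and split the sum according to which coordinates $\eta_i$ equal $0$. Using $|\hat A(0)|^2 = 1$ and $\hat a = \hat A$ on the nonzero frequencies,
\[
\|A\star A\|_K^K \;=\; \sum_{\sum_{i=1}^K \eta_i = 0}\prod_i |\hat A(\eta_i)|^2 \;=\; \sum_{j=0}^{K} \binom{K}{j}\, T_j, \qquad T_j \;:=\; \|a \star a\|_j^j \;=\; \sum_{\substack{\xi_1, \ldots, \xi_j \neq 0 \\ \sum_i \xi_i = 0}} \prod_i |\hat a(\xi_i)|^2.
\]
The crucial observation is that every $T_j \geq 0$ -- this is exactly spectral positivity -- so dropping all terms but $j = k$ and applying the hypothesis gives $\|a\star a\|_k^k = T_k \leq (1+\eps)^K/\binom{K}{k}$.

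It remains to check that this estimate is at most $(2\eps)^k$. The self-regularity index $K = \lceil k/\eps\rceil$ is calibrated so that the binomial expansion $(1+\eps)^K = \sum_j \binom{K}{j}\eps^j$ concentrates near the index $j = k$ -- essentially the mode of $\mathrm{Bin}(K, \eps/(1+\eps))$, whose mean is $k/(1+\eps)$. A local central limit / Stirling estimate at this mode gives $\binom{K}{k}\eps^k \geq (1+\eps)^K / (c\sqrt{k})$ for an absolute constant $c$; taking $k$-th roots yields $\|a\star a\|_k \leq (ck)^{1/(2k)}\eps \leq 2\eps$ for every even $k \geq 2$ and $\eps \in [0, \tfrac{1}{4}]$. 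Combining with the first step produces the main inequality $\|A*B - 1\|_k \leq 2\eps$. The ``in particular'' clause then follows by H\"older's inequality with dual exponents $k$ and $k/(k-1)$: for $C \subseteq G$ with $|C| \geq 2^{-k}|G|$, the density $C$ satisfies $\|C\|_\infty \leq 2^k$, and the interpolation $\|C\|_{k/(k-1)} \leq \|C\|_1^{1-1/k}\|C\|_\infty^{1/k} \leq 2$ gives $\|C-1\|_{k/(k-1)} \leq 3$; hence $|\ip{A*B}{C} - 1| = |\ip{a*b}{C-1}| \leq 3 \cdot 2\eps = O(\eps)$. The main obstacle is the tightness of the numerics here: the constant $2$ in ``$2\eps$'' is essentially forced, and crude lower bounds like $\binom{K}{k} \geq (K/k)^k$ only give $\|a\star a\|_k \leq e\eps$ (hence $\|a*b\|_k \leq e\eps$), which is weaker; exploiting the $\Theta(1/\sqrt{k})$-concentration of the binomial mass at its mode is what sharpens this to $2\eps$.
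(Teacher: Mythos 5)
Your overall architecture matches the paper's: decoupling via Cauchy--Schwarz in Fourier, then spectral positivity to extract a bound on $\|A\star A - 1\|_k$ from the self-regularity hypothesis, then H\"older for the inner-product clause. The decoupling step and the H\"older step are essentially identical to what the paper does. Where you diverge is in how you exploit spectral positivity. The paper (via its odd-moments proposition) expands $\E X^K = \sum_j\binom{K}{j}\E(X-1)^j$ and retains \emph{all even indices $j \geq k$}, then uses a median argument for the binomial distribution to show these indices capture at least a $1/4$ fraction of $(1+\eps)^K$. You instead retain only the single index $j = k$, which forces you to need a sharper lower bound on $\binom{K}{k}\eps^k$.

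That sharper bound is where the argument breaks. You assert $\binom{K}{k}\eps^k \geq (1+\eps)^K/(c\sqrt{k})$ for an absolute constant $c$, justifying this by a local central limit theorem ``at the mode.'' But $k$ is \emph{not} at the mode of $\mathrm{Bin}(K, \eps/(1+\eps))$: the mean is $K\eps/(1+\eps) \approx k/(1+\eps)$, so $k$ sits at distance $\approx \eps k/(1+\eps)$ from the mean, while the standard deviation is $\approx \sqrt{k}/(1+\eps)$. Thus $k$ is about $\eps\sqrt{k}$ standard deviations away -- which tends to infinity with $k$ whenever $\eps$ is bounded away from zero. Concretely, take $\eps = 1/4$ and $K = 4k$: by Stirling,
\[
\frac{\binom{4k}{k}(1/4)^k}{(5/4)^{4k}} \;=\; \binom{4k}{k}\cdot\frac{4^{3k}}{5^{4k}} \;\sim\; \left(\frac{16384}{16875}\right)^{k}\cdot\frac{1}{\Theta(\sqrt{k})} \;=\; (0.971\ldots)^{k}\cdot\frac{1}{\Theta(\sqrt{k})},
\]
which decays exponentially in $k$, so no universal constant $c$ can close the gap. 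Your fallback observation that the crude bound $\binom{K}{k}\geq (K/k)^k$ only yields $e\eps$ is correct, and this illustrates the problem: single-term truncation really does need something beyond the crude bound, but the $\Theta(1/\sqrt{k})$ local CLT figure you invoke is not available at $j = k$.

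The final numerical conclusion is, in fact, still true: a careful Stirling estimate shows $\left((1+\eps)^K/\binom{K}{k}\right)^{1/k} \approx e^{\eps^2/2}\cdot\eps\cdot(\mathrm{poly}(k))^{1/(k)}$, and for $\eps\in[0,1/4]$ this is comfortably below $2\eps$ -- but the saving factor comes from $e^{\eps^2/2} \leq e^{1/32}$ being close to $1$, not from any $\sqrt{k}$-concentration. If you want to keep the single-term truncation, you should bound the ratio explicitly via Stirling (the exponent to analyze is $\frac{1-\eps}{\eps}\ln(1-\eps) + \frac{1}{\eps}\ln(1+\eps) = \eps^2/2 + O(\eps^3)$) rather than appealing to the mode. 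Alternatively, the paper's route -- keeping all even $j\geq k$ and using a median bound for binomials -- sidesteps the issue entirely by capturing a constant fraction rather than a single term.
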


\begin{proof}

Our first step is to apply the following claim, which can be proved without much trouble by a slightly nontrivial application of Cauchy–Schwarz.
\begin{prop}[Decoupling inequality] \label{cauchy-schwarz}
Let $A, B$ be density functions on a finite abelian group $G$. 
For even integers $k \in \N$ we have
$$ \|A * B - 1\|_k \leq \|A \star A - 1\|_k^{1/2} \|B \star B- 1\|_k^{1/2} .$$
\end{prop}
So, it suffices to show that both $\|A \star A - 1\|_k \leq 2\eps$ and $\|B \star B - 1\|_k \leq 2 \eps$; without loss of generality let us consider $A$.
We argue that the near-uniformity of $A \star A$ follows from an upper bound on $\|A \star A\|_{k'}$ (with $k' := \lceil k/\eps \rceil $).
This follows by combining two simple claims. 

\begin{prop} [Positive correlation for spectrally positive functions -- restatement of \cref{positive-correlation}] \label{positive-correlation-section-2} Let $G$ be a finite abelian group, and let $f_1, f_2, \ldots, f_t$ be some real-valued functions on $G$ which are ``spectrally positive." That is, each of the Fourier coefficients $\hat{f_i}(\alpha)$ is real and non-negative. 
Then, for uniformly random $x \in G$,
$$ \E [ f_1(x) f_2(x) \cdots f_t(x) ] \geq \E [ f_1(x) ]  \E [ f_2(x) ] \cdots \E [f_t(x)] \geq 0. $$
\end{prop}

\begin{prop}[Odd moments -- reformulation of \cref{odd-moments}] 
Let $Z$ be a real-valued random variable. We use the notation $\|Z\|_{k} := \E\left[|Z|^{k}\right]^{1/k}$. 
Suppose $Z$ has non-negative odd moments: that is, $\E\left[Z^t\right] \geq 0$ for all odd integers $t \in \N$. If 
$ \|1+Z\|_{k'} \leq 1 + \eps$
for some integer $k'$ and some $\eps \in [0,\frac{1}{4}]$, then
$ \|Z\|_{k} \leq 2 \eps$
for any even integer $k \leq \eps k'$.
\end{prop}

Our density function $(A \star A)(x)$ is indeed spectrally positive, and so is its centering, $F := A \star A - 1$. From this we see that $A \star A$ has non-negative odd central moments:
$$ \E \big((A \star A)(x) - 1\big)^{t} = F(x)^t \geq 0$$
for all odd $t \in \N$. Since we have assumed the upper bound
$ \|A \star A\|_{k'} \leq 1 + \eps,$
we obtain the desired two-sided bound 
$ \|A \star A - 1\|_k \leq 2 \eps. $
 This proves the first claim.
For the second claim, we can write
\begin{align*}
\ip{A * B}{C} &= \ip{1}{C} + \ip{A * B - 1}{C} = 1 + \ip{A * B - 1}{C}  ,
\end{align*}
and use a Hölder inequality to estimate
\begin{multline*}
\left| \ip{A * B - 1}{C} \right| \leq \|A * B - 1\|_{k} \|C\|_{1 + \frac{1}{k-1}} 
\leq \|A * B - 1\|_{k} \|C\|_{\infty}^{1/k} \|C\|_{1}^{1-1/k} \\
=   \|A * B - 1\|_{k}  \|C\|_{\infty}^{1/k} 
\leq 2 \|A * B - 1\|_{k}.  \qedhere
\end{multline*}

\end{proof}

We remark that it is always possible, for generic density functions $D$, to infer some kind of bound on $\|D-1\|_{k}$ from an upper bound $\|D\|_{k} \leq 1 + \eps$. However, this generic bound degrades rapidly as $k$ increases: consider, for example, a density function which is uniform over some subset of $G$ of size $(1-\eps) |G|$. In this case we have
$$ \|D\|_{k'} \leq \|D\|_{\infty} = \frac{1}{1-\eps} \approx 1 + \eps$$
for any $k'$, and yet
$$ \|D-1\|_{k} \geq \eps^{1/k}. $$
So, for our arguments above, it was quite important that there are no self-convolutions which are shaped in such a way. More specifically: self-convolutions $A \star A$ which have deviations downwards from $1$ must also have deviations upwards from $1$ of comparable strength, as measured by the $k$-norm, for (roughly speaking) any choice of $k$. This is what allows us to efficiently convert our upper bound from \cref{II} into a lower bound. 
It is interesting to contrast this situation with the discussion surrounding \cref{central-question-spread} in the appendix. Specifically, we discuss a formal setting where one can obtain strong upper-control on the quantity $\ip{A * B}{C}$ from the density-increment method, but it is impossible to obtain any nontrivial lower bound $\ip{A * B}{C} > 0$.

\textbf{A review of all the steps.} \label{review}
We recall that, as a result of the combination of all of our observations, one can infer from the starting assumption $\ip{A * B}{C} \leq 1 -\Omega(1)$ that either $A$ or $B$ must have a density increment onto some large affine subspace $V$. Now that we have seen everything needed for this, we offer the following summary listing all the steps in one place.
\begin{itemize}
\item Infer by Hölder that $\|A * B - 1\|_{k} \geq \Omega(1)$, for $k \geq d$, as witnessed by $C$.
\item Argue that $k$-norm distance to uniform is maximized by considering self-convolutions:\\ $\|A * B - 1\|_{k} \leq \|A \star A - 1\|_{k}^{1/2} \|B \star B - 1\|_{k}^{1/2}$. Conclude that either $\|A \star A - 1\|_{k} \geq \Omega(1)$ or  $\|B \star B - 1\|_{k} \geq \Omega(1)$; Suppose $\|A \star A - 1\|_{k} \geq \Omega(1)$.
\item Argue by spectral positivity that the presence of deviations of $A \star A$ downwards from $1$ entails the presence of deviations upwards from $1$ of comparable strength. More concretely: argue that (upon increasing $k$ slightly) we have $\|A \star A\|_{k} \geq 1 + \Omega(1)$.
\item Use sifting to find a convolution $A' \star A'$ witnessing $\|f\|_{*,O(dk)} \approx 1$, where 
$$f := \ind{A \star A \geq 1 + \Omega(1)}.$$
\item Use Sanders' invariance lemma (which is itself powered by the Croot-Sisask lemma and Chang's inequality) to deduce that $\|f\|_{\perp, O(d^4 k^4)} \approx 1$.
\item Conclude that $\ip{V}{A \star A} \geq 1 + \Omega(1)$ for some $V$.
\item Conclude that $\ip{V'}{A} \geq 1 + \Omega(1)$ for some $V ' = V + a$.
\end{itemize}

Let us now outline our plan for the remainder of the paper. 

In \cref{finitefield-mainproofs}, 
 we discuss the well-known density-increment framework and describe how it can be used to find a large subset $A' \subseteq A$ which is spread relative to its span.  We use this to complete the proof our main result for finite field setting: the structural lemma, \cref{robust-sunflower}.

\ignore{In the continuation of our introduction, we discuss the following topics.
\begin{itemize}

\item In \cref{density-formulation}, we take a moment to switch to some alternative normalization conventions. These improve the interpretability of our three conditions (spreadness, regularity, and self-regularity) and clarify their relation to each other. 
\item In \cref{regularity-from-spreadness}, we present and discuss \cref{II}. 

\item In \cref{2-sided}, we present and discuss \cref{I}. 

\item In \cref{finitefield-mainproofs}, 
 we discuss the well-known density-increment framework and describe how it can be used to find a large subset $A' \subseteq A$ which is spread relative to its span.  We use this to complete the proof our main result for finite field setting: the structural lemma, \cref{robust-sunflower}.
\end{itemize}}

In the middle part of the paper, we develop our main tools used to prove  \cref{II} and \cref{I}: ``sifting" and ``spectral positivity," respectively. Both tools apply to general finite abelian groups $G$. We also establish some ``local" variants of these techniques in preparation for later when we will consider subsets $A \subseteq \Z$, as well as subsets of cyclic groups.

\begin{itemize}
\item In \cref{prelims}, we go over some definitions and conventions related to normalization, convolutions, and Fourier analysis. 
\item In \cref{sec:sifting-and-proof-of-II}, we prove the ``sifting lemma" (\cref{sifting-lem-density-formulation}) and use is to finish the proof of \cref{II}.
\item In \cref{spectral positivity}, we develop the simple tools needed to prove \cref{I}. These include various $k$-norm inequalities for convolutions and self-convolutions, the latter of which relies on the notion of spectral positivity. 
\end{itemize}

In the final part of the paper, we consider the setting $A \subseteq [N] \subseteq \Z$. 

\begin{itemize}
\item In \cref{integers-overview}, we give an overview of our plan to establish the existence of many 3-progressions in the integer case, discussing what changes must be made to our approach which handles the finite field case. Most notably, we consider some potential replacements for the notion of ``spreadness", where some kind of approximate subgroup, such as a generalized arithmetic progression of bounded rank or a Bohr set of bounded rank, instead plays the role of the subspace of bounded codimension. 
\item In \cref{prelims-integers}, we go over some definitions and basic properties related to generalized progressions, Bohr sets, and Freiman homomorphisms. We also introduce a potentially new device related to the notion of a Freiman homomorphism which we call a ``safe" set. 
\item In \cref{proof-of-3-progs-in-the-integers}, we complete our proof that dense sets $A \subset [N]$ have many 3-progressions, which relies on tools including sifting, spectral positivity, safe sets, as well as a translation-invariance lemma which is due to Schoen and Sisask.
\end{itemize}
\subsection{The density-increment framework: completing the proof of \cref{robust-sunflower}}\label{finitefield-mainproofs}

In this section we prove our main result for the finite field setting by combining \cref{II} and \cref{I} with the well-known density-increment approach.

Let us argue that spreadness is a property that is easy to obtain.
Before we begin, we discuss the notion of relative spreadness.
For a set $A \subseteq \F_q^n$ contained within some linear subspace $V$ of codimension $s$, let us say that $A$ is spread relative to $V$ if, upon embedding
$$ A \subseteq V \cong \F_q^{n-s},$$
the set is spread in the sense of \cref{spread}. 
Additionally, suppose that $A$ is contained in some affine subspace $V'$, which can be uniquely described as $V' = V + \theta$ for some linear subspace $V$ and some $\theta \in V^{\perp}.$ Let us say that $A$ is spread relative to $V'$ if $A - \theta$ is spread relative to $V$.

Now, given a set $A \subseteq \F_q^n$ of density at least $2^{-d}$, we can find a large set $A'$ which is spread (relative to the ambient space $\textnormal{Span}^*(A')$) by the following simple greedy algorithm. Let $A_0 := A$. We proceed to describe a nested sequence of subsets $A_{i} \subseteq A_{i-1}$; let $V_i = \textnormal{Span}^*(A_i)$. If the current set $A_i$ is not $(1+\eps,r)$ spread (relative to its container $V_i$), then there must be an affine subspace $V' \subseteq V_i$, which has codimension at most $r$ inside $V_i$, such that
$$ \frac{|A_i \cap V'|}{|V'|} > \left(1 + \eps\right) \frac{|A_i|}{|V_i|}. $$
We then pass to the subset $A_{i+1} := A_i \cap V'$. Since we begin with a set with density $\delta_0 \geq 2^{-d}$, and the density of $A_i$ in $V_i$ increases by at least a factor $(1+\eps)$ on every iteration, we must have density
$$ \delta_{i} \geq (1+\eps)^{i} \cdot 2^{-d}.$$
after $i$ iterations. However, density cannot exceed $1$ at any point. Thus, in the case of $\eps \in [0,1]$, this process must terminate within some number $t \leq d/\eps$ iterations; by definition, this means that the final set $A_t$ is $(1+\eps,r)$-spread inside its own span. To summarize, by this simple argument, we have proved the following.

\begin{prop} \label{greedy}
Let $\eps \in [0,1]$.
Suppose $A \subseteq \F_q^n$ has size $|A| \geq 2^{-d} |\F_q^n|$.
Then, for some linear subspace $V \subseteq \F_q^n$ of codimension $s \leq rd/\eps$, 
and some shift $\theta \in V^{\perp}$,
the subset $A' := A \cap (V + \theta)$ satisfies the following:
\begin{enumerate}
    \item $\frac{|A'|}{|V|} \geq \frac{|A|}{|\F_q^n|}$, and
    \item The set $A' - \theta \subseteq V$ is $(1+\eps,r)$-spread inside $V \cong \F_{q}^{n-s}$ in the sense of \cref{spread}.
\end{enumerate}
\end{prop}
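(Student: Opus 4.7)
The plan is to prove Proposition \ref{greedy} by the iterative density-increment argument already sketched informally in the paragraph preceding the statement; I will just make it more precise and verify the quantitative bookkeeping. Define a nested sequence of pairs $(A_i, W_i)$, where each $W_i \subseteq \F_q^n$ is an affine subspace, $A_i \subseteq W_i$, and $W_0 = \F_q^n$, $A_0 = A$. At stage $i$, write $W_i = V_i + \theta_i$ with $V_i$ a linear subspace and $\theta_i \in V_i^{\perp}$, and set $\delta_i := |A_i|/|W_i|$. The process tests whether the shifted set $A_i - \theta_i \subseteq V_i$ is $(1+\eps,r)$-spread inside $V_i \cong \F_q^{n - \textnormal{Codim}(V_i)}$ in the sense of Definition \ref{spread}. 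If yes, stop and output $A_i$. If no, Definition \ref{spread} supplies an affine subspace $U \subseteq V_i$ of codimension at most $r$ inside $V_i$ such that the translate $U + \theta_i$ witnesses a density increment: $|A_i \cap (U + \theta_i)|/|U| > (1+\eps)\, \delta_i$. Set $W_{i+1} := U + \theta_i$ and $A_{i+1} := A_i \cap W_{i+1}$.

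The first step in the analysis is to track how the density $\delta_i$ grows. By construction $\delta_{i+1} \geq (1+\eps) \delta_i$, so after $t$ iterations $\delta_t \geq (1+\eps)^t \delta_0 \geq (1+\eps)^t 2^{-d}$. Because $\delta_t \leq 1$ always, the process must terminate at some step $t \leq d/\log_2(1+\eps)$. For $\eps \in [0,1]$, the elementary inequality $\log_2(1+\eps) \geq \eps/\log 4$ (or, more crudely, the fact that $(1+\eps)^{1/\eps} \geq 2^{1/2}$) yields $t \leq O(d/\eps)$; since the statement asks only for $t \leq d/\eps$ iterations (with the factor absorbed by adjusting constants in $r$), any such elementary bound suffices. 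The second step is to track codimensions: each transition $W_i \to W_{i+1}$ adds at most $r$ to the codimension of the underlying linear subspace, so $\textnormal{Codim}(V_t) \leq rt \leq rd/\eps$.

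Finally, I will verify that the terminal set $A' := A_t$ meets both stated conclusions. Conclusion (2) is immediate from the stopping criterion: the process halts precisely when $A_t - \theta_t$ is $(1+\eps,r)$-spread inside $V_t$. Conclusion (1) follows from monotonicity of the density sequence, $\delta_t \geq \delta_0 = |A|/|\F_q^n|$, so $|A'|/|V_t| = \delta_t \geq |A|/|\F_q^n|$. Setting $V := V_t$, $\theta := \theta_t$, and $s := \textnormal{Codim}(V_t)$ gives exactly the data required by the proposition.

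There is essentially no hard step here; the argument is entirely mechanical once the iteration is set up. The only minor point of care is the bookkeeping between affine and linear subspaces — specifically, writing each container $W_i$ in the canonical form $V_i + \theta_i$ with $\theta_i \in V_i^{\perp}$ so that the notion of relative spreadness from Definition \ref{spread} applies after translation, and ensuring that refining $W_i$ to an affine sub-subspace $W_{i+1}$ still admits such a canonical decomposition (which it does, taking $V_{i+1}$ to be the linear subspace parallel to $W_{i+1}$ and $\theta_{i+1}$ its orthogonal shift). This is routine, so the proof amounts to writing the iteration, the two monotone quantities (density up, codimension up), and reading off both conclusions at termination.
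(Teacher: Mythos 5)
Your proof is essentially the same greedy density-increment argument the paper uses, with the same iteration, the same two monotone quantities (density up, codimension up by at most $r$ per step), and the same reading-off of both conclusions at termination. The one quantitative point worth tightening: you settle for $t \leq O(d/\eps)$ via $\log_2(1+\eps) \geq \eps/\log 4$ and then remark that the constant can be ``absorbed into $r$,'' but $r$ is a free parameter in the statement with no hidden slack, so the bound $s \leq rd/\eps$ actually requires $t \leq d/\eps$ on the nose. Fortunately this holds cleanly: for $\eps \in [0,1]$ one has $\log_2(1+\eps) \geq \eps$ (check equality at $\eps = 0$ and $\eps = 1$ and concavity of $\log_2(1+\eps)$ in between), so $t \leq d/\log_2(1+\eps) \leq d/\eps$, which is exactly what the paper asserts. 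Swap in that inequality and your argument matches the paper's.
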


We remark that it is somewhat more common to frame the structure vs.\ pseudorandomness approach as a sort of ``branching process", where at each step, our current set is either pseudorandom and therefore (possibly by a lengthy argument) satisfactory, or we find a density increment and then repeat this argument recursively. Inspired by \cite{bk12} and \cite{alwz20}, we find it clarifying to insist that we do all of our density-incrementing up front and then argue that the resulting (spread) set must be satisfactory. Ultimately, though, there does not seem to be a tangible advantage arising from either viewpoint.

Next, we argue that spreadness of $A$ and $B$ implies near-uniformity of $A * B$ (simply by combining \cref{II} with \cref{I}).

\begin{prop} [Near-uniformity from spreadness] \label{near-uniformaity-from-spreadness}
We have the following for some absolute constant $c \geq 1$.
Suppose $A,B \subseteq \F_q^n$ are two sets each of size at least $2^{-d}|\F_q^n |$, where $d \geq 1$.
Let $r \in \N$, $\eps \in [0,\frac{1}{4}]$, and $k \geq 1$ be such that
$$ r \geq d^4 k^4 / \eps^c .$$
If $\|A\|_{\perp, r} \leq 1 + \eps$ and $\|B\|_{\perp,r} \leq 1 + \eps$, then
$$ \|A * B - 1\|_{k} \leq O(\eps).$$
\end{prop}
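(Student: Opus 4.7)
The proposition is a direct composition of \cref{II} (which converts spreadness into self-regularity via its contrapositive) with \cref{I} (which converts self-regularity of $A$ and $B$ into near-uniformity of $A * B$ in $k$-norm). The absolute constant $c$ appearing in the hypothesis is chosen just large enough to absorb the polynomial-in-$1/\eps$ factors introduced along the way.

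Concretely, set $k' := \lceil k/(4\eps)\rceil$, rounded up to the next even integer, and apply the contrapositive of the second assertion of \cref{II} with parameter $4\eps$ in place of $\eps$ and $k'$ in place of $k$. This gives the following: for some codimension $r_0 \leq O_\eps((k')^4 d^4)$, the bound $\|A\|_{\perp, r_0} \leq 1 + \eps$ forces $\|A \star A\|_{k'} \leq 1 + 4\eps$ (and likewise for $B$). Substituting $k' = \Theta(k/\eps)$ and unpacking the implicit $\eps$-dependence in $O_\eps$, we may bound $r_0$ by $d^4 k^4 / \eps^c$ for a suitable absolute $c \geq 1$. With this $c$ fixed in the hypothesis, the relation $r \geq r_0$ holds, and since $\|\cdot\|_{\perp, r}$ is nondecreasing in $r$, the given spreadness hypothesis $\|A\|_{\perp, r}, \|B\|_{\perp, r} \leq 1 + \eps$ passes down to $r_0$. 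Hence both $A$ and $B$ are $(1 + 4\eps, k')$-self-regular.

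Finally, feed this into \cref{I} with $4\eps$ playing the role of its parameter $\eps$ and the same $k$ (increment $k$ by $1$ if needed to ensure it is even; this harmlessly changes the final bound by constants). The choice $k' = \lceil k/(4\eps)\rceil$ matches the hypothesis of \cref{I} exactly, so we conclude $\|A * B - 1\|_k \leq 8\eps = O(\eps)$, as desired. The whole argument contains no real obstacle beyond constant-bookkeeping; the only delicate points are absorbing the factor of $4$ between the output threshold $1 + \eta/4$ produced by \cref{II} and the input threshold $1 + \eps$ appearing in our hypothesis, absorbing the $\eps$-dependence hidden in $O_\eps$ into a single exponent $c$, and respecting the constraint $4\eps \leq 1/4$ demanded by \cref{I} (for the handful of remaining $\eps$ values, the conclusion $\|A * B - 1\|_k = O(\eps)$ is trivial from $\|A * B - 1\|_k \leq \|A\|_1 + \|B\|_1 \leq 2$).
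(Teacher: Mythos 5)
Your plan is the same route the paper takes: compose the contrapositive of \cref{II} (spreadness implies self-regularity) with \cref{I} (self-regularity implies near-uniformity). The paper writes this in the contrapositive direction, supposing $\|A*B-1\|_k \geq \eta$ and deducing a spreadness violation, but up to direction the two arguments are identical; your bookkeeping through $k' = \Theta(k/\eps)$ and the absorption of the $O_\eps(\cdot)$ dependence into a single exponent $c$ are both fine in the main regime.

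The one genuine flaw is in your patch for $\eps \in (1/16, 1/4]$, where you assert that $\|A * B - 1\|_k \leq \|A\|_1 + \|B\|_1 \leq 2$ ``trivially.'' This inequality is false. In the paper's normalization $\|A\|_1 = \|B\|_1 = 1$, but $\|\cdot\|_k$ is nondecreasing in $k$ and $\|A*B-1\|_k$ is only generically controlled by $\|A*B\|_\infty \leq \|A\|_\infty = 2^d$, not by $L^1$-information. A concrete counterexample: take $A = B$ to be the density function of a codimension-$d$ subspace $V$; then $A*B = A$ and $\|A*B-1\|_k^k = 2^{-d}(2^d-1)^k + (1-2^{-d})$, which is of order $2^{d(k-1)}$, so $\|A*B-1\|_k$ is roughly $2^{d(1-1/k)}$ and not bounded by $2$. (This $A$ is of course nowhere near spread, but the bound you wrote down uses no spreadness.) In truth the argument — yours and the paper's alike — only produces a contradiction for $\eps$ below a small absolute constant: the factor-$4$ loss in the conclusion of \cref{II-part-2} composed with the constraint $\eps \leq \tfrac14$ inherited from \cref{I} means the density increment one manufactures is too small to violate $\|A\|_{\perp,r} \leq 1+\eps$ once $\eps$ approaches $\tfrac14$. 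Since the paper only invokes the proposition at small constant $\eps$ and permits a generous implied constant in $O(\eps)$, this is harmless, but the tail regime cannot be dismissed as ``trivial.''
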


\begin{proof}
We argue by the contrapositive.
Let $\eta \in [0,1]$, and suppose that
$$ \|A * B - 1\|_{k} \geq  \eta.$$
By \cref{I}, this means that
either
$ \|A \star A\|_{k'} \geq  1 + \Omega(\eta)$
or
$ \|B \star B\|_{k'} \geq  1 + \Omega(\eta)$
for some $k' \leq O(k/\eta)$;
without loss of generality suppose $ \|A \star A\|_{k'} \geq 1 + \Omega(\eta)$.
We use \cref{II} (or rather, the more specific \cref{II-part-2}) to infer that
$$ \|A\|_{\perp, r} \geq 1 + \Omega(\eta) $$
for some $r \leq M(\eta) \cdot d^4 k^4$, where
$M(\eta) \leq \textnormal{poly}(1/\eta)$. 
Since $\eta \in [0,1]$ was arbitrary, the result follows. \qedhere
\end{proof}

By combining the two claims above we easily obtain our structural lemma stated in the introduction, \cref{robust-sunflower}.
\begin{proof}[Proof of \cref{robust-sunflower}]
We are given a constant $\eps$, a parameter $k \geq 1$, and a set $|A| \geq 2^{-d} |\F_q^n|$. Changing $\eps$ and $d$ only slightly, we may assume $d \geq 1$ and $\eps \leq 1/4$. 

We set a parameter $r \in \N$ to be some constant factor larger than $k^4 d^4 / \eps^c$. We invoke \cref{greedy} and consider the subset $A'  = A \cap (V + \theta)$ which is $(1+\eps,r)$-spread relative to its span, $V + \theta$, an affine subspace with codimension at most $rd/\eps = O(d^5 k^4 / \eps^{c+1})$ in $\F_q^n$.
By our choice of $r$, we may apply \cref{near-uniformaity-from-spreadness}.
So, the distribution
$$ r_{A'-\theta}(x) := \frac{R_{A-\theta}(x)}{|A|^2} $$
has $k$-norm divergence from the uniform distribution on $V$ bounded by $O(\eps)$. 
Equivalently, $r_{A'}(x)$ has $O(\eps)$ divergence from the uniform distribution on $V + 2\theta$. 
Since $\eps \in [0,\frac{1}{2}]$ was arbitrary the result follows. \qedhere
\end{proof}

Furthermore, our lower bound for 3-progressions in the finite field setting also follows as a direct consequence of \cref{robust-sunflower}.

\begin{proof}[Proof of \cref{3-progs-in-finite-fields} from \cref{robust-sunflower}]
Suppose $A \subseteq \F_q^n$ has size $|A| \geq 2^{-d} |\F_q^n|$.
If $q$ is even, we clearly have $|A|^2$ solutions to $x + y = 2z$, since $2z = 0$ for all $z$. So, let us assume that $q$ is odd.
We seek to lower bound the number of 3-progressions in $A$, which is to say the quantity
$$ \sum_{z \in A} R_{A}(2z).$$
Our plan is to invoke \cref{robust-sunflower}, with $\eps = 1/4$ and $k = d + 1$, to obtain a nice subset $A'$, and then we will count only the solutions to $x + y = 2z$ with $x,y,z \in A'$, and ignore the rest.
For the sake of clarity let us write $\textnormal{Span}^*(A) = V + \theta$ where $V$ is a linear subspace, and consider instead the translated set $B := A' - \theta \subseteq V$, noting that translating an entire set does not change the number of solutions to $x + y = 2z$ with $x,y,z$ in the set. 

\cref{robust-sunflower} tells us that the distribution $r_B(x)$ has $k$-norm divergence at most $1/4$ from the uniform distribution on $V$. This in particular means that the number of points $x \in V$ where
$$ r_B(x) \leq \frac{1}{2|V|}  $$
is at most $2^{-k} |V|$. Now using the fact that $q$ is odd, we note that the dilation map $z \mapsto 2z$ is a permutation of $V$, and so
$$ \sum_{z \in B} r_B(2z) \geq  \left( |B| - 2^{-k} |V| \right) \frac{1}{2 |V|} \geq \frac{2^{-d}}{4},$$
where we have used that fact that $|B| \geq 2^{-d} |V|$ and that $k = d + 1$. Overall, we see that
\begin{align*}
\sum_{z \in A} R_{A}(2z) &\geq \sum_{z \in B} R_{B}(2z) \\
&\geq 2^{-O(d)} |B|^2 \\
&\geq q^{-O(d^5 k^4)} |\F_q^n|^2 \\
&= q^{-O(d^9)} |\F_q^n|^2 . \qedhere
\end{align*}
\end{proof}

\section{Preliminaries} \label{prelims}

\subsection{Densities and normalization}

\begin{definition}
 For an arbitrary finite set $\Omega$, a \textit{density function} on $\Omega$ is simply a non-negative function $D : \Omega \rightarrow \R_{\geq 0}$ normalized so that
 $$\E_{x \in \Omega} D(x) = 1$$.
\end{definition}
\begin{itemize}
\item Let $A \subseteq \Omega$ be a subset of size $|A| = \delta|\Omega|$. Abusing notation, we also write 
\begin{equation*}
A(x) := \frac{\1_A(x)}{\delta} = 
\begin{cases} 
    1/\delta &\text{ for } x \in A\\
    0 &\text{ otherwise}\\
\end{cases}
\end{equation*}
to denote the (re-normalized) indicator function of $A$.
Under this normalization, $A(x)$ is a density function.

\item For any two functions $f, g : \Omega \rightarrow \R$ we define $\ip{f}{g} := \E_{x \in \Omega} f(x)g(x)$. For any set $A$ we have the probabilistic interpretation
\begin{equation*}
\ip{A}{g} = \E_{x \in \Omega} A(x) g(x) = \E_{a \in A} g(a).
\end{equation*}
We can also apply this more generally to an arbitrary density function $B$. Let $b \sim B$ denote a random variable in $\Omega$ whose probability distribution is proportional to $B$. Then $\langle B, g \rangle$ has the interpretation
\begin{equation*}
     \ip{B}{g} = \E_{x \in \Omega} B(x) g(x) = \E_{b \sim B} g(b).
\end{equation*}
Occasionally it will be useful to allow more generally for complex-valued functions $f,g$ (which can appear e.g.\ during some intermediate calculations involving Fourier expansions of real-valued functions, even if the resulting quantity must be also real). In this case we insist on the convention
$$ \ip{f}{g} = \E_{x \in \Omega} \overline{f(x)} g(x)$$
so that at least when $B(x)$ is real-valued and $g(x)$ is complex-valued we still have
$$ \ip{B}{g} = \E_{x} B(x) g(x) = \E_{b \sim B} g(b).$$

\item For $k\geq 1$, we use the notation
\begin{equation*}
    \norm{f}_k = \left( \E_{x \in \Omega} \abs{f(x)}^k \right)^{1/k},
\end{equation*}
and
\begin{equation*}
    \norm{f}_\infty = \max_{x \in \Omega} \abs{f(x)}.
\end{equation*}
\item It is a consequence of Jensen's inequality that for $1 \leq k \leq k'$ and any function $f$ we have
$$ \|f\|_{k} \leq \|f\|_{k'}.$$
\end{itemize}

\subsection{Convolutions}

\begin{itemize}
\item We specialize the finite set $\Omega$ from above to be a finite abelian group $G$. Given two functions $f, g : G \rightarrow \R$, we define their convolution with the following normalization:
\begin{equation*}
    (f * g)(x) := \E_{y \in G} f(y)g(x - y)  = \E_{y \in G} f(x - y)g(y) = 
    \frac{1}{|G|}\sum_{
    \substack{
    y,z \in G \\
    y + z = x
    }}
    f(y)g(z).
\end{equation*}
\item For a density function $B$, we can interpret $B * g$ as
$$ (B * g)(x) := \E_{y \in G} B(y) g(x - y) = \E_{b \sim B} g(x - b) .$$

\item For two real-valued function $f,g$, we define the \textit{cross-correlation} $f \star g$  as the convolution of $f(-x)$ with $g$:
\begin{equation*}
    (f \star g)(x) := \E_{y \in G} f(-y) g(x - y) = \E_{y \in G} f(y) g(x + y),
\end{equation*}
and we have
$$ (B \star g)(x) = \E_{b \sim B}g(x + b).$$
\item If $A,B$ are densities corresponding to (independent) random variables $a, b$, then
as a consequence of our normalization conventions, $A * B$ is again a density. Indeed, it is the density corresponding to the random variable $a + b \in G$. Similarly, $A \star B$ is the density corresponding to the random variable $b-a \in G$.
\item We note the identity  $\ip{f * g}{h} = \ip{f}{g \star h}$.
\end{itemize}

\subsection{Fourier analysis on finite abelian groups}
\begin{itemize}
    \item Let $\set{e_{\alpha}(\cdot)}{\alpha \in G}$ denote the set of characters of the finite abelian group $G$. Each character is a function from $G$ to the set of complex numbers of modulus 1. 
    The product of two characters is again a character:
    For $\alpha, \beta \in G$, $e_{\alpha} \cdot e_{\beta} = e_{\alpha + \beta} $. Beyond this, the main important properties of the characters (for the development of the Fourier expansion) are
    \begin{enumerate}
        \item[(i)] orthogonality: $\ip{e_{\alpha}}{e_{\beta}} = 
        \E_{x \in G} \overline{e_{\alpha}(x)} e_{\beta} (x) = 
        \E_{x \in G}  e_{\alpha}(-x)  e_{\beta}(x) =  \ind{\alpha = \beta}$, and
        \item[(ii)] symmetry: $e_{\alpha}(x) = e_{x}(\alpha). $
    \end{enumerate}
    In the case that the group $G$ is presented explicitly as $G = \Z_{N_1} \times \Z_{N_2} \times \cdots \times \Z_{N_r}$
    we can consider the following concrete description:
    $$ e_{\alpha}(x) := \textnormal{exp} \left( 2 \pi i \sum_{j=1}^r \frac{\alpha_j x_j}{N_j} \right).$$
    \item For a set $A \subseteq G$, we define the fourier coefficients
    $$ \widehat{A}(\alpha) := \ip{A}{e_\alpha} = 
    \E_{a \in A} e_{\alpha}(a).$$
    \item Note that $\widehat{A}(-\alpha) = \ip{A}{e_{-\alpha}} = \overline{\ip{A}{e_{\alpha}}} = \overline{\widehat{A}(\alpha)}$.
    \item For a function $f : G \rightarrow \R$, we define the fourier coefficients
    $$ \widehat{f}(\alpha) := \ip{f}{e_{\alpha}} = 
    \E_{y \in G} f(y) e_{\alpha}(y) .$$
    \item We can express any function by its fourier expansion
    $$ f(x) = \sum_{\alpha \in G} \widehat{f}(\alpha) e_{\alpha}(-x) .$$
    \begin{proof}
    \begin{align*}
    \sum_{\alpha} \left( \E_y f(y) e_{\alpha}(y) \right) e_{\alpha}(-x) &=
    \sum_{y} \E_\alpha  f(y)   e_{\alpha}(y) e_{\alpha}(-x)  \\
    &= \sum_{y}  f(y)  \E_\alpha e_{y}(\alpha) e_{x}(-\alpha) \\
    &= \sum_{y}  f(y)  \ind{y = x} \\
    &= f(x). \qedhere
    \end{align*}
    \end{proof}
    \item We have the following fourier-analytic identities for real-valued functions $A(x),B(x),f(x)$.
    In general they can all be verified, with no creativity required, by the following process: express any function appearing by its Fourier expansion, expand any products of sums appearing into a sum of products, and then use the identity $\E_{x \in G}  e_{\alpha}(x)  = \ind{\alpha = 0}$ to evaluate any expectations. 
    \begin{enumerate}
        \item $\ip{A}{f} = 
        \sum_{\alpha}\widehat{A}(-\alpha)   \widehat{f}(\alpha) = \sum_{\alpha} \hat{A} (\alpha) \hat{f}(-\alpha)  $
        \item $(A * B)(x) = \sum_{\alpha} \widehat{A}(\alpha) \widehat{B}(\alpha) e_{\alpha}(-x)$
        \item $(A \star B)(x) = \sum_{\alpha} \widehat{A}(-\alpha) \widehat{B}(\alpha) e_{\alpha}(-x)$
        \item $(A \star A)(x) = \sum_{\alpha} | \widehat{A}(\alpha) |^2 e_{\alpha}(-x)$
        \item $\|A\|_{2}^2 = \ip{A}{A} = \sum_{\alpha} |\widehat{A}(\alpha) |^2 $
        \item $\|A \star A\|_2^2 = \sum_{\alpha} |\widehat{A}(\alpha) |^4$
    \end{enumerate}
\end{itemize}

\section{Proof of \cref{II}, and Sifting} \label{sec:sifting-and-proof-of-II}


\subsection{Sifting lemma} \label{sec:sifting-lemma}

It will be convenient to set aside the following generic argument.

\begin{prop}[Weighted pigeonhole principle, or ``first-moment method"] \label{weighted-php}
  Fix some nonnegative numbers $g_1, g_2, \ldots g_m$ and $h_1, h_2,\ldots, h_m$, with $\sum_i h_i > 0$.
  Let 
  $$\eta := \frac{\sum_{i=1}^m g_i}{\sum_{i=1}^m h_i} $$
  and
  $$\mu(h) := \frac{1}{m} \sum_{i=1}^m h_i. $$
  \begin{enumerate}
      \item[(i)] There exists a choice of $j \in [m]$ with $\frac{g_j}{h_j} \leq \eta$ (and not of the form $\frac{0}{0}$).
      \item[(ii)] Furthermore, there exists a choice of $j$ with both
      $$ \frac{g_j}{h_j} \leq 2 \eta \text{ and } h_{j} \geq \tfrac{1}{2} \mu(h)$$
      \item[(iii)] More generally, suppose that $H, \tau > 0$ are such that
      $$ \sum_{i \: : \: h_{i} \geq H} h_i \geq \tau \sum_{i} h_i.$$
      Then there is a choice of $j$ with both
      $$ \frac{g_j}{h_j} \leq \frac{\eta}{\tau} \text{ and } h_{j} \geq H.$$
  \end{enumerate}
\end{prop}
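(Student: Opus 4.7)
The proof is a standard weighted-averaging (first-moment) argument, with the three parts being a simple averaging bound, then the same bound restricted to a subset of indices that carry most of the $h$-mass. I would prove part (iii) first, since it is the most general, and deduce (i) and (ii) as special cases.

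For part (iii), the plan is to restrict attention to the index set $S := \{i : h_i \geq H\}$. By hypothesis, $\sum_{i \in S} h_i \geq \tau \sum_i h_i$. I would then invoke the trivial inequality
$$ \min_{j \in S} \frac{g_j}{h_j} \;\leq\; \frac{\sum_{i \in S} g_i}{\sum_{i \in S} h_i}, $$
which holds whenever the denominator is positive, since the right-hand side is precisely the $h$-weighted average of the ratios $g_i/h_i$ over $i \in S$. The numerator is bounded by $\sum_i g_i = \eta \sum_i h_i$, and the denominator is at least $\tau \sum_i h_i$, so the ratio is at most $\eta/\tau$. Any $j \in S$ attaining (or beating) this minimum also satisfies $h_j \geq H$ by construction, giving both required inequalities simultaneously. (In the degenerate case $\sum_{i \in S} h_i = 0$ the hypothesis forces $\sum_i h_i = 0$, contradicting the assumption that $\sum_i h_i > 0$, so $S$ is nonempty and the weighted average is well-defined.)

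Part (ii) will then follow from part (iii) by taking $H = \tfrac12 \mu(h)$ and $\tau = \tfrac12$: the "small" indices, those with $h_i < \tfrac12 \mu(h)$, contribute at most $m \cdot \tfrac12 \mu(h) = \tfrac12 \sum_i h_i$ to the total, so the "big" indices contribute at least half, verifying the hypothesis of (iii) with $\tau = \tfrac12$. Part (i) is the case where we drop the lower bound on $h_j$ entirely, and is simply the inequality $\min_i g_i/h_i \leq \eta$ taken over all $i$ with $h_i > 0$.

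There is no real obstacle here; the only mild subtlety is handling ratios of the form $0/0$ correctly, which is done by restricting the minimum to indices with $h_i > 0$ (in (i)) or $h_i \geq H > 0$ (in (iii)), and noting that $\sum_i h_i > 0$ guarantees this restricted index set is nonempty in each case.
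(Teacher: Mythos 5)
Your proof is correct and uses essentially the same idea as the paper's: both prove the claims by bounding the weighted average of the ratios $g_i/h_i$ over a restricted index set (the paper phrases this as applying part (i) to the modified sequence $h_i \cdot \ind{h_i \geq H}$, while you restrict to $S = \{i : h_i \geq H\}$, which is the same thing). The only difference is presentation order --- you prove (iii) first and derive (i), (ii) from it, whereas the paper proves (i) first by contradiction and then specializes --- but the underlying argument is identical.
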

\begin{proof}
The degenerate case $\eta = 0$ is easy to handle: we simply pick the largest value $h_j$. So suppose $\eta \neq 0$ and consider the first claim.
We discard any indices $i$ where both $g_i = h_i = 0$, noting that this does not change the value of $\sum_i g_i$ or $\sum_i h_i$. 
Consider the equality 
$ \sum_{i} g_i = \eta \sum_i h_i. $
If the desired conclusion does not hold, we have $g_i > \eta h_i$ for all $i$, and so
$ \sum_i (g_i - \eta h_i) > 0,$
a contradiction. For the second claim, we apply the first claim to the modified sequence 
$$h_i' := h_i \cdot \ind{h_i \geq \mu / 2} .$$
For the final claim, we apply the first claim to the modified sequence
\begin{equation*}
 h_i'' := h_i\cdot  \ind{h_i \geq H}.  \qedhere
\end{equation*}
\end{proof}

In preparation to make some combinatorial arguments we briefly switch to the counting measure. 
This also allows us to also handle finite subsets $A$ of infinite groups $G$, noting that summations such as
$\sum_{x \in G} R_{A}(x) $
are sensible because $R_A$ is finitely supported. 

\begin{lem}[Sifting lemma -- counting formulation] Consider
\begin{itemize}
\item a finite subset $A$ of an abelian group $G$,
\item a function $f : G \rightarrow \R_{\geq 0}$, and
\item an integer $k \geq 2$. 
\end{itemize}

There is a subset $A' \subseteq A$ with
$$ \frac{1}{|A'|^2} \sum_{a,b \in A'}f(a - b) \leq 2 \cdot \frac{\sum_{x} R_{A}^{-}(x)^k f(x)}{\sum_{x} R_{A}^-(x)^k}$$
and
$$ |A'| \geq \tfrac{1}{2} \cdot \frac{\sum_{x} R_{A}^{-}(x)^k}{|A|^k}. $$
Specifically, $A'$ is of the form
$$ A'(s) := A \cap (A + s_1) \cap \cdots \cap (A + s_{k-1})$$
for some $s = (s_1, s_2 ,\cdots, s_{k-1}) \in G^{k-1}$. 
\end{lem}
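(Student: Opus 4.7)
The plan is to reduce the statement to the weighted pigeonhole principle \cref{weighted-php}(iii) applied to quantities indexed by $s \in G^{k-1}$. The key observation is that although $G$ may be infinite, $A'(s)$ is empty unless every coordinate $s_i$ lies in the finite set $A - A$, so all sums over $s$ below are really finite sums supported on $(A - A)^{k-1}$.

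First I would establish three moment identities by double counting:
\begin{align*}
\sum_{s} |A'(s)| &= |A|^k, \\
\sum_{s} |A'(s)|^2 &= \sum_{x \in G} R_A^-(x)^k, \\
\sum_{s} \; \sum_{a,b \in A'(s)} f(a-b) &= \sum_{x \in G} R_A^-(x)^k \, f(x).
\end{align*}
The first is immediate because for each $a \in A$ the tuples $s$ with $a \in A'(s)$ are in bijection with $(a - A)^{k-1}$, contributing $|A|^{k-1}$ each. For the latter two, exchange orders of summation: for each pair $(a,b) \in A^2$, the number of coordinates $s_i$ with $a - s_i \in A$ and $b - s_i \in A$ equals $R_A^-(b-a) = R_A^-(a-b)$, so the number of full tuples $s$ with $\{a,b\} \subseteq A'(s)$ is $R_A^-(a-b)^{k-1}$. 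Regrouping the outer sum over $(a,b)$ by $x := a - b$, and noting that the number of pairs with a given difference contributes one more factor of $R_A^-(x)$, yields the two $k$-th power identities. The weighting by $f(a-b)$ in the third identity passes through the same double count untouched.

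Next I would apply \cref{weighted-php}(iii) with $g_s := \sum_{a,b \in A'(s)} f(a-b)$ and $h_s := |A'(s)|^2$, so that the base ratio is
$$ \eta := \frac{\sum_s g_s}{\sum_s h_s} = \frac{\sum_x R_A^-(x)^k\, f(x)}{\sum_x R_A^-(x)^k}, $$
exactly half the target upper bound on $\frac{1}{|A'|^2}\sum_{a,b \in A'} f(a-b)$. I would take the threshold
$$ H := \tfrac{1}{4}\Bigl(\textstyle\sum_{s'} h_{s'} / |A|^k \Bigr)^2 , $$
so that $h_s \geq H$ is equivalent to the desired lower bound $|A'(s)| \geq \tfrac{1}{2}\sum_x R_A^-(x)^k / |A|^k$.

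The last step is to verify the hypothesis of part (iii) with $\tau = 1/2$. On the ``small" set $\{s : |A'(s)| < \sqrt{H}\}$ one has $|A'(s)|^2 \leq \sqrt{H}\cdot |A'(s)|$, so by the first moment identity
$$ \sum_{s : h_s < H} h_s \;\leq\; \sqrt{H}\sum_{s} |A'(s)| \;=\; \sqrt{H}\cdot |A|^k \;=\; \tfrac{1}{2}\sum_s h_s , $$
and hence $\sum_{s : h_s \geq H} h_s \geq \tfrac{1}{2}\sum_s h_s$. Part (iii) then produces an $s$ with $g_s/h_s \leq 2\eta$ and $h_s \geq H$, which is exactly the claimed conclusion. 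I don't expect a substantive obstacle beyond getting the double-counting bookkeeping right; the pleasant coincidence that $R_A^-$ is symmetric ($R_A^-(x) = R_A^-(-x)$) is what keeps the per-coordinate count clean.
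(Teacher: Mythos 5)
Your proposal is correct and follows the paper's own argument essentially step for step: the same three moment identities established by double counting, the same application of the weighted pigeonhole principle (part (iii) with $\tau = 1/2$), and the same threshold choice (your $\sqrt{H}$ is exactly the paper's $M$). The only cosmetic differences are that you phrase the identities via bijections rather than expanding $R_A^-(a-b)^{k-1} = \bigl(\sum_t \1_{A+t}(a)\1_{A+t}(b)\bigr)^{k-1}$, and you explicitly note the finite-support caveat which the paper addresses in its preamble rather than inside the proof.
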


\begin{proof}

For $s \in G^{k-1}$, let
$$ A'(s) :=  A \cap (A + s_1) \cap (A + s_2) \cap \cdots \cap (A + s_{k-1}). $$
We wish to understand the quantity
$$ \frac{\sum_{a,b \in A'(s)} f(a-b)}{|A'(s)|^2}  $$
for various choices of $s$.
Define
$$ g(s) := \sum_{a,b \in A'(s)} f(a - b) $$
and 
$$ h(s) := |A'(s)|^{2}.$$ 

We would like to apply the weighted pigeonhole principle to find a suitable choice of $s$, and so we need to compute $\sum_{s} g(s)$ and $\sum_{s} h(s)$. 
Before we start the computation we point out ahead-of-time the following combinatorial identities:
$$ R_{A}^{-}(a-b) = |A \cap (A + a-b)| = |(A - a) \cap (A - b)| = \sum_{t \in G} \1_{A-a}(t) \1_{A-b}(t) = \sum_{t \in G} \1_{A+t}(a) \1_{A+t}(b).$$
We have
\begin{align*}
    \sum_{x} R_{A}^-(x)^k f(x) &=
    \sum_{a,b \in A} R_{A}^-(a-b)^{k-1} f(a-b) \\
    &= \sum_{a,b \in A} \left(\sum_{t} \1_{A+t}(a) \1_{A+t}(b) \right)^{k-1} f(a - b) \\
    &= \sum_{a,b \in A} \sum_{s_1, \ldots s_{k-1}} 
    \left(  \prod_{j=1}^{k-1} \1_{A+s_j}(a) \right)  \cdot
   \left(  \prod_{j=1}^{k-1}  \1_{A+s_j}(b) \right)  \cdot
    f(a - b) \\
    &= \sum_{s_1, \ldots s_{k-1}}  \sum_{a,b \in A} 
    \left(  \1_{\bigcap_{j=1}^{k-1} (A+s_j)}(a) \right)  \cdot
   \left(  \1_{\bigcap_{j=1}^{k-1} (A+s_j)}(b) \right)  \cdot
    f(a - b) \\
    &= \sum_{s_1, \ldots, s_{k-1}}  \sum_{a,b \in A'(s)} f(a-b) \\
    &= \sum_{s} g(s).
\end{align*}
A special case of this same calculation (with $f \equiv 1$) also gives
$$ \sum_{x} R_{A}^-(x)^k = \sum_{s} h(s)  = \sum_s |A'(s)|^2 .$$
Thus we have
$$ \eta := \frac{\sum_s g(s)}{\sum_s h(s)} = \frac{\sum_x R(x)^k f(x)}{\sum_x R(x)^k}. $$
We remark that in the case that $G$ is finite and $A$ is a dense subset of size $|A| = \delta |G|$, we can obtain a quite satisfactory conclusion already by applying part (ii) of the weighted pigeonhole principle. Indeed, for uniformly random $s \in G^{k-1}$, we have average size
$$ \E_{s} |A'(s)|^2 = \E_{s} h(s) = \frac{1}{|G|^{k-1}} \sum_x R_{A}^{-}(x)^k = \frac{1}{|G|^{k-2}} \E_x R_{A}^-(x)^k = \delta^{2k} |G|^2  \E_x (A \star A)(x)^k \geq \delta^{2k} |G|^2 .$$
However, we consider the following argument which is better in general. 
We wish to determine a value $M$ which is as large as possible and also satisfies
$$ \sum_{s \: : \: |A'(s)| \geq M} |A'(s)|^2 \geq \tfrac{1}{2}  \cdot \sum_{s} |A'(s)|^2. $$
For any choice of $M$ we have
\begin{align*}
 \sum_{s \: : \: |A'(s)| \leq M} |A'(s)|^2 &\leq M \cdot  \sum_{s} |A'(s)| \\
 &= M \cdot \sum_{s} \sum_{a \in A} \1_{A+s_1}(a) \cdot \1_{A + s_2}(a) \cdots \1_{A + s_{k-1}} (a) \\
 &= M \cdot \sum_{a \in A} \sum_s  \1_{A-a}(s_1) \cdot \1_{A-a}(s_2) \cdots \1_{A-a}(s_{k-1}) \\
 &= M \cdot |A|^k.
\end{align*}
This shows that we may take 
$$ M = \tfrac{1}{2} \cdot \frac{\sum_{s} |A'(s)|^2}{|A|^k}  = \tfrac{1}{2} \cdot \frac{\sum_x R_{A}^-(x)^k}{|A|^k} .$$
We conclude by part (iii) of the weighted pigeonhole principle. 
\end{proof}

\begin{remark}
We note that the combinatorial argument above which shows that
$$ \sum_{x} R_{A}^-(x)^k f(x) = \sum_{s} \sum_{a,b \in A'(s)} f(a-b) = \sum_{s}  \sum_{x} R_{A'(s)}^-(x) f(x) $$
is in fact valid for any function $f$. This more plainly means that we have the identity
$$ {R_{A}^{-}}(x)^k \equiv \sum_{s} R_{A'(s)}^-(x)$$
for the $k$-th power of $R_A^-$. 
Given this, we can summarize the remaining points of the argument as follows.
\begin{itemize}
\item For any $f$, we can interpret the sum $\sum_{s,x} R_{A'(s)}^-(x) f(x)$ in two ways: firstly as $$\sum_{s} \left( \sum_{x} R_{A'(s)}^-(x) f(x) \right)$$ but also as $ \sum_{x} \left( \sum_{s}R_{A'(s)}^-(x) \right) f(x) = \sum_x R_{A}^-(x)^k f(x)$.
\item For the particular case $f \equiv 1$, we may further interpret $\sum_{x} R_{A'(s)}^-(x) = |A'(s)|^2$.
\item If we have a reasonable bound on the size of $\set{s}{|A(s')| \neq 0}$, we can already infer a reasonable bound on the median value of $|A'(s)|^2$. 
\item We also have the identity $\sum_{s} |A'(s)| = |A|^k$, which is relevant for obtaining a better estimate for the median value of $|A'(s)|^2$. 
\end{itemize}
\end{remark}

\begin{proof}[Proof of the extended Pre-BSG Lemma (\cref{extended-pre-bsg})]
We apply the sifting lemma straightforwardly to the sub-level set indicator
$$ f(x) = \ind{R_{A}^-(x) \leq c \cdot \kappa^{\frac{1}{k-1}} \cdot |A|},$$
where
$$ \kappa = \frac{\sum_{x} R_{A}^-(x)^k}{|A|^{k+1}}. $$
We check that
$$ \sum_x R_{A}^-(x)^{k} f(x) \leq c^{k-1} \cdot \kappa \cdot |A|^{k-1} \sum_{x} R_{A}^-(x) = c^{k-1} \cdot \kappa \cdot |A|^{k+1},$$
so indeed
$$ \frac{\sum_x R_{A}^-(x)^k f(x)}{\sum_x R_{A}^-(x)^k} =  \frac{\sum_x R_{A}^-(x)^k f(x)}{\kappa |A|^{k+1}} \leq c^{k-1}. $$
We also have
\begin{equation*}
|A'| \geq \tfrac{1}{2} \cdot \frac{\sum_x R_{A}^-(x)^k}{|A|^k} = \tfrac{1}{2} \cdot \frac{\kappa |A|^{k+1}}{|A|^k}   = \tfrac{1}{2} \cdot \kappa \cdot |A|. \qedhere
\end{equation*}
\end{proof}

We proceed to state a density formulation of the sifting lemma for subsets $A \subseteq G$ of a finite groups $G$ with of $|A| = \delta |G|$.
We use the notation
$$ D^{\wedge k}(x) := \left( \frac{D(x)}{\|D\|_k}\right)^k $$
to denote the unique density function proportional to $D^k$.
We also point out the translation
$$ \frac{\sum_{x \in G} R_{A}^-(x)^k}{|A|^k} = \frac{|A|^k}{|G|^k} \sum_{x \in G} (A \star A)(x)^k = \frac{|A|^k}{|G|^{k-1}} \E_x (A \star A)(x)^k = \delta^k \cdot |G| \cdot \|A \star A\|_{k}^k. 
$$ 

\begin{lem}[Sifting lemma -- density formulation] \label{sifting-lem-density-formulation} Consider
\begin{itemize}
\item a set  $A \subseteq G$ of size $|A| = \delta |G|$,
\item a function $f : G \rightarrow \R_{\geq 0}$, and
\item an integer $k \geq 2$. 
\end{itemize}

There is a subset $A' \subseteq A$ with
$$ \ip{A' \star A'}{f} \leq 2 \cdot \ip{(A \star A)^{\wedge k}}{f}$$
and
$$ \frac{|A'|}{|G|} \geq \tfrac{1}{2} \cdot \delta^k \cdot \|A \star A\|_{k}^k \geq \tfrac{1}{2} \cdot \delta^k. $$
\end{lem}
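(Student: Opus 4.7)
The plan is to recast the counting formulation of the sifting lemma (just proved above) into the analytic normalization. The same subset $A'$ already returned by the counting version will work, so no new combinatorial argument is needed; only a dictionary between $R_A^{-}$ and $A \star A$.

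First, I would record the dictionary. For $|A| = \delta|G|$, one has $R_A^{-}(x) = \delta^2 |G| \cdot (A \star A)(x)$. Applying this together with the identity $\sum_{x \in G} g(x) = |G| \cdot \E_x g(x)$ gives
\[
\sum_x R_A^{-}(x)^k = \delta^{2k}|G|^{k+1} \|A \star A\|_k^k,
\qquad
\sum_x R_A^{-}(x)^k f(x) = \delta^{2k}|G|^{k+1} \ip{(A \star A)^k}{f}.
\]
The ratio of these two is $\ip{(A \star A)^k}{f} / \|A \star A\|_k^k$, which equals $\ip{(A \star A)^{\wedge k}}{f}$ by the definition of $D^{\wedge k}$ stated just before the lemma.

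Next, I would translate the left-hand side of the first conclusion. The density $A' \star A'$ corresponds to the distribution of $b - a$ for independent uniform $a, b \in A'$, so $\ip{A' \star A'}{f} = \E_{a,b \in A'} f(b - a)$, and by relabeling the dummy variables $a \leftrightarrow b$ this equals $\frac{1}{|A'|^2} \sum_{a,b \in A'} f(a - b)$. Combined with the previous paragraph, the first inequality of the counting version transforms exactly into $\ip{A' \star A'}{f} \leq 2 \ip{(A \star A)^{\wedge k}}{f}$.

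Finally, for the size bound I would compute
\[
\frac{|A'|}{|G|} \;\geq\; \frac{1}{2|G|} \cdot \frac{\sum_x R_A^{-}(x)^k}{|A|^k} \;=\; \frac{1}{2|G|} \cdot \frac{\delta^{2k}|G|^{k+1}\|A \star A\|_k^k}{\delta^k |G|^k} \;=\; \tfrac{1}{2} \delta^k \|A \star A\|_k^k.
\]
The trailing trivial inequality $\|A \star A\|_k^k \geq 1$ follows from Jensen's inequality applied to the density $A \star A$, for which $\|A \star A\|_1 = 1$. There is no substantive obstacle: the entire argument is bookkeeping against the normalization conventions of the preliminaries, and all the real content was already carried out in the counting-measure version.
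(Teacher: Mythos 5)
Your proof is correct and takes exactly the route the paper intends: it deduces the density formulation from the already-proved counting formulation by translating $R_A^{-}(x) = \delta^2 |G|\,(A\star A)(x)$ and the resulting ratio $\sum_x R_A^{-}(x)^k f(x) / \sum_x R_A^{-}(x)^k = \ip{(A\star A)^{\wedge k}}{f}$. The paper itself merely records the key normalization identity and states the density version without a formal proof block; you have simply spelled out the same bookkeeping in full.
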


One can easily derive a formulation of the extended Pre-BSG Lemma for the dense setting which corresponds e.g.\ to either part of \cref{compressions-escape}. We consider the following formulation which is tailored specifically to our intended application. 

\begin{cor}[Sifting a robust witness] \label{sift-robust-witness}
Suppose that $A \subseteq G$ has size $|A| \geq 2^{-d} |G|$, and
$$ \|A \star A\|_k \geq 1 + \eps$$
for some $k \geq 1$ and $\eps > 0.$ 
Consider 
$$ S := \set{x \in G}{(A \star A) \leq 1 + \eps /2}.$$

Let $\overline{\eps} = \min \{ 1, \eps \}$. There is a subset $A' \subseteq A$ with
$$ \ip{A' \star A'}{\1_S} \leq \frac{\overline{\eps}}{2^4} $$
and
$$ \frac{|A'|}{|G|} \geq 
\begin{cases}
2^{-O(d k)} \cdot \eps^{O(d / \eps)}  &\textnormal{ when } \eps < 1/2  \\
2^{-O(d k)} &\textnormal{ when } \eps \geq 1/2.
\end{cases}
$$
\end{cor}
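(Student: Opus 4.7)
The plan is to apply the density formulation of the sifting lemma (\cref{sifting-lem-density-formulation}) with $f = \1_S$, but to boost the parameter $k$ to a slightly larger value $k' \geq k$ chosen so that the tilted density $(A \star A)^{\wedge k'}$ places vanishingly small mass on the sub-level set $S$. This gives, via sifting, a subset $A'$ whose self-correlation $A' \star A'$ also puts small mass on $S$.

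The key calculation is to bound
\[
\ip{(A \star A)^{\wedge k'}}{\1_S} \;=\; \frac{\E_{x}\,(A\star A)(x)^{k'} \1_S(x)}{\|A\star A\|_{k'}^{k'}}
\;\leq\; \frac{(1+\eps/2)^{k'}}{(1+\eps)^{k'}} \;=\; \left(1 - \frac{\eps/2}{1+\eps}\right)^{k'},
\]
where in the numerator we use the definition of $S$, and in the denominator we use the monotonicity $\|A \star A\|_{k'} \geq \|A \star A\|_{k} \geq 1 + \eps$ (valid for $k' \geq k$). In the regime $\eps \geq 1/2$, a choice $k' = \max(k, C)$ with $C$ an absolute constant pushes this ratio below $1/32 = \overline{\eps}/2^5$. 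In the regime $\eps < 1/2$, we choose $k' = \max(k, C \eps^{-1} \log(1/\eps))$ so that $(1 - \eps/4)^{k'} \leq \eps/32 = \overline{\eps}/2^{5}$ for a large enough absolute constant $C$.

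With this $k'$ in hand, \cref{sifting-lem-density-formulation} yields a set $A'$ with
\[
\ip{A' \star A'}{\1_S} \;\leq\; 2\,\ip{(A \star A)^{\wedge k'}}{\1_S} \;\leq\; \frac{\overline{\eps}}{2^4},
\]
as required. For the size, using $\|A \star A\|_{k'} \geq 1$ and $\delta \geq 2^{-d}$,
\[
\frac{|A'|}{|G|} \;\geq\; \tfrac{1}{2}\, \delta^{k'} \;\geq\; \tfrac{1}{2}\, 2^{-d k'}.
\]
In the case $\eps \geq 1/2$, $k' = O(k)$ gives $|A'|/|G| \geq 2^{-O(dk)}$. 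In the case $\eps < 1/2$, $k' \leq k + O(\eps^{-1}\log(1/\eps))$ gives
\[
\frac{|A'|}{|G|} \;\geq\; 2^{-O(dk)} \cdot 2^{-O(d \log(1/\eps)/\eps)} \;=\; 2^{-O(dk)} \cdot \eps^{O(d/\eps)}.
\]

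\textbf{Anticipated difficulty.} The proof is essentially a one-line instantiation of the sifting lemma; the only care required is in choosing $k'$ so that simultaneously (a) $k' \geq k$ (to preserve the lower bound on $\|A \star A\|_{k'}$), and (b) the geometric factor $\bigl((1+\eps/2)/(1+\eps)\bigr)^{k'}$ beats the required threshold $\overline{\eps}/2^5$, with the right split at $\eps = 1/2$ to match the stated case distinction. Tracking the exponents then gives the two quoted bounds on $|A'|/|G|$ without further work.
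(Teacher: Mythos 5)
Your proposal is correct and matches the paper's proof: both instantiate the sifting lemma (\cref{sifting-lem-density-formulation}) with $f = \1_S$ and a boosted $k' \geq k$, bounding $\ip{(A\star A)^{\wedge k'}}{\1_S} \leq \bigl(\tfrac{1+\eps/2}{1+\eps}\bigr)^{k'}$ via monotonicity of $k \mapsto \|A\star A\|_k$ and then choosing $k' = k + O(1)$ or $k' = k + O(\eps^{-1}\log(1/\eps))$ according to the two regimes, exactly as the paper does. (One tiny slip: for $1/2 \leq \eps < 1$ you have $\overline{\eps}/2^5 = \eps/32 < 1/32$, so the line ``$1/32 = \overline{\eps}/2^5$'' is not an equality there, but an absolute constant $k'$ still beats $\eps/32 \geq 1/64$ on that interval, so the argument is unaffected.)
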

\begin{proof}
Apply the sifting lemma with $f := \1_S$ and
\begin{equation*} k' = \begin{cases}
\lceil k +  2 \lg(32/\eps) / \eps \rceil  &\textnormal{ when } \eps < 1/2  \\
\lceil k + 20 \rceil &\textnormal{ when } \eps \geq 1/2. \qedhere
\end{cases} 
\end{equation*}
\end{proof}
The point of the quantity $\overline{\eps} = \min \{1, \eps \}$ is that for all $\eps \geq 0$, $\lambda \geq 1$ we have
$$ \left(1 + \frac{\eps}{\lambda} \right) \left(1 - \frac{\overline{\eps}}{4 \lambda} \right) \geq 1 + \frac{\eps}{2 \lambda}. $$
So $A' \star A'$ above is a robust witness to $A \star A \gg 1 + \eps/4$ with some room to spare: for 
$$f := \ind{A \star A \geq 1 + \eps/2}$$ 
we have
$$ \ip{A' \star A'}{A \star A} \geq \left(1 + \frac{\eps}{2} \right) \ip{A' \star A'}{f} \geq \left(1 + \frac{\eps}{2} \right)  \left(1 - \frac{\overline{\eps}}{2^3} \right)  \geq 1 + \frac{\eps}{2^2}. $$

\begin{thm} \label{II-part-2}
Let $A \subseteq \F_q^n$ be a set of size $|A| \geq 2^{-d} |\F_q^n|$, where $d \geq 1$.
Suppose that
$$\|A \star A\|_{k} \geq 1 + \eps$$
for some $k \geq 1$. We have
$$ \|A\|_{\perp, r} \geq 1 + \frac{\eps}{4} $$
for some \begin{equation*} r = \begin{cases}
 O(d^4 k^4)  &\textnormal{ when } \eps \geq 1/2  \\
 O ( d^4 k^4 / \eps^2 + d^4  \lg(1/\eps)^4 / \eps^6) &\textnormal{ when } \eps < 1/2. \qedhere
\end{cases} 
\end{equation*}
In either case we also have the related conclusion
$$ \max_{\substack{W \subseteq \F_q^n \\ \textnormal{dim}(W) \leq r}} \|P_W A - 1 \|_{2}^2 \geq \frac{\eps}{4} $$
where $P_W A := W^{\perp} * A$.
\end{thm}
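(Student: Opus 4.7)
The plan is to execute the four-step program sketched in the technical introduction. Starting from $\|A \star A\|_k \geq 1 + \eps$, I first invoke the sifting corollary (\cref{sift-robust-witness}) with the sub-level-set $S := \{x : (A \star A)(x) \leq 1 + \eps/2\}$; this yields a subset $A' \subseteq A$ of density at least $2^{-k''}$ satisfying $\ip{A' \star A'}{\1_S} \leq \overline{\eps}/16$, where $k'' = O(dk)$ in the regime $\eps \geq 1/2$ and $k'' = O(dk + (d/\eps)\log(1/\eps))$ when $\eps < 1/2$. Letting $g := 1 - \1_S$, this rewrites as $\ip{A' \star A'}{g} \geq 1 - \overline{\eps}/16$, and since $A' \star A' = (-A') * A'$ is a convolution of two sets each of density $\geq 2^{-k''}$, this witnesses $\|g\|_{*, k''} \geq 1 - \overline{\eps}/16$.

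Next I apply the Sanders invariance proposition from the technical introduction to $g$, with slack $\eta$ of order $\eps$, to obtain $\|g\|_{\perp, r} \geq 1 - \eps/8$ with $r = O((k'')^4 / \eps^2)$; substituting the two bounds on $k''$ recovers exactly the claimed codimensions (in the small-$\eps$ regime the $d^4 \log(1/\eps)^4 / \eps^6$ term arises from the $(d/\eps)\log(1/\eps)$ contribution to $k''$ raised to the fourth power and then divided by $\eps^2$). Let $V$ be an affine subspace of codimension at most $r$ realizing this, so $\ip{V}{g} \geq 1 - \eps/8$. Since $A \star A \geq 0$ everywhere and $A \star A > 1 + \eps/2$ on the support of $g$, pointwise comparison yields $\ip{V}{A \star A} \geq (1 + \eps/2)\ip{V}{g} \geq (1 + \eps/2)(1 - \eps/8) \geq 1 + \eps/4$ for $\eps \leq 1$.

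Finally I convert this into the two stated conclusions via a short Fourier calculation. Write $V = V_0 + \theta$ with $V_0$ a linear subspace of codimension $r$, and set $W := V_0^{\perp}$, so $\dim W \leq r$. A direct computation gives $\hat{V}(\alpha) = e_\alpha(\theta) \ind{\alpha \in W}$ and hence $\ip{V}{A \star A} = \sum_{\alpha \in W} e_\alpha(-\theta) |\hat{A}(\alpha)|^2$; by the triangle inequality this is at most $\ip{V_0}{A \star A} = \sum_{\alpha \in W} |\hat{A}(\alpha)|^2$. Isolating the $\alpha = 0$ term yields $\|P_W A - 1\|_2^2 = \sum_{\alpha \in W \setminus \{0\}} |\hat{A}(\alpha)|^2 \geq \eps/4$, establishing the second conclusion. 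For the first, set $a_\theta := \ip{V_0 + \theta}{A}$ for each coset $V_0 + \theta$; Parseval on the quotient $\F_q^n / V_0$ gives $\E_\theta a_\theta^2 = \sum_{\alpha \in W} |\hat{A}(\alpha)|^2 \geq 1 + \eps/4$, and since $a_\theta \geq 0$ with $\E_\theta a_\theta = 1$ we get $\max_\theta a_\theta \geq \E_\theta a_\theta^2 \geq 1 + \eps/4$ from $a_\theta^2 \leq (\max_{\theta'} a_{\theta'}) \cdot a_\theta$, exhibiting an affine subspace witnessing $\|A\|_{\perp, r} \geq 1 + \eps/4$.

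I expect the main obstacle to be the quantitative bookkeeping in the small-$\eps$ regime: one must carefully track how the density loss $\eps^{O(d/\eps)}$ from sifting propagates through the fourth power in Sanders' invariance and the subsequent $1/\eps^2$ factor to produce exactly the $d^4 \log(1/\eps)^4 / \eps^6$ term, and verify that the constants in ``slack $\eta$ of order $\eps$'' are chosen so that the accumulated error $\overline{\eps}/16 + O(\eta)$ comfortably fits inside the final $\eps/8$ budget.
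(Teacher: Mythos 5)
Your proof is correct and follows essentially the same route as the paper's: sift a robust witness of the super-level set of $A \star A$, bootstrap it through Sanders' invariance to obtain a large affine subspace $V$ with $\ip{V}{A \star A} \geq 1 + \eps/4$, and then read off both conclusions by averaging and by Fourier expansion. The only cosmetic differences are that you work with the complementary sub-level indicator $g = 1 - \1_S$ rather than the super-level indicator $f$, you phrase the Sanders step via the abstract $\|\cdot\|_{*,k}\!\to\!\|\cdot\|_{\perp,r}$ proposition rather than unpacking it, and for the $P_W$ conclusion you pass from the affine $V$ to its underlying linear $V_0$ by the triangle inequality (giving $\ip{V_0}{A\star A}\geq\ip{V}{A\star A}$), whereas the paper stays with $V * A' \star A'$ and uses $|\hat{A'}(\alpha)|^2 \leq 1$; both yield $\|P_W A - 1\|_2^2 \geq \eps/4$ and the same codimension bounds.
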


We note that the latter conclusion here is (qualitatively) stronger for small $\eps$. 
Indeed, we have $\|P_W A \|_2^2 = 1 + \|P_W A - 1 \|_{2}^2$, and in light of the alternative characterization
$$ \|A\|_{\perp,r} = \max_{\substack{W \subseteq \F_q^n \\ \textnormal{dim}(W) \leq r}} \|P_W A\|_{\infty},$$
we have the lower bound
$$ \|A\|_{\perp,r} \geq \|P_W A\|_{\infty} \geq \|P_W A\|_2 \geq \sqrt{1 + \frac{\eps}{4}} \approx 1 + \frac{\eps}{8}  $$
for some small subspace $W$.

\begin{proof}
In the case $\eps \leq \frac{1}{2}$,
we replace $k$  by  $\max \{k, \lg(1/\eps)/\eps \}$ for convenience, noting that $\|A \star A\|_{k}$ does not decrease as a result.
We consider
$$ f := \ind{A \star A \geq 1 + \tfrac{\eps}{2}} $$
and (using \cref{sift-robust-witness}) we sift a robust witness $A' \star A'$ with
$$ \ip{A' \star A'}{f} \geq 1 - \frac{\overline{\eps}}{2^4}.$$
and density $|A'|/|G| \geq 2^{-O(d k)}$.
We apply Sanders' invariance lemma (\cref{sanders-invar-appendix}) to obtain a linear subspace $V$ with
$$ \ip{V * A' \star A'}{f} \geq \ip{A' \star A'}{f} - \frac{\overline{\eps}}{2^4} \geq 1 - \frac{\overline{\eps}}{2^3} $$
and codimension $r \leq O(d^4 k^4 / \overline{\eps}^2). $
We conclude that
$$\ip{V * A' \star A'}{A \star A} \geq \left(1 + \frac{\eps}{2} \right) \ip{V * A' \star A'}{f} \geq 
\left(1 + \frac{\eps}{2} \right) \left(1 - \frac{\overline{\eps}}{2^3} \right) \geq 1 + \frac{\eps}{4}.
$$
This gives
$$ \ip{V'}{A \star A} \geq 1 + \frac{\eps}{4} $$
for some affine subspace $V'$ and 
$$ \ip{V'}{A} \geq 1 + \frac{\eps}{4} $$
for some $V'' = V' + a$. This proves the first claim.

To prove the latter claim, we depart from the argument above when we reach
$$ \ip{V * A' \star A'}{A \star A} \geq 1 + \frac{\eps}{4}. $$
It is pleasant, although not ultimately crucial, to interpret this quantity Fourier-analytically. Letting $W = V^{\perp}$, we can express
$$ \ip{V * A' \star A'}{A \star A} = \sum_{\alpha \in W} |\hat{A'}(\alpha)|^2 |\hat{A}(\alpha)|^2 = 1 + \sum_{\substack{\alpha \in W \\ \alpha \neq 0}} |\hat{A'}(\alpha)|^2 |\hat{A}(\alpha)|^2 ,
$$
and so we have
$$ \sum_{\substack{\alpha \in W \\ \alpha \neq 0}} |\hat{A'}(\alpha)|^2 |\hat{A}(\alpha)|^2 \geq \frac{\eps}{4}. $$
Using the trivial bound $|\hat{A'}(\alpha)| \leq 1$ we infer
$$ \sum_{\substack{\alpha \in W \\ \alpha \neq 0}} |\hat{A}(\alpha)|^2 \geq \frac{\eps}{4}, $$
and indeed this quantity is the same as $\|P_W A - 1\|_{2}^2$.  \qedhere
\end{proof}

\subsection{Sifting for general convolutions}

We now address the problem of obtaining
$ \|A\|_{\perp,r} \geq 1 + \frac{\eps}{8} $
from an assumption
$ \|A\|_{*,k} \geq 1 + \eps.$
To begin with we have by definition that 
$ \ip{A}{B * C} \geq 1 + \eps$
for some large sets $|B|,|C| \geq  2^{-k} |G|$.
We argue that
$$ \ip{A}{B * C} = \ip{A \star B}{C} \leq 2^{k/k'} \|A \star B\|_{k'}$$
for any choice of $k'$. Choosing $k' = 4 \lg(e) k/\overline{\eps}$ gives
$$ \|A \star B\|_{k'} \geq 2^{-k/k'}(1 + \eps) \geq \left( 1 - \frac{\overline{\eps}}{4}\right)(1 + \eps) \geq 1 + \frac{\eps}{2}. $$

At this point we argue roughly as before -- we just need a variant of the sifting lemma for general convolutions $A \star B$.

\begin{lem}[Sifting general convolutions] \label{sifting-general} Consider
\begin{itemize}
\item an abelian group $G$,
\item two finite sets $A,B \subseteq G$,
\item a function $f : G \rightarrow \R_{\geq 0}$, and
\item an integer $k \geq 2$. 
\end{itemize}

There are subsets $A' \subseteq A$ and $B' \subseteq B$ with
$$ \frac{1}{|A'||B'|} \sum_{a \in A'} \sum_{b \in B'} f(a - b) \leq 2 \cdot \frac{\sum_{x} R_{A,B}^{-}(x)^k f(x)}{\sum_{x} R_{A,B}^-(x)^k}$$
and
$$ |A'| \geq \tfrac{1}{4} \cdot \frac{\sum_{x} R_{A,B}^{-}(x)^k}{|B|^k}, $$
$$ |B'| \geq \tfrac{1}{4} \cdot \frac{\sum_{x} R_{A,B}^{-}(x)^k}{|A|^k},$$
Specifically, $A'$ and $B'$ are of the form
$$ A'(s) := A \cap (A + s_1) \cap \cdots \cap (A + s_{k-1}),$$
$$ B'(s) := B \cap (B + s_1) \cap \cdots \cap (B + s_{k-1})$$
for some $s = (s_1, s_2 ,\cdots s_{k-1}) \in G^{k-1}$. 
\end{lem}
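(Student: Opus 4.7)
The plan is to follow the same template as the proof of the self-convolution sifting lemma, carrying through the natural bilinear analogue with $A'(s)$ and $B'(s)$ in place of a single $A'(s)$, and arguing a bit more carefully to ensure that \emph{both} $A'$ and $B'$ come out large.

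The first step is to derive the bilinear version of the key combinatorial identity. Starting from $R_{A,B}^{-}(a-b) = |A \cap (B+a-b)| = |(A-a) \cap (B-b)|$ and expanding the intersection as a sum of products of indicators, one obtains
$$ R_{A,B}^{-}(a-b) = \sum_{t \in G} \1_{A+t}(a) \1_{B+t}(b), $$
which is the direct generalization of the identity used in the self-convolution case. Then, setting
$$ g(s) := \sum_{a \in A'(s),\, b \in B'(s)} f(a-b), \qquad h(s) := |A'(s)| \cdot |B'(s)|, $$
the same telescoping computation (raise the identity to the $(k-1)$-th power, swap the two summations, and collapse the inner products of indicators into intersections) yields
$$ \sum_{s \in G^{k-1}} g(s) = \sum_{x \in G} R_{A,B}^{-}(x)^k f(x), \qquad \sum_{s \in G^{k-1}} h(s) = \sum_{x \in G} R_{A,B}^{-}(x)^k. $$
Thus $\eta := \sum_s g(s) / \sum_s h(s)$ equals the target ratio on the right-hand side of the conclusion.

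Next, I would apply the weighted pigeonhole principle (\cref{weighted-php}) to pick a good $s$. The only wrinkle compared with the self-convolution case is that we need both $|A'(s)|$ and $|B'(s)|$ to be large, not just their product. For this I would use the marginal identities
$$ \sum_{s \in G^{k-1}} |A'(s)| = |A|^k, \qquad \sum_{s \in G^{k-1}} |B'(s)| = |B|^k, $$
which follow by the same unfolding (with $f \equiv 1$ and the identity $\sum_{s'} \1_{A+s'}(a) = |A|$). Setting
$$ M_A := \tfrac{1}{4} \cdot \frac{\sum_x R_{A,B}^{-}(x)^k}{|B|^k}, \qquad M_B := \tfrac{1}{4} \cdot \frac{\sum_x R_{A,B}^{-}(x)^k}{|A|^k}, $$
I would then estimate
$$ \sum_{s \,:\, |A'(s)| < M_A} h(s) \;\leq\; M_A \sum_s |B'(s)| \;=\; \tfrac{1}{4} \sum_s h(s), $$
and symmetrically for the condition $|B'(s)| < M_B$. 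A union bound shows that the set $S^* := \{ s : |A'(s)| \geq M_A \text{ and } |B'(s)| \geq M_B \}$ carries at least half of the total $h$-mass. Finally, part (iii) of \cref{weighted-php} applied to the modified sequence $h(s) \cdot \1_{s \in S^*}$ (with $\tau = 1/2$) produces an $s \in S^*$ with $g(s)/h(s) \leq 2\eta$. The sets $A' := A'(s)$ and $B' := B'(s)$ then satisfy all three required inequalities.

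The main obstacle, and essentially the only real difference from the self-convolution proof, is the simultaneous lower bound on $|A'|$ and $|B'|$; this is handled by the two-threshold union-bound argument above, which is exactly why the constant $\tfrac{1}{4}$ (rather than $\tfrac{1}{2}$) appears in the size estimates. Everything else is a routine bilinear transcription of the already-established argument, and no new tools are needed.
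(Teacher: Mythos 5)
Your proof is correct and follows essentially the same route as the paper: the same bilinear identity, the same marginal counting, the same two-threshold choice of $M_A, M_B$, and the same pigeonhole argument over the index set $S^*$. One small bookkeeping note: since $S^*$ is cut out by a joint condition on $|A'(s)|$ and $|B'(s)|$ rather than a single threshold on $h(s)$, the citation should be to variant (iv) of the weighted pigeonhole principle rather than variant (iii) — though your explicit description (``applied to the modified sequence $h(s)\cdot\1_{s\in S^*}$'') is precisely how (iv) reduces to (i), so the argument itself is sound.
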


 To prove this we will need one more form of the weighted pigeonhole principle. 

\begin{prop}[Continuation of \cref{weighted-php}]   {\ } \\
Under the hypotheses of \cref{weighted-php}, 
\begin{enumerate}
\item[(iv)] Suppose that $I \subseteq [m]$ and $\tau > 0$ are such that
$$ \sum_{i \in I} h_i \geq \tau \sum_{i} h_i. $$
Then there is a choice of $j$ with both
$$ \frac{g_j}{h_j} \leq \frac{\eta}{\tau} \text{ and } j \in I. $$
\end{enumerate}
\end{prop}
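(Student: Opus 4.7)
The plan is to reduce part (iv) directly to part (i) of \cref{weighted-php} by a simple truncation argument, in direct analogy with how parts (ii) and (iii) are handled in the proof already given. Namely, I would define a modified sequence
$$ g_i' := g_i \cdot \ind{i \in I}, \qquad h_i' := h_i \cdot \ind{i \in I},$$
and apply part (i) to the pair $(g_i', h_i')$. The hypothesis $\sum_{i \in I} h_i \geq \tau \sum_i h_i > 0$ ensures that $\sum_i h_i' > 0$, so part (i) applies and delivers some $j$ with $g_j'/h_j' \leq \sum_i g_i' / \sum_i h_i'$ (not of the form $0/0$). Since the ratio $g_j'/h_j'$ is only well-defined when $h_j' \neq 0$, such a $j$ must lie in $I$, and then $g_j'/h_j' = g_j/h_j$.

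It remains to estimate the resulting upper bound. I would write
$$ \frac{\sum_i g_i'}{\sum_i h_i'} = \frac{\sum_{i \in I} g_i}{\sum_{i \in I} h_i} \leq \frac{\sum_i g_i}{\tau \sum_i h_i} = \frac{\eta}{\tau},$$
where the inequality uses $\sum_{i \in I} g_i \leq \sum_i g_i$ (since all $g_i \geq 0$) in the numerator and $\sum_{i \in I} h_i \geq \tau \sum_i h_i$ in the denominator. Combining these two observations gives the claim: there exists $j \in I$ with $g_j/h_j \leq \eta/\tau$.

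There is essentially no obstacle here: the statement is a cosmetic strengthening of part (i), and the ``localize to $I$ by zeroing out'' device is exactly the same one used in the proofs of parts (ii) and (iii), where $I$ was specialized to $\{i : h_i \geq \mu(h)/2\}$ and $\{i : h_i \geq H\}$ respectively. If desired, one could even observe that part (iii) is itself the special case of part (iv) obtained by taking $I = \{i : h_i \geq H\}$; so part (iv) could equally well be presented as the ``master'' form from which parts (ii) and (iii) are derived.
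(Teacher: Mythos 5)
Your proof is correct and matches the paper's approach in spirit: the paper's one-line proof sets $h_i' := h_i \cdot \ind{i \in I}$ (leaving $g_i$ untouched) and invokes part (i), which already forces the chosen index into $I$ since indices with $h_j' = 0$ yield ratios that are either $+\infty$ or of the excluded form $0/0$. Your version, which additionally zeroes out $g_i$ off $I$, is a harmless cosmetic variant that yields the same bound $\eta/\tau$, and your closing observation that (ii) and (iii) are special cases of (iv) is accurate.
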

\begin{proof}
Consider the modified sequence $h_i' := h_i \cdot \ind{i \in I}$. \qedhere
\end{proof}

\begin{proof}[Proof of \cref{sifting-general}]

For $s \in G^{k-1}$, let
$$ A'(s) :=  A \cap (A + s_1)  \cap \cdots \cap (A + s_{k-1}), $$
$$ B'(s) := B \cap (B + s_1) \cap \cdots \cap (B + s_{k-1}) $$
Note that
$$ R_{A,B}^{-}(a-b) = |A \cap (B + a-b)| = |(A - a) \cap (B - b)| = \sum_{t \in G} \1_{A-a}(t) \1_{B-b}(t) = \sum_{t \in G} \1_{A+t}(a) \1_{B+t}(b).$$
We express
\begin{align*}
    \sum_{x} R_{A,B}^-(x)^k f(x) &= \sum_{a \in A} \sum_{b \in B}  \left(\sum_{t} \1_{A+t}(a) \1_{B+t}(b) \right)^{k-1} f(a - b) \\
    &= \sum_{s} \sum_{a \in A} \sum_{b \in B} 
    \left(  \1_{\bigcap_{j=1}^{k-1} (A+s_j)}(a) \right)  \cdot
   \left(  \1_{\bigcap_{j=1}^{k-1} (B+s_j)}(b) \right)  \cdot
    f(a - b) \\
    &=: \sum_{s} \Phi_f(s).
\end{align*}
We apply the weighted pigeonhole principle to the ratio
$$ \frac{\Phi_f(s)}{ \Phi_{1}(s)} .$$
to find a suitable choice of $s$.
Specifically, we apply variant (iv) with some index-set of the form
$$ S = \set{s}{ |A'(s)| \geq M_{A} \textnormal{ and } |B'(s)| \geq  M_{B}}. $$
For any choice of $M_A,M_B$ we can estimate
$$ \sum_{s \: : \: s \not\in S} \Phi_{1}(s) =  \sum_{s \: : \: s \not\in S} |A'(s)||B'(s)| \leq
M_A \sum_{s} |B'(s)| + M_B \sum_{s} |A'(s)| =  M_A |B|^k + M_B |A|^k.
$$
Choosing 
$$ M_A := \tfrac{1}{4} \cdot \frac{\sum_{s} \Phi_s(s)}{|B|^k} = \tfrac{1}{4} \cdot \frac{\sum_x R_{A,B}^-(x)^k}{|B|^k}, $$
$$ M_B := \tfrac{1}{4} \cdot \frac{\sum_{s} \Phi_s(s)}{|A|^k} = \tfrac{1}{4} \cdot \frac{\sum_x R_{A,B}^-(x)^k}{|A|^k} $$
is sufficient to ensure that
\begin{equation*}
\sum_{s \in S} \Phi_{1}(s) \geq \frac{1}{2} \cdot \sum_{s} \Phi_{1}(s). \qedhere
\end{equation*}
\end{proof}

The size guarantees here for $A'$ and $B'$ have a pleasant interpretation in the density formulation. 
Let $G$ be a finite group and let $A \subseteq G$ be a set of size $|A| = \delta |G|$.
We have the translation
$$ \frac{\sum_x R_{A,B}^-(x)^k}{|B|^k} = \frac{|A|^k}{|G|^k} \sum_{x} (A \star B)(x)^k = \delta^{k} \cdot \|A \star B\|_{k}^k \cdot |G| \geq \delta^{k} \cdot |G|. 
$$

\begin{lem}[Sifting general convolutions -- density formulation] Consider
\begin{itemize}
\item a finite abelian group $G$,
\item two sets $A,B \subseteq G$ of sizes $|A| = \delta_A |G|$, $|B| = \delta_B |G|$.
\item a function $f : G \rightarrow \R_{\geq 0}$, and
\item an integer $k \geq 2$. 
\end{itemize}

There are subsets $A' \subseteq A$ and $B' \subseteq B$ with
$$ \ip{A' \star B'}{f}\leq 2 \cdot \ip{(A \star B)^{\wedge k}}{f}$$
and
$$ \frac{|A'|}{|G|} \geq \tfrac{1}{4} \cdot \delta_A^k \cdot \|A \star B\|_{k}^k \geq \tfrac{1}{4} \cdot \delta_A^k ,$$
$$ \frac{|B'|}{|G|} \geq  \tfrac{1}{4} \cdot \delta_B^k \cdot \|A \star B\|_{k}^k \geq \tfrac{1}{4} \cdot \delta_B^k .$$
\end{lem}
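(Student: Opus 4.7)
The plan is to translate Lemma \ref{sifting-general} (the counting-formulation sifting lemma for general convolutions, proved immediately above) into density language using the dictionary implicit in our normalization conventions. The only mildly delicate point is a sign convention: the counting version's left-hand side evaluates $f(a-b)$, whereas $\ip{A' \star B'}{f}$ naturally evaluates $f$ at $b-a$ (since, per the preliminaries, $A' \star B'$ is the density of $b-a$), so the counting lemma will be applied to a reflected weight.

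First I record the dictionary. Starting from $(A \star B)(x) = \E_y A(y) B(x+y)$ and the rescaling $A(y) = \1_A(y)/\delta_A$, a direct computation gives $R_{A,B}^-(x) = \tfrac{|A||B|}{|G|}\,(A \star B)(-x)$, the sign flip being an artifact of our cross-correlation convention. Raising to the $k$th power and summing over $x$ yields
$$ \sum_{x \in G} R_{A,B}^-(x)^k = \frac{|A|^k |B|^k}{|G|^k} \sum_x (A \star B)(x)^k = \delta_A^k \delta_B^k |G|^{k+1} \|A \star B\|_k^k, $$
which is exactly the translation already foreshadowed in the text immediately preceding the lemma statement.

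Next I apply Lemma \ref{sifting-general} to the reflected weight $\tilde f(x) := f(-x)$ (still nonnegative), obtaining $A' \subseteq A$ and $B' \subseteq B$ satisfying the three counting-formulation inequalities with $\tilde f$ in place of $f$. The two size bounds translate directly through the dictionary to
$$ \frac{|A'|}{|G|} \geq \tfrac{1}{4} \cdot \delta_A^k \|A \star B\|_k^k, \qquad \frac{|B'|}{|G|} \geq \tfrac{1}{4} \cdot \delta_B^k \|A \star B\|_k^k, $$
and since $A \star B$ is itself a density, Jensen's inequality gives $\|A \star B\|_k \geq \|A \star B\|_1 = 1$, yielding the weaker bounds $\tfrac{1}{4}\delta_A^k$ and $\tfrac{1}{4}\delta_B^k$ as stated.

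Finally I translate the main inequality. The left-hand side of the counting version with $\tilde f$ is $\E_{(a,b) \in A' \times B'} \tilde f(a-b) = \E_{(a,b)} f(b-a) = \ip{A' \star B'}{f}$. On the right-hand side the sign flip in $R_{A,B}^-(x) = \tfrac{|A||B|}{|G|}(A \star B)(-x)$ cancels the reflection in $\tilde f$, so
$$ \frac{\sum_x R_{A,B}^-(x)^k \tilde f(x)}{\sum_x R_{A,B}^-(x)^k} = \frac{\sum_x (A \star B)(x)^k f(x)}{\sum_x (A \star B)(x)^k} = \ip{(A \star B)^{\wedge k}}{f}, $$
and the counting inequality becomes $\ip{A' \star B'}{f} \leq 2 \ip{(A \star B)^{\wedge k}}{f}$, as desired. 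There is essentially no obstacle: all of the combinatorial work was absorbed into the counting formulation, and the density version is pure bookkeeping modulo the reflection trick noted above.
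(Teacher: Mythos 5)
Your proof is correct, and it carries out exactly what the paper intends: a direct translation of the counting-formulation lemma (Lemma~\ref{sifting-general}) into density language via the dictionary sketched immediately before the statement, followed by Jensen's inequality $\|A \star B\|_k \geq \|A \star B\|_1 = 1$ for the weaker size bounds. The one genuinely helpful thing you add that the paper glosses over is the reflection check: since $R_{A,B}^-(x) = \tfrac{|A||B|}{|G|}(A\star B)(-x)$ (the paper's cross-correlation convention makes $A\star B$ the density of $b-a$, while the counting lemma evaluates $f(a-b)$), the counting lemma must be applied to $\tilde f(x)=f(-x)$ for the inner products to line up -- a sign nuance that is invisible in the $A'=B'$ self-correlation case but real here. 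This does not change the structure of the argument, and your size-bound computation $\tfrac{\sum_x R_{A,B}^-(x)^k}{|B|^k}=\delta_A^k\,|G|\,\|A\star B\|_k^k$ matches the paper's stated translation identically.
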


\begin{thm} \label{II-part-1}
Let $A \subseteq \F_q^n$ be a set of size $|A| \geq 2^{-d} |\F_q^n|$, where $d \geq 1$.
Suppose that
$$\|A\|_{*,k} \geq 1 + \eps $$
for some $k \geq 1$. We have
$$ \|A\|_{\perp, r} \geq 1 + \frac{\eps}{8} $$
for some \begin{equation*} r = \begin{cases}
O(d k^7)  &\textnormal{ when } \eps \geq 1  \\
 O( d k^7 / \eps^6 + d k^3 \lg(2/\eps)^4 / \eps^6) &\textnormal{ when } \eps < 1. \qedhere
\end{cases} 
\end{equation*}
\end{thm}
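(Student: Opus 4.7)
The plan is to mirror the proof of \cref{II-part-2}, with the cross-convolution $A \star B$ playing the role of the self-convolution $A \star A$. Starting from $\|A\|_{*,k} \geq 1 + \eps$, by definition there exist sets $B,C$ of density at least $2^{-k}$ with $\ip{B * C}{A} \geq 1 + \eps$, which rewrites as $\ip{A \star B}{C} \geq 1 + \eps$. Applying H\"older together with the bound $\|C\|_{k'/(k'-1)} \leq \|C\|_\infty^{1/k'} \leq 2^{k/k'}$ gives $\|A \star B\|_{k'} \geq 2^{-k/k'}(1+\eps) \geq 1 + \eps/2$ as soon as $k' = O(k/\bar\eps)$, where $\bar\eps := \min\{1,\eps\}$.

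Next, I would sift a robust witness to this large-norm condition using the general-convolution analogue of \cref{sift-robust-witness}. Setting $f := \ind{(A \star B)(x) \geq 1 + \eps/4}$ and applying \cref{sifting-general} with a parameter $k''$ slightly larger than $k'$ (large enough that $\ip{(A \star B)^{\wedge k''}}{1-f}$ is $O(\bar\eps)$), I obtain $A' \subseteq A$ and $B' \subseteq B$ with $\ip{A' \star B'}{f} \geq 1 - O(\bar\eps)$ and densities at least $2^{-O(dk'')}$ and $2^{-O(kk'')}$, respectively. Since $A' \star B' = \tilde{A'} * B'$ with $\tilde{A'}(x) := A'(-x)$, Sanders' invariance lemma then produces a linear subspace $V$ of bounded codimension with
\[ \ip{V * A' \star B'}{f} \geq \ip{A' \star B'}{f} - O(\bar\eps) \geq 1 - O(\bar\eps), \]
so that
\[ \ip{V * A' \star B'}{A \star B} \geq (1 + \eps/4)(1 - O(\bar\eps)) \geq 1 + \eps/8. \]
Unpacking the density $V * A' \star B'$ as a convex combination of translates of $V$ yields an affine subspace $V'$ with $\ip{V'}{A \star B} \geq 1 + \eps/8$.

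The final step is to transfer the density increment from $A \star B$ onto $A$. Writing $(A \star B)(x) = (B \star A)(-x) = \E_{b \in B} A(b - x)$, I compute $\ip{V'}{A \star B} = \E_{b \in B} \ip{b - V'}{A}$, where $b - V'$ is an affine subspace of the same codimension as $V'$. Hence some $b \in B$ gives $\ip{b - V'}{A} \geq 1 + \eps/8$, proving $\|A\|_{\perp, r} \geq 1 + \eps/8$. Note that here the cross-convolution asymmetry matters: if we had extracted a translate of $V'$ by an element of $A$ instead, we would have landed with an increment on $B$; taking the reflection of $A \star B$ is what routes the increment to $A$.

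The main obstacle is the quantitative bookkeeping needed to match the asymmetric bound $r = O(dk^7/\bar\eps^6)$ in the statement. A naive application of Sanders' invariance to $\tilde{A'} * B'$ gives codimension scaling like $\max(\|\tilde{A'}\|_\infty,\|B'\|_\infty)^4 / \bar\eps^2 \sim \max(d,k)^4 (k'')^4 / \bar\eps^2$, which would be too weak (quartic in $d$). To recover the linear dependence on $d$, one must use Sanders' invariance in its asymmetric form, where one factor enters the codimension only through the Croot--Sisask step (affecting the size of the approximate-period set via its density) while the other enters through Chang's lemma (affecting the codimension of the large spectrum). Routing the larger density parameter (namely the one tied to $d$) through the Croot--Sisask side is what produces the linear-in-$d$ contribution, with the logarithmic $\log(2/\eps)^4$ slack absorbed into the secondary term of the final bound.
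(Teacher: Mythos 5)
Your proof matches the paper's line by line: Hölder to pass from $\|A\|_{*,k}$ to $\|A\star B\|_{p}$, sifting via \cref{sifting-general} to produce a robust witness $A'\star B'$, Sanders' invariance to replace $A'\star B'$ by $V*A'\star B'$, and then averaging over translates to route the increment back to $A$. The only misstep is in the last paragraph's account of the parameter bookkeeping, and it has the roles in Sanders' lemma reversed. In \cref{sanders-invar-appendix} the density parameter of the set to which Croot--Sisask is applied is the one that gets \emph{cubed} (once from the size of the approximate-period set $S$, twice more from the iteration count $t=O(d)$), while the density parameter of the other set enters \emph{linearly} via the choice of H\"older exponent / Chang. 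So to land on $O(dp\cdot(kp)^3/\bar\eps^2)$, as the paper does, one must route $\tilde{A'}$ (deficit $O(dp)$) through the H\"older/Chang side and $B'$ (deficit $O(kp)$) through the Croot--Sisask side — the opposite of what you wrote. Executed as you stated, you would get $O((kp)(dp)^3/\bar\eps^2)\sim d^3 k^5/\bar\eps^6$, which is cubic rather than linear in $d$.
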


\begin{proof}
It follows from
$ \|A\|_{*,k} \geq 1 + \eps $
that for $p = 4 \lg(e) k/\overline{\eps} + \lg(2/\overline{\eps}) / \overline{\eps}$ we have
$ \|A \star B\|_{p} \geq 1 + \frac{\eps}{2} $
for some set of size $|B| \geq 2^{-k} |\F_q^n|$. 
From here we argue similarly as in the proof of \cref{II-part-2}.
This time we suppress some quantitative details.
Let 
$ f := \ind{A \star B \geq 1 + \eps/4}.$
We sift $A \star B$ to find a convolution $A' \star B'$ witnessing
$\ip{A' \star B'}{f} \approx 1$
with $|A'| \geq 2^{-O(d p)} |\F_q^n|$ and $|B'| \geq 2^{-O(k p)} |\F_q^n|. $
We apply Sanders' invariance lemma to find a linear subspace $V$ with
$ \ip{V * A' \star B'}{f} \approx 1 $
and codimension $r \leq O(dp \cdot k^3p^3 / \overline{\eps}^2) = O(d k^3 p^4 / \overline{\eps}^2)$.
We conclude that 
$ \ip{V'}{A \star B} \geq 1 + \frac{\eps}{8} $
for some translate $V'$ of $V$ and
$ \ip{V''}{A } \geq 1 + \frac{\eps}{8} $
for some translate $V'' = V' + b$. 
\end{proof}

\subsection{Local variants}

We proceed to give some local variants of the statements above which will be needed for setting $A \subseteq [N] \subseteq \Z$. 

\begin{lem}[Sifting lemma -- a local variant] \label{sifting-local}
Consider
\begin{itemize}
    \item a finite abelian group $G$,
    \item a set $A \subseteq G$.
    \item some additional subsets $B,C \subseteq G$, 
    \item a function $f : G \rightarrow \R_{\geq 0}$, and 
    \item an integer $k \geq 1$. 
\end{itemize}
There is some choice of $s = (s_1, s_2, \ldots, s_k) \in G^k$ giving rise to subsets $B' \subseteq B$ and $C' \subseteq C$ of the form
\begin{itemize}
    \item $B' = B \cap (A - s_1) \cap (A - s_2) \cap  \cdots \cap (A - s_{k})$ and
    \item $C' = C \cap (s_1 - A) \cap (s_2 - A) \cap \cdots \cap (s_k - A)$
\end{itemize}
which satisfy 
$$    \frac{1}{|B'| |C'|} \sum_{b \in B'} \sum_{c \in C'} f(b + c) \leq   2 \cdot \frac{\sum_x R_{B,C}(x) R_A^{-}(x)^k f(x)}{\sum_{x} R_{B,C}(x) R_A^{-}(x)^k }$$
and
$$|B'||C'| \geq \tfrac{1}{2} \cdot \frac{\sum_{x} R_{B,C}(x) R_A^-(x)^k }{|G|^k} . $$
\end{lem}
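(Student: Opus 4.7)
The plan is to derive the lemma from the weighted pigeonhole principle (part (iii) of \cref{weighted-php}) applied to the two nonnegative sequences indexed by $s \in G^k$,
$$g(s) := \sum_{b \in B'(s)} \sum_{c \in C'(s)} f(b+c), \qquad h(s) := |B'(s)| \cdot |C'(s)|.$$
Once the totals $\sum_s g(s)$ and $\sum_s h(s)$ are identified with the two sums appearing on the right-hand side of the lemma statement, the desired $s$ will fall out immediately.

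The key step is the following combinatorial identity: for each fixed $b \in B$ and $c \in C$, the number of $s \in G^k$ with $b \in B'(s)$ and $c \in C'(s)$ is exactly $R_A^{-}(b+c)^k$. Indeed, the condition $b \in B'(s)$ requires $s_j + b \in A$ for each $j$, and $c \in C'(s)$ requires $s_j - c \in A$ for each $j$. For a single index $j$ the pair $(a_1, a_2) := (s_j + b, s_j - c)$ lies in $A \times A$ with $a_1 - a_2 = b + c$, and $s_j = a_1 - b$ is determined by the pair; hence the number of valid $s_j$ is $R_A^{-}(b+c)$, and across $k$ independent coordinates the count is $R_A^{-}(b+c)^k$. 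Swapping the order of summation then gives
$$\sum_{s \in G^k} g(s) = \sum_{b \in B, \, c \in C} R_A^{-}(b+c)^k \, f(b+c) = \sum_x R_{B,C}(x) \, R_A^{-}(x)^k \, f(x),$$
and specializing to $f \equiv 1$ yields $\sum_s h(s) = \sum_x R_{B,C}(x) R_A^{-}(x)^k$.

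To invoke pigeonhole (iii), I set $M := \tfrac{1}{2} \sum_s h(s) / |G|^k$ and restrict attention to $I := \{s \in G^k : h(s) \geq M\}$. Since $s$ ranges over a set of size $|G|^k$, the trivial estimate
$$\sum_{s \notin I} h(s) \leq M \cdot |G|^k = \tfrac{1}{2} \sum_s h(s)$$
shows that $\sum_{s \in I} h(s) \geq \tfrac{1}{2} \sum_s h(s)$, so \cref{weighted-php}(iii) applied with $\tau = 1/2$ and $\eta := \sum_s g(s) / \sum_s h(s)$ furnishes an $s \in I$ with $g(s)/h(s) \leq 2\eta$. Translating back through the identities above recovers both displayed conclusions.

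The main obstacle is simply handling the asymmetric pattern of shifts correctly: unlike the earlier sifting lemmas, here $B$ is intersected with translates $A - s_j$ while $C$ is intersected with reflections $s_j - A$, and one must check that this particular combination is precisely what makes the sum $b+c$ (rather than a difference) appear inside $R_A^{-}$, so that the sum-representation function $R_{B,C}$ enters the identity for $\sum_s g(s)$. Once this bookkeeping is in place the argument is a direct transcription of the proofs of \cref{sifting-lem-density-formulation} and \cref{sifting-general} with no new analytic ingredient required.
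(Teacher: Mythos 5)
Your proposal is correct and matches the paper's proof essentially line for line: the same combinatorial identity (rewriting $\sum_s \Phi_f(s)$ by swapping the order of summation, via the observation that for fixed $(b,c)$ the number of admissible $s_j$'s is $R_A^-(b+c)$ per coordinate) followed by the weighted pigeonhole principle. The only cosmetic difference is that you re-derive variant (ii) of \cref{weighted-php} from variant (iii) with the threshold $M = \tfrac{1}{2}\sum_s h(s)/|G|^k$, whereas the paper invokes variant (ii) directly; the two are identical since $\tfrac12 \mu(h) = M$.
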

\begin{proof}
We express
\begin{align*}
    \sum_{x \in G} R_{B,C}(x) R_A^-(x)^k \cdot f(x)  &= 
    \sum_{b \in B} \sum_{c \in C} R_A^-(b + c)^k  \cdot f(b+c)  \\
    &= \sum_{b \in B} \sum_{c \in C} | A \cap (A + b + c) |^k  \cdot f(b+c)  \\
    &= \sum_{b \in B} \sum_{c \in C} | (A - b) \cap (A + c) |^k \cdot f(b+c)  \\
    &= \sum_{b \in B} \sum_{c \in C} \left( \sum_{t} \1_{A-b}(t) \1_{A+c}(t) \right)^k \cdot f(b+c)  \\
    &= \sum_{b \in B} \sum_{c \in C} \left( \sum_{t} \1_{A-t}(b) \1_{t-A}(c)\right)^k  \cdot f(b+c)  \\
    &= \sum_{s} \sum_{b \in B} \sum_{c \in C} 
    \left(  \prod_{j=1}^{k}  \1_{A-s_j}(b)  \right)  \cdot
   \left(  \prod_{j=1}^{k}  \1_{s_j-A}(c) \right)   \cdot
    f(b+c) \\
    &=: \sum_{s} \Phi_{f}(s) .
\end{align*}
We then apply variant (ii) of the weighted pigeonhole principle to the ratio 
$$ \frac{\Phi_f(s)}{\Phi_1(s)} $$
to find an appropriate choice of $s$. \qedhere
\end{proof}

We need the following notation.

\begin{definition}[Weighted $k$-(semi)norm] \label{weighted-k-norm}
For a density $D$ on an arbitrary finite set $\Omega$, and $k \geq 1$, we use the notation
$$ \| f \|_{k, D} := \ip{D}{|f|^k}^{1/k} =  \left( \E_{x \sim D} |f(x)|^k \right)^{1/k} .$$
\end{definition}

\begin{cor}[Sifting a local robust witness] \label{local-witness}
Consider
\begin{itemize}
    \item a finite abelian group $G$ of size $N$,
    \item a subset $A \subseteq G$ of size $|A| = 2^{-d} N$,
    \item some additional subsets $B,C \subseteq G$, and
    \item $k \geq 1,$ $\eps \in [0,1]$.
\end{itemize}
Suppose that
$$ \|A \star A\|_{k, B * C} \geq 1 + 2 \eps, $$
and define the sublevel-set indicator function
$$f(x) := \ind{\big(A \star A\big)(x) \leq 1 + \eps}.$$
For any integer $k' \geq k$,  there are subsets $B' \subseteq B$ and $C' \subseteq C$
with
$$ \ip{B' * C'}{f} \leq 2 \cdot \left(\frac{1 + \eps}{1 + 2 \eps}\right)^{k'} \leq 2 \cdot 2^{-\eps k'/2} $$
and
$$\frac{|B'||C'|}{|B||C|} \geq  \tfrac{1}{2} \cdot 2^{-2dk'} .$$
In particular, 
$$\frac{|B'|}{|B|}, \frac{|C'|}{|C|}  \geq \tfrac{1}{2}  \cdot 2^{-2dk'} .$$
\end{cor}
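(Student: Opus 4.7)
The plan is to apply the local sifting lemma (\cref{sifting-local}) with its integer parameter set to $k'$, translate the resulting counting-measure bounds into the density formulation, and then control the ratio using the definition of $f$ together with the weighted $k$-norm hypothesis.

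Concretely, invoking \cref{sifting-local} with parameter $k'$ produces subsets $B' \subseteq B$, $C' \subseteq C$ with
\[
\ip{B'*C'}{f} \leq 2 \cdot \frac{\sum_x R_{B,C}(x)\,R_A^-(x)^{k'} f(x)}{\sum_x R_{B,C}(x)\,R_A^-(x)^{k'}}, \qquad |B'||C'| \geq \tfrac{1}{2} \cdot \frac{\sum_x R_{B,C}(x)\,R_A^-(x)^{k'}}{N^{k'}}.
\]
Using the identities $R_{B,C}(x) = \tfrac{|B||C|}{N}(B*C)(x)$ and $R_A^-(x) = N \cdot 2^{-2d}(A\star A)(x)$, both expressions simplify cleanly to
\[
\ip{B'*C'}{f} \leq 2 \cdot \frac{\ip{B*C}{(A \star A)^{k'} f}}{\ip{B*C}{(A \star A)^{k'}}}, \qquad \frac{|B'||C'|}{|B||C|} \geq \tfrac{1}{2} \cdot 2^{-2dk'} \cdot \ip{B*C}{(A\star A)^{k'}}.
\]

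The next step is to bound the numerator and denominator of the ratio separately. Since $(A\star A)(x) \leq 1+\eps$ wherever $f(x) \neq 0$ and $B*C$ is a density, I get $\ip{B*C}{(A\star A)^{k'} f} \leq (1+\eps)^{k'}\ip{B*C}{f} \leq (1+\eps)^{k'}$. Because $B*C$ is a probability measure, the usual monotonicity of $L^p$-norms in the exponent gives $\|A \star A\|_{k', B*C} \geq \|A\star A\|_{k, B*C} \geq 1+2\eps$ for every $k' \geq k$, so $\ip{B*C}{(A\star A)^{k'}} \geq (1+2\eps)^{k'} \geq 1$. Substituting these bounds into the two displays above yields $\ip{B'*C'}{f} \leq 2\bigl(\tfrac{1+\eps}{1+2\eps}\bigr)^{k'}$ and $\tfrac{|B'||C'|}{|B||C|} \geq \tfrac{1}{2} \cdot 2^{-2dk'}$; the individual bounds on $|B'|/|B|$ and $|C'|/|C|$ follow immediately since $|B'|/|B|, |C'|/|C| \leq 1$.

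Finally, the cleaner form $\bigl(\tfrac{1+\eps}{1+2\eps}\bigr)^{k'} \leq 2^{-\eps k'/2}$ amounts to the elementary inequality $\ln\tfrac{1+2\eps}{1+\eps} \geq \tfrac{\eps \ln 2}{2}$ on $[0,1]$, which I would verify by noting equality at $\eps = 0$, the endpoint value $\ln(3/2) > \tfrac{\ln 2}{2}$ at $\eps = 1$, and an elementary derivative comparison showing that the difference of the two sides has a single interior critical point in $[0,1]$. I do not expect a serious obstacle here: the whole argument is a direct unpacking of \cref{sifting-local}, and the only substantive move is the $L^p$-monotonicity step that promotes the weighted $k$-norm hypothesis at exponent $k$ to the corresponding lower bound at the larger exponent $k'$.
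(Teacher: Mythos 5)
Your proof is correct and follows essentially the same route as the paper: apply \cref{sifting-local} at exponent $k'$, rewrite the counting-measure quantities in density notation, bound the numerator by $(1+\eps)^{k'}$ using the support of $f$, and bound the denominator below by $(1+2\eps)^{k'}\geq 1$ via $L^p$-monotonicity against the probability measure $B*C$. The only difference is cosmetic: you spell out the identities $R_{B,C}=\tfrac{|B||C|}{N}(B*C)$ and $R_A^-=N2^{-2d}(A\star A)$ in one step, whereas the paper substitutes them in two displays, and you verify the elementary bound $\bigl(\tfrac{1+\eps}{1+2\eps}\bigr)^{k'}\leq 2^{-\eps k'/2}$ which the paper states without proof.
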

\begin{proof}
We  straightforwardly  apply \cref{sifting-local} -- it remains to interpret the result. We have
\begin{align*}
    \ip{B' * C'}{f} &\leq 2 \cdot \frac{\sum_x R_{B,C}(x) R_A^{-}(x)^k f(x)}{\sum_{x} R_{B,C}(x) R_A^{-}(x)^k } \\
    &=  2 \cdot \frac{\E_x \big(B * C\big)(x) \big(A \star A\big)(x)^{k'} f(x)}{\ip{B * C}{(A \star A)^{k'}}} \\
    &\leq 2 \cdot \frac{\E_x \big(B * C\big)(x) (1 + \eps)^{k'} }{(1+2\eps)^{k'}} \\
    &= 2 \cdot \left( \frac{1 + \eps}{1 + 2 \eps} \right)^{k'}
\end{align*}
and
\begin{align*}
|B'||C'| &\geq  \frac{1}{2 |G|^{k'}} \sum_{x} R_{B,C}(x) R_A^-(x)^{k'} \\
&= \frac{1}{2 |G|^{k'}} \cdot |G| \cdot \frac{|B||C|}{|G|} \cdot \frac{|A|^{2k'}}{|G|^{k'}}
\cdot \ip{B * C}{(A \star A)^{k'}} \\
&\geq  \frac{1}{2 |G|^{k'}} \cdot |G| \cdot \frac{|B||C|}{|G|} \cdot \frac{|A|^{2k'}}{|G|^{k'}} \\
&= \tfrac{1}{2} \cdot 2^{-dk'} |B||C| , 
\end{align*}
where we have used that 
$$\ip{B * C}{(A \star A)^{k'}} = \|A \star A\|_{k', B * C}^{k'} \geq \|A \star A\|_{k, B * C}^k
\geq 1
$$
by assumption.
\end{proof}

\section{Spectral positivity and $k$-norm inequalities for convolutions} \label{spectral positivity}

\subsection{A decoupling inequality for convolutions}

\begin{prop}[Fourier interpretation of $k$-norms]
For any function $f : G \rightarrow \R$ and any even $k \in \N$,
$$ \|f\|_{k}^k = \E_{x \in G} f(x)^k = \sum_{\substack{
(\alpha_1, \alpha_2, \ldots, \alpha_k) \in G^k \\
\alpha_1 + \alpha_2 + \cdots + \alpha_k = 0
}}
\prod_{j=1}^k \widehat{f}(\alpha_j) .
$$
\end{prop}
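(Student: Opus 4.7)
The plan is to expand each copy of $f$ using its Fourier expansion and then use the orthogonality of characters to identify which tuples of frequencies contribute. Since $k$ is even and $f$ is real-valued, I first note that $|f(x)|^k = f(x)^k$, so that $\|f\|_k^k = \E_{x \in G} f(x)^k$, matching the target expression on the left. This reduces the claim to a pure identity on $\E_x f(x)^k$.

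Next, I would substitute the Fourier expansion $f(x) = \sum_{\alpha \in G} \widehat{f}(\alpha)\, e_{\alpha}(-x)$ into each of the $k$ factors and expand the resulting product into a sum indexed by tuples $(\alpha_1,\ldots,\alpha_k) \in G^k$. Using the multiplicativity of characters, namely $e_{\alpha_1}(-x) \cdots e_{\alpha_k}(-x) = e_{\alpha_1 + \cdots + \alpha_k}(-x)$, I can write
\[
f(x)^k = \sum_{(\alpha_1,\ldots,\alpha_k) \in G^k} \left(\prod_{j=1}^k \widehat{f}(\alpha_j)\right) e_{\alpha_1 + \cdots + \alpha_k}(-x).
\]

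Taking $\E_{x \in G}$ and swapping the finite sum with the expectation, each term contributes $\left(\prod_j \widehat{f}(\alpha_j)\right) \cdot \E_{x \in G} e_{\alpha_1 + \cdots + \alpha_k}(-x)$. The identity $\E_{x \in G} e_{\beta}(-x) = \ind{\beta = 0}$, which is the character orthogonality relation already invoked in the preliminaries to derive the Fourier inversion formula, kills every tuple except those satisfying the constraint $\alpha_1 + \cdots + \alpha_k = 0$. This yields exactly the claimed sum on the right-hand side.

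There is really no technical obstacle here beyond bookkeeping; the only point worth flagging is the parity assumption, which is used precisely to ensure $f(x)^k$ agrees with $|f(x)|^k$ pointwise (and thus that the identity is a statement about $\|f\|_k^k$ rather than about a signed quantity). Everything else is a routine swap of a finite sum and an expectation, combined with the two character identities (multiplicativity and orthogonality) listed in \cref{prelims}.
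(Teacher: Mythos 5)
Your proof is correct and follows exactly the same route as the paper's one-line proof: expand each factor of $f$ via its Fourier expansion, use multiplicativity of characters, take the expectation, and apply orthogonality $\E_{x \in G} e_{\beta}(-x) = \ind{\beta = 0}$. Your remark that evenness of $k$ is needed only to identify $|f(x)|^k$ with $f(x)^k$ is a correct and sensible clarification of what the paper leaves implicit.
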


\begin{proof}
Expand the expression $\left(\sum_{\alpha} \widehat{f}(\alpha) e_\alpha(-x)\right)^k$ and take the expectation.
\end{proof}

\begin{lem}[Decoupling inequality] \label{decoupling-inequality}
Let $A,B$ be density functions on a finite abelian group $G$. 
For even integers $k \in \N$ we have
$$ \|A * B - 1\|_k \leq \|A \star A - 1\|_k^{1/2} \|B \star B - 1\|_k^{1/2} .$$
In particular,
$$ \|A * A - 1\|_k \leq \|A \star A - 1\|_k . $$
\end{lem}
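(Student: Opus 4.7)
The plan is to pass to the Fourier side using the proposition immediately preceding the lemma, and then apply Cauchy--Schwarz on the resulting sum. Since $k$ is even, $\|A*B-1\|_k^k = \E_x (A*B-1)(x)^k$ is a real nonnegative quantity; since $A*B-1$ has mean zero, its only Fourier coefficients are $\widehat{A*B-1}(\alpha) = \hat{A}(\alpha)\hat{B}(\alpha)$ for $\alpha \neq 0$ (and zero at $\alpha=0$). The Fourier interpretation therefore yields
\[
\|A*B-1\|_k^k \;=\; \sum_{\substack{\alpha_1,\ldots,\alpha_k \in G \setminus \{0\} \\ \alpha_1 + \cdots + \alpha_k = 0}} \prod_{j=1}^k \hat{A}(\alpha_j)\hat{B}(\alpha_j).
\]

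Next, I would regard this as an inner product of the tuple-indexed vectors $x_{(\alpha_1,\ldots,\alpha_k)} := \prod_j \hat{A}(\alpha_j)$ and $y_{(\alpha_1,\ldots,\alpha_k)} := \prod_j \hat{B}(\alpha_j)$, ranging over tuples of nonzero coordinates summing to $0$. Cauchy--Schwarz (in the complex-inner-product form, absorbing a conjugate into one factor) bounds the absolute value of the sum above by
\[
\Biggl(\sum \prod_j |\hat{A}(\alpha_j)|^2 \Biggr)^{1/2} \Biggl(\sum \prod_j |\hat{B}(\alpha_j)|^2 \Biggr)^{1/2}.
\]
Because the original sum equals $\|A*B-1\|_k^k \geq 0$, the absolute value bars may be dropped on the left. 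Finally, recognizing that $\widehat{A \star A - 1}(\alpha) = |\hat{A}(\alpha)|^2$ for $\alpha \neq 0$ (and vanishes at $\alpha=0$), another invocation of the Fourier $k$-norm identity identifies the two factors on the right as $\|A \star A - 1\|_k^{k/2}$ and $\|B \star B - 1\|_k^{k/2}$ respectively. Taking $k$-th roots gives the claimed inequality, and the ``in particular'' assertion is the specialization $B = A$.

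The main obstacle, such as it is, is bookkeeping: one must verify carefully that the Cauchy--Schwarz decomposition lines up with the Fourier expansions of the self-correlations (including the restriction to $\alpha_j \neq 0$), and that the LHS being real and nonnegative legitimately lets us strip the absolute value. Once these alignments are checked, the inequality is essentially a one-line consequence of Cauchy--Schwarz applied in the frequency domain.
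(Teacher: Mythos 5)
Your proof is correct and follows essentially the same route as the paper: express the $k$-th moment in Fourier space as a sum over zero-sum tuples of frequencies, then bound the cross term by Cauchy--Schwarz to factor it into the two self-correlation norms (the paper normalizes $\|f\star f\|_k = \|g\star g\|_k = 1$ and applies $|ab| \le \tfrac12|a|^2 + \tfrac12|b|^2$, which is Cauchy--Schwarz in disguise, and reduces to the claim for general $f = A-1$, $g = B-1$ rather than tracking the restriction to nonzero frequencies as you do). Both versions are sound and amount to the same computation.
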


\begin{proof}
We note that for any density $A$, $A * 1 = 1 * A \equiv 1$. So, the claimed inequality is the same as
$$ \norm{(A - 1) * (B - 1)}_k \leq \|(A - 1) \star (A - 1)\|_k^{1/2} \|(B - 1) \star (B - 1)\|_k^{1/2}.
$$
We prove more generally that for any functions $f,g$,
$$ \norm{f * g}_k \leq \|f \star f\|_k^{1/2} \|g \star g\|_k^{1/2}.
$$
By re-scaling, it suffices\footnote{
One can check the degenerate case  $\|f \star f\|_{k} = 0$ separately.
This case occurs only when $f \equiv 0$, which can be seen by the calculation $\|f \star f\|_{k}^2 \geq \|f \star f\|_{2}^2 = \sum_{\alpha} |\hat{f}(\alpha)|^4.$
} 
to prove this for $\|f \star f\|_k = \|g \star g\|_k = 1$.
We have
\begin{align*}
    \E_{x} \left( f * g \right) (x)^k &=  \E_x\left( \sum_{\alpha} \widehat{f}(\alpha) \widehat{g}(\alpha) e_\alpha(-x) \right)^k\\
    &\leq \sum_{\substack{
\alpha_1, \alpha_2, \ldots, \alpha_k \\
\alpha_1 + \alpha_2 + \cdots + \alpha_k = 0
}}  \left| \prod_{i=1}^k \widehat{f}(\alpha_i) \cdot \prod_{i=1}^k   \widehat{g}(\alpha_i) \right| \\
&\leq \sum_{\alpha}  \left(  \frac{1}{2} \prod_{i=1}^k |\widehat{f}(\alpha_i)|^2
+  \frac{1}{2} \prod_{i=1}^k | \widehat{g}(\alpha_i) |^2 \right) \\
&= \frac{1}{2} \norm{f \star f}_k^k  + \frac{1}{2} \norm{g \star g}_k^k \\
&= 1. \qedhere
\end{align*}

\end{proof}

We remark that this inequality has appeared before in other works. For example, it is a special case of Lemma 13 in \cite{shkredov17}.

\subsection{Spectral Positivity}

\begin{definition}
Suppose $f : G \rightarrow \R$ is a function such that
$ \widehat{f}(\alpha) $ is real and nonnegative for all $\alpha \in G$. 
We say that such functions are ``spectrally positive", and we use the notation
$$ f \succeq 0 $$
to denote the fact that $f$ is such a function.
\end{definition}
We note that any self-convolution $f \star f$ is spectrally positive:
$$(f \star f)(x) =  \sum_{\alpha \in G} |\widehat{f}(\alpha)|^2 e_{\alpha}(-x) \succeq 0,$$
and conversely that any spectrally positive function $f(x) = \sum_{\alpha} \widehat{f}(\alpha) e_{\alpha}(-x)$ can be expressed as a self-convolution $g \star g$, where 
$$g(x) := \sum_{\alpha}\sqrt{ \widehat{f}(\alpha)} e_{\alpha}(-x) .$$

\begin{prop}
The set $\set{f}{f \succeq 0}$ enjoys the following two closure properties.
\begin{itemize}
    \item (Closure under multiplication.) Suppose $f,g \succeq 0$. Then 
    $$ h(x) := f(x) \cdot g(x) \succeq 0.$$
    \item (Closure under centering.) Suppose $f \succeq 0$, and that $D$ is a symmetric\footnote{By this we mean that $D(-x) \equiv D(x)$.} density function. Then
    $$ h(x) := f(x) - (D * f)(x) \succeq 0.$$
    In particular, by letting $D \equiv 1$,
    $$ f - \E[f] \succeq 0.$$
 \end{itemize}
\end{prop}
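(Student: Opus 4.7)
The plan is to verify both closure properties by direct Fourier computation; all the needed identities are already in \cref{prelims}. For multiplicative closure, I would expand $f(x)g(x)$ using the Fourier inversion formula $f(x) = \sum_\alpha \widehat{f}(\alpha) e_\alpha(-x)$ on each factor. This yields
$$ f(x)g(x) = \sum_{\alpha,\beta} \widehat{f}(\alpha)\widehat{g}(\beta)\, e_{\alpha+\beta}(-x) = \sum_{\gamma}\Big(\sum_{\alpha} \widehat{f}(\alpha)\widehat{g}(\gamma-\alpha)\Big) e_\gamma(-x), $$
so $\widehat{fg}(\gamma) = \sum_{\alpha} \widehat{f}(\alpha)\widehat{g}(\gamma-\alpha)$. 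Since every $\widehat{f}(\alpha)$ and $\widehat{g}(\beta)$ is real and non-negative by hypothesis, each Fourier coefficient of $fg$ is a non-negative sum of non-negative reals, so $fg \succeq 0$.

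For centering closure, the identity $(D * f)(x) = \sum_{\alpha} \widehat{D}(\alpha)\widehat{f}(\alpha) e_\alpha(-x)$ from \cref{prelims} immediately gives $\widehat{f - D * f}(\alpha) = \widehat{f}(\alpha)(1 - \widehat{D}(\alpha))$. Since $\widehat{f}(\alpha) \geq 0$ is given, it suffices to show $\widehat{D}(\alpha)$ is real and lies in $[-1,1]$. Reality follows from the symmetry assumption: substituting $y = -x$ in $\widehat{D}(\alpha) = \E_x D(x) e_\alpha(x)$ and using $e_\alpha(-y) = \overline{e_\alpha(y)}$ yields
$$ \widehat{D}(\alpha) = \E_y D(-y) e_\alpha(-y) = \E_y D(y)\, \overline{e_\alpha(y)} = \overline{\widehat{D}(\alpha)}. $$
The triangle inequality combined with $D \geq 0$ and $\E D = 1$ gives $|\widehat{D}(\alpha)| \leq \E_x D(x)|e_\alpha(x)| = 1$. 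Hence $1 - \widehat{D}(\alpha) \geq 0$ and the product is non-negative, as required. The stated special case $D \equiv 1$ is immediate because $\widehat{1}(\alpha) = \ind{\alpha = 0}$, so $(1 * f)(x) \equiv \widehat{f}(0) = \E f$.

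No step appears obstructive; the proposition is essentially a pair of short Fourier exercises. The only subtlety worth flagging is the use of symmetry of $D$ in establishing reality of $\widehat{D}(\alpha)$ — an asymmetric density typically has a complex Fourier transform, so the symmetry hypothesis cannot be dropped.
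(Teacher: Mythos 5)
Your proof is correct and follows essentially the same route as the paper: both closures are verified by direct Fourier expansion, with the centering case hinging on $|\widehat{D}(\alpha)|\le 1$ (triangle inequality on a density) and the reality of $\widehat{D}$ coming from the symmetry of $D$. You spell out the reality argument a bit more explicitly than the paper does, but the substance is identical.
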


\begin{proof}
For the first claim, simply express $f$ and $g$ by their Fourier expansions and expand the product:
$$ \left( \sum_{\beta} \widehat{f}(\beta) e_\beta(-x) \right) \left( \sum_{\beta'} \widehat{g}(\beta') e_{\beta'}(-x) \right) = \sum_{\alpha} \left( \sum_{ \beta + \beta' = \alpha} \widehat{f}(\beta) \widehat{g}(\beta') \right) e_\alpha(-x) \succeq 0.$$
For the second claim we first note that since $D$ is a density, for any $\alpha$ we have
$$ |\widehat{D}(\alpha)| = \left| \E_{x \sim D} e_{\alpha}(x) \right| \leq \E_{x \sim D} \left| e_{\alpha}(x)) \right| = 1; $$
in fact, this applies more generally to any function $D$ with $\|D\|_{1} \leq 1$.
Then we compute
\begin{equation*}
f(x) - (D * f)(x) = \sum_{\alpha} \widehat{f}(\alpha) (1 - \widehat{D}(\alpha)) e_\alpha(-x) \succeq 0,
\end{equation*}
where we have used that $D(-x) = D(x)$ and so $D$ has Fourier coefficients $\widehat{D}(\alpha) \in \R.$
\end{proof}

\begin{cor} [Positive correlation for spectrally positive functions] \label{positive-correlation}
Suppose 
$$f_1,f_2,\ldots,f_k \succeq 0$$ are some spectrally positive functions on $G$. Then
$$ \E [ f_1 f_2 \cdots f_k ] \geq \E [ f_1 ]  \E [ f_2 ] \cdots \E [f_k]. $$
\end{cor}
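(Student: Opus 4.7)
The plan is a straightforward induction on $k$, using the two closure properties already established: spectral positivity is preserved under pointwise multiplication, and under centering by the mean. The base case $k=1$ is the observation that $\mathbb{E}[f_1] = \widehat{f_1}(0) \geq 0$, which follows directly from the definition of spectral positivity together with the fact that $e_0 \equiv 1$.

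For the inductive step, assuming the inequality for $k-1$ functions, I would decompose the last factor as $f_k = (f_k - \mathbb{E}[f_k]) + \mathbb{E}[f_k]$ and split the expectation accordingly:
\begin{equation*}
\mathbb{E}[f_1 \cdots f_k] \;=\; \mathbb{E}\bigl[f_1 \cdots f_{k-1}\,(f_k - \mathbb{E}[f_k])\bigr] \;+\; \mathbb{E}[f_k]\cdot \mathbb{E}[f_1 \cdots f_{k-1}].
\end{equation*}
By closure under centering (taking the symmetric density $D \equiv 1$), we have $f_k - \mathbb{E}[f_k] \succeq 0$; by iterated closure under multiplication, the product $f_1 f_2 \cdots f_{k-1} \succeq 0$; and applying closure under multiplication once more gives $f_1 \cdots f_{k-1}(f_k - \mathbb{E}[f_k]) \succeq 0$. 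Hence its expectation (its Fourier coefficient at $0$) is nonnegative, so the first term drops out with the right sign.

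For the second term, the base case gives $\mathbb{E}[f_k] \geq 0$, and the inductive hypothesis gives $\mathbb{E}[f_1 \cdots f_{k-1}] \geq \mathbb{E}[f_1] \cdots \mathbb{E}[f_{k-1}]$. Multiplying these nonnegative quantities yields the desired inequality $\mathbb{E}[f_1 \cdots f_k] \geq \mathbb{E}[f_1] \cdots \mathbb{E}[f_k]$. There is no real obstacle here; the only thing to be careful about is invoking the two closure properties in the correct order, and noting that one needs nonnegativity of the individual expectations to multiply inequalities safely, which is itself a consequence of spectral positivity.
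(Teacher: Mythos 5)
Your proof is correct and is essentially the paper's argument: the paper first reduces to $k=2$ by closure under multiplication and then performs the same decomposition $g = (g - \E[g]) + \E[g]$, while you fold the same decomposition into an explicit induction; the underlying steps (closure under multiplication, closure under centering, nonnegativity of $\hat{F}(0)$) are identical.
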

\begin{proof}
In light of the closure property for multiplication, it suffices to verify this for just two functions. Let $f,g \succeq 0$. 
Write $g \equiv (g  - \E[g]) + \E[g]$, and express
$$ \ip{f}{g} = \ip{f}{g  - \E[g]} + \E[f] \cdot \E[g]. $$
We note that the function $F(x) := f(x) \cdot (g(x) - \E[g])$ is spectrally positive. 
In particular,
\begin{equation*}
 \ip{f}{g  - \E[g]} = \E[F] = \hat{F}(0) \geq 0  . \qedhere
\end{equation*}
\end{proof}
 
\begin{cor}[Odd central moments] \label{odd-central-moments}
Consider a spectrally positive density $D$ on $G$. The odd central moments of $D$ are non-negative. That is,
$$ \E_{x \in G} (D(x) - 1)^k \geq 0$$
for all odd integers $k \in \N$ (and hence for all $k \in \N$).
\end{cor}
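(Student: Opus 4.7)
The plan is to deduce this immediately from the two closure properties established in the preceding proposition, without any explicit Fourier computation beyond what is already packaged into the definition of spectral positivity. The idea is that the centered density $D - 1$ is itself spectrally positive, so any pointwise power $(D-1)^k$ is spectrally positive by repeated application of closure under multiplication, and therefore has non-negative mean.

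First I would observe that $D - 1 \succeq 0$. This follows from the closure-under-centering property (applied in the trivial special case $D' \equiv 1$, or equivalently, already stated as the ``in particular'' clause of that property): since $D$ is a density we have $\E[D] = 1$, so $D - 1 = D - \E[D]$, and the proposition gives $D - \E[D] \succeq 0$. Concretely, all this is saying is that subtracting the mean merely zeroes out the Fourier coefficient at $\alpha = 0$ while leaving the remaining non-negative real coefficients of $D$ untouched.

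Next, by repeated application of closure under multiplication to $f_1 = \cdots = f_k = D - 1$, I would conclude that the pointwise $k$-th power $(D-1)^k$ is also spectrally positive. Since any spectrally positive function $h$ satisfies $\E[h] = \widehat{h}(0) \geq 0$, this immediately yields
$$ \E_{x \in G} (D(x) - 1)^k = \widehat{(D-1)^k}(0) \geq 0, $$
which is the desired bound. Note that this argument in fact works for all $k \in \N$, not just odd $k$; for even $k$ the conclusion is of course trivial since $(D-1)^k \geq 0$ pointwise, but for odd $k$ the spectral-positivity route gives the nontrivial content. Alternatively, one can obtain exactly the same conclusion by invoking \cref{positive-correlation} with $f_1 = \cdots = f_k = D - 1$, which gives $\E[(D-1)^k] \geq (\E[D - 1])^k = 0$. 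There is no real obstacle here; the work has already been done in setting up the closure properties of $\succeq 0$.
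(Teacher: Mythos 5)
Your proof is correct and takes essentially the same approach as the paper: set $F = D - 1$, note $F \succeq 0$ by closure under centering, note $F^k \succeq 0$ by closure under multiplication, and conclude $\E[F^k] = \widehat{F^k}(0) \geq 0$. The paper's proof is just a terse version of this (and also mentions, as you do, the alternative derivation from \cref{positive-correlation}).
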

\begin{proof}
$F(x) := D(x) - 1 \succeq 0$.
Also, $F(x)^k = F(x)  F(x) \cdots  F(x) \succeq 0;$
in particular $\E[F] \geq 0$.
Alternatively, consider this as a special case of \cref{positive-correlation}.
\end{proof}

The utility of this is due to the following. 

\begin{prop} \label{odd-moments}
For a real-valued random variable $X$ and $k \geq 1$, we use the notation
$$ \|X\|_{k} := \left( \E |X|^k \right)^{1/k}. $$
Suppose $X$ is such that 
\begin{itemize}
    \item $\E (X - 1)^{k} \geq 0$ for all odd $k \in \N$, and
    \item $\|X - 1\|_{k_0} \geq \eps$ for some even $k_0 \geq 2$ and some $\eps \in [0,\frac{1}{2}]$.
\end{itemize}
Then, for any integer $k' \geq 2 k_0/\eps$,
$$ \|  X  \|_{k'} \geq 4^{-1/k'} (1 + \eps) \geq 1 + \frac{\eps}{2}.$$
\end{prop}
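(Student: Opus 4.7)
The plan is to bound $\|X\|_{k'}^{k'} \geq \E X^{k'}$ (using the pointwise inequality $|X|^{k'} \geq X^{k'}$) and then obtain a lower bound on $\E X^{k'}$ via the binomial expansion of $X^{k'} = (1+Y)^{k'}$ with $Y := X-1$. Taking expectations gives $\E X^{k'} = \sum_{j=0}^{k'}\binom{k'}{j}\E Y^j$, where every term is nonnegative: for even $j$ this is automatic since $\E Y^j = \E|Y|^j$, and for odd $j$ it is precisely the hypothesis. Moreover, power-mean monotonicity yields $\E|Y|^j \geq \|Y\|_{k_0}^j \geq \eps^j$ for all $j \geq k_0$, so for even $j \geq k_0$ we have $\E Y^j \geq \eps^j$. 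Discarding all other (still nonnegative) contributions,
\begin{equation*}
\E X^{k'} \;\geq\; S_+ \;:=\; \sum_{\substack{k_0 \leq j \leq k' \\ j \text{ even}}} \binom{k'}{j}\eps^j.
\end{equation*}

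The main technical step will be to show $S_+ \geq (1+\eps)^{k'}/4$ under the hypotheses $\eps \leq 1/2$ and $k' \geq 2k_0/\eps$. Writing $S_- := \sum_{j<k_0,\, j \text{ even}}\binom{k'}{j}\eps^j$, I will first note the identity $S_+ + S_- = \tfrac{1}{2}\big((1+\eps)^{k'} + (1-\eps)^{k'}\big) \geq (1+\eps)^{k'}/2$. Then I will control $S_-$ by a ratio test: for $j \leq k_0 - 2$,
\begin{equation*}
\frac{\binom{k'}{j+2}\eps^{j+2}}{\binom{k'}{j}\eps^j} \;=\; \frac{(k'-j)(k'-j-1)\eps^2}{(j+1)(j+2)} \;\geq\; (2-\eps)^2 \;\geq\; 9/4,
\end{equation*}
where I use $k'-j \geq k' - k_0 \geq k_0(2-\eps)/\eps$ in the numerator and $(j+1)(j+2) \leq k_0^2$ in the denominator. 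Iterating backwards from $j = k_0$ yields $S_- \leq \binom{k'}{k_0}\eps^{k_0}\sum_{m \geq 1}(4/9)^m = (4/5)\binom{k'}{k_0}\eps^{k_0} \leq (4/5)S_+$. Therefore $S_+ \geq (5/9)(S_+ + S_-) \geq (5/18)(1+\eps)^{k'} \geq (1+\eps)^{k'}/4$, as desired.

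Combining these two paragraphs gives $\|X\|_{k'} \geq 4^{-1/k'}(1+\eps)$. The remaining inequality $4^{-1/k'}(1+\eps) \geq 1 + \eps/2$ is purely arithmetic: it rearranges to $4^{1/k'} \leq 1 + \eps/(2+\eps)$, which will follow from $k' \geq 4/\eps$ (implied by $k_0 \geq 2$) together with the elementary estimate $\ln(1+t) \geq 2t/(2+t)$ applied at $t = \eps/(2+\eps)$. The main obstacle I anticipate is the bookkeeping in the ratio-test estimate: the threshold $k' \geq 2k_0/\eps$ is exactly what drives the common ratio safely above $1$, and the $\eps \leq 1/2$ restriction is what guarantees the geometric constant $9/4$ is large enough for $S_-$ to be dominated by $S_+$ with enough slack to extract the factor of $1/4$ in the final bound.
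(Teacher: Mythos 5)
Your proof is correct, and while it opens exactly as the paper does, its second half follows a genuinely different and arguably cleaner route. Both arguments begin identically: expand $\E X^{k'}=\sum_j\binom{k'}{j}\E(X-1)^j$, drop the (nonnegative) odd terms and the even terms below $k_0$, and bound the remaining even terms below by $\eps^j$, reducing to showing $S_+\geq\tfrac14(1+\eps)^{k'}$; both also use the identity $\sum_{j\text{ even}}\binom{k'}{j}\eps^j=\tfrac12\bigl((1+\eps)^{k'}+(1-\eps)^{k'}\bigr)$. Where the paths diverge is in controlling the contribution of the small indices $j<k_0$. The paper normalizes to a $\mathrm{Bin}\!\left(k',\tfrac{\eps}{1+\eps}\right)$ random variable, invokes the nontrivial fact (with an external citation) that the median of a binomial is at least $\lfloor pk'\rfloor$ to get $\P(k\geq k_0)\geq\tfrac12$, and then pairs each even $j<k_0$ with the odd $j+1$ to conclude $\P(k\text{ even},\,k<k_0)\leq\tfrac12\P(k<k_0)\leq\tfrac14$. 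You instead bound $S_-$ directly by a geometric series: the ratio of consecutive even terms is at least $(2-\eps)^2\geq 9/4$ up to index $k_0$ (this is where you use $k'\geq 2k_0/\eps$ and $\eps\leq\tfrac12$ together), so $S_-\leq\tfrac45\binom{k'}{k_0}\eps^{k_0}\leq\tfrac45 S_+$ and hence $S_+\geq\tfrac{5}{18}(1+\eps)^{k'}>\tfrac14(1+\eps)^{k'}$. This sidesteps the binomial-median citation entirely and even yields a marginally better constant. The closing arithmetic $4^{-1/k'}(1+\eps)\geq 1+\eps/2$ is handled the same way in both (you sketch it via $\ln(1+t)\geq 2t/(2+t)$; the needed inequality $(4+3\eps)\ln 4\leq 8$ does hold for $\eps\leq\tfrac12$). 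One small caveat: both arguments silently need $k'\geq k_0$ for the term $j=k_0$ to appear in $S_+$, but this is guaranteed since $k'\geq 2k_0/\eps\geq 4k_0$.
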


We note that one can infer a statement of this sort already from the following observation: for any odd $k \geq k_0$ we have
$$ \E(X - 1)^k = \E |(X - 1)_{+}|^k - \E |(X - 1)_{-}|^k \geq 0$$
and
$$ \E |X - 1|^{k} = \E |(X - 1)_{+}|^k + \E |(X - 1)_{-}|^k \geq \eps^k,  $$
so it follows that
$$ \E |(X - 1)_{+}|^k \geq \frac{\eps^k}{2}. $$
From here one can get a reasonable lower bound on some $\|X\|_{k'}$ e.g.\ by a basic pruning argument analogous to the proof of the Paley–Zygmund inequality.
However, to obtain nicer constants we give a somewhat different proof which can be found in the appendix (see \cref{sec:odd-moments}). 

\subsection{Local variants}

We proceed to give some local variants of the statements above which will be needed for setting $A \subseteq [N] \subseteq \Z$. 
We recall the notation from \cref{weighted-k-norm},
$$ \| f \|_{k, D} := \ip{D}{|f|^k}^{1/k} =  \left( \E_{x \sim D} |f(x)|^k \right)^{1/k}.$$

\begin{lem}[Local decoupling inequality] \label{local-decoupling}
Let $G$ be a finite abelian group, and suppose $D$ is a spectrally-positive density function on $G$. Let $(T_{\theta} D)(x) := D(x - \theta)$ be some translate of $D$. For arbitrary real-valued functions $f,g$ on $G$, and for all even $k \in \N$, we have
$$ \|f * g\|_{k, T_{\theta} D}^2 \leq  \|f \star f\|_{k, D} \|g \star g\|_{k, D} .$$
In particular, if $A$ is a density function then
$$ \|A * A - 1\|_{k, T_{\theta} D} \leq \|A \star A - 1\|_{k, D} .$$
\end{lem}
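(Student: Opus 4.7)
The plan is to mimic the proof of the non-local decoupling inequality (Lemma~\ref{decoupling-inequality}), replacing the ``$\alpha_1+\cdots+\alpha_k = 0$'' constraint (which there came from $\widehat{1}(\beta) = \mathbbm{1}[\beta=0]$) with a non-negative weight supplied by $\widehat{D}$. First I would Fourier-expand both sides of the quantity
\[
\|f*g\|_{k, T_\theta D}^k = \E_x (T_\theta D)(x)\, (f*g)(x)^k.
\]
Expanding $(f*g)(x) = \sum_\alpha \widehat{f}(\alpha)\widehat{g}(\alpha) e_\alpha(-x)$ and raising to the $k$-th power yields
\[
\E_x (T_\theta D)(x)(f*g)(x)^k = \sum_{\alpha_1,\ldots,\alpha_k} \Bigl(\prod_{i=1}^k \widehat{f}(\alpha_i)\widehat{g}(\alpha_i)\Bigr)\cdot W(\alpha_1+\cdots+\alpha_k),
\]
where the ``weight'' $W(\beta) := \E_x (T_\theta D)(x) e_\beta(-x)$ is computed (by substituting $y = x - \theta$) to be $e_\beta(-\theta)\,\widehat{D}(-\beta)$. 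The crucial point is that $|W(\beta)| = \widehat{D}(-\beta) \geq 0$, using spectral positivity of $D$.

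Next I would take absolute values of the character factor $e_\beta(-\theta)$ (which has modulus $1$) and of the products $\prod_i \widehat{f}(\alpha_i)\widehat{g}(\alpha_i)$, and then apply the AM--GM inequality $|xy| \leq \tfrac12 |x|^2 + \tfrac12 |y|^2$ to each term to obtain
\[
\|f*g\|_{k, T_\theta D}^k \leq \tfrac12 \sum_{\alpha_1,\ldots,\alpha_k} \prod_i |\widehat{f}(\alpha_i)|^2 \widehat{D}(-{\textstyle\sum}\alpha_i) + \tfrac12 \sum_{\alpha_1,\ldots,\alpha_k} \prod_i |\widehat{g}(\alpha_i)|^2 \widehat{D}(-{\textstyle\sum}\alpha_i).
\]
Recognizing $(f \star f)(x) = \sum_\alpha |\widehat{f}(\alpha)|^2 e_\alpha(-x)$ and expanding $(f \star f)^k$ analogously, the first sum is exactly $\E_x D(x)(f\star f)(x)^k = \|f \star f\|_{k,D}^k$, and similarly for $g$. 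This yields the additive bound
\[
\|f*g\|_{k, T_\theta D}^k \leq \tfrac12 \|f \star f\|_{k, D}^k + \tfrac12 \|g \star g\|_{k, D}^k,
\]
and then the standard rescaling trick $f \mapsto \lambda f$, $g \mapsto \mu g$ with $\lambda^2\|f\star f\|_{k,D} = \mu^2 \|g\star g\|_{k,D}$ converts this into the multiplicative (product) bound claimed.

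The ``In particular'' statement follows by applying the inequality to $f = g = A - 1$, since $(A-1)*(A-1) = A*A - 1$ and $(A-1)\star(A-1) = A\star A - 1$ (using $A*\mathbf{1} = \mathbf{1}*A = A\star \mathbf{1} = \mathbf{1}\star A = \mathbf{1}$ for any density $A$); taking square roots of $\|A*A-1\|_{k,T_\theta D}^2 \leq \|A\star A - 1\|_{k,D}^2$ gives the result. I do not foresee a major obstacle here: the only place where spectral positivity really enters is in step two, where we need $\widehat{D}(-\sum \alpha_i) \geq 0$ in order to freely drop the character factor $e_{\sum\alpha_i}(-\theta)$ in absolute value without losing a sign; this is precisely what makes the translate $T_\theta D$ admissible even though $T_\theta D$ itself is typically not spectrally positive.
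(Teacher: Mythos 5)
Your proof is correct and follows essentially the same route as the paper: Fourier-expand $\ip{T_\theta D}{(f*g)^k}$, use spectral positivity of $D$ (together with $|e_\beta(-\theta)|=1$) to drop the character factor and take absolute values, apply AM--GM termwise, recognize the resulting sums as $\ip{D}{(f\star f)^k}$ and $\ip{D}{(g\star g)^k}$, and then convert the additive bound to the product bound by a homogeneity argument. The paper's use of the $\eta$-weighted AM--GM, $|xy|\leq\frac{\eta}{2}|x|^2+\frac{1}{2\eta}|y|^2$, followed by optimizing over $\eta$, is exactly the rescaling trick you describe, phrased slightly differently; it also handles the degenerate case $\|f\star f\|_{k,D}=0$ more cleanly (taking $\eta\to\infty$), which you would want to note for completeness, since in the local setting this can occur for nonzero $f$. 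Your derivation of the ``in particular'' clause from $f=g=A-1$ matches what the paper leaves implicit.
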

\begin{proof}
We have
\begin{align*}
     \ip{T_\theta D}{(f * g)^k} &= \sum_{\gamma} \widehat{D}(\gamma) e_{\gamma}(\theta) \sum_{\beta_1 + \beta_2 + \cdots + \beta_k = \gamma} \prod_{i=1}^k \widehat{f}(\beta_i) \widehat{g}(\beta_i) \\
     &\leq \sum_{\gamma}  \sum_{\beta_1 + \beta_2 + \cdots + \beta_k = \gamma}  \widehat{D}(\gamma)\prod_{i=1}^k |\widehat{f}(\beta_i)| \
    |\widehat{g}(\beta_i)|\\
    &\leq \sum_{\gamma}  \sum_{\beta_1 + \beta_2 + \cdots + \beta_k = \gamma}  \widehat{D}(\gamma)  \left(
    \frac{\eta}{2}\prod_{i=1}^k \left|\widehat{f}(\beta_i)\right|^2 + \frac{1}{2\eta}
    \prod_{i=1}^k\left|\widehat{g}(\beta_i)\right|^2 \right) \\
    &= \frac{\eta}{2} \ip{D}{(f \star f)^k} + \frac{1}{2 \eta} \ip{D}{(g \star g)^k}
\end{align*}
for any choice of $\eta > 0$. Optimizing over $\eta$, we conclude that
\begin{equation*} \ip{T_\theta D}{(f * g)^k} \leq \sqrt{\ip{D}{(f \star f)^k} } \sqrt{\ip{D}{(g \star g)^k}}. \qedhere
\end{equation*}

\end{proof}

\begin{lem}[Lower bounds from upper bounds -- local variant] \label{local-upper-to-lower}
If $$\|A \star A - 1\|_{k, D} \geq \eps$$
for some $\eps \in [0,\frac{1}{2}]$ and spectrally-positive density $D$,
then
$$ \|A \star A\|_{k', D} \geq 1 + \frac{\eps}{2} $$
for some $k' \leq O(k/\eps)$.
\end{lem}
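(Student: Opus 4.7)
The plan is to reduce to the global ``lower-bounds-from-upper-bounds'' statement (Proposition 4.8 / the odd-moments lemma) by viewing $A\star A$ as a real-valued random variable under the measure $D$, and verifying that the ``nonnegative odd central moments'' hypothesis still holds in this weighted setting.

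First, I would set $X$ to be the random variable $(A\star A)(Z)$ where $Z$ is drawn from the distribution $D$. With this convention,
\[
\|X\|_{k'} \;=\; \bigl(\E_{z\sim D} (A\star A)(z)^{k'}\bigr)^{1/k'} \;=\; \|A\star A\|_{k',D},
\]
and similarly $\|X-1\|_{k_0} = \|A\star A - 1\|_{k_0,D} \geq \eps$ by hypothesis, where I take $k_0 = k$.

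Next I would verify the spectral-positivity hypotheses needed to control odd central moments of $X$. Since $A$ is a density we have $\widehat{A}(0)=1$ and $\widehat{A\star A}(\alpha)=|\widehat{A}(\alpha)|^2$, so $A\star A - 1$ has Fourier coefficients $|\widehat{A}(\alpha)|^2$ at $\alpha \neq 0$ and $0$ at $\alpha=0$; in particular $A\star A - 1 \succeq 0$. By the closure of spectrally positive functions under multiplication, $(A\star A - 1)^t \succeq 0$ for every odd $t\geq 1$ (in fact every $t \geq 1$). Combined with $D\succeq 0$, this yields
\[
\E(X-1)^t \;=\; \ip{D}{(A\star A - 1)^t} \;\geq\; 0 \qquad \text{for all odd } t \in \N,
\]
either by direct Fourier computation $\ip{D}{f}=\sum_{\alpha}\widehat{D}(\alpha)\widehat{f}(\alpha)\geq 0$ for $D,f\succeq 0$, or as a special case of the positive-correlation corollary (\cref{positive-correlation}).

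Finally, I would invoke \cref{odd-moments} with this $X$, parameters $k_0=k$, and the given $\eps \in [0,\tfrac12]$, which for any integer $k'\geq 2k/\eps$ yields
\[
\|A\star A\|_{k',D} \;=\; \|X\|_{k'} \;\geq\; 4^{-1/k'}(1+\eps) \;\geq\; 1 + \tfrac{\eps}{2}.
\]
Taking $k'=\lceil 2k/\eps\rceil = O(k/\eps)$ completes the proof. There is no genuine obstacle here: the only step worth double-checking is that the spectral positivity of $D$ really is what lets the odd-moment argument survive reweighting (without it, the random variable $X$ need not have nonnegative odd central moments around $1$), and this is exactly the role played by the hypothesis that $D$ is spectrally positive.
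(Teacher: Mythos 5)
Your proposal is correct and follows essentially the same route as the paper: both verify that the odd central moments $\E_{z\sim D}\bigl((A\star A)(z)-1\bigr)^t$ are nonnegative by combining the spectral positivity of $A\star A - 1 = (A-1)\star(A-1)$ and $D$ with closure of spectrally positive functions under multiplication, and then invoke \cref{odd-moments}. The only cosmetic difference is that the paper phrases the nonnegativity as ``$D\cdot(A\star A-1)^k \succeq 0$, so its mean is nonnegative,'' whereas you route it through \cref{positive-correlation}; these are the same computation.
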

\begin{proof}
Let $k$ be an integer. We note that
$$ D \cdot (A \star A - 1)^{k} =  D \cdot ( (A - 1) \star (A - 1) )^k $$
is a spectrally-positive function on $G$. In particular,
$$ \E_{x \sim D} ((A \star A)(x) - 1)^k = \E_{x \in G} D(x) ((A \star A)(x) - 1)^k \geq 0.
$$
So, we may apply \cref{odd-moments} to the random variable $X := (A \star A)(x)$ where $x \sim D$. \qedhere
\end{proof}
\section{Finding a 3-progression in $A \subseteq [N]$ -- an overview} \label{integers-overview}

We describe how to modify our approach above to address the 3-progression problem in the setting $A \subseteq [N] \subseteq \Z$.
We recall our result regarding this problem.
\begin{thm}[\cref{many-3-progs}, restated]
Suppose $A \subseteq [N]$ has density $\mu \geq 2^{-d}$. Then the number of triples $(x,y,z) \in A^3$ with $x + y = 2z$ is at least
$$ 2^{-O(d^{12})} \cdot N^2 .$$
\end{thm}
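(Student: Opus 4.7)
The plan is to run a direct analogue of the finite-field argument (the proof of \cref{3-progs-in-finite-fields} via \cref{robust-sunflower}), with two essential substitutions dictated by the integer setting: affine subspaces of bounded codimension in $\F_q^n$ are replaced throughout by Bohr sets of bounded rank and carefully controlled (regular) width, and the statement is transferred between $A \subseteq [N]$ and a cyclic model $\Z_M$ via Freiman $2$-homomorphisms using the ``safe set'' machinery promised in \cref{prelims-integers}. Since the additive equation $x+y=2z$ is a Freiman $2$-invariant, 3-progression counts are preserved by this embedding up to constants, and $M$ can be chosen odd so that the dilation $z \mapsto 2z$ is a bijection of the model group (as required for the counting step).

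The core argument mirrors the three-step scheme from \cref{technical-introduction}. First, a Bohr-set density-increment argument (replacing \cref{greedy}) produces a subset $A' \subseteq A$ of relative density at least $\mu$ inside a regular Bohr set $B_0$ of rank $r_0 = d^{O(1)}$, with the property that $A'$ is $(1+\eps,r)$-spread relative to all regular sub-Bohr sets of rank $r$; each iteration either certifies spread or supplies a $(1+\eps)$ relative-density gain, so at most $O(d/\eps)$ iterations occur, and the rank blows up by a polynomial factor per iteration. Second, I feed this spread set into the local sifting lemma (\cref{sifting-local}, via \cref{local-witness}) together with the Schoen--Sisask translation-invariance lemma (playing the role of Sanders' invariance result used in the proof of \cref{II-part-2}) to obtain local regularity of $A'$ against densities $D$ supported on Bohr sets. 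Third, the local decoupling inequality \cref{local-decoupling} combined with the spectral-positivity bound \cref{local-upper-to-lower} converts this into a two-sided estimate: the representation function $r_{A'}(x)$ has small $k$-norm divergence from the uniform density on (a translate of) $B_0$, in the weighted sense of \cref{weighted-k-norm}.

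Once $r_{A'}$ is pointwise close to uniform on (a translate of) $B_0$ in $\Z_M$, the counting step runs almost verbatim as in the proof of \cref{3-progs-in-finite-fields}: for odd $M$ the map $z \mapsto 2z$ is a measure-preserving bijection on $\Z_M$, and it maps $B_0$ to a Bohr set of comparable size (up to a $2$-dilation of the frequency set), so a weighted pigeonhole bound gives $\sum_{z \in A'} r_{A'}(2z) \geq 2^{-O(d)} / |B_0|$. Transferring back through the Freiman embedding and multiplying by $|A'|$ (and by $|B_0|$ from the sum) yields at least $2^{-O(d^{12})} N^2$ triples $(x,y,z) \in A^3$ with $x+y=2z$. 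The exponent degrades from $d^9$ in the finite-field case to $d^{12}$ because each of the density-increment, sifting, and invariance steps incurs an extra multiplicative polynomial cost in the Bohr-rank compared to its subspace analogue.

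The main obstacle is step one: Bohr-set density increments are considerably more expensive than the subspace version in \cref{greedy}, because maintaining regularity after intersecting with a new Bohr direction forces one to shrink the width and pay a rank-dependent loss, and because Bohr sets are not closed under sumsets (so even formulating the right notion of ``spread relative to a Bohr set'' requires care). A secondary technical point is that the Croot--Sisask / Chang-type input behind the translation-invariance lemma only delivers a \emph{Bohr} set of small rank rather than an exact subspace, which slightly weakens the quantitative trade-off between rank and invariance tolerance; balancing this against the sifting parameter $k$ and the regularity parameter of the Bohr sets is what ultimately pins down the exponent $d^{12}$.
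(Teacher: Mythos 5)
Your high-level reading of the ingredients is largely right --- Freiman $2$-homomorphisms, safe sets, the local sifting/decoupling/spectral-positivity machinery, and the Schoen--Sisask invariance lemma are indeed what the paper uses --- but the organizing skeleton you propose is precisely the one the paper explicitly says it could \emph{not} make work. Your step one iterates density increments \emph{inside} $\Z_M$ onto smaller Bohr sets, producing a subset that is ``spread relative to a regular Bohr set $B_0$.'' The paper raises exactly this as an open problem (\cref{integers-overview}, the Question following the discussion of iterating on Bohr sets): ``\ldots in this work, we were unsuccessful in developing sufficiently strong local variants of our techniques that could address it.'' The obstruction is not merely bookkeeping. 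All of the local tools you cite (\cref{sifting-local}, \cref{local-witness}, \cref{local-decoupling}, \cref{local-upper-to-lower}) estimate $\|A \star A - 1\|_{k,D}$ for a weight $D$ supported near a Bohr set, i.e.\ they compare $A \star A$ to the \emph{global} uniform density $1$ on $G$. After one density increment onto a Bohr set $B$, the set $A'$ is dense only relative to $B$ and essentially invisible relative to $G$, so these comparisons become vacuous; what one would need is a fully relativized theory comparing $A' \star A'$ to the uniform density on $B$, which the paper does not have.

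What the paper does instead is keep the iteration in $\Z^r$: the container at each stage is a box $[N_1]\times\cdots\times[N_r]$, the density-increment framework (\cref{spreadness}, \cref{pass-to-spread}) moves from one box configuration to another via a Freiman $2$-homomorphism, and each increment is manufactured by an \emph{ad hoc} round trip. Within a round one embeds the current box into the product of cyclic groups $\Z_{N_1}\times\cdots\times\Z_{N_r}$ (chosen with prime, odd, distinct $N_i$ so the group is cyclic and $2$ is invertible --- that is how oddness enters, not by choosing an odd $M$), restricts to a ``nice'' sub-configuration whose middle slice guarantees the convolution genuinely deviates from $1$, applies \cref{svr-local} to land on a generalized arithmetic progression $P$ contained in a safe Bohr set, and finally pulls $P$ back to a box in $\Z^{r'}$ via the labelling map of \cref{progression_label}. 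Bohr sets appear only transiently inside a single round, never as the ambient container for the next one. This is also why the counting step works: the final spread configuration lives in a cube where many $3$-progressions are counted directly (\cref{progressions-in-spread-sets-specific}), not via ``$r_{A'}$ uniform on a translate of $B_0$'' (which would anyway require $z\mapsto 2z$ to interact well with $B_0$, and a Bohr set is not closed under doubling the way a subspace is). Your exponent heuristic for $d^{12}$ is also untethered from the actual accounting, which is driven by the rank growth $r_n = O(d^9)$ and the per-step size loss $r\log r + d^2 r + d^{10}$ over $O(d)$ iterations.

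To repair your outline you would need either to answer the paper's open question about Bohr-relative $3$-progressions, or to replace step one with the box/progression/Freiman-$2$-homomorphism loop and keep Bohr sets strictly internal to each increment.
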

We review the well-known approximate equivalence of the 3-progression problem in the setting $A \subseteq [N]$ with the same problem in the setting $A \subseteq \Z_N$. One may embed $A \subseteq [N]$ naturally inside $\Z_{N'}$, say for some $N' \approx 3N$, so that (i) the density of $A$ in its container decreases only slightly and (ii) we do not obtain any new solutions to
$$ x + y = 2z \textnormal{ mod } N'$$
which were not present already. The natural reduction in the other direction is even easier: in this direction, we suffer no loss in density. 

So, it suffices to consider the problem of finding many 3-progressions in the setting $A \subseteq \Z_N$, $|A| = 2^{-d} N$. Ultimately we will not take quite this approach, but we consider this setting first as it is more directly comparable with the setting $A \subseteq \F_q^n$. Let us discuss the compact summary of the approach for $\F_q^n$ given at the end of \cref{review}. All of our steps are, in fact, more generally applicable to any finite abelian group $ G $, up until the last point, where we use Sanders' invariance lemma to find an increment onto a large subspace. This is still not a problem: applying a more general form of Sanders' lemma can readily prove a statement of the following sort. Suppose $A \subseteq \Z_N$ has noticeably fewer than the expected number of 3-progressions. In that case, we can obtain a density increment onto a choice of either (i) some (translate of a) large Bohr set $B$ of rank at most $O(d^8)$, or (ii) some large generalized arithmetic progression $P$ of rank $O(d^8)$. The problem comes when we try to iterate this. Unlike the situation before, the container-sets $B$ and $P$ are not structurally isomorphic again to some finite abelian group, so our techniques no longer apply.\footnote{We could still apply them in a literal sense -- everything is still a subset of some finite group $G = \Z_N$ -- but quantitatively our techniques become trivial because we are constantly comparing to the uniform density function $1$ on $G$.}
This is not to say that such an approach (i.e., \ iteratively seeking density increments onto large approximate subgroups) does not work, only that we cannot analyze it; indeed, it seems likely that it should work.
To be concrete, we ask the following technical question.
\begin{question}
Suppose $A \subseteq B$, where $B \subseteq \Z_N$ is a Bohr set of rank $r$, and we have local density
$$ \mu = \frac{|A|}{|B|} \geq 2^{-d}. $$
Can it be shown that we must have either (i) at least
$$ 2^{-\textnormal{poly}(r)} \cdot \mu \cdot |A|^2 $$
solutions to $x + y = 2z$ with $x,y,z \in A$, or else (ii)
a density increment
$$ \frac{|A \cap B'|}{|B'|} \geq (1 + \Omega(1)) \cdot \mu $$
of $A$ onto some affine Bohr set $B'$ with
\begin{itemize}
\item rank $r' \leq r + \textnormal{poly}(d)$ and
\item size $|B'| \geq 2^{-\textnormal{poly}(r,d)} |B|$ ?
\end{itemize}
\end{question}
Answering this question seems maybe not too far out of reach, but in this work, we were unsuccessful in developing sufficiently strong local variants of our techniques that could address it. 
A positive answer to this question would surely give the ``right" version of our proof.\footnote{
Moreover, an approach that successfully answers this question would likely also produce a pleasant analog to our structural result (\cref{robust-sunflower}) in the finite field setting. It is not immediately clear what such an analog should look like, specifically.
}
Here instead, we will apply some tricks and cut some corners.

We proceed to discuss the overall structure of the arguments in the earlier works of Roth and Szemer{\'e}di and Heath-Brown for addressing the 3-progression problem in the integers. More specifically, we follow the interpretation of these works given in \cite{gowers01} and \cite{green-note}.
In what follows, we refer to the arguments of both works together simply as the ``early approach" (as opposed to the ``modern approach" developed by Bourgain and refined by Sanders, which emphasizes the use of Bohr sets \cite{bourgain99, bourgain08, sanders12certain}).

In the early approach, one considers an ad hoc passage back and forth from the setting $A \subseteq \Z$ into various cyclic groups to facilitate the density increment argument. 
We outline the main points.
\begin{itemize}
\item We begin with a set $A \subseteq [N]$ of size $|A| = \mu N$ which we are dissatisfied with: it has much fewer than $\mu |A|^2$ solutions to $x+y = 2z$ with $x,y,z \in A$.
\item We consider a sort of ``temporary" embedding of $A$ into some cyclic group $\Z_{N'}$ where we can do Fourier analysis. This embedding must be notably more efficient than the simple one described above. Ultimately, we will obtain only a small density increment $(1+\eps)$ at every iteration, so we cannot afford to lose a factor $3$ repeatedly. Additionally, we (roughly) need the embedding to be such that $A$ still has substantially fewer solutions to $x + y = 2z \textnormal{ mod } N'$ than expected, which is in tension with the previous constraint.
\item Given such an embedding, we argue via Fourier analysis that we may obtain a density increment onto some large arithmetic progression $P \subseteq \Z_{N'}$.
\item We must then argue that $P$ can be pulled back to some ``genuine" progression $P' \subseteq [N] \subseteq \Z$, so that we obtain a density increment of $A$ onto $P'$. This limits the kind of progressions we can allow in the previous step. Finally, we observe that the progression $P'$ is structurally isomorphic to the interval $[|P'|]$, which allows us to iterate the argument.
\end{itemize}

Let us offer a comment which we find clarifying.
There is an alternative interpretation of this approach where the actual algorithm is quite simple and natural. The step where we briefly consider some ``virtual" embedding into a cyclic group can be relegated to the analysis. 
Let us elaborate. For a set $A \subseteq [N]$ of size $|A| = \mu N$, say that a subset $A' \subseteq A$ is satisfactory if there are roughly at least $\mu |A'|^2$ solutions to $x+y=2z$ with $x,y,z \in A'$. We can make a conceptual distinction between the procedure used to find such a subset and the argument proving that the procedure is successful. In the context of the above, we note that if we are dissatisfied with our current subset, the only recourse which is ultimately available to us is to pass to some restriction $A \cap P$ onto a large progression $P \subseteq \Z$ where we obtain a density increment.

Thus, one can give a quite clear description of the algorithm implicit in the early approach, which need not mention cyclic groups: While there is a density increment onto some large progression $P \subseteq \Z$ available, take it. Throughout, hold on to the current container-set $P$, starting with $P = [N]$, and measure density relative to it. Once there are no more increments available: conclude -- we have found our candidate subset $A' \subset A$. From here, it can be considered a separate matter to argue that $A'$ is satisfactory: if it is not, we argue (e.g., \ by considering a ``virtual" embedding into some cyclic group, if we like) that we could obtain one further increment -- a contradiction.

A natural extension of this simple algorithm is to broaden the class of allowed container-sets $P$ to include low-rank generalized progressions; this is quite analogous to the allowance of subspaces of codimension larger than one in the setting of $\F_q^n$. This comes morally very close to the algorithm we will use, and we will also analyze it in much the same way as above: by making some ad hoc reductions to the setting of finite groups where our techniques apply.
We give a more detailed overview of our analysis in \cref{integers-analysis-overview}.
For now, we focus on fully specifying our algorithm.

We will need to consider an algorithm stronger than the one described above to make up for a specific weakness of our analysis. Namely, this weakness is related to the last step in the early approach described above, where we would like to pull back a progression in a cyclic group to a ``genuine" progression in the integers. This step is already costly for arithmetic progressions; in the early approach, one obtains a progression $P \subseteq [N]$ no larger than $\sqrt{N}$. Indeed, the situation is only worse for generalized progressions, and it will be imperative that we somehow avoid such a substantial size loss. 

We will be naturally led to consider the (only mildly) more general problem of establishing the existence of many 3-progressions in a set $A \subseteq P \subseteq \Z^r$, where $P$ is some set of the form $P = \prod_{i=1}^r [N_i] \subseteq \Z^r$, and again we have density 
$$ \mu = \frac{|A|}{|P|} \geq 2^{-d}, $$
and furthermore $r$ is assumed to be no larger than $d^{O(1)}$. 
We call the set $A$ and its container-set $P$ together a ``configuration". 
We consider some more liberal implementations of the density increment framework which would still suffice for lower bounding the number of 3-progressions in $A$.

In the broadest sense, whenever we are dissatisfied with $A \subseteq P$, we would like to establish the existence of some related configuration $A' \subseteq P'$, possibly lying in a slightly larger-dimensional space $\Z^{r'}$, where
\begin{itemize}
\item $A'$ has no more solutions to $x + y = 2z$ than does $A$,
\item the container set $P'$ is still highly structured -- in our setting: of the form $\prod_{i=1}^{r'}[N_i']$,
\item the density of $A'$ in its container has increased -- in our setting by some constant factor $(1 + \Omega(1))$, and
\item The size of $A'$ has not decreased too substantially. 
\end{itemize}

To this end, let us say that $A \subseteq P$ is ``spread" roughly when it has no possible ``density increments" satisfying the criteria above (we will settle on a precise formulation shortly). Then, our procedure for ``locating" a satisfactory configuration $A' \subseteq P'$ is as follows: while there are any good density increments available, take one of them. Otherwise, our resulting configuration is spread, and we are left to argue that this forces it to be satisfactory.

\begin{definition}[Good increments and spread configurations] \label{spreadness}
Fix some constants $c,K \in \N$.
Suppose that $A \subseteq P \subseteq \Z^r$, where
$P$ is of the form $[N_1] \times [N_2] \times \cdots \times [N_r]$, and $A$ has density
$$ \frac{|A|}{|P|} \geq 2^{-d} $$
in its container, for some $d \geq 1$. 

Let $A'$ be a subset of $A$, and let $\phi$ be a labelling of elements $a \in A'$ by points $p \in P' = \prod_{i=1}^{r'} [N_i'] \subseteq \Z^{r'}$; that is, $\phi$ is some injection $\phi : A' \rightarrow P'$. 

For $\eps > 0$, we say that $(A', \phi)$ is a $(1+\eps)$-good increment (or just a ``good increment", suppressing the dependence on $\eps$ in addition to the dependence on $c$ and $K$) if the following conditions are satisfied.

\begin{enumerate}
\item The labelling $\phi : A' \rightarrow \Z^{r'}$ is a Freiman homomorphism of order $2$.
\item We have the density increment 
$$ \frac{|A'|}{|P'|} \geq \left(1 + \eps \right) \frac{|A|}{|P|} .$$
\item We have bounded dimension growth 
$$ r' \leq r + K d^c. $$
\item We have bounded size loss 
$$\lg |A'| \geq \lg |A| - K d^c - K r^c. $$
\end{enumerate}

If the configuration $A \subseteq P$ has no $(1+\eps)$-good increments, we say that $A$ is $(1+\eps)$-spread (relative to $P$).
\end{definition}

We see that this quite liberal type of density increment (that is, one requiring merely a $2$-homomorphism) is indeed still useful for investigation of 3-progressions: if $x,y,z \in A'$ satisfy
$$ \phi(x) + \phi(y) = 2\cdot \phi(z) = \phi(z) + \phi(z) \textnormal{ (in } \Z^{r'}\textnormal{)},$$ then 
$$ x + y = z + z = 2 \cdot z \textnormal{ (in } \Z^r\textnormal{)}.$$ 
Thus, the number of 3-progressions does not increase when we pass from $A'$ to $\phi(A')$.  

\begin{prop}[Passing to a spread configuration] \label{pass-to-spread}
Fix a choice of constants $(c,K,\eps)$ quantifying spreadness. 

Suppose $A \subseteq [N]$ has density at least $2^{-d}$. 
Then there is a subset $A' \subseteq A$ and a Freiman $2$-homomorphism 
$$ \phi : A' \rightarrow P' = [N_1] \times [N_2] \times  \cdots \times [N_r] \subseteq \Z^r$$
such that
\begin{itemize}
    \item $\phi(A')$ is spread relative to its container $P'$,
    \item $r \leq O(d^{c+1})$, and
    \item $\lg |A'| \geq \lg N - O\left(d^{c^2 + c + 1}\right)$.
\end{itemize}

\end{prop}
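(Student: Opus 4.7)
The plan is a straightforward iterative density-increment argument, where the composition of good increments produces the required configuration. Set $A_0 := A \subseteq P_0 := [N] \subseteq \Z^{r_0}$ with $r_0 = 1$, and let $\Phi_0 : A_0 \to A_0$ be the identity (which is trivially a Freiman $2$-homomorphism). While the current configuration $A_i \subseteq P_i \subseteq \Z^{r_i}$ is not $(1+\eps)$-spread, invoke \cref{spreadness} to extract a good increment: a subset $\widetilde{A}_{i} \subseteq A_i$ together with a Freiman $2$-homomorphism $\phi_{i+1} : \widetilde{A}_i \to P_{i+1} \subseteq \Z^{r_{i+1}}$ satisfying conditions 1--4. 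Set $A_{i+1} := \phi_{i+1}(\widetilde{A}_i)$ and $\Phi_{i+1} := \phi_{i+1} \circ \Phi_i$ on the appropriate preimage in $A$. Since the composition of two Freiman $2$-homomorphisms is again a Freiman $2$-homomorphism, the accumulated map $\Phi_{i+1}$ remains a valid Freiman $2$-homomorphism from some subset $A^{(i+1)} \subseteq A$ onto $A_{i+1} \subseteq P_{i+1}$.

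The iteration must terminate: density $|A_i|/|P_i|$ grows by a factor $(1+\eps)$ at each step and cannot exceed $1$, so the number $t$ of iterations is at most $d/\log_2(1+\eps) = O(d)$ (treating $\eps$ as an absolute constant fixed by the spreadness definition). Note that the hypothesis ``density $\geq 2^{-d}$'' in \cref{spreadness} is preserved throughout, since density is monotonically nondecreasing along the iteration. When the process halts, the current configuration $A_t \subseteq P_t$ is $(1+\eps)$-spread by construction, and we output $A' := A^{(t)} \subseteq A$ together with $\phi := \Phi_t$.

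It remains to verify the quantitative bounds. Condition 3 gives $r_{i+1} \leq r_i + K d^c$, so by induction $r_t \leq 1 + t \cdot K d^c \leq O(d \cdot d^c) = O(d^{c+1})$, which matches the claimed dimension bound. For the size loss, condition 4 says $\lg|\widetilde{A}_i| \geq \lg|A_i| - K d^c - K r_i^c$. Using the uniform bound $r_i \leq O(d^{c+1})$, each step costs at most $K d^c + K \cdot O(d^{c(c+1)}) = O(d^{c^2 + c})$ in log-size. Summing over $t = O(d)$ iterations yields total size loss $O(d^{c^2+c+1})$, so
\[
\lg |A'| \;\geq\; \lg |A| - O(d^{c^2+c+1}) \;\geq\; \lg N - d - O(d^{c^2+c+1}) \;=\; \lg N - O(d^{c^2+c+1}),
\]
as required.

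The main conceptual obstacle is purely bookkeeping: one must keep track of both how the dimension $r$ accumulates (growing linearly in the number of iterations) and how the per-iteration size loss itself depends on the current $r$ (contributing the $K r^c$ term). The multiplicative interaction between these two effects is what produces the $d^{c^2+c+1}$ exponent --- the worst-case iteration loss $d^{c^2+c}$ is paid at every one of the $O(d)$ steps. The only other point requiring care is the claim that composition preserves the Freiman $2$-homomorphism property, which is routine and follows immediately from the definition (unpacking: if $x+y=z+w$ in the domain, this is preserved by $\Phi_i$ and then by $\phi_{i+1}$, so also by their composition).
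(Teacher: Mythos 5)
Your proof is correct and takes essentially the same approach as the paper's: greedily take good increments until none remain, noting that density grows by $(1+\eps)$ each step and so termination occurs within $O(d)$ iterations, then bound dimension growth and cumulative size loss using the uniform bound on $r_i$. One small caveat on your parenthetical ``unpacking'' of why composition preserves the Freiman $2$-homomorphism property: the implication is stated in the wrong direction. The paper's Definition~\ref{freiman} requires that equality of sums in the \emph{codomain} implies equality in the \emph{domain} (this is what lets one pull $3$-progressions back along $\phi$), so the correct --- and equally routine --- verification reads: if $\phi_{i+1}(\Phi_i(x))+\phi_{i+1}(\Phi_i(y))=\phi_{i+1}(\Phi_i(z))+\phi_{i+1}(\Phi_i(w))$, then $\Phi_i(x)+\Phi_i(y)=\Phi_i(z)+\Phi_i(w)$ by the property for $\phi_{i+1}$, and hence $x+y=z+w$ by the property for $\Phi_i$. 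This does not affect the validity of the argument, since the claim itself is true and the overall structure of your proof matches the paper's.
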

\begin{proof}
Let $A_0 := A$ and $P_0 := [N]$. We consider the ``greedy algorithm" which at each step passes from the current configuration to some good increment, if one exists. In this way we produce a sequence of
\begin{itemize}
    \item configurations $A_i \subseteq P_i \subseteq \Z^{r_i}$,
    \item subsets $A_{i-1}' \subseteq A_{i-1}$, and
    \item bijections $\phi_{i} : A_{i-1}' \rightarrow A_i$
\end{itemize}
such that in addition, each of the maps $\phi_{i}$ is a $2$-homomorphism. 
The density of the $i$-th configuration is at least
$$ \frac{|A_i|}{|P_i|} \geq \left(1 + \eps\right)^{i} 2^{-d} \geq 2^{\eps i  - d},
$$ 
and so this algorithm must terminate within $n \leq d/\eps$ iterations. We are left with a configuration $A_n \subseteq P_n$ which is guaranteed to be spread. 
Taking the composition $\phi := \phi_{n} \circ \phi_{n-1} \circ \cdots \circ \phi_{1}$ gives a map which is a bijection between $A_n$ and its preimage $\phi^{-1}(A_n) \subseteq A$, and it is also a $2$-homomorphism from the preimage $\phi^{-1}(A_n)$ to $\Z^{r_n}$. 

Since the density never decreases, we can bound $r_n$ simply by
\begin{equation*}
    r_n \leq 1 + n \cdot K (d^{c}+1) \leq O(d^{c+1}). 
\end{equation*}
Since $r_i \leq r_n$ for all $i$, we can lower-bound the size of $A_n$ simply by
\begin{equation*}
    \lg |A_n| \geq |A| - n \cdot O(d^c + r^c) \geq \lg |A| - O(d^{c^2 + c + 1}) = \lg N - O(d^{c^2+c+1}). \qedhere
\end{equation*}
\end{proof}

With the density-increment framework in place, the task of proving \cref{3-progs} is reduced to the task of proving the following.

\begin{lem}[Spread configurations have many 3-progressions] \label{progressions-in-spread-sets}
For some fixed choice of constants $(c,K,\eps)$ quantifying spreadness, the following holds.

Suppose that $A \subseteq [N_1] \times [N_2] \times \cdots \times [N_r] \subseteq \Z^r$ has density $\mu$ relative to its container, and that $A$ is spread relative to its container.
Then there are at least
$$ r^{-O(r)} \cdot \mu \cdot |A|^2 $$
solutions to $x + y = 2z$ with $(x,y,z) \in A^3$. 
\end{lem}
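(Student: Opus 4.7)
The plan is to argue by contrapositive: assume $A \subseteq P$ has strictly fewer than $r^{-O(r)} \mu |A|^2$ three-term progressions; I aim to produce a $(1+\eps)$-good density increment of $A$ relative to $P$, contradicting spreadness for suitable absolute constants $c, K, \eps$ chosen at the end.

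The first step is to transfer the problem into a single cyclic group. I use a Freiman $2$-homomorphism $\psi : P \hookrightarrow \Z_M$ with $M = O(2^r |P|)$ an odd prime, for instance via the standard base encoding $\psi(x) = \sum_i x_i \prod_{j<i}(2N_j + 1) \bmod M$, so that $z \mapsto 2z$ is a permutation of $\Z_M$ and $\psi$ is injective on $2P - 2P$. Then $\psi(A)$ carries the same 3-progression count as $A$; viewing $A$ as a density on $\Z_M$ with $\|A\|_\infty = M/|A|$, the progression count equals $|A|^3/M$ times $\ip{A}{(A*A)\circ 2}$, where $(\cdot)\circ 2$ denotes pullback under $z\mapsto 2z$. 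A shortage of progressions (below the target $r^{-O(r)} \mu |A|^2$) therefore translates, via H\"older with conjugate exponents $1 + 1/(k-1)$ and $k$ at some $k = \textnormal{poly}(d, r)$ chosen so that $\|A\|_\infty^{1/k} \leq 2$, together with the decoupling inequality \cref{decoupling-inequality}, into $\|A \star A - 1\|_k \geq \Omega(1)$.

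Spectral positivity of $A \star A$ combined with the odd-moments lemma \cref{odd-moments} upgrades this to $\|A \star A\|_{k'} \geq 1 + \Omega(1)$ at some $k' = O(k)$. Feeding this into the sifting corollary \cref{sift-robust-witness} produces a subset $A'' \subseteq \psi(A)$ of density $\geq 2^{-O(dk')}$ in $\Z_M$ whose self-convolution $A'' \star A''$ is concentrated on the super-level set $\{A \star A \geq 1 + \Omega(1)\}$. Invoking the Schoen--Sisask local translation-invariance lemma (the $\Z_M$-analogue of Sanders' invariance lemma advertised in the paper outline) then convolves this robust witness with a Bohr set $B \subseteq \Z_M$ of rank $\textnormal{poly}(d)$ and size $\geq 2^{-\textnormal{poly}(d)} M$ while preserving the bound $\ip{B * (A'' \star A'')}{A \star A} \geq 1 + \Omega(1)$. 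Translating appropriately, this yields an affine Bohr set on which $\psi(A)$ has density at least $(1+\Omega(1))$ times its average on $\psi(P)$.

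The last step converts this Bohr-set density increment into a $(1+\eps)$-good increment as demanded by \cref{spreadness}. Every Bohr set in $\Z_M$ of rank $t$ contains a proper generalized arithmetic progression of rank $t$ and size at least $t^{-O(t)}$ of the Bohr-set size, and such a progression is by construction the image of a box $B_0 = [N_1'] \times \cdots \times [N_{r'}']$ under a Freiman $2$-homomorphism $\pi : B_0 \to \Z_M$. Composing with $\psi^{-1}$ and invoking the ``safe set'' device of \cref{prelims-integers} --- designed precisely to certify that $\psi^{-1} \circ \pi$ is a legitimate $2$-Freiman map on its domain --- produces the required labelling $\phi : A' \to B_0 \subseteq \Z^{r'}$, with $A' \subseteq A$ the preimage of the increment set. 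The dimension growth $r'-r \leq t \leq \textnormal{poly}(d)$ and the size loss $\textnormal{poly}(d) + O(t \log t) = \textnormal{poly}(d)$ bits both fit inside the $Kd^c + Kr^c$ allowance for $c, K$ chosen large enough, giving the required contradiction. The main obstacle is this final conversion: matching the box structure from the Bohr-to-progression passage with the $2$-Freiman pullback through $\psi^{-1}$ via the safe-set machinery, and tracking the quantitative bookkeeping carefully enough for the increment to meet all four conditions of \cref{spreadness}, is the delicate combinatorial step, whereas the analytic core (sifting, decoupling, spectral positivity, local translation invariance) runs essentially parallel to the finite-field argument of \cref{II-part-2}.
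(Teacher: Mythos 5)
Your proposal runs into a genuine quantitative obstruction at the density-increment step, and the issue is precisely the $2^{\Theta(r)}$ inflation factor introduced by the single-cyclic-group embedding.

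When you embed $P = [N_1]\times\cdots\times[N_r]$ into $\Z_M$ via base encoding with $M = \Theta(2^r|P|)$, the density of $\psi(A)$ relative to the new ambient group is $|A|/M = 2^{-\Theta(r)}\cdot|A|/|P|$. The sifting and Schoen--Sisask machinery you invoke naturally detects deviations of $\psi(A)\star\psi(A)$ from the \emph{uniform density on $\Z_M$}, and therefore the Bohr set $B''$ it produces satisfies only $|\psi(A)\cap B''|/|B''| \geq (1+\Omega(1))\cdot|A|/M$. You then assert, ``this yields an affine Bohr set on which $\psi(A)$ has density at least $(1+\Omega(1))$ times its average on $\psi(P)$,'' but this is exactly what the machinery does not give you: the increment you have is short of the \cref{spreadness} requirement $|A'|/|P'| \geq (1+\eps)|A|/|P|$ by a multiplicative factor $M/|P| = 2^{\Theta(r)}$. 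That factor cannot be absorbed into the constants: over the iterated argument in \cref{many-3-progs} the rank $r$ grows to $\textnormal{poly}(d)$, so the deficit becomes $2^{\textnormal{poly}(d)}$, which is far larger than any fixed constant increment $(1+\eps)$. Nothing in the sifting, odd-moments, or translation-invariance steps recovers this factor.

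The paper's proof is organized specifically to avoid this inflation. It embeds coordinate-wise into $G = \Z_{N_1}\times\cdots\times\Z_{N_r}$ (with $|G|=|P|$ exactly, so no loss of density), accepting in exchange that the embedding is a Freiman $2$-homomorphism only on a restricted part of $P$. This forces two extra pieces of machinery you have skipped: (i) the ``nice configuration'' reduction (\cref{obtaining-nice-config}, \cref{nice-configuration}), which arranges for prime side-lengths, places $A$ in the upper portion, and guarantees a density lower bound on the middle slice together with a uniform bound on $R_A(2z)$; and (ii) the \emph{local} variants of sifting, decoupling, and spectral positivity (\cref{sifting-local}, \cref{local-decoupling}, \cref{local-upper-to-lower}), which measure $\|A\star A\|_{k,B\star B}$ against a safe Bohr set $B$ rather than against the global uniform density. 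The density increment these yield (\cref{svr-local}) is genuinely relative to the correct container $|G|=|P|$, and the safe-set machinery (\cref{safe_bohr}) certifies the $2$-homomorphism property precisely because the increment set lies inside a translate of $B$. Your proposal mentions the safe-set device for the final Freiman pullback, but the harder point --- getting an increment measured against the right baseline in the first place --- is where the single-$\Z_M$ route breaks down.

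One smaller note: you also haven't verified that the progression produced from the Bohr set, after pulling back through $\psi^{-1}$, remains a box in $\Z^{r'}$ whose labelling map composes with $\psi^{-1}$ to a $2$-homomorphism on the restricted domain. In the paper this is nontrivial and is exactly what the safe-set construction in $\prod\Z_{N_i}$ is designed to deliver; there is no analogous off-the-shelf statement for the base-encoded $\psi$, since (as the paper illustrates with the example after \cref{interval_embed}) $\psi^{-1}$ of even a short interval in $\Z_M$ need not be a single interval in $[N]$.
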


We in fact prove the following more specific formulation.

\begin{lem} \label{progressions-in-spread-sets-specific}
Suppose $A \subseteq [N_1] \times [N_2] \times \cdots \times [N_r] \subseteq \Z^r$ has density $\mu \geq 2^{-d}$. 

Either the number of triples $(x,y,z) \in A^3$ with $x+y=2z$ is at least
$$ r^{-O(r)} \cdot \mu \cdot |A|^2, $$
or there is a $(1+2^{-10})$-good increment $(A',\phi)$ mapping into $\Z^{r'}$, specifically with
\begin{itemize}
    \item $r' \leq r + O(d^8)$ and
    \item $|A'| \geq r^{-O(r)} \cdot 2^{-O(d^2 r)} \cdot 2^{-O(d^{10})} \cdot |A|$.
\end{itemize}
\end{lem}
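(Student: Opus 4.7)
The plan is to argue by contrapositive: assuming that the number of solutions to $x + y = 2 z$ with $x, y, z \in A$ is less than $r^{-O(r)}\mu |A|^2$, I will produce a $(1 + 2^{-10})$-good increment.

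\textbf{Step 1 (Embedding into a finite abelian group).} Choose odd integers $M_i$ with $3 N_i \leq M_i \leq 3 N_i + O(1)$ and set $G := \prod_{i=1}^{r} \Z_{M_i}$. The natural inclusion $\iota : P \hookrightarrow G$ is a Freiman $2$-homomorphism: each coordinate lies in a window of length $N_i \leq M_i/3$, so no wrap-around can occur. Consequently three-progressions of $\iota(A)$ inside $G$ correspond bijectively to three-progressions of $A$ inside $\Z^r$, and, since each $M_i$ is odd, dilation by $2$ is a permutation of $G$. The density of $\iota(A)$ in $G$ is $\delta := \mu \prod_i N_i/M_i$, which satisfies $r^{-O(r)} \mu \leq \delta \leq \mu$. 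This passes the problem entirely into a finite-abelian-group setting where the machinery of the earlier sections is available.

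\textbf{Step 2 (Deviation of $A \star A$).} Writing the three-progression count as $\delta |A|^2 \cdot \ip{A * A}{g}$, where $g(z) := A(2^{-1} z)$ is a dilated copy of the density $A$, the hypothesis combined with $\mu/\delta \leq r^{O(r)}$ forces $\ip{A * A - 1}{g} \leq -1 + o(1)$. Using $\|g\|_\infty = \|A\|_\infty \leq 2^{d + O(r)}$ and a H\"older estimate, there exists $k = O(d + r)$ with $\|A * A - 1\|_k \geq \Omega(1)$. The decoupling inequality \cref{decoupling-inequality} then gives $\|A \star A - 1\|_k \geq \Omega(1)$, and spectral positivity (\cref{odd-moments}) upgrades this to $\|A \star A\|_{k'} \geq 1 + \Omega(1)$ for some $k' = O(k)$.

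\textbf{Step 3 (Sifting and Sanders' invariance).} Apply the sifting lemma (\cref{sifting-lem-density-formulation}) to the sublevel-set indicator $\ind{(A \star A)(x) \leq 1 + \Omega(1)}$ to extract a subset $A'' \subseteq A$ for which $A'' \star A''$ robustly witnesses that $A \star A$ exceeds $1$ on a set of near-full measure. Feeding $A''$ into the general-abelian-group form of Sanders' invariance lemma yields a Bohr set $B \subseteq G$, whose rank and size are tracked explicitly, together with a translate $B + t$ on which $\iota(A)$ has density at least $(1 + 2^{-10}) \delta$. This is the abstract density increment onto an approximate subgroup inside $G$.

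\textbf{Step 4 (Converting to a box, and the main obstacle).} To match the concrete format of \cref{spreadness}, the Bohr-set increment in $G$ must be converted into a Freiman $2$-homomorphism whose image is a genuine box $\prod_{i=1}^{r'} [N_i']$. I would follow the chain sketched in \cref{prelims-integers}: a Bohr set of rank $r_0$ contains a proper generalized arithmetic progression $Q$ of rank $r_0$ and relative volume at least $r_0^{-O(r_0)}$; a ``safe'' subset $Q_0 \subseteq Q$ admits a natural Freiman $2$-homomorphism $\psi : Q_0 \to \prod_{i=1}^{r_0} [N_i']$. Composing $\iota$ restricted to $A \cap \iota^{-1}(Q_0 + t)$ with a suitable affine modification of $\psi$ produces a $2$-homomorphism from a large subset $A' \subseteq A$ into a box in $\Z^{r + r_0}$, with new density at least $(1 + 2^{-10}) \mu$. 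The main obstacle is the quantitative bookkeeping: first, showing that the Bohr rank $r_0$ from Step 3 is in fact at most $O(d^8)$ --- which requires sidestepping the unwanted $r$-dependence in $k'$ by applying the \emph{local} variants of sifting and spectral positivity (\cref{sifting-local}, \cref{local-upper-to-lower}) with a reference density $B * C$ rather than the flat density $1$ on $G$; and second, showing that the cumulative size loss from the chain ``sift $\to$ Bohr $\to$ GAP $\to$ safe subset $\to$ box'' multiplies out to exactly $r^{-O(r)} \cdot 2^{-O(d^2 r)} \cdot 2^{-O(d^{10})}$, matching the claimed bound on $|A'|$.
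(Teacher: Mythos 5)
Your Step 1 is where the argument breaks, and it is not a bookkeeping matter that can be absorbed later.  Embedding into $G = \prod_i \Z_{M_i}$ with $M_i \approx 3 N_i$ drops the density from $\mu$ to $\delta \approx 3^{-r} \mu$.  Every density-increment lemma available (in particular \cref{svr-local}, which is the one that would be used in Step~3) produces an increment \emph{relative to the density of the set inside its ambient group}, i.e.\ it yields $|\iota(A) \cap P|/|P| \geq (1 + \tfrac{1}{32})\,\delta$.  That is at most $(1 + \tfrac{1}{32}) \cdot 3^{-r}\mu$, which for every $r \geq 1$ is \emph{less} than $\mu$, so condition~2 in \cref{spreadness} fails and what you have constructed is not a $(1+2^{-10})$-good increment at all.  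The factor $3^r$ is never recovered: it is a density loss, not merely a size loss, and the ``bounded size loss'' clause of \cref{spreadness} does not help since the density clause is separate and strict.  The paper explicitly identifies this as the central difficulty (``we cannot afford to lose a factor $3$ repeatedly'' in \cref{integers-overview}) and the entire apparatus of nice configurations (\cref{obtaining-nice-config}), upper portions and middle slices, and safe sets (\cref{t-safe}, \cref{safe_bohr}) exists precisely so that one can embed $A$ into $\Z_{N_1} \times \cdots \times \Z_{N_r}$ \emph{without enlarging the side lengths}.  In that embedding $\phi$ is not a global Freiman $2$-homomorphism and the 3-progression count can change under wrap-around; the middle-slice argument shows there is still a large region where $\phi(A) * \phi(A)$ is far below $1$, and the safe-set machinery ensures that once the increment set $P$ is found inside a translate of the safe Bohr set $B$, the restriction of $\phi$ to $A \cap \phi^{-1}(P)$ \emph{is} a $2$-homomorphism.

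Your Step 2 has a related, secondary issue which you do flag: with $\|A\|_\infty \leq 2^{d+O(r)}$ in the tripled group, H\"older forces $k = O(d+r)$, and then the codimension coming out of Sanders is $\mathrm{poly}(d+r)$, not $r + O(d^8)$.  You correctly guess that local variants (\cref{sifting-local}, \cref{local-decoupling}, \cref{local-upper-to-lower}) are the remedy, but those local variants take $k$-norms relative to a reference density $B \star B$ supported near the middle slice, and it is exactly the nice-configuration construction that makes such a reference density available and gives $k = O(d)$ independent of $r$ (see the end of \cref{embedding}).  So both issues trace back to the same omission: you cannot skip the passage to a nice configuration and still land on the stated parameters, because that passage is what simultaneously (a) avoids the $3^r$ density penalty and (b) localizes the problem so the Hölder exponent stays $O(d)$.
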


Given these specific parameters, we obtain \cref{many-3-progs} by using essentially the density-increment framework described above but with some optimizations made to certain details. 

\begin{proof}[Proof of \cref{many-3-progs}]
We begin with a set $A \subseteq [N]$ with density at least $2^{-d}$. We (repeatedly) take any available $(1+2^{-10})$-increment specified by a subset which is smaller than our current set only by a factor $r^{-O(r)} \cdot 2^{-O(d^{10})}$ and a 2-homomorphism $\phi$ into $\Z^{r'}$ for some dimension $r'$ exceeding the current dimension by only $O(d^8)$. After some number (say $n$) of such increments, we arrive at some configuration $A_n \subseteq P_n \subseteq \Z^{r_n}$ for which no further increment is possible, and indeed, we must have $n \leq O(d)$. Thus, also we have
\begin{itemize}
    \item $r_n \leq O(d^9)$ and, noting the asymptotic bound $r_n \lg(r_n) \leq O(d^{10}) $,
    \item $\lg |A_n| \geq \lg |A| - n \cdot O(r_n \lg  r_n) - n \cdot O(d^2 r_n) - n \cdot O(d^{10}) \geq \lg N - O(d^{12})$. 
\end{itemize}
We apply \cref{progressions-in-spread-sets-specific} to the final configuration $A_n \subseteq P_n$. Since we are in the ``spread" case, we must find at least
$$ r_n^{-O(r_n)} \cdot 2^{-d} \cdot |A_n|^2 \geq 2^{-O(d^{12})} \cdot N^2 
$$
3-progressions. \qedhere
\end{proof}

\subsection{Proof overview for \cref{progressions-in-spread-sets} and \cref{progressions-in-spread-sets-specific}} \label{integers-analysis-overview}

For this overview, let us focus on the one-dimensional case $A \subseteq [N]$, which is sufficient already to illustrate many of the critical points. Our proof of \cref{progressions-in-spread-sets} is by contradiction. We assume that the set $A$ has few 3-progressions (i.e., \ much fewer than the ``expected" number, roughly $|A|^3/N$), and we then show that $A$ cannot be spread by exhibiting a density increment satisfying the four criteria in \cref{spreadness}: We obtain a new configuration via a (i) Freiman 2-homomorphism with (ii) increased density, (iii) bounded dimension growth, and (iv) bounded size loss. 

Our strategy is to make ad hoc reductions to the setting where $ A $ is instead a subset of a finite group $ G $, where our techniques apply. For example, it follows readily from our prior arguments that if $A \subseteq \Z_N$ has density $\
\mu \geq 2^{-d}$, and the number of 3-progressions in $A$ deviates substantially from the expected number, then we get a density increment onto some generalized arithmetic progression $P \subseteq \Z_N$.

\begin{lem}[Structure vs.\ Pseudorandomness in $\Z_N$ -- special case of \cref{svr-local}] \label{svr}
Consider $A \subseteq \Z_N$ of size $|A| \geq 2^{-d} N$.

Suppose that 
$$\|A * A - 1\|_{k} \geq \Omega(1).$$
Then there exists a (proper) generalized arithmetic progression $P \subseteq G$, with
\begin{itemize}
    \item rank at most $r \leq O(k^4 d^4) $ and
    \item size $|P| \geq 2^{-O(k^5 d^5)} N$
\end{itemize}
such that
$$ \ip{P}{A} \geq 1 + \Omega(1). $$
\end{lem}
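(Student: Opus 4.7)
The plan is to mimic the proof of \cref{II-part-2}, replacing affine subspaces of $\F_q^n$ by Bohr sets of $\Z_N$ in the application of Sanders' invariance lemma, and then converting the resulting Bohr-set density increment into a proper generalized arithmetic progression in a final covering step.

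First I would upgrade the hypothesis on $A * A$ to a one-sided upper-tail bound on the self-convolution $A \star A$. By the decoupling inequality \cref{decoupling-inequality} applied with $B := A$, we have $\|A \star A - 1\|_k \geq \|A * A - 1\|_k \geq \Omega(1)$. Since $A \star A$ is spectrally positive, \cref{odd-central-moments} says that $A \star A - 1$ has non-negative odd moments, so \cref{odd-moments} converts the centered lower bound into $\|A \star A\|_{k_1} \geq 1 + \eps$ for some absolute constant $\eps > 0$ and some $k_1 = O(k)$.

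Next I would run the sifting-plus-invariance argument from \cref{II-part-2}. Setting $f(x) := \ind{(A \star A)(x) \geq 1 + \eps/2}$, \cref{sift-robust-witness} produces a subset $A' \subseteq A$ with $|A'|/N \geq 2^{-O(d k)}$ and $\langle A' \star A', f\rangle \geq 1 - \eps/16$, which gives $\langle A' \star A', A \star A\rangle \geq 1 + \eps/4$. I would then invoke the $\Z_N$ version of Sanders' invariance lemma: it yields a regular Bohr set $B_0 \subseteq \Z_N$ of rank $r = O(k^4 d^4)$ and radius $\rho = (kd)^{-O(1)}$ such that $\langle B_0 * A' \star A', f\rangle \geq 1 - \eps/8$, and hence $\langle B_0 * A' \star A', A \star A\rangle \geq 1 + \eps/4$. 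Exactly as in \cref{II-part-2}, this implies $\langle V, A \star A\rangle \geq 1 + \eps/4$ for some translate $V$ of $B_0$, and therefore $\langle V + a, A\rangle \geq 1 + \eps/4$ for some shift $a \in \Z_N$.

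The last step is to convert this Bohr-set density increment into a proper-GAP density increment. A standard fact (due to Freiman, with the quantitative forms used here given by Ruzsa and Green--Ruzsa) states that a regular Bohr set of rank $r$ and radius $\rho$ in $\Z_N$ contains a proper generalized arithmetic progression of rank $r$ and size at least $(c\rho/r)^r \cdot N$. With $r = O(k^4 d^4)$ and $\rho = (kd)^{-O(1)}$ this produces a proper GAP $P_0$ of rank $O(k^4 d^4)$ and size $|P_0| \geq 2^{-O(k^5 d^5)} N$, which matches the bound required by the lemma. Covering (most of) $V+a$ by translates of $P_0$ and pigeonholing, some translate $P := P_0 + t$ inherits the density increment $\langle P, A\rangle \geq 1 + \Omega(1)$. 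The main obstacle I expect is the quantitative bookkeeping in this final step: one has to verify that the radius emerging from Sanders' invariance lemma degrades only polynomially in $k$ and $d$ and that the $r$-fold loss incurred in passing from a Bohr set to a proper GAP of equal rank still fits within the claimed $2^{-O(k^5 d^5)}$ budget, but once the radius is tracked carefully through the proof the two exponents line up.
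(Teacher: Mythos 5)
Your plan is essentially the one the paper takes: the paper derives \cref{svr} as the $r=0$ case of \cref{svr-local}, whose proof is exactly ``decouple / spectral-positivity, sift a robust witness, invoke a translation-invariance lemma to get a Bohr set, pass to a proper GAP inside it.'' Two of your quantitative claims need correction, though, and one step as written has a genuine hole.

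First, the radius. The $\Z_N$ analogue of Sanders' invariance lemma that the paper actually uses is \cref{schoen-sisask}, and its radius bound contains a factor $2^{-s/2}$ where $s$ is the doubling parameter. Plugging in $X,Y$ of relative density $2^{-O(dk)}$ gives $s = O(dk)$, so the Bohr set's radius is $2^{-O(dk)}$, not $(kd)^{-O(1)}$. You hedged on this, and as it turns out the final bound $2^{-O(k^5 d^5)}$ is unaffected since $\bigl(2^{-O(dk)}/r\bigr)^{r}$ with $r=O(k^4 d^4)$ gives exactly $2^{-O(k^5 d^5)}$; but the polynomial-radius claim is wrong as stated.

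Second, and more seriously, the last sentence — ``covering (most of) $V+a$ by translates of $P_0$ and pigeonholing'' — does not work as stated. Translates of a proper GAP do not partition a Bohr set, only cover it, and covers overcount: if $V+a$ is covered by $T$ translates of $P_0$ then pigeonhole gives some $|A \cap (P_0+t)| \geq |A \cap (V+a)|/|T|$, and since $|T|\,|P_0| \geq |V|$ (with equality only for a genuine partition), the resulting density can be \emph{below}, not above, $(1+\eps/4)\mu$. The correct way to finish is via Bohr-set smoothing: take $\delta = \Theta(\eps/r)$, pick the GAP $P_0$ inside the \emph{dilated} set $V_\delta$ (via \cref{progression_in_bohr_set}), and apply \cref{one-sided}/\cref{smoothing-bohr} to get $\ip{V_{1+\delta} * P_0}{A(\cdot - a)} \geq (1-12\delta r)\ip{V+a}{A} \geq 1+\eps/8$; then pigeonhole over translates drawn from $V_{1+\delta}$. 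The paper avoids this issue in a slightly different way: in \cref{svr-local} it produces the progression $P \subseteq B'$ \emph{inside} the invariance argument, shows $\ip{P * X * Y}{f} \leq \eps/4$ directly, and then pigeonholes over translates drawn from $X*Y$, which sidesteps the tiling problem entirely. Either fix works and yields the claimed parameters; but without one of them the pigeonhole step is a gap.
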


That is, we get a set $P \subseteq \Z_N$ with density-increment
$$ \frac{|A \cap P|}{|P|} \geq (1 + \Omega(1)) \frac{|A|}{N} $$
of the form
$$ P = \Big\{ a + \sum_{i=1}^r c_i \cdot x_i \; : \; x_i \in [N_i]\Big\} $$
with size
$|P| = N_1 N_2 \cdots N_r \geq 2^{-\textnormal{poly}(d,k)} N$.

So, to describe our tentative plan in detail: we plan to embed $A \subseteq [N]$ into $\Z_N$ via the obvious embedding $\phi : x \mapsto x \mod N$, and then to find a density increment onto some subset $\phi(A) \cap P$. We let $A' = \phi^{-1}( \phi(A) \cap P) = A \cap \phi^{-1}(P) \subseteq [N]$ be the preimage.
We then compose with the simple Freiman homomorphism $\phi'$ which takes points $p \in P$ to their ``label" $(x_1, x_2, \ldots, x_r) \in [N_1] \times [N_2] \times \cdots \times [N_r] \subseteq \Z^r$ (see \cref{progression_label}). Then, we hope that the pair $(A', \phi' \circ \phi)$ gives us our desired density increment.

There are two distinct issues to address. Firstly, we need to ensure that upon embedding into $\Z_N$, the density $\phi(A) * \phi(A)$ does indeed deviate substantially from $1$ at sufficiently many points that we can apply \cref{svr}. Secondly, we must ensure that the embedding $\phi : A' \rightarrow \Z_{N}$ is a $2$-homomorphism. 

Although both issues must be addressed, the first should be considered less serious, as explained next. By our assumption that $A$ has few 3-progressions, we have that the number of representations $R_A(2z)$ of $2z$ is much smaller than $|A|^2/N$ for most points $z \in A$. For simplicity, let us assume the $R_A(2z)$ is very small for \textit{all} points $z \in A$ (and, indeed, it is not hard to reduce to this case).
For $x \in [N]$, we have 
$$( \phi(A) * \phi(A))(\phi(x)) = \frac{N}{|A|^2} R_{\phi(A)}(\phi(x)) = \frac{N}{|A|^2}R_{A}(x) + \frac{N}{|A|^2}R_A(x + N).$$
We need to avoid the ``unlikely" case that identifying each $x$ with $x+N$ results in close approximation $R_{\phi(A)}(\phi(x)) \approx \frac{|A|^2}{N}$ for all but a very tiny number of points $x$.
One has considerable flexibility in applying various ad hoc tricks to avoid this case. For example, one can choose to instead embed into $\Z_{N'}$ for any choice of $N' \in [N, (1 + \delta) N]$ for some small parameter $\delta$ -- for the sake of discussion, say $\delta \leq 1/100$. This gives up a small amount of density in the short term, but it is acceptable to do so to satisfy the hypothesis of \cref{svr} since, in the end, we can still obtain an overall increase in density. Another trick one can consider is first passing to some restriction $A \cap I$ for any reasonably large interval $I \subseteq [N]$ with density, say $|A \cap I|/|I| \geq (1 - 1/100) \mu$. 

We briefly sketch some details of the specific ad hoc reduction used here. 
We assume, at only a negligible-factor loss in the density, that $A$ is in fact entirely contained within the interval $U := [2 \delta N, N] \subseteq [N]$. Additionally we assume we are in the ``nice" case where a reasonably large fraction of $A$ lies in the interval $M = (\frac{N}{2}, \frac{N}{2} + \delta N)$: say $|A \cap M| \geq \Omega(\delta) \cdot |A|$. If we are not in the nice case, we note that by using ideas in \cref{invar} we can pass to a restriction of $A$ to some fairly large interval such that the restriction becomes ``nice" (relative to that interval).\footnote{
In the one-dimensional case, it would follow already from the fact that $A$ is spread relative to $U$ that $A$ must be ``nice". However, the connection between density upper-bounds on structured sets to density lower-bounds on structured sets degrades when we pass to the general case $A \subseteq [N_1] \times [N_2] \times \cdots \times [N_r]$. In contrast, the idea of using translation-invariance remains quantitatively efficient.
}

The point of these two intervals (the ``upper-portion" $U$ and the ``middle slice" $M$ of $[N]$) is that they are designed specifically so that if $x,y \in U$ and $z \in M$ then
$$ x+y = 2z \mod N $$
only if
$$ x+y = 2z.$$
In particular, we obtain $R_{\phi(A)}(\phi(2z)) = R_{A}(2z)$ for all $z \in M$, so the convolution $\phi(A) * \phi(A)$ is indeed much smaller than $1$ on all the points $\phi(2z) \in \Z_N$ with $z \in A \cap M$. Since there are at least $\Omega(2^{-d} N)$ such points, we can conclude that, say,
$$ \|\phi(A) * \phi(A) - 1\|_{k} \geq \frac{1}{2} $$ 
for some $k \leq O(d)$, as desired.
Looking ahead, we note that in the general case of $A \subseteq [N_1] \times [N_2] \times \cdots \times [N_r]$, we intend to essentially apply this same trick independently in each of the $r$ coordinates, but with parameter $\delta \approx 1/100r$. 

Now we discuss the second key issue: can we ensure that the embedding $\phi : A' \rightarrow \Z_N$ is in fact a $2$-homomorphism? A notable feature of our approach is that we do not try to ensure that the embedding $\phi : A \rightarrow \Z_N$ is a $2$-homomorphism with respect to our original set $A$ -- this seems difficult to accomplish without conceding an unacceptable amount of density. Instead, we intend to exploit the fact that we only care about the behavior of $\phi$ on its restriction to $A' \subseteq A.$ 
Our starting point is the following formalization of a commonly-used trick (see \cref{interval_embed}), used to embed problems in $\Z$ into $\Z_N$. Suppose $I \subseteq \Z$ is an interval of size $|I| \leq N/t$. Then the natural embedding $\phi : I \rightarrow \Z_N$ is a $t$-homomorphism. 

Usually, this trick is applied before the embedding. Typically, one picks a specific interval $I$, considers the restriction $A \cap I$, and embeds this restriction into $\Z_N$. In contrast, we would like to delay making a specific choice of interval for as long as possible. We proceed as follows. Consider the natural embedding $\phi : [N] \rightarrow \Z_N$. We embed $A$ into $\Z_N$ and invoke \cref{svr} to find a density-increment $\mu' = |\phi(A) \cap P|/|P| \geq (1 + \Omega(1)) \mu$, where $P$ is a generalized progression of rank at most $\textnormal{poly}(d)$. Now consider $A' = \phi^{-1}(\phi(A) \cap P)  = A \cap \phi^{-1}(P)$, and also consider some partition $[N] = I_1 \cup I_2 \cup I_3$ of $[N]$ into three intervals each of size roughly $N/3$. It must be the case that one of the densities $\mu_i' := |A' \cap I_i|/|I_i|$ is at least as large as $\mu'$; let us start to modify the basic plan laid out above and set $A'' := A' \cap I_i$. 

We now have that the embedding $\phi : A'' \rightarrow \Z_N$ is a $2$-homomorphism, as desired, which nearly completes the proof. As stated, a small issue with this plan is that now the container of $\phi(A'')$, $\phi(I_i) \cap P$, is no longer necessarily a generalized progression. Certainly, this container-set, which is the intersection of a progression of rank $1$ and a progression of rank $r$, still has a large amount of additive structure. One way to continue here is to partition $P \cap \phi(I_i)$ into a small number of (still reasonably low-rank) generalized progressions and further restrict onto one of them -- this would suffice to complete the proof (for the one-dimensional case $A \subseteq [N]$). This gives a good idea of how we plan to address the second key issue. In the actual proof, we proceed somewhat differently: 
we mix the idea described here involving the intervals $I_1, I_2, I_3$ into the \textit{proof} of \cref{svr}, which works with a low-rank Bohr set $B$ as an intermediate step before arriving at a low-rank progression $P$ inside some translate of $B$. Compared with generalized progressions, Bohr sets are nicer because the intersection of two Bohr sets is again a Bohr set, which explains how we avoid the small issue encountered above. We note that for the general case of $A \subseteq [N_1] \times [N_2] \times \cdots \times [N_r]$, our corresponding generalization of the trick here involving the intervals $I_1, I_2, I_3$ is developed and formalized in \cref{freiman-hom}. See in particular \cref{t-safe}, which defines the notion of a ``safe" set, and \cref{safe_bohr}, which provides a reasonably large, safe Bohr set for $\Z_{N_1} \times \Z_{N_2} \times \cdots \times \Z_{N_{r}}$.


\section{Preliminaries for 3-progressions in the integers} \label{prelims-integers}

\subsection{Generalized arithmetic progressions}

\begin{definition}[Generalized progression] \label{generalized-progression}
In an abelian group $G$, a generalized arithmetic progression (or just a ``progression") is a set of the form
$$ P = \Big\{ a + \sum_{i=1}^r c_i \cdot x_i \; : \; x_i \in [N_i]\Big\} $$
for some elements $a, c_1, \cdots c_r \in G$. We say that the number $r$ is the ``rank" of $G$.
\end{definition}

In the case that every point $p = a + \sum_{i=1}^r c_i \cdot  x_i \in P$ is represented only once in this form (i.e.\ the case when $|P| = N_1 N_2 \cdots N_r$), we say that $P$ is a ``proper" progression. Since we will be interested only in proper progressions in this work, we often omit the qualifier ``proper".

\subsection{Bohr sets}

\begin{definition}[Bohr set] \label{bohr-set}
Let $G$ be a finite abelian group with character group $\hat{G}$.
A Bohr set of rank $1$ in $G$ is a set of the form
$$ \set{x \in G}{|\gamma(x) - 1| \leq \rho} $$
for some character $\gamma \in \hat{G}$ and some $\rho \geq 0,$ which we call the radius.
A Bohr set of rank $r$ is a set in $G$ describable as the intersection of at most $r$ such sets.

Given a set $\Gamma \subseteq G$, we use the notation 
$$ \textnormal{Bohr}(\Gamma, \rho) $$
to denote the rank $|\Gamma|$ Bohr set
$$ \set{x \in G}{|\gamma(x) - 1| \leq \rho \textnormal{ for all } \gamma \in \Gamma}.$$
\end{definition}

The following is an example of a simple connection between progressions and Bohr sets which we'll make use of at a few points.

\begin{example}[A centered interval is a Bohr set] \label{bohr_interval}
Consider the rank-$1$ Bohr set in $\Z_N$ corresponding to the character $e_1 = (x \mapsto e^{2 \pi i x/N})$:
$$ \textnormal{Bohr}(\{ e_1\}, \rho) = \set{x \in \Z_N}{|e^{2 \pi i \frac{x}{N}} -  1| \leq \rho}. $$
For any given $\rho \in [0,1]$, this set is simply some interval $[-m,m] \subseteq \Z_N$ with 
$$    \frac{\rho}{2 \pi} \leq \frac{m}{N} \leq \frac{\rho}{4}.  $$
\end{example}
\subsection{Properties of Bohr sets}

Given a Bohr set $B = \Bohr(\Gamma, \rho)$, we denote the \textit{dilation} of $B$ by $\delta$ by
$$ B_{\delta} := \Bohr(\Gamma, \delta \rho) .$$
We note the straightforward sumset inclusion
$$ B + B_{\delta} \subseteq B_{1 + \delta} .$$

A convenient fact about Bohr sets and their dilations is that we can easily give some good approximate bounds on their size.

\begin{prop}[Bohr set size estimates {\cite[Section 4.4]{tao-vu}}] \label{bohr-set-size}
Suppose $B = \Bohr(\Gamma, \rho) \subseteq G$ is a Bohr set of rank $|\Gamma| = r$ and with $\rho \in [0,2]$. 
We have the size estimates
\begin{itemize}
    \item $|B| \geq \left(\frac{\rho}{2 \pi} \right)^r |G|$,
    \item $|B_2| \leq 6^r |B|$, and
    \item for any $\delta \in [0,1]$, $|B_{\delta}| \geq \left(\frac{\delta}{2} \right)^r |B| .$
\end{itemize}
\end{prop}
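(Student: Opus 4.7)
The plan is to establish all three bounds via a standard pigeonhole argument on the image of $G$ inside the torus $(\mathbb{R}/\mathbb{Z})^r$. Enumerating $\Gamma = \{\gamma_1,\ldots,\gamma_r\}$, I would lift each character to a homomorphism $\theta_i : G \to \mathbb{R}/\mathbb{Z}$ satisfying $\gamma_i(x) = e^{2\pi i \theta_i(x)}$, and assemble the combined homomorphism $\Phi : G \to (\mathbb{R}/\mathbb{Z})^r$, $\Phi(x) := (\theta_1(x),\ldots,\theta_r(x))$. The key observation is that whenever $\Phi(x)$ and $\Phi(y)$ lie in a common axis-parallel cube of side $\eta$, the elementary estimate $|e^{2\pi i \phi} - 1| \leq 2\pi |\phi|$ yields $|\gamma_i(x-y) - 1| \leq 2\pi\eta$ for every $i$, so $x - y \in \Bohr(\Gamma, 2\pi\eta)$.

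For bound (i), I would partition the full torus into $\lceil 2\pi/\rho\rceil^r$ axis-parallel cubes of side at most $\rho/(2\pi)$. Pigeonhole produces some cube $Q$ with $|\Phi^{-1}(Q)| \geq (\rho/(2\pi))^r|G|$, and translating this preimage by any fixed element of itself produces a subset of $B$ of the same cardinality, as desired.

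For bound (iii), I would run essentially the same pigeonhole, but restricted to $B$ rather than to all of $G$. Using the matching lower bound $|e^{i\alpha} - 1| \geq 2|\alpha|/\pi$ on $|\alpha|\leq\pi$, the image $\Phi(B)$ is confined to a product of short centered arcs in $(\mathbb{R}/\mathbb{Z})^r$ whose total length per coordinate is $O(\rho)$. Choosing sub-cubes whose side length is tuned so that differences land in $B_\delta$, one covers $\Phi(B)$ by at most $\lceil 2/\delta\rceil^r$ such sub-cubes, and pigeonhole then gives $|B_\delta| \geq (\delta/2)^r |B|$ (again after translating the preimage to the origin).

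Finally, bound (ii) reduces immediately to (iii) applied to the larger Bohr set $B_2 = \Bohr(\Gamma, 2\rho)$ with dilation parameter $\delta = 1/3$: since $(B_2)_{1/3} = \Bohr(\Gamma, 2\rho/3) \subseteq B$, one obtains $|B| \geq |(B_2)_{1/3}| \geq (1/6)^r |B_2|$, which rearranges to $|B_2| \leq 6^r |B|$. The main obstacle I expect is the bookkeeping in (iii): a naive partition of the full torus gives only $(\delta/(2\pi))^r$, and one really needs to exploit the constrained range of $\Phi(B)$ to cover it by sub-cubes whose count scales like $(2/\delta)^r$. Once the sharp constant in (iii) is secured, (i) and (ii) follow with essentially no further work.
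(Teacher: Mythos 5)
Your high-level plan---lift $\Gamma$ to a map $\Phi:G\to(\R/\Z)^r$, observe that when $\Phi(x),\Phi(y)$ lie in a common small cube the difference $x-y$ lands in a Bohr set, and then locate one heavy cube---is precisely the standard route; the paper does not reprove this result, it only cites Tao--Vu. The place where your version falls short of the stated constants is that you \emph{partition} the torus and invoke discrete pigeonhole, whereas the clean argument fixes one cube $Q$ and \emph{integrates} $|\Phi^{-1}(\theta+Q)|$ over all translates $\theta\in(\R/\Z)^r$. The discrete version is inherently lossy: a partition of each coordinate into pieces of length at most $\rho/(2\pi)$ forces $\lceil 2\pi/\rho\rceil$ pieces, and pigeonhole then gives only $|B|\geq|G|/\lceil 2\pi/\rho\rceil^{r}$---at $\rho=2$, say, this is $4^{-r}|G|$ rather than the stated $\pi^{-r}|G|$. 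The integral version computes $\int_{(\R/\Z)^r}|\Phi^{-1}(\theta+Q)|\,d\theta=|Q|\cdot|G|$ exactly, so a heavy translate exists with no rounding, and (i) follows sharply.

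The gap is more serious in (iii). You use $|e^{i\alpha}-1|\geq 2|\alpha|/\pi$ to confine each coordinate of $\Phi(B)$ to an arc of length $\rho/2$, while requiring each sub-cube to have side $\leq\delta\rho/(2\pi)$ so that differences land in $B_\delta$; these two elementary bounds are tight at opposite ends of $[0,\pi]$, so the cover needs $\lceil\pi/\delta\rceil$ pieces per coordinate, not the $\lceil 2/\delta\rceil$ you claim. Even with the exact conversion $\rho\mapsto\frac{1}{\pi}\arcsin(\rho/2)$, convexity of $\arcsin$ forces the arc-to-sub-cube ratio to be at least $2/\delta$, so any rounded-up partition has strictly more than $(2/\delta)^r$ cells whenever $2/\delta\notin\Z$; the partition route cannot recover $(\delta/2)^r$ on the nose. (Your reduction of (ii) also needs a separate case when $\rho\in(1,2]$, since then $B_2=\Bohr(\Gamma,2\rho)$ exits the hypothesis $\rho\in[0,2]$.) None of these losses matter for the paper's applications, which use the estimates only up to $2^{O(r)}$ factors; but as written your argument proves a weakened form of the proposition. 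To repair it, replace the partition with integration over cube translates, and in (iii) exploit that the integrand is supported only on the small region $\Phi(B)-Q$, so the average over the support---and hence the maximum---is correspondingly larger.
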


The ``doubling" estimate 
$$ |B + B| \leq |B_{2}| \leq 6^r |B| $$
shows that $B$ is in some sense an ``approximate subgroup" -- it is (quantitatively) nearly closed under addition, assuming that one considers the factor $6^r$ to be small. In settings where this factor cannot be considered small, it can be useful to consider instead a slightly different quantification of approximate closure under addition, which  motivates the following definition. 

\begin{definition}[Regular Bohr set] \label{regular-bohr-set}
A Bohr set $B$ of rank $r$ is regular if, for all $\delta \in [0,\frac{1}{12 r}]$,
$$ \frac{|B_{1 + \delta}|}{|B|} \leq 1 + 12 r \delta$$ \
and
$$ \frac{|B_{1 - \delta}|}{|B|} \geq 1 - 12 r \delta.$$
\end{definition}

The point here then is that we have the ``doubling" estimate
$$ |B + B_{\delta}| \leq |B_{1 + \delta}| \leq  2 |B| $$
for $\delta \leq 1/12r$. Compared to the bound above, we have removed the exponential dependence on $r$ in the doubling constant at the cost of an exponential-in-$r$ factor loss in the size of one of the summands, which is more acceptable in certain contexts.

Fortunately, regular Bohr sets are easy to obtain:

\begin{prop}[Regularizing a Bohr set {\cite[Section 4.4]{tao-vu}}] \label{regularize}
Given a Bohr set $B$, there is some $\delta \in [\frac{1}{2}, 1]$ so that $B_\delta$ is regular. 
\end{prop}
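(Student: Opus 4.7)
The plan is to study the function $\phi(\delta) := \log |B_\delta|$ on the range $\delta \in [1/2, 1]$ and to find a value of $\delta$ at which $\phi$ grows slowly in a neighborhood. Two features power the argument: first, $\phi$ is monotone non-decreasing, since $B_\delta \subseteq B_{\delta'}$ whenever $\delta \leq \delta'$; second, the size estimate $|B_{1/2}| \geq (1/4)^r |B|$ from Proposition~\ref{bohr-set-size} gives a total-increase bound $\phi(1) - \phi(1/2) \leq 2r\log 2 = O(r)$.

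Suppose for contradiction that no $\delta \in [1/2, 1]$ makes $B_\delta$ regular. Unpacking Definition~\ref{regular-bohr-set} applied to $B_\delta$, for each such $\delta$ there exist a witness $\eta = \eta(\delta) \in (0, 1/(12r)]$ and an associated short interval, either $[\delta, \delta(1+\eta)]$ or $[\delta(1-\eta), \delta]$ of length $\delta\eta$, on which $\phi$ increases by more than $6r\eta$ (using $\log(1+x), -\log(1-x) \geq x/2$ for $x \in [0,1]$). Hence on this interval the average rate of growth of $\phi$ is at least $6r/\delta \geq 6r$.

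These short intervals each contain their associated $\delta$, so collectively they cover all of $[1/2, 1]$. Applying the Vitali covering lemma, extract a disjoint subcollection whose union has Lebesgue measure bounded below by a constant fraction of $1/2$. Summing the $\phi$-increments over this disjoint family yields total $\phi$-growth $\Omega(r)$ across $[1/2,1]$, and with properly tuned constants this contradicts the $O(r)$ upper bound from the first paragraph. Hence some $\delta \in [1/2,1]$ must already be regular.

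The main delicate point is constant-matching: the specific choice of $12r$ in Definition~\ref{regular-bohr-set} is calibrated so that the per-interval growth rate is fast enough to beat the total variation bound even after the unavoidable factor loss in the Vitali step. A morally equivalent route, which avoids Vitali, is to substitute $t = \log_2 \delta$ and apply a pigeonhole argument on a length-one $t$-interval sliced into sub-intervals of length $\asymp 1/r$: by monotonicity and the $O(r)$ variation bound, most sub-intervals contribute only $O(1)$ of growth, and any one of them produces a regular dilation factor.
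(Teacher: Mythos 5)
Your approach — reduce regularity to a statement about the local growth of $\phi(\delta) := \log|B_\delta|$, use the monotonicity of $\phi$ and the total-variation bound $\phi(1)-\phi(1/2) \leq r\log 4$ from \cref{bohr-set-size}, and then derive a contradiction via a covering/pigeonhole argument — is the standard one, and it is essentially the argument in the cited source (the paper itself gives no proof, only the reference). So the overall route is right. But the place where you say ``with properly tuned constants this contradicts the $O(r)$ upper bound'' is in fact where I think the argument does \emph{not} close with the constant $12$ appearing in \cref{regular-bohr-set}. Chasing constants: a witness $\eta \in (0,1/(12r)]$ to non-regularity of $B_\delta$ gives a $\phi$-increment exceeding $\log(1+12r\eta) \geq (12\log 2)\, r\eta$ across an interval of length $\delta\eta \leq \eta$, hence an average slope larger than $(12\log 2)\,r$. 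A one-dimensional Vitali extraction from the resulting cover of $[1/2,1]$ loses a factor (say $1/3$), giving disjoint intervals of total length $\geq 1/6$ and total growth $> 2r\log 2$. But the available budget is $\phi(1)-\phi(1/2) \leq 2r\log 2$, and is in fact a bit larger once you account for bad intervals that stick past $1$ or below $1/2$ — so the two sides are at best a tie, not a contradiction. Tao and Vu phrase regularity with a substantially larger constant (on the order of $100$), which gives comfortable slack here; with $12$ you would need either a sharper covering lemma or a sharper size estimate than \cref{bohr-set-size} provides. You should either prove the proposition with a larger regularity constant (the rest of the paper only uses regularity through \cref{smoothing-bohr}-type statements, which adapt), or locate a tighter input.

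Two smaller points. First, the Vitali step needs more care than you let on: the family is indexed by an uncountable set of $\delta$, and each $\delta$ is only an \emph{endpoint} of its assigned interval (not interior), so the usual ``compactness gives a finite subcover, then apply finite Vitali'' route does not directly apply. A clean fix is to split by which side the witness lies on (one side carries at least half the measure of $[1/2,1]$) and then apply the measure-theoretic basic covering lemma to a maximal disjoint subfamily, but this should be spelled out. Second, the pigeonhole alternative you sketch at the end has a genuine hole: a sub-interval of $t$-length $\asymp 1/r$ on which the total growth of $\phi$ is $O(1)$ does not by itself give a $\delta$ inside it at which growth is controlled \emph{at every scale $\eta \leq 1/(12r)$} — the $O(1)$ of growth could be concentrated at a single jump, ruining regularity at any $\delta$ just to its left. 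Some further averaging (Lebesgue-density type) is still needed, which brings you back to the Vitali route.
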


Ultimately, our interest in Bohr sets in cyclic groups is due to the fact that large Bohr sets are guaranteed to contain a generalized progression which is still fairly large.

\begin{prop}[Large progression in a Bohr set {\cite[Proposition 4.23]{tao-vu}}] \label{progression_in_bohr_set}

Let $G$ be a cyclic group of size $N$, and let
Let $B = \textnormal{Bohr}(\Gamma, \rho) \subseteq G$ be a Bohr set with 
\begin{itemize}
    \item rank $|\Gamma| \leq r$ and
    \item radius $\rho \in [0,1]$.
\end{itemize}
Then $B$ contains a (proper) progression $P$ of rank $r$ and size
$$ |P| \geq  \left( \frac{\rho}{2 \pi r} \right)^r N .
$$
\end{prop}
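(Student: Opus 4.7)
The plan is a classical application of the geometry of numbers, via Minkowski's second theorem applied to a lattice built from $\Gamma$; I will sketch the main idea and flag where care is needed.

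\textbf{Step 1: Reduce to a lattice problem.} I would write $G = \Z_N$ and identify characters with elements of $\Z_N$, so $\Gamma = \{\gamma_1, \ldots, \gamma_r\}$. The elementary inequality $|e^{2\pi i t} - 1| \leq 2\pi \|t\|$ (with $\|\cdot\|$ denoting distance to the nearest integer) gives
$$B \;\supseteq\; B' \;:=\; \{x \in \Z_N : \|\gamma_j x/N\| \leq \rho/(2\pi) \text{ for all } j\},$$
so it is enough to produce the desired progression inside $B'$. I would then consider $\phi : \Z_N \to (\R/\Z)^r$ defined by $\phi(x) := (\gamma_j x/N)_{j=1}^r$, and form the lattice $\Lambda := \pi^{-1}(\phi(\Z_N)) \subset \R^r$, where $\pi : \R^r \to (\R/\Z)^r$ is the canonical projection. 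In the generic case $\gcd(\gamma_1, \ldots, \gamma_r, N) = 1$ the map $\phi$ is injective, so $\Lambda \supseteq \Z^r$ with index $N$, and $\textnormal{covol}(\Lambda) = 1/N$.

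\textbf{Step 2: Apply Minkowski's second theorem.} Applied to $\Lambda$ and the symmetric convex body $K := [-\rho/(2\pi), \rho/(2\pi)]^r$ of volume $(\rho/\pi)^r$, this yields successive minima $\lambda_1 \leq \cdots \leq \lambda_r$ and linearly independent $v_1, \ldots, v_r \in \Lambda$ with $v_i \in \lambda_i K$ satisfying
$$\prod_{i=1}^r \lambda_i \;\leq\; \frac{2^r \cdot \textnormal{covol}(\Lambda)}{\textnormal{vol}(K)} \;=\; \frac{(2\pi)^r}{N \rho^r}.$$

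\textbf{Step 3: Build and transfer the progression.} Set $N_i := \lfloor 1/(r\lambda_i) \rfloor$. The set $P' := \{\sum_i n_i v_i : |n_i| \leq N_i\} \subset \R^r$ lies in $K$ by convexity and symmetry, since $\sum_i N_i \lambda_i \leq 1$; linear independence of the $v_i$ makes $P'$ proper in $\Lambda$. Lifting each $v_i$ to its preimage $x_i \in \Z_N$ under $\phi$, the set $P := \{\sum_i n_i x_i \bmod N : |n_i| \leq N_i\}$ sits inside $B' \subseteq B$. Properness in $\Z_N$ will follow from the observation $2K \cap \Z^r = \{0\}$ (which holds since $\rho \leq 1 < \pi$): if $\sum n_i x_i \equiv \sum n_i' x_i \pmod N$, then $\sum(n_i - n_i')v_i \in \Z^r \cap 2K = \{0\}$, forcing $n_i = n_i'$. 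This will give
$$|P| \;=\; \prod_i (2N_i + 1) \;\geq\; \prod_i \frac{1}{r\lambda_i} \;=\; \frac{1}{r^r \prod_i \lambda_i} \;\geq\; \left(\frac{\rho}{2\pi r}\right)^r N.$$

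\textbf{Step 4: Main obstacle.} The delicate point will be the non-injective case $g := \gcd(\gamma_1, \ldots, \gamma_r, N) > 1$: then $\textnormal{covol}(\Lambda) = g/N$, and the Minkowski estimate above yields only $(N/g)(\rho/(2\pi r))^r$, so the missing factor of $g$ has to be recovered from $\ker \phi \subseteq B$, which is a cyclic subgroup of $\Z_N$ of order $g$. The remedy I would use is to absorb a generator of $\ker \phi$ into one of the $x_i$'s, leveraging the fact that $\ker \phi$ is one-dimensional so that this absorption does not increase the rank. A secondary bookkeeping issue is that some $\lambda_i$ may exceed $1/(2r)$, forcing a degenerate $N_i = 0$; one drops the corresponding generator and pads with trivial ones, and the product bound survives since the remaining $\lambda_j$'s compensate by the Minkowski product constraint.
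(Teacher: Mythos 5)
The paper does not prove this proposition; it cites \cite[Proposition 4.23]{tao-vu} and gives no argument of its own, so there is no in-paper proof to compare against. Your route --- pass to a lattice $\Lambda = \pi^{-1}(\phi(\Z_N))$ built from the frequencies, invoke Minkowski's second theorem on the box $K$, and read off a proper symmetric progression from the successive minima --- is exactly the standard geometry-of-numbers proof one expects for this statement. Steps 1--3 are correct: the conversion $|e^{2\pi i t}-1| \leq 2\pi\|t\|$ is right, the properness argument via $2K \cap \Z^r = \{0\}$ (valid since $\rho \leq 1 < \pi$) is right, and the size computation $\prod_i(2N_i+1) \geq \prod_i 1/(r\lambda_i)$ is right. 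Your ``secondary bookkeeping issue'' is in fact a non-issue: when $\lambda_i > 1/r$ one has $N_i = 0$ and $2N_i + 1 = 1 > 1/(r\lambda_i)$, so the inequality $2N_i + 1 \geq 1/(r\lambda_i)$ already holds with no dropping or padding.

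The genuine gap is Step 4, and the proposed remedy does not close it. When $g = \gcd(\gamma_1,\ldots,\gamma_r,N) > 1$, the covolume of $\Lambda$ is $g/N$ and your computation delivers a proper rank-$r$ progression of size only $\geq (N/g)(\rho/(2\pi r))^r$. You correctly observe that $\ker\phi$, a cyclic subgroup of order $g$, lies in $B$, and that $\ker\phi + P$ has the full size $(\rho/(2\pi r))^r N$ and is a proper GAP --- but of rank $r+1$, not $r$. The claim that ``$\ker\phi$ is one-dimensional'' so it can be absorbed into an existing generator without increasing the rank is not an argument: a coset union of the form $\ker\phi + P$ is not a rank-$r$ GAP in general (e.g.\ $\{0,5\}+\{0,1\} = \{0,1,5,6\} \subseteq \Z_{10}$ is not a rank-$1$ AP), and nothing about one-dimensionality makes the absorption work. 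So as written the argument gives either the stated size at rank $r+1$, or rank $r$ with an extra $1/g$ loss, and the non-coprime case remains open. You would need either a genuinely different construction for $g > 1$, or to track down precisely how Tao--Vu's Proposition 4.23 handles (or implicitly excludes) this case before the proof is complete.
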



\subsection{Translation invariance for approximate subgroups} \label{invar}

\begin{prop}[Smoothing an approximate subgroup]\label{smoothing}
Let $A, B$ be finite subsets of an abelian group $G$. Here we would also like to allow for infinite groups, so in the present context we switch to the counting measure: we define the distribution functions
\begin{itemize}
    \item $\pi_{A}(x) := \frac{\1_A(x)}{|A|},$
    \item $\pi_B(x) := \frac{\1_B(x)}{|B|}$.
\end{itemize}
We define the convolution of distributions $\pi_A * \pi_B$ according to the counting measure,
$$ (\pi_A * \pi_B)(x) := \sum_{y \in G} \pi_{B}(y) \pi_{A}(x - y),$$
so that the convolution of two distributions is again a distribution (i.e.\ $\sum_x (\pi_A * \pi_B)(x) = 1$). 

Suppose $S$ is a finite set in $G$ which contains the difference-set $A - B$.
Then, for all $x \in A$, we have the identity
$$ (\pi_S * \pi_B)(x) = \frac{1}{|S|} .$$
More generally, if $\nu$ is any distribution supported on $B$, then still for all $x \in A$ we have
$$ (\pi_S * \nu)(x) = \frac{1}{|S|}. $$
\end{prop}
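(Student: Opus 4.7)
The plan is to prove the more general statement first (about arbitrary distributions $\nu$ supported on $B$), since the claim about $\pi_B$ follows immediately by specializing to $\nu = \pi_B$. The key observation is that the containment $A - B \subseteq S$ is exactly engineered so that $\mathbbm{1}_S(x-y) = 1$ whenever $x \in A$ and $y \in B$; this trivializes the indicator function inside the convolution.

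Concretely, fix $x \in A$ and expand
\[
(\pi_S * \nu)(x) = \sum_{y \in G} \nu(y) \, \pi_S(x - y) = \frac{1}{|S|} \sum_{y \in G} \nu(y) \, \mathbbm{1}_S(x - y).
\]
Since $\nu$ is supported on $B$, the sum reduces to a sum over $y \in B$. For each such $y$ we have $x - y \in A - B \subseteq S$, so $\mathbbm{1}_S(x - y) = 1$. Therefore
\[
(\pi_S * \nu)(x) = \frac{1}{|S|} \sum_{y \in B} \nu(y) = \frac{1}{|S|},
\]
using that $\nu$ is a probability distribution and so sums to $1$. Specializing to $\nu = \pi_B$ yields the first identity.

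There is essentially no obstacle here; the statement is a direct consequence of the defining property $A - B \subseteq S$, and the only subtlety is just keeping the counting-measure normalization consistent (so that convolution of two distributions is again a distribution and no stray factor of $|G|$ appears). The role of the hypothesis is purely combinatorial --- one is averaging the indicator of a set that is guaranteed to contain all the translates $x - y$ being queried, so the average is identically $1/|S|$ on all of $A$ regardless of how mass is distributed inside $B$.
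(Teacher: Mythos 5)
Your proof is correct. The one difference from the paper is organizational: the paper first establishes the flat case $(\pi_S * \pi_B)(x) = 1/|S|$ by counting pairs $(s,b) \in S \times B$ with $s+b = x$, and then extends to general $\nu$ by writing $\nu$ as a convex combination of flat distributions $\pi_{B'}$ with $B' \subseteq B$. You instead go directly to the general case by expanding $(\pi_S * \nu)(x) = \frac{1}{|S|}\sum_{y \in B}\nu(y)\,\1_S(x-y)$ and observing the indicator is identically $1$ on the relevant range, then specialize to get the flat case. Both rest on exactly the same combinatorial fact (that $x - y \in A - B \subseteq S$ for $x \in A$, $y \in B$ -- the paper encodes it as $|S \cap (x-B)| = |B|$), but your direct treatment of general $\nu$ dispenses with the convex-combination step entirely, which is a small simplification.
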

\begin{proof}
We count the number of pairs $(s,b) \in S \times B$ for which $s + b = x$.
\begin{align*}
    \sum_{s,b} \ind{s + b = x} &= \sum_{s \in S} \ind{s \in x - B} = |S \cap (x - B)| = |x - B| = |B|,
\end{align*}
since $x - B \subseteq A - B \subseteq S$ for any $x \in A$. 
So we do in fact have
$$ (\pi_{S} * \pi_{B})(x) = \frac{|B|}{|S \times B|} = \frac{1}{|S|} $$
whenever $x \in A$. 

Now we consider the case of general $\nu$ supported on $B$. 
We note that $\nu$ may be expressed as a convex-combination of flat distributions supported on $B$ -- that is, we may write
$$ \nu = \E_{B'} [ \pi_{B'} ] $$
with respect to some probability distribution over subsets $B ' \subseteq B$. Then the claim follows from our previous argument since $S$ still contains $A - B'$ for every such $B'$: 
$$  (\pi_S * \nu)(x) = \E_{B'}[ (\pi_S * \pi_{B'})(x) ] = \frac{1}{|S|} $$
for $x \in A$. \qedhere
\end{proof}
\begin{cor}[Strong one-sided approximation]\label{one-sided}
Let $A,B,S \subseteq G$, where $S \supseteq A - B$. 
Suppose that $|S| \leq (1 + \delta)|A|$ for some $\delta \geq 0$.
Then for any nonnegative function $f : G \rightarrow \R_{\geq 0}$ and any distribution $\nu$ supported on $B$, 
$$ \ip{\pi_S * \nu}{f} \geq (1 + \delta)^{-1} \ip{\pi_A}{f} .$$
\end{cor}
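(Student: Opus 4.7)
The plan is to deduce this corollary as a direct quantitative consequence of \cref{smoothing}. The key observation is that \cref{smoothing} tells us that $(\pi_S * \nu)(x)$ takes the exact value $\tfrac{1}{|S|}$ on every $x \in A$, and the hypothesis $|S| \leq (1+\delta)|A|$ converts this into the pointwise lower bound
\[ (\pi_S * \nu)(x) \;\geq\; \frac{1}{(1+\delta)|A|} \;=\; (1+\delta)^{-1} \pi_A(x) \qquad \text{for all } x \in A. \]

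Given this, I would simply integrate against $f$ and exploit nonnegativity to discard the mass $(\pi_S * \nu)$ places outside of $A$. Explicitly, since $f \geq 0$ and $(\pi_S * \nu)$ is itself a nonnegative function on $G$,
\[ \ip{\pi_S * \nu}{f} \;=\; \sum_{x \in G} (\pi_S * \nu)(x)\, f(x) \;\geq\; \sum_{x \in A} (\pi_S * \nu)(x)\, f(x) \;\geq\; (1+\delta)^{-1} \sum_{x \in A} \pi_A(x)\, f(x) \;=\; (1+\delta)^{-1} \ip{\pi_A}{f}. \]

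There is no real obstacle here; the proof is a one-line consequence of \cref{smoothing} together with the size hypothesis on $S$. The only subtle point worth flagging is the asymmetry: we get a clean lower bound on $\ip{\pi_S * \nu}{f}$ in terms of $\ip{\pi_A}{f}$, but no matching upper bound, because $\pi_S * \nu$ may place mass on points of $G \setminus A$ that is not controlled by the hypotheses. This is precisely why the statement is called a \emph{one-sided} approximation, and the proof above makes clear that one-sidedness is intrinsic to the argument rather than an artifact of the bound.
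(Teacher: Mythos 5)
Your proof is correct and takes essentially the same approach as the paper: apply \cref{smoothing} to get the exact value $\frac{1}{|S|}$ on $A$, compare pointwise with $\pi_A(x) = \frac{1}{|A|}$, and integrate against the nonnegative $f$. The paper compresses this to a single sentence, but the argument is identical.
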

\begin{proof}
For every point $x \in A$, we compare $\pi_A(x) = \frac{1}{|A|}$ with $(\pi_{S} * \nu)(x) = \frac{1}{|S|}$. 
\end{proof}
\begin{cor}[Smoothing a regular Bohr set] \label{smoothing-bohr}
Suppose $B$ is a regular Bohr set of rank $r$ in a finite abelian group $G$.
Then, for any nonnegative function $f$ on $G$,
$$ \ip{B_{1 + \delta} * B_{\delta}}{f} \geq (1 - 12 \delta r) \ip{B}{f}. $$
\end{cor}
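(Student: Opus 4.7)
The plan is to reduce the statement directly to \cref{one-sided} by identifying an appropriate ``near-translation-invariant'' superset of the difference set. I would set $A := B$ (the given regular Bohr set), take the second argument to be $B_\delta$, and take the ``covering'' superset to be $S := B_{1+\delta}$. With the measure $\nu := \pi_{B_\delta}$, \cref{one-sided} gives precisely the convolution appearing in the desired inequality, so the entire task is to verify its two hypotheses: the containment $A - B_\delta \subseteq S$ and the size approximation $|S| \leq (1+\delta')|A|$ for a suitable $\delta'$.

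For the containment, I would note that every Bohr set is symmetric under negation, since $|\gamma(-x)-1| = |\overline{\gamma(x)}-1| = |\gamma(x)-1|$, so $-B_\delta = B_\delta$ and therefore $B - B_\delta = B + B_\delta$. Combined with the basic sumset inclusion $B + B_\delta \subseteq B_{1+\delta}$ recalled in the preliminaries, this gives the required containment $B - B_\delta \subseteq B_{1+\delta}$. For the size bound, I would apply the regularity hypothesis (\cref{regular-bohr-set}): for $\delta \in [0, 1/12r]$ we have $|B_{1+\delta}| \leq (1+12r\delta)|B|$, so $\delta' = 12r\delta$ works. (Outside this range $1 - 12r\delta \leq 0$ and the claim is trivial, since $f \geq 0$ and the left-hand side is nonnegative.)

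Applying \cref{one-sided} with these choices then yields, in counting-measure normalization, $\ip{\pi_{B_{1+\delta}} \ast \pi_{B_\delta}}{f} \geq (1 + 12 r \delta)^{-1} \ip{\pi_B}{f}$. The elementary bound $(1+x)^{-1} \geq 1 - x$ for $x \geq 0$ upgrades the factor to $1 - 12 r \delta$, which matches the target. The last step is simply to transport this inequality from the counting-measure formulation used in \cref{one-sided,smoothing} into the density formulation used in the statement of the corollary; since $\ip{\pi_A}{f} = \E_{a \in A} f(a) = \ip{A}{f}$ (in the density formulation) and density convolution equals $|G|$ times counting-measure convolution, the inner products agree on both sides and no adjustment is needed.

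There is no real obstacle here; this is essentially a direct corollary. The only point requiring mild care is aligning the two normalization conventions the paper uses in parallel and confirming that the symmetry of Bohr sets legitimately converts the difference-set $B - B_\delta$ into the sumset $B + B_\delta$ so that the standard dilation-sumset inclusion can be invoked.
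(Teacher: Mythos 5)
Your proof is correct and is essentially the paper's own argument: apply \cref{one-sided} with $A := B$, $\nu := \pi_{B_\delta}$, $S := B_{1+\delta}$, verify the containment via the symmetry $-B_\delta = B_\delta$ and the inclusion $B + B_\delta \subseteq B_{1+\delta}$, use regularity for the size bound (dispatching $\delta \geq 1/12r$ as trivial), and finish with $(1+x)^{-1} \geq 1-x$. The normalization bookkeeping you note at the end is indeed harmless since the $|G|$ factor appears symmetrically on both sides.
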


\begin{proof}
We note that $-B_{\delta} = B_{\delta}$ and $B_{1+\delta} \supseteq B + B_{\delta}$. 
If $\delta \geq 1/12r$ then the claim is trivial. 
Otherwise, we have $|B_{1+\delta}| \leq (1 + 12 \delta r)|B|$, and so $|B|/|B_{1+\delta}|\geq (1+12 \delta r )^{-1} \geq 1 - 12 \delta r$.
\end{proof}

\subsection{Freiman homomorphisms}  \label{freiman-hom}

\begin{definition}[Freiman Homomorphism] \label{freiman}
Suppose we have a set $A \subseteq G$ where $G$ is an abelian group, and $G'$ is another abelian group. A map
$$ \phi : A \rightarrow G'$$
is said to be a Freiman homomorphism of order $t$ if, 
for any $x_1, x_2, \ldots, x_t \in A$ and $y_1, y_2,  \ldots, y_t \in A$, 
$$ \phi(x_1) + \cdots +  \phi(x_t) = \phi(y_1) + \cdots + \phi(y_t)
$$
implies
$$ x_1 + \cdots + x_t = y_1 + \cdots + y_t.
$$
In particular, a linear map $\phi : G \rightarrow G'$ is a $t$-homomorphism on $A$ if and only if $\phi$ is injective on the sumset $tA$. 
\end{definition}

While not immediate, it is easy to check (by making a translation) that a $t$-homomorphism $\phi$ is also a $t'$-homomorphism for $t' < t$; in particular any Freiman homomorphism $\phi$ must at least be an injection. We also point out the trivial property that if $\phi : A \rightarrow G'$ is a  $t$-homomorphism, and $A' \subseteq A$, then the restriction of $\phi$ to $A'$ is also a $t$-homomorphism. 

We use the following simple Freiman homomorphism often.

\begin{example}[Labelling a proper progression] \label{progression_label}

Let
$$ P = \Big\{ a + \sum_{i=1}^r c_i \cdot x_i \; : \; x_i \in [N_i]\Big\} \subseteq G $$
be a (proper) progression, and define the map
$$ \phi : P \rightarrow [N_1] \times [N_2] \times \cdots \times [N_r] \subseteq \Z^r$$
by
$$ \phi(p) = (x_1, x_2, \ldots, x_r). $$
This map is a Freiman homomorphism of all orders.
\end{example}
\begin{proof}
The only potentially tricky point is that $\phi$ is actually well-defined, which is true only because $P$ is proper. Besides this, it is clear that equality of sums of vectors $\sum_{j=1}^t x^j = \sum_{j=1}^t y^j$ within $\Z^r$ implies the equality
$$  a + \sum_{i=1}^r c_i \cdot (x_i^1 + x_i^2 + \cdots + x_i^t)  = 
 a + \sum_{i=1}^r c_i \cdot (y_i^1 + y_i^2 + \cdots + y_i^t) 
$$
in $G$.
\end{proof}

The following is a formalization of a standard trick used (e.g.\ in \cite{gowers01}) to reduce questions regarding $t$-progressions in the integers to some corresponding questions regarding $t$-progressions in $\Z_N$. 
The trick itself is so simple that it is often presented without any corresponding formalization. However, in preparation for some more complicated extensions which can no longer be reasonably handled ``by inspection", we work out the details here with some care.

\begin{example}[Embedding an interval] \label{interval_embed}
Define the natural map
$$ \phi : [N] \rightarrow \Z_N $$
by
$$ \phi(x) = x \mod N .$$
Let $I = [a,a+m] \subseteq [N]$ be some interval with
$$ m < \frac{N}{t}.$$
Then the map $\phi$ is a Freiman $t$-homomorphism when restricted to $I$.
\end{example}
\begin{proof}
The sumset $t I \subseteq \Z$ is contained in $I' = [ta, ta + tm]$. Let $x,y \in I'$. If
$x \equiv y \mod N$, this means that $N$ divides the distance $|x - y|$. However, this distance is at most
$$ (ta + tm) - ta < N.$$
So in fact we must have $|x - y| = 0$. 
\end{proof}

It will turn out to be convenient for us if we could find a more ``intrinsic" formulation of this trick, in the sense that we would like to find a formulation which refers to a nice set $S \subseteq \Z_N$ rather than a nice set $I \subseteq [N]$. 
For instance, we might wish to say that $\phi$ is such that if $S$ is any interval in $\Z_N$ with length at most $N/t$, then the map $\phi$ is a $t$-homomorphism on the pullback $\phi^{-1}(S) \subseteq [N]$. This statement is in fact false (already for $t \geq 2$), which is witnessed by the example $S = \{-1, 0 ,1 \} \subseteq \Z_N$: under our map, this set has preimage
$$ \phi^{-1}(S) = \{1\} \cup \{N-1,N\}, $$
and now the sumset $\phi^{-1}(S) + \phi^{-1}(S)$ contains two multiples of $N$: both $N$ and $2N$. 
This demonstrates alarmingly the importance of certain ``implementation details" of linear maps between groups in the present context which can often be safely ignored in other contexts. 

We can however make the following simple observation that the behavior of the family of sets in $[N]$ obtainable from preimages of intervals in $\Z_N$ is not unboundedly bad; in the terminology of \cite{gowers01, green-note}, an interval in $\Z_N$ corresponds to a union of at most two ``genuine" intervals:

\begin{observation} \label{union_of_intervals}
Fix the map $\phi : [N] \rightarrow \Z_N$ defined by $\phi(x) = x \mod N$. 
Define the family of ``intervals of size $m$" in $\Z_N$ by
$$\mathcal{F}_m = \set{a + \phi([m])}{a \in \Z_N}.\footnote{
Note that we recover the same family if we instead consider sets $a + \phi(I) \subseteq \Z_N$ where $I = [a,b] \subseteq [N]$ is any other interval of size $|I| = m$. 
}$$
For any $I \in \mathcal{F}_m$, the pullback $I' = \phi^{-1}(I)$ is either 
\begin{itemize} 
    \item an interval $I' = [a,b] \subseteq [N]$ of size $|I'| = m$, or
    \item the union of two intervals:  specifically $I' = [1,b] \cup [a,N]$, with $b < m$ and $a > N - m$. 
\end{itemize}
\end{observation}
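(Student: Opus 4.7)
The observation is essentially a routine case analysis, so my plan is to write $I$ concretely in terms of a canonical representative and then split into two cases according to whether wrapping around $N$ occurs.

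First, I would fix a set $I \in \mathcal{F}_m$. By definition, $I = a + \phi([m])$ for some $a \in \Z_N$, which I rewrite by choosing the unique representative $\tilde{a} \in \{0, 1, \dots, N-1\}$ of $a$, so that $I$ consists of the $m$ elements $\tilde{a}+1, \tilde{a}+2, \dots, \tilde{a}+m$ reduced modulo $N$. Recall that $\phi\colon [N] \to \Z_N$ defined by $\phi(x) = x \bmod N$ is a bijection, with the only nontrivial identification being $\phi(N) = 0$. So to understand $\phi^{-1}(I)$, I just need to list the residues of elements of $I$ and pull them back one by one, noting that the residue $0$ pulls back to $N \in [N]$ and every other residue $r \in \{1, \dots, N-1\}$ pulls back to $r \in [N]$.

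The key split is then on whether $\tilde{a} + m \leq N$ or $\tilde{a} + m > N$. In the first case, the representatives $\tilde{a}+1, \dots, \tilde{a}+m$ already lie in $\{1, 2, \dots, N\}$ (with $N$ occurring precisely when $\tilde{a}+m = N$), and they pull back to themselves, giving the interval $[\tilde{a}+1, \tilde{a}+m] \subseteq [N]$ of size $m$. In the second case, write $b := \tilde{a} + m - N$, which satisfies $0 < b < m$ because $\tilde{a} \leq N-1$ and $m \leq N$ forces $\tilde{a} < N$ and $b < m$. The residues of $I$ are $\{\tilde{a}+1, \dots, N-1, 0, 1, \dots, b\}$, and pulling back gives
\[
\phi^{-1}(I) = \{\tilde{a}+1, \dots, N-1, N\} \cup \{1, 2, \dots, b\} = [a, N] \cup [1, b]
\]
with $a := \tilde{a}+1$. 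From $\tilde{a} + m > N$ I get $a = \tilde{a} + 1 > N - m + 1 > N - m$, and I have already noted $b < m$, so this is precisely the second alternative in the statement.

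There is no genuine obstacle here; the only point that requires a moment of care is keeping track of the convention $\phi(N) = 0$ so as not to miscount by one when $\tilde{a} + m$ lands exactly on $N$ or wraps just past it. Once that bookkeeping is in place, the two displayed cases exhaust all possibilities and the observation follows.
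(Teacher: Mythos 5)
Your proof is correct, and since the paper presents this as an unproved observation, your routine case analysis on whether $\tilde{a}+m$ exceeds $N$ is exactly the intended verification; the only subtlety you flag (the identification $\phi(N)=0$) is indeed the one place one could slip, and you handle it correctly.
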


We record the following immediate consequence of \cref{interval_embed} and \cref{union_of_intervals} -- this statement shows that it is possible to obtain the ``intrinsic" formulation of \cref{interval_embed} we were looking for, so long as we are willing to concede a small fraction of points in $[N]$.

\begin{prop}[A safe set in $\Z_N$] \label{safe_interval}
Fix the map $\phi : [N] \rightarrow \Z_N$ defined by $\phi(x) = x \mod N$. 
Fix a parameter $\delta \in (0,\tfrac{1}{2}]$, and define the ``upper portion" of $[N]$ by
$$U = [\delta N, N] \subseteq [N].$$
Suppose $I \subseteq \Z_N$ is some interval\footnote{
See \cref{union_of_intervals} for what is precisely meant by this.
}
of size $|I| \leq \delta N$,
and that $t \leq 1/\delta$.

Then, for any translation by some $a \in \Z_N$, $\phi$ is a $t$-homomorphism on the set
$$ \phi^{-1}(I + a) \cap U. $$
\end{prop}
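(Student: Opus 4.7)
The plan is to reduce the statement directly to \cref{interval_embed} via the structural description of preimages supplied by \cref{union_of_intervals}. First I would observe that since the family of intervals in $\Z_N$ is closed under translation (a translate of $b + \phi([m])$ is again of the form $b' + \phi([m])$), the set $J := I + a$ is itself an interval in $\Z_N$ of size $|J| = |I| \leq \delta N$. So it suffices to prove the statement in the form: for any interval $J \subseteq \Z_N$ with $|J| \leq \delta N$, the map $\phi$ is a $t$-homomorphism on $\phi^{-1}(J) \cap U$.

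Next, I would apply \cref{union_of_intervals} to classify the pullback $\phi^{-1}(J)$ as one of two cases: (a) a single genuine interval $[p,q] \subseteq [N]$ of size at most $\delta N$, or (b) a union $[1, q] \cup [p, N]$ with $q < \delta N$ and $p > N - \delta N$. In case (a), the intersection $\phi^{-1}(J) \cap U$ is automatically a single genuine interval of size at most $\delta N$. In case (b), the crucial point is that the hypothesis $q < \delta N$ forces $[1,q] \cap U = \emptyset$, because every element of $U = [\delta N, N]$ is at least $\delta N$. Thus $\phi^{-1}(J) \cap U = [p, N]$ is again a single genuine interval of size at most $\delta N$. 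This is where the role of the ``upper portion'' $U$ enters: it exactly excises the piece of $\phi^{-1}(J)$ that would otherwise produce wrap-around collisions.

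Finally, in either case I would apply \cref{interval_embed} to the resulting single interval $I^\star \subseteq [N]$. Writing $I^\star = [p, p+m]$, we have $m + 1 = |I^\star| \leq \delta N$, so $m \leq \delta N - 1 < \delta N \leq N/t$ using the hypothesis $t \leq 1/\delta$. Hence $m < N/t$, and \cref{interval_embed} yields that $\phi$ is a $t$-homomorphism on $I^\star$, which is what we wanted.

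There is essentially no obstacle here; the argument is a short case analysis on top of the two preceding results. The only mild subtlety is the off-by-one between the ``size'' $|I^\star|$ and the length parameter $m$ appearing in \cref{interval_embed}, which is easily absorbed by the strict inequality $\delta N - 1 < \delta N \leq N/t$.
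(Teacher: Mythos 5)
Your proof is correct and follows essentially the same route as the paper: reduce to $\cref{interval_embed}$ by using $\cref{union_of_intervals}$ to show that $\phi^{-1}(I+a) \cap U$ is a single genuine interval in $[N]$ of size at most $\delta N$. You spell out the two-case analysis and the role of $U$ in excising the wraparound piece more explicitly than the paper's one-sentence proof, but the underlying argument is identical.
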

\begin{proof}
Using the characterization in \cref{union_of_intervals}, we see that the set $\phi^{-1}(I + a) \cap U$ is in fact equal to some interval\footnote{It may be equal to the ``trivial" interval $\emptyset$ -- this case is also fine.}
in $[N]$ of size at most $\delta N$. This is the case considered in \cref{interval_embed}, so the claim follows. 
\end{proof}

We introduce the following definition to capture this phenomenon in general.

\begin{definition}[$t$-safe set] \label{t-safe}
Consider two abelian groups $G,G'$ and a subset $A \subseteq G$. 
Fix an injection $\phi : A \rightarrow G$. 
We say that a set $B \subseteq G'$ is ``$t$-safe" with respect to $\phi$ if, for every translation $\theta \in G'$, the restriction
$$ \phi : A \cap \phi^{-1}(B+\theta) \rightarrow G' $$
is a Freiman homomorphism of order $t$. 
\end{definition}

By a simple generalization of \cref{safe_interval}, we can describe a highly structured, reasonably large set which is safe with respect to the natural embedding of 
$ [\delta N_1, N_1] \times [\delta N_2, N_2] \times \cdots \times [\delta N_r, N_r]$ into  $\Z_{N_1} \times \Z_{N_2} \times \cdots \times \Z_{N_r}.
$

\begin{prop}[A safe set in $\Z_{N_1} \times \Z_{N_2} \times \cdots \times \Z_{N_r}$] \label{safe_bohr}
Fix some natural numbers $N_1, N_2, \ldots N_r$ and  a parameter $\delta \in (0,\tfrac{1}{2}]$.
Consider the set $ [N_1] \times [N_2] \times \cdots \times [N_r] \subseteq \Z^r,$
and define the ``upper portion" of this set:
$$ U := \prod_{i=1}^r [\delta N_i, N_i].$$
Fix the map $\phi : [N_1] \times [N_2] \times \cdots \times [N_r] \rightarrow G = \Z_{N_1} \times \Z_{N_2} \times \cdots \times \Z_{N_r}$ defined by
$$\phi(x) = x \mod (N_1, N_2, \ldots, N_r).$$
For $i=1,2,\ldots,r$, let $m_i \leq \delta N_i$, $I_i := [m_i],$ $B_i := I_i \textnormal{ mod } N_i \subseteq \Z_{N_i}$, and let
$$ B := B_1 \times B_2 \times \cdots \times B_r.
$$
If $t \leq 1/\delta$, then $B$ (and hence also any translate of $B$) is $t$-safe with respect to the restriction $\phi : U \rightarrow G.$
That is, the natural embedding $\phi(x) = x \mod (N_1, N_2 \ldots, N_r)$ is a $t$-homomorphism on the set
$$ \phi^{-1}(B + \theta) \cap U$$
for any translation $a \in G$. 
\end{prop}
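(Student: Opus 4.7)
The plan is to reduce the $r$-dimensional claim to $r$ independent applications of the one-dimensional statement, \cref{safe_interval}. The crucial observation is that every object in the statement decomposes as a Cartesian product over coordinates, so the whole question factors.

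More concretely, $\phi$ acts coordinate-wise: writing $\phi_i : [N_i] \to \Z_{N_i}$ for the natural reduction $x_i \mapsto x_i \bmod N_i$, we have $\phi = (\phi_1, \ldots, \phi_r)$. For any $\theta = (\theta_1, \ldots, \theta_r) \in G$, one has $B + \theta = \prod_i (B_i + \theta_i)$ and $U = \prod_i U_i$ with $U_i := [\delta N_i, N_i]$, so that
$$U \cap \phi^{-1}(B + \theta) = \prod_{i=1}^r \Big( U_i \cap \phi_i^{-1}(B_i + \theta_i) \Big).$$
Moreover, since $m_i \leq \delta N_i \leq N_i$, reduction mod $N_i$ is injective on $I_i = [m_i]$, so $B_i$ really is an interval in $\Z_{N_i}$ of size $m_i \leq \delta N_i$ in the sense required by \cref{safe_interval}.

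Given this decomposition, the verification is routine. Suppose $x^1, \ldots, x^t, y^1, \ldots, y^t$ all lie in $U \cap \phi^{-1}(B + \theta)$ and satisfy $\sum_{j=1}^t \phi(x^j) = \sum_{j=1}^t \phi(y^j)$ in $G$. Projecting onto the $i$-th factor yields
$$\sum_{j=1}^t \phi_i(x^j_i) \;=\; \sum_{j=1}^t \phi_i(y^j_i) \qquad \text{in } \Z_{N_i},$$
with each $x^j_i, y^j_i \in U_i \cap \phi_i^{-1}(B_i + \theta_i)$. Since $t \leq 1/\delta$ and $|B_i| \leq \delta N_i$, \cref{safe_interval} says that $\phi_i$ is a Freiman $t$-homomorphism on this set, hence $\sum_j x^j_i = \sum_j y^j_i$ in $\Z$. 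Doing this for every $i$ and reassembling gives $\sum_j x^j = \sum_j y^j$ in $\Z^r$, which is exactly what it means for $\phi$ restricted to $U \cap \phi^{-1}(B + \theta)$ to be a $t$-homomorphism.

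There is essentially no obstacle: the proof is entirely formal once one notices that $\phi$, $U$, and $B+\theta$ all respect the product structure of $\Z^r$ and $G$. The only tiny point worth a sentence in the write-up is the injectivity of $\phi_i$ on $I_i$, which lets us invoke \cref{safe_interval} with $B_i$ playing the role of the interval $I$ there.
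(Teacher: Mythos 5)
Your proof is correct and matches the paper's argument: both decompose $\phi^{-1}(B+\theta)\cap U$ as a product over coordinates, apply \cref{safe_interval} coordinate-wise, and conclude via the closure of Freiman $t$-homomorphisms under Cartesian products (which you verify directly by projecting and reassembling rather than citing the closure property as a standalone fact, but this is the same argument).
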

\begin{proof}

We note that $\phi^{-1}(B + \theta) \cap U = \prod_{i=1}^r \phi_i^{-1}(B_i + \theta_i) \cap [\delta N_i, N_i]$, and we apply \cref{safe_interval} on each coordinate. To finish, we use the easily verifiable fact that if $\phi_1 : A_1 \rightarrow G_1$ and $\phi_2 : A_2 \rightarrow G_2$ are both Freiman $t$-homomorphisms, then the map from $A_1 \times A_2$ to $G_1 \times G_2$ given by
$$ (x_1,x_2) \mapsto (\phi_1(x_1), \phi_2(x_2))
$$
is also a Freiman $t$-homomorphism.
\end{proof}

\section{Proof of \cref{progressions-in-spread-sets} and \cref{progressions-in-spread-sets-specific}} \label{proof-of-3-progs-in-the-integers}

\cref{progressions-in-spread-sets} follows straightforwardly from the following three claims, which are proved in this section. Here, for easier reading we present a slightly informal formulation of each one, suppressing some minor details. The more specific \cref{progressions-in-spread-sets-specific} follows from the details found in the corresponding complete formulations.

\begin{prop}[Informal version of \cref{obtaining-nice-config}: passing to a ``nice" configuration]
Suppose $A \subseteq [N_1] \times [N_2] \times \cdots \times [N_r]$ is a spread configuration with density $\mu$. Then either 
$A$ has at least 
$$ r^{-O(r)} \cdot \frac{|A|^3}{|G|} $$
3-progressions, or
there is a large subset of $A$ which is Freiman $2$-isomorphic to a ``nice" configuration $A' \subseteq P' = [N_1'] \times [N_2'] \times \cdots \times [N_r']$ with the assurance that
\begin{itemize}
    \item $A' \subseteq P'$ is only slightly less spread, and
    \item $|A'| \geq r^{-O(r)} |A|$. 
\end{itemize}

\end{prop}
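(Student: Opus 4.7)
The plan is to combine the spreadness hypothesis with a translation-invariance averaging argument to locate, inside $A$, a large Freiman-$2$-isomorphic copy of a configuration that is confined to a suitable ``upper portion'' box and simultaneously carries substantial mass in a designated ``middle slice''---this will play the role of the nice output configuration. The ``many 3-progressions'' alternative acts only as a safety valve for the degenerate regime in which the density $\mu$ is already a positive constant, where $\Omega(|A|^{2}/|G|)$ 3-progressions follow from a direct Cauchy--Schwarz count.

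First I would fix $\delta := 1/(Cr)$ for a sufficiently large absolute constant $C$ and define the upper portion $U := \prod_{i=1}^{r}[2\delta N_i, N_i] \subseteq P$ (taking integer parts implicitly). For each coordinate $i$, the low slab $L_i := \{y \in P : y_i \leq 2\delta N_i\}$ is itself a sub-box of $P$, and the identity map witnesses a Freiman $2$-homomorphism from $L_i$ onto a smaller axis-aligned box of the same rank; spreadness of $A \subseteq P$ then gives $|A \cap L_i| \leq (1+\eps)\cdot 2\delta\cdot |A|$. A union bound over $i \in [r]$ yields $|A \cap U| \geq (1 - O(\delta r))|A| \geq \tfrac{1}{2}|A|$.

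Next I would define the middle slice $M := \prod_{i=1}^{r}[\delta N_i]$ and the admissible-translate set $\mathcal{T} := \prod_{i=1}^{r}[2\delta N_i,\, N_i - \delta N_i]$, i.e.\ all $\theta$ with $M+\theta \subseteq U$; note $|\mathcal{T}| \geq \tfrac{1}{2}|P|$. Spreadness applied to each box $M+\theta$ gives the pointwise upper bound
\[
|A \cap (M+\theta)| \;\leq\; (1+\eps)\,\mu\,|M| \;=\; (1+\eps)\,\delta^{r}\,|A|,
\]
while double counting yields
\[
\sum_{\theta \in \mathcal{T}}|A \cap (M+\theta)| \;=\; \sum_{a \in A}|\mathcal{T} \cap (a - M)| \;\geq\; |A \cap U|\cdot|M| \;\geq\; \tfrac{1}{2}\,|M|\,|A|,
\]
so $\E_{\theta \in \mathcal{T}}|A \cap (M+\theta)| \geq \tfrac{1}{2}\delta^{r}|A|$. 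Since maximum and mean agree up to a constant factor, the weighted pigeonhole principle (\cref{weighted-php}(ii)) produces a specific $\theta^{\star} \in \mathcal{T}$ with $|A \cap (M+\theta^{\star})| \geq \tfrac{1}{4}\delta^{r}|A|$. I would then let $A' := (A \cap U) - \theta^{\star}$ and re-coordinatize $U - \theta^{\star}$ as the axis-aligned box $P' := \prod_i [N_{i}']$ with $N_i' := N_i - 2\delta N_i$; the composed map from $A \cap U$ to $A'$ is a pure translation, hence a Freiman isomorphism of every order. Then $|A'| \geq \tfrac{1}{2}|A| \geq r^{-O(r)}|A|$, the shifted copy of $M$ is a distinguished middle slice of $P'$ carrying at least $\tfrac{1}{4}\delta^{r}|A|$ points of $A'$, the density obeys $\mu' = (1 + O(\delta r))\mu$, and the dimension is unchanged.

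The main obstacle I anticipate is verifying the ``slightly less spread'' clause carefully against the quantitative parameters $(c, K, \eps)$ of \cref{spreadness}. Since $A \cap U \to A'$ is a pure translation and the dimension is unchanged, any hypothetical $(1+\eps')$-good increment of $A' \subseteq P'$ pulls back directly (by composition with the translation) to a candidate $(1+\eps')$-good increment of $A \subseteq P$. The only nontrivial cost is that $P'$ is a factor $(1 + O(\delta r))$ smaller than $P$, inflating relative densities by that factor; choosing $\eps' := \eps - O(\delta r)$ and $C$ large enough absorbs this slack, so that any violation of spreadness of $A'$ at parameter $\eps'$ would contradict spreadness of $A$ at parameter $\eps$. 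The size-loss and dimension-growth budgets in clauses (3)--(4) of \cref{spreadness} transfer verbatim under a pure translation, so only a routine but somewhat finicky verification of the constants remains.
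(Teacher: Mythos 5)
Your argument correctly uses spreadness to confine a $1 - O(\delta r)$ fraction of $A$ to the upper portion $U$, and the translation-averaging plus pigeonhole step for locating a middle slice with good density is in the same spirit as the paper's use of \cref{one-sided} and Markov. However, there are two genuine gaps that prevent this from proving the formal statement (\cref{obtaining-nice-config}).

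The main gap is that you never arrange for the new side lengths $N_i'$ to be \emph{distinct odd primes}, which is criterion~1 of \cref{nice-configuration}. You set $N_i' := N_i - 2\delta N_i$, which has no reason to be prime. This criterion is not cosmetic: the entire downstream argument (\cref{embedding} followed by \cref{svr-local}) requires the target group $G' = \Z_{N_1'} \times \cdots \times \Z_{N_r'}$ to be \emph{cyclic}, since \cref{svr-local} works only over cyclic groups and \cref{progression_in_bohr_set} is stated only for cyclic $G$. The paper's proof therefore does not merely shrink $P$; it passes to a subcube $a + \prod_i [p_i]$ with $p_i$ distinct primes near $\delta N_i$, and pays an explicit $r^{-O(r)}$ size factor (plus possibly dimension reduction when some $N_i$ is too small to host $r+1$ primes in $[n/2,n]$) to obtain this. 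Without this step, the ``nice'' configuration you produce cannot be used in the next stage of the argument.

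The second gap concerns criterion~4, the pointwise bound $\max_{z \in A'} R_{A'}(2z) \leq \tfrac{\mu'}{4}|A'|$. You describe the ``many 3-progressions'' alternative as a safety valve for the $\mu = \Omega(1)$ regime, but that is not its role: in the paper's proof the alternative arises precisely to absorb the case in which this representation bound fails. The paper inserts a pruning step (discard the $\leq \tfrac{\eps}{4}|A|$ points $x$ with $R_A(2x) > \tfrac{4}{\eps}\E_{z \in A} R_A(2z)$, by Markov), and then observes at the very end that the surviving bound on $R_{A''}(2x)$ either meets criterion~4 or forces $\sum_{z} R_A(2z) \geq r^{-O(r)}\mu|A|^2$, i.e.\ many 3-progressions. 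Your proposal omits the pruning entirely, so criterion~4 is simply not established.

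A smaller mismatch: criterion~3 of \cref{nice-configuration} is a weighted density bound $\E_{b,b'}[\1_{A'}(v+b-b')] \geq \mu/2$ with $b,b'$ drawn from a product of intervals $[-m_i,m_i]$, not simply a count $|A' \cap (M+\theta^\star)| \geq \tfrac14\delta^r|A|$ for a translated box $M$. The two are in the same spirit and your averaging step could likely be adapted, but as written it does not verify the actual condition used in \cref{embedding}.
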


\begin{prop}[Informal version of \cref{embedding}: embedding a ``nice" configuration]
Suppose $A \subseteq [N_1] \times [N_2] \times \cdots \times [N_r]$ is a ``nice" configuration with density $\mu \geq 2^{-d}$. Then (firstly) the group $G = \Z_{N_1} \times \Z_{N_2} \times \cdots \times \Z_{N_r}$ must be cyclic.
Let $\phi$ be the natural embedding $\phi : A \rightarrow G$. 
Then we have
$$ \|\phi(A) \star \phi(A)\|_{k,B \star B} \geq 1 + \Omega(1) $$
for some $k \leq O(d)$ and some regular Bohr set $B$ which
\begin{itemize}
    \item has rank $r$ and radius $\rho \approx 1/r$, and
    \item is $2$-safe with respect to the embedding $\phi : A \rightarrow G$.
\end{itemize}

\end{prop}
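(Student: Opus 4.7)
My plan is to prove the slightly sharper conditional form ``nice configuration $+$ few $3$-APs $\Rightarrow$ norm bound''; the formulation in the excerpt then arises by combining this with \cref{progressions-in-spread-sets-specific}. Write $\mu = |A|/|P|$ and fix $\delta \approx 1/(Cr)$ for a large absolute constant $C$. The ``niceness'' package will include: (i) the moduli $N_1,\ldots,N_r$ are pairwise coprime, so by CRT $G \cong \Z_N$ is cyclic (a tiny perturbation of each $N_i$ upward at the start of the whole argument can arrange this, losing only a constant density factor); (ii) $A \subseteq U := \prod_i [2\delta N_i, N_i]$; and (iii) there is a middle slice $M := \prod_i [N_i/2,\, N_i/2 + \delta N_i] \subseteq P$ with $|A \cap M| \geq \alpha |A|$ for some $\alpha \geq r^{-O(r)}$. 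A coordinate-by-coordinate check shows $M$ is ``safe'' in the sense that for $x,y \in U$ and $z \in M$, $x+y \equiv 2z \pmod{(N_i)}$ forces $x+y = 2z$ in $\Z^r$.

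Assume that $A$ contains fewer than $\tfrac{1}{16} \alpha \mu |A|^2$ nontrivial $3$-APs (in the alternative case, \cref{progressions-in-spread-sets-specific} is already satisfied). The safety property gives $R_{\phi(A)}(2\phi(z)) = R_A(2z)$ for every $z \in A \cap M$, and a brief density-formulation computation using $|G| = |P| = \mu^{-1}|A|$ yields the averaged estimate
$$ \frac{1}{|A \cap M|} \sum_{z \in A \cap M} (\phi(A) * \phi(A))(2\phi(z)) \;\leq\; \tfrac{1}{16}. $$
Using \cref{bohr_interval} coordinate-wise, define $B \subseteq G$ as the CRT product of symmetric rank-$1$ Bohr sets of radius $\rho \approx 1/r$; this is a rank-$r$ Bohr set of radius $\approx 1/r$, and by \cref{safe_bohr} $B$ (and every translate of $B$) is $2$-safe for $\phi|_U$; replace $B$ by a dilate from \cref{regularize} to enforce regularity. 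Let $\theta \in G$ be twice the image of the centre of $M$, so that $2\phi(A \cap M) \subseteq \theta + B$.

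The averaged estimate supported on the ``discrete'' set $2\phi(A \cap M)$ must now be converted to an averaged estimate against the smooth spectrally-positive density $T_\theta(B \star B)$. I plan to do this by inserting an approximate translation-invariance step: show that $\phi(A) * \phi(A)$ is essentially invariant under convolution with $B_{\delta''} \star B_{\delta''}$ for an appropriately small dilate, using a Bohr-set analogue of the invariance tools underlying \cref{II-part-2}. This will let us replace $2\phi(A \cap M)$ by a slightly thickened superset whose uniform density is dominated pointwise (up to a factor $r^{O(r)}$) by $T_\theta(B \star B)$ on the bulk of $\theta + B$, giving $\E_{x \sim T_\theta(B \star B)} (\phi(A) * \phi(A))(x) \leq \tfrac{1}{2}$ after absorbing the $r^{O(r)}$ loss by taking the deficit $\tfrac{1}{16}$ suitably small. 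Hence $\|\phi(A) * \phi(A) - 1\|_{k, T_\theta(B \star B)} \geq \tfrac{1}{2}$ for every $k \geq 1$; local decoupling (\cref{local-decoupling}) with $D = B \star B$ then upgrades this to $\|\phi(A) \star \phi(A) - 1\|_{k, B \star B} \geq \tfrac{1}{2}$, and the local upper-to-lower transfer (\cref{local-upper-to-lower}) finally yields $\|\phi(A) \star \phi(A)\|_{k', B \star B} \geq 1 + \Omega(1)$ for some $k' \leq O(d)$.

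The main obstacle will be the third step: converting the pointwise bound ``$\phi(A) * \phi(A)$ is small on $2\phi(A \cap M)$'' into a bound averaged against $T_\theta(B \star B)$. The issue is that $B \star B$ is a tent function peaked at $0$, not a flat density, and $2\phi(A \cap M)$ is only a modest subset of $\theta + (B - B)$. I expect the cleanest resolution to be a \emph{local} Bohr-set translation-invariance lemma --- essentially porting Sanders' invariance lemma into the Bohr setting so that $\phi(A) * \phi(A)$ may be smoothed by convolution with a small Bohr set at negligible cost --- though an alternative resolution via sifting over $A \cap M$ before applying \cref{local-decoupling} also seems feasible. Either way the $r^{O(r)}$ losses incurred here are absorbed into the $r^{-O(r)}$ slack already present in the conclusion of \cref{progressions-in-spread-sets-specific}.
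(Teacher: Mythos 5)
Your overall skeleton is right (CRT cyclicity, safety of the cube, Bohr set built from the coordinatewise intervals, then local decoupling followed by the local upper-to-lower transfer), but you have missed a crucial design decision in the paper that makes the step you flag as ``the main obstacle'' disappear entirely, and the replacement you propose for it is a genuine gap.

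The paper's Definition~\ref{nice-configuration} does \emph{not} assert a uniform-measure density bound $|A \cap M| \geq \alpha|A|$ in the middle slice, as your proposal assumes. Its criterion~(3) is stated directly as a \emph{tent-weighted} bound: with $b,b'$ uniform over the box $C_m = \prod_i[-m_i,m_i]$, one requires $\E_{b,b'}\bigl[\1_A(v+b-b')\bigr] \geq \mu/2$. Since the law of $b-b'$ is exactly the triangular (tent) distribution on $C_m - C_m$, which becomes $B \star B$ after applying $2\phi(\cdot)$, this criterion \emph{is already} the $B \star B$-weighted quantity you need, translated by $\theta = -2\phi(v)$. Consequently the paper's proof of \cref{embedding} never needs to pass from a uniform average over $2\phi(A\cap M)$ to the tent-weighted one: the niceness hypothesis hands it over directly, and the proof reduces to a one-line computation together with the observation that criterion~(4) gives the pointwise bound $(\phi(A)*\phi(A))(2\phi(z)) \leq 1/4$ for every $z \in A \cap M$ (your version again only has an averaged $3$-AP count, so you would additionally need a Markov/pruning step that the paper performs earlier, inside \cref{obtaining-nice-config}). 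In short, the entire conversion problem you correctly identify as the hard step is not solved in the paper --- it is designed out, by choosing the definition of ``nice'' so that \cref{obtaining-nice-config} is the place where the averaging against translates of $B$ (via \cref{smoothing} and \cref{one-sided}) actually happens.

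Your proposed bridge --- a Bohr-set analogue of Sanders' invariance lemma to smooth $\phi(A)*\phi(A)$ and then dominate the uniform density on (a thickening of) $2\phi(A\cap M)$ by $r^{O(r)}\cdot T_\theta(B\star B)$ --- is where the real gap lives. Even granting that such a local invariance statement can be formulated and proved with workable parameters, you would be smoothing the function $|h|^k$ (not $h$ itself, since you ultimately need the $k$-th moment), and the error control there is not addressed. The pointwise domination by $r^{O(r)} T_\theta(B \star B)$ also only holds on the inner ``bulk'' of $\theta + (B - B)$; near the boundary the tent decays to zero and the comparison fails, so you would further need the thickened set to avoid the boundary. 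None of this is impossible, but it is a substantial additional argument that the paper deliberately does not make. The takeaway: read Definition~\ref{nice-configuration} and \cref{obtaining-nice-config} together; the ``nice'' package is stronger than what you assumed, precisely so that \cref{embedding} is immediate.
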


\begin{lem}[Informal version of \cref{svr-local}: obtaining a density increment]
Suppose $A$ is a subset of a cyclic group $G$ with density $\mu \geq 2^{-d}$,
and let $B$ be a regular Bohr set with rank $r$ and radius $\rho \approx 1/r$. 
If
$$ \|A \star A\|_{k, B \star B} \geq 1 + \Omega(1), $$
then 
$$ \frac{|A \cap P|}{|P|} \geq \left(1 + \Omega(1) \right) \mu
$$
for some generalized progression $P$ which
\begin{itemize}
    \item has rank $r' \leq r + \textnormal{poly}(k,d)$,
    \item has size $|P| \geq 2^{-\textnormal{poly}(k,d,r)}  |G|$, and
    \item is contained in some translate $B + \theta$.
\end{itemize}
\end{lem}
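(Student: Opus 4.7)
The plan is to mirror the proof of \cref{II-part-2}, replacing Sanders' finite-field invariance lemma with a Bohr-set analogue and converting the resulting sub-Bohr-set correlation into a generalized progression correlation via \cref{progression_in_bohr_set}. Write $\eps = \Omega(1)$ for the lower bound on $\|A \star A\|_{k, B \star B}$, and set $f := \ind{(A \star A)(x) \leq 1 + \eps/2}$. The first step is to sift a robust witness: apply \cref{local-witness} with $C = B$ and $k' = O(dk/\eps)$ to extract subsets $B_1, B_2 \subseteq B$ of relative densities at least $2^{-O(dk/\eps)}$ satisfying $\ip{B_1 * B_2}{f} \leq 1/16$; the pointwise inequality $A \star A \geq (1 + \eps/2)(1 - f)$ then yields
$$\ip{B_1 * B_2}{A \star A} \geq (1 + \eps/2)(1 - 1/16) \geq 1 + \eps/4.$$

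Next, invoke a local analogue of Sanders' invariance lemma. Because $B_1, B_2$ are dense subsets of the regular Bohr set $B$, combining the Croot--Sisask lemma (in its Bohr-set form) with Chang's inequality should produce a regular sub-Bohr-set $B'' \subseteq B_{1/2}$ of rank $r'' \leq r + O(\textnormal{poly}(d,k))$ and radius $\rho'' \geq \rho / \textnormal{poly}(r, d, k)$ with the property that $f$ is approximately invariant under $B''$-translations in a suitable norm, giving $\ip{B'' * B_1 * B_2}{f} \leq 1/8$ and hence $\ip{B'' * B_1 * B_2}{A \star A} \geq 1 + \eps/8$. Then apply \cref{progression_in_bohr_set} to extract a proper generalized progression $P \subseteq B''$ of rank $\leq r''$ and size at least $2^{-\textnormal{poly}(d,k,r)} |G|$; by the same (strong-enough) invariance of $f$ under $B''$, its average over any translate of the progression $P \subseteq B''$ remains small, so that
$$\ip{P * B_1 * B_2}{A \star A} \geq 1 + \eps/16.$$

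The final step converts this into a density increment on $A$ itself. Using symmetry of $A \star A$, rewrite the inequality as $\ip{(P * B_1 * B_2) * A}{A} \geq 1 + \eps/16$; averaging first over $a \in A$ yields some $a_0$ with $((P * B_1 * B_2) * A)(a_0) \geq 1 + \eps/16$, and then averaging out $(b_1, b_2) \in B_1 \times B_2$ (keeping $p \in P$ as the outer variable) produces a shift $\theta_0 = a_0 - b_1^0 - b_2^0$ for which $\E_{p \in P} A(\theta_0 - p) \geq 1 + \eps/16$, i.e.\ $\ip{\theta_0 - P}{A} \geq 1 + \eps/16$. The set $\theta_0 - P$ is a proper generalized progression of the same rank and size as $P$, and since $-P \subseteq B''$ (Bohr sets being symmetric) and $B'' \subseteq B_{1/2} \subseteq B$, the translate $\theta_0 - P$ lies inside $B + \theta_0$, as required. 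The principal obstacle is establishing the local invariance lemma at the strength needed: in the finite-field case, subspace convolutions are trivially-commuting spectral projections, whereas in the Bohr-set setting one must adjoin Chang's spectrum of $B_1 * B_2$ to the defining frequencies of $B$, pick the new radius small enough to ensure both regularity and containment $B'' \subseteq B_{1/2}$, and obtain a pointwise-like control on $B''$-translates strong enough to replace $B''$ by a sub-progression $P$ in the preceding inequalities.
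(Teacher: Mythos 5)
Your blueprint follows the paper's overall skeleton — sift a robust witness via \cref{local-witness}, then invoke a local (Bohr-set) analogue of Sanders' invariance lemma, then extract a progression via \cref{progression_in_bohr_set} and transfer the correlation to a density increment on $A$ — but there is a genuine gap where you apply \cref{local-witness} directly with $C = B$ and obtain $B_1, B_2 \subseteq B$ as the two factors to feed into the invariance lemma. The local invariance lemma you need here is \cref{schoen-sisask}, whose cost is governed by the doubling parameters $|Y + B''| \leq 2^{d}|Y|$ and $|X + Y + B''| \leq 2^{s}|X|$, and whose rank growth is $O(d\,s^3/\eps^2 + \dots)$. With $B_1, B_2$ both dense subsets of $B$, you are stuck bounding $|B_1 + B_2 + B''|$ by $|B + B + B''|$, and $|B+B| \le |B_{2}|$ can be as large as $6^{r}|B|$ (\cref{bohr-set-size}; regularity does not help at dilation $2$). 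Since $|B_1| \geq 2^{-O(dk)}|B|$, this forces $s = \Omega(r) + O(dk)$, and the resulting rank growth is $O(d\,r^3 + d^4 k^4)$ — i.e.\ polynomial in $r$. That destroys the required conclusion $r' \leq r + \textnormal{poly}(d,k)$, which is essential because the overall argument iterates.

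This is exactly the difficulty the paper's smoothing step (\cref{smoothing-bohr}) is designed to remove. The paper first passes from $\ip{B \star B}{(A\star A)^k}$ to $\ip{B_{1+\delta} * B_\delta}{B * (A\star A)^k}$ with $\delta \approx \eps_0/r$, and then fixes a single $\theta \in B_{1+\delta}$ to obtain $\ip{B_\delta * C}{(A\star A)^k}$ with $C = B + \theta$. Only then does it sift, producing $X \subseteq C$ and $Y \subseteq B_\delta$. Because one factor lives in the tiny dilate $B_\delta$, the doubling constants feeding into \cref{schoen-sisask} (taken with refinement Bohr set $B_\eta$, $\eta \approx \delta^2$) are $|Y + B_\eta| \leq 2|B_\delta| \leq 2^{O(dk)}|Y|$ and $|X + Y + B_\eta| \leq |B_{1+\delta+\delta^2}| \leq 2|B| \leq 2^{O(dk)}|X|$, using only regularity at scales $O(1/r)$. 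This gives $s = O(dk)$ and rank growth $r + O(d^4k^4)$, independent of $r$. You do flag a ``principal obstacle'' at the end, but you identify it as adjoining Chang's spectrum and choosing the radius; the actual obstruction is the $6^r$-type doubling of $B$ against itself, and the smoothing/one-sided-approximation device (\cref{smoothing}, \cref{one-sided}, \cref{smoothing-bohr}) is the missing idea that resolves it.
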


\begin{proof}[Proof of \cref{progressions-in-spread-sets-specific}.]
Let $A_0 = A$ and $r_0 = r$. 
In the case that $A$ has at least
$$ r^{-O(r)} \frac{|A|^3}{|P|} $$
3-progressions, we are done. 
Otherwise, we apply \cref{obtaining-nice-config} with $\eps = 2^{-9}$.
Either this immediately supplies us with the desired density increment, or else we obtain a subset $A_0' \subseteq A_0$ and a $2$-homomorphic bijection 
$ \phi_1 : A_0' \rightarrow A_1 \subseteq P_1$ where 
\begin{itemize}
    \item $P_1 = [p_1] \times [p_2] \times \cdots \times [p_{r_1}] \subseteq \Z^{r_1},$ 
    \item $|A_1| \geq r^{-O(r)} |A_0|$ and $r_1 \leq r_0$, 
    \item $A_1 \subseteq P_1$ is $\delta$-nice with $\tfrac{1}{10} \geq \delta \geq \Omega(1/r)$, and 
    \item $\mu_1 := \frac{|A_1|}{|P_1|} \geq (1 - 5 \eps) \mu .$
\end{itemize}
We apply \cref{embedding}. 
Letting  $\phi_2 : A_1 \rightarrow A_2 \subseteq G$ be the natural embedding into the group $G = \Z_{p_1} \times \Z_{p_2} \times \cdots \times \Z_{p_{r_1}}$,
we have
$$ \| A_2 \star A_2 \|_{k, B \star B} \geq 1 + \tfrac{1}{4} $$
for some $k \leq O(d)$ and some large regular Bohr set $B$ which is $2$-safe with respect to $\phi_2$. Applying \cref{svr-local}, we obtain a subset
$$ A_2' = A_2 \cap P_3 $$
where $P_3$ is some proper progression which
\begin{itemize}
    \item has rank $r_3 \leq r + O(d^8)$,
    \item has size $|P_3| \geq r^{-O(r)} \cdot 2^{-O(d^2 r)} \cdot 2^{-O(d^{10})} \cdot |G|$,
    \item is contained in some translate $B+\theta$, and
    \item provides the density increment
    $$ \mu_3 = \frac{|A_2 \cap P_3|}{|P_3|} \geq \left(1 + \tfrac{1}{32}\right) \mu_1 \geq  \left(1 + \tfrac{1}{32}\right) \left(1 - \tfrac{5}{2^9}\right)\mu \geq \left(1 + \tfrac{1}{50}\right) \mu. $$
\end{itemize}
As the density does not decrease, we note also that
$$ |A_2'| \geq r^{-O(r)} \cdot 2^{-O(d^2 r)} \cdot 2^{-O(d^{10})} |A_2|. 
$$
Since $A_2' \subseteq P_3 \subseteq B + \theta$, the map $\phi_2$ is a $2$-homomorphism when restricted to $\phi_2^{-1}(A_2')$. 

Finally, we let $\phi_3 : P_3 \rightarrow \Z^{r_3}$ be the simple Freiman homomorphism described in \cref{progression_label} and we arrive at our final configuration
$$ A_3 = \phi_3 (A_2') \subseteq \phi_3 (P_3) \subseteq \Z^{r_3}. $$
Letting $\phi := \phi_3 \circ \phi_2 \circ \phi_1$ and $A' := \phi^{-1}(A_3)$,
we obtain the desired increment $(A',\phi)$. \qedhere
\end{proof}

\subsection{Obtaining a density increment}

\begin{lem}[Structure vs.\ Pseudorandomness in $\Z_N$ -- local variant] \label{svr-local}
Consider
\begin{itemize}
    \item a cyclic group $G$.
    \item a subset $A \subseteq G$ of size $|A| \geq 2^{-d} |G|$,
    \item a regular Bohr set $B = \Bohr(\Gamma, \rho)$ of rank $|\Gamma| = r$, 
    \item $k \in \N$, and
    \item a constant\footnote{We allow the implied constants in the $O$-notation here to depend on $\eps_0$.} $\eps_0 \in [0,1]$.
\end{itemize}
If 
$$ \| A \star A \|_{k, B \star B} \geq 1 + \eps_0, $$
then there is a (proper) generalized progression $P$, contained in some translate $B + \theta$, with
\begin{itemize}
    \item rank $r' \leq r + O(k^4 d^4)$ and
    \item size $|P| \geq   \rho^{r'} \cdot  r^{-O(r)} \cdot 2^{-O(dkr)} \cdot 2^{-O(d^5 k^5)} \cdot  N$,
\end{itemize}
and such that
$$ \frac{|A \cap P|}{|P|} \geq \left( 1 + \frac{\eps_0}{8} \right) \frac{|A|}{|G|}. $$
\end{lem}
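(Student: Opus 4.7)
The plan is to mirror the proof of \cref{II-part-2} in the local setting. The subspace increment there becomes a Bohr set increment here, and at the end we invoke \cref{progression_in_bohr_set} to convert this into a generalized progression increment. Since $B = -B$ for a Bohr set, we have $B \star B = B * B$, so the hypothesis reads $\|A \star A\|_{k, B * B} \geq 1 + \eps_0$ and the local sifting machinery applies directly.

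\textbf{Step 1 (sift a local robust witness).} Apply \cref{local-witness} with $C := B$, $\eps := \eps_0/2$, and $k' := \lceil k + 20/\eps_0 \rceil$. For the sublevel-set indicator $f := \ind{(A \star A)(x) \leq 1 + \eps_0/2}$, this produces subsets $B_1, B_2 \subseteq B$ with
$$ \ip{B_1 * B_2}{f} \leq \tfrac{1}{16}, \qquad |B_1|, |B_2| \geq \tfrac{1}{2} \cdot 2^{-2dk'} |B|, $$
and in particular $\ip{B_1 * B_2}{A \star A} \geq 1 + \eps_0/4$, i.e.\ $B_1 * B_2$ is a robust witness to the upwards deviation of $A \star A$.

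\textbf{Step 2 (Bohr invariance).} Apply a Bohr-set analog of Sanders' invariance lemma (the Croot–Sisask/Chang machine deployed with respect to the Bohr set $B$ rather than the whole group) to find a regular Bohr set $B^* = \Bohr(\Gamma \cup \Gamma^*, \rho^*)$ with $|\Gamma^*| \leq O(d^4 k^4)$ and radius $\rho^* \leq \rho \cdot r^{-O(1)} \cdot 2^{-O(dk)}$, such that
$$ \ip{B^* * B_1 * B_2}{f} \leq \tfrac{1}{8}, $$
and consequently $\ip{B^* * B_1 * B_2}{A \star A} \geq 1 + \eps_0/8$. Crucially we retain $\Gamma$ inside the character set so that $B^* \subseteq B$. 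Averaging over translates produces shifts $\theta, \theta' \in G$ with $\ip{B^* + \theta}{A \star A} \geq 1 + \eps_0/8$ and then $\ip{B^* + \theta'}{A} \geq (1 + \eps_0/8) \cdot |A|/|G|$, i.e.\ the desired density increment onto a translate of $B^*$ of rank $r' \leq r + O(d^4 k^4)$.

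\textbf{Step 3 (extract a progression).} Apply \cref{progression_in_bohr_set} to $B^*$ to obtain a proper generalized progression $P_0$ of rank $r'$ and size at least $(\rho^*/2\pi r')^{r'} \cdot |G|$. By translating $P_0$ through $B^* + \theta'$ and averaging (as in \cref{smoothing}/\cref{one-sided}), we locate a translate $P \subseteq B + \theta$ of $P_0$ that retains the density increment up to a factor of $2$, which is absorbed in the $(1 + \eps_0/8)$ factor if we start from $\ip{B^* + \theta'}{A} \geq (1 + \eps_0/4) \cdot |A|/|G|$ (achievable by tightening constants in Steps 1–2). The size accounting then combines $\rho^{r'}$ from the radius, $r^{-O(r)}$ from the $(2\pi r')^{-r'}$ factor in Bohr-to-progression, $2^{-O(dkr)}$ from the radius-shrinkage factor raised to the ambient rank, and $2^{-O(d^5 k^5)}$ from the new Bohr constraints raised to their own rank $O(d^4 k^4)$.

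\textbf{Main obstacle.} The critical and least routine step is Step 2: formulating the local Bohr analog of Sanders' invariance lemma so that the output Bohr set both refines the given $B$ and acts almost trivially on $B_1 * B_2$ in the sense that its convolution barely perturbs $\ip{B_1 * B_2}{f}$. Two technical points require care. First, the Croot–Sisask almost-periods must be extracted while keeping them inside a suitable dilate of $B$, which entails using the regularity of $B$ (\cref{regularize}) together with the smoothing estimates (\cref{smoothing-bohr}) to pay only a bounded multiplicative cost per convolution. Second, the new Bohr constraints produced by Chang's inequality must be introduced at a radius small enough that the resulting shrinkage (captured in the $r^{-O(r)} \cdot 2^{-O(dkr)}$ factor) is absorbed cleanly into the stated size bound. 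Everything else is bookkeeping of constants and standard use of the preliminaries developed in \cref{prelims-integers}.
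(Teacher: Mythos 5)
Your three-stage outline (sift a robust witness, apply a translation-invariance lemma, extract a progression from the resulting Bohr set) correctly mirrors the paper's strategy, and Step~3 is essentially what the paper does. But there is a genuine gap in how you set up Steps~1 and~2, and it is not merely a bookkeeping concern.

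You apply \cref{local-witness} with $B$ and $C := B$, producing witnesses $B_1, B_2 \subseteq B$ with $B_1 * B_2$ robustly witnessing the deviation. You then want to feed $X := B_1$, $Y := B_2$ into the translation-invariance machinery (which, incidentally, is not something you need to ``formulate'' from scratch: the paper uses the existing Schoen--Sisask lemma, \cref{schoen-sisask}, as a black box). The Schoen--Sisask parameters are governed by the doubling ratios $|Y + B'|/|Y|$ and $|X + Y + B'|/|X|$. With both $X, Y \subseteq B$, the second ratio involves $|B + B + B'| \leq |B_{2+\eta}|$, which by \cref{bohr-set-size} can be as large as $6^r |B|$. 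This forces the $s$-parameter in \cref{schoen-sisask} to be $O(r + dk)$ rather than $O(dk)$, so the rank increment becomes $O\bigl(dk \cdot (r + dk)^3\bigr)$, which for $r \gg dk$ is $\Omega(dk\, r^3)$ --- far larger than the claimed $O(d^4 k^4)$. In the iterative application in \cref{many-3-progs}, $r$ can reach $d^{O(1)}$, and this blowup destroys the argument.

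The missing idea is that one must smooth $B \star B$ \emph{before} sifting, so that one of the two witness sets lives inside a tiny dilate of $B$. The paper sets $\delta \approx \eps_0 / (100 r)$, applies \cref{smoothing-bohr} to pass from $\ip{B \star B}{(A\star A)^k}$ to $\ip{B_\delta * (B + \theta)}{(A\star A)^k}$, and then sifts to get $Y \subseteq B_\delta$ and $X \subseteq B + \theta$. Now $|X + Y + B_\eta| \leq |B_{1+\delta+\delta^2}| \leq 2|B|$ by regularity, so $s = O(dk)$ and the rank increment is $O(d^4 k^4)$ as claimed. Your ``main obstacle'' paragraph gestures at using \cref{smoothing-bohr} and \cref{regularize}, but positions the fix inside the invariance step; once you have already sifted $B*B$ directly, you cannot retroactively push the witnesses into a small dilate, because sifting always returns subsets of the original $B$, not of $B_\delta$. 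The smoothing must precede the sifting.
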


We will need the following translation-invariance lemma due to Schoen and Sisask.

\begin{lem}[Special case of 
{\cite[Theorem 5.4]{schoen-sisask}}] \label{schoen-sisask}
Consider 
\begin{itemize}
    \item a finite abelian group $G$,
    \item subsets $X,Y \subseteq G$,
    \item a regular Bohr set $B = \Bohr(\Gamma, \rho) \subseteq G$ with rank $|\Gamma| = r$,
    \item an indicator function $f : G \rightarrow \{0,1\}$, and 
    \item $\eps \in [0,1]$. 
\end{itemize}
If
$$ |Y + B| \leq 2^{d} |Y|$$
and
$$ |X + Y + B| \leq 2^{s} |X|,$$
then there is a Bohr set $B' = \Bohr(\Gamma', \rho') \subseteq B$ such that
\begin{itemize}
    \item $ \left| \big((X * Y) \star f  \big)(b) - \big((X * Y) \star f  \big)(0)  \right| \leq \eps $ for all $b \in B'$,
    \item $B'$ has rank $r' \leq r + O \left( d s^3 / \eps^2  +  d s \log(1/\eps)^2 / \eps^2 \right)$, and
    \item $B'$ has radius $\rho' \geq   \rho  \cdot \eps \cdot 2^{-s/2} / (r^2  r') $.
\end{itemize}
In particular,
$$ \left|\ip{D * X * Y}{f} - \ip{X * Y}{f} \right| \leq \eps $$
for any density function $D$ supported on $B'$.
\end{lem}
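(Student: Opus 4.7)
The plan is to follow the standard Croot--Sisask plus Chang recipe for almost-periodicity lemmas. First I would reduce the desired pointwise statement to approximate translation-invariance of the density $g := X * Y$: the difference $((X*Y) \star f)(b) - ((X*Y)\star f)(0)$ equals $\ip{\tau_{-b} g - g}{f}$ with the paper's convolution conventions, and since $\|f\|_\infty \leq 1$ it suffices by H\"older to find a Bohr set $B'$ along which $\|\tau_b g - g\|_1$ is small. So the game reduces to producing a large Bohr set of approximate $L^1$-periods of $g$.

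The next step is to invoke the Croot--Sisask almost-periodicity lemma. Using the doubling hypothesis $|Y + B| \leq 2^d |Y|$, for parameters $k \in \N$ and $\eta > 0$ to be chosen, it produces a symmetric set $T \subseteq B$ of density $|T|/|B| \geq 2^{-O(dk/\eta^2)}$ such that $\|\tau_t g - g\|_{k} \leq \eta$ for every $t \in T$. To convert this $L^k$ bound into the pointwise-against-$f$ bound we want, I would use the support control coming from the second hypothesis $|X + Y + B| \leq 2^s |X|$: the relevant functions are supported on a set of that size, so H\"older interpolation between $L^1$ and $L^k$ loses at most a factor $2^{s(1-1/k)}$. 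Choosing $k = \Theta(s)$ and $\eta = \eps \cdot 2^{-\Theta(s)}$ then yields the desired pointwise near-invariance at every $t \in T$, with $T$ still of significant density in $B$.

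The third step is to promote the set $T$ to a Bohr set by Chang's spectral inequality. Applied to the normalized indicator of $T$, Chang provides a set of characters $\Gamma'$ with $|\Gamma'| = O(\log(|B|/|T|)) = O(d s^3/\eps^2)$ and some choice of radius $\rho'$ such that $\Bohr(\Gamma \cup \Gamma', \rho')$ is contained in an iterated sumset $mT - mT$ for a small $m$. Since sums of approximate periods remain approximate periods (with a modest loss in error), every element of $B' := \Bohr(\Gamma \cup \Gamma', \rho')$ is an $\eps$-period of $g$ against $f$, which gives the claimed pointwise invariance. The rank is $|\Gamma| + |\Gamma'| \leq r + O(d s^3 / \eps^2)$, and the radius $\rho'$ acquires the factors $\eps \cdot 2^{-s/2}/(r^2 r')$ from the standard Bohr-set intersection and regularization estimates (\cref{bohr-set-size} and \cref{regularize}).

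The main obstacle will not be conceptual --- the Croot--Sisask plus Chang strategy is the standard route for this kind of lemma, going back to Sanders --- but rather the delicate parameter tracking needed to hit both summands in the rank bound. The $d s^3/\eps^2$ term comes from the ``moderate $\eps$'' regime where $k \approx s/\eps$ and $\eta \approx \eps$, while the $d s \log(1/\eps)^2/\eps^2$ term comes from a ``tiny $\eps$'' regime where one pays more carefully for smaller errors, replacing a factor of $1/\eps$ by $\log(1/\eps)$ through a finer exponent choice in the H\"older step. Pinning down the exact radius dependence $\rho \cdot \eps \cdot 2^{-s/2}/(r^2 r')$ also requires placing the Bohr-regularization steps where they cost only polynomial factors in $r, r'$; a sloppier placement would lose an extra $2^{-O(r')}$, which the intended application cannot afford.
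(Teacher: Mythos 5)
The paper does not re-prove this statement at all: it is explicitly labeled a special case of Schoen--Sisask's Theorem~5.4, and what appears in the text is only a translation of notation and a short argument that one may truncate $f$ to $\1_{X+Y+B}$ so that only the ``local'' values of $f$ matter, after which the cited theorem (with their $A, M, L, \eta, K$ corresponding to the paper's $Y, X, -\textnormal{supp}(f), 2^{-s}, 2^d$) gives exactly the claim. You instead propose to re-derive the lemma from scratch by Croot--Sisask plus Chang, which is a genuinely different route: more self-contained, but you are now shouldering Schoen--Sisask's entire burden rather than piggy-backing on it.

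Your high-level plan is the right one --- it is essentially how Schoen and Sisask prove their Theorem~5.4, and the reduction to $L^1$-almost-periods of $X*Y$ via H\"older against bounded $f$ is standard --- but there is a concrete gap in the parameter tracking that would prevent the sketch from hitting the claimed bounds. You choose $k = \Theta(s)$ and $\eta = \eps \cdot 2^{-\Theta(s)}$ so that the $L^k \to L^1$ interpolation loss $2^{s(1-1/k)}$ is absorbed. But the Croot--Sisask period set then has density $|T|/|B| \geq 2^{-O(dk/\eta^2)} = 2^{-O(ds \cdot 4^{\Theta(s)} / \eps^2)}$, and Chang applied to $T$ gives a spectrum of size $O(\log(|B|/|T|)) = O(ds \cdot 4^{\Theta(s)}/\eps^2)$, which is \emph{exponential} in $s$, not the claimed $O(ds^3/\eps^2 + ds\log(1/\eps)^2/\eps^2)$. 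This is precisely the loss that the na\"ive ``$L^k$ then H\"older by support size'' route always incurs, and it is what the refined $L^p$-almost-periodicity machinery of Croot--\L{}aba--Sisask and Schoen--Sisask was built to avoid: they get a period set whose size deficit is polynomial in $s$ (roughly $s/\eps^2$ up to logs) by controlling the $L^p$ norm directly rather than passing through an exponential support factor. Your sketch, as written, would produce a statement with the shape of the paper's own \cref{sanders-invar-appendix} --- a codimension of order $2^{O(s)} \cdot d/\eps^2$ --- not the much sharper bound asserted in \cref{schoen-sisask}. To make the rewrite work you would need to import (or reprove) the quantitatively strong $L^p$ almost-periodicity lemma, at which point you are effectively reproducing \cite{schoen-sisask}'s Theorem~5.4 rather than deducing the statement from cheaper tools.
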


It is maybe not immediately clear that this is indeed a special case of Lemma 5.4 of \cite{schoen-sisask}, so we elaborate.
We assume familiarity with the statement as it appears in \cite{schoen-sisask}. 
Let $L \subseteq G$ denote the support of our indicator function $f = \1_L$.
The key point is that for our formulation here, one can observe that the only values of $f$ which can possibly play any role are those values $f(z)$ for which $z$ is ``near" zero -- more accurately: $z \in X + Y + B$. This assertion makes use of the fact that we indeed have $0 \in B$, so that $X+Y \subseteq X + Y + B$. 
Thus, it makes no difference for us to insist that $L \subseteq X + Y + B$ -- that is, to replace  $f$ by $f \cdot \1_{X+Y+B}$. 
Given this, the remaining translation between the statements is straightforward: we apply their Lemma 5.4 with $S = B$. Their sets $A,M,L$ correspond to our sets $Y,X,-L$. Their parameters $\eta, K$ correspond to our parameters $2^{-s},2^d$. Their lemma provides a bound on 
$$  \left| \E_{\substack{x \in X \\ y \in Y}} f(x + y + z + b ) - 
\E_{\substack{x \in X \\ y \in Y}} f(x + y + z ) \right| $$
for all $b \in B'$ and all $z \in G$, whereas we seek to control this difference only at the single point $z = 0$. 

\begin{proof}[Proof of \cref{svr-local}]
By assumption we have
$$ \ip{B \star B}{|A \star A|^{k}} = \ip{B}{B * (A \star A)^{k}} \geq \left(1 + \eps_0 \right)^k .$$
We set $\delta_0 = \eps_0/100r$, and pick $\delta \in [\delta_0/2,\delta_0]$ so that $B_{\delta}$ is regular. We apply \cref{smoothing-bohr} to get
$$ \ip{B_{1 + \delta} * B_{\delta} }{B * (A \star A)^{k}} \gtrapprox \left(1 + \eps_0 \right)^k,$$
yielding
$$ \ip{B_{1 + \delta} * B_{\delta} }{B * (A \star A)^{k}}^{1/k} \geq 1 + \frac{\eps_0}{2} .$$
It follows that for some fixed $\theta \in B_{1 + \delta}$, the translate $C := -B + \theta = B + \theta$ satisfies
$$ \ip{B_{\delta} * C}{(A \star A)^{k}}^{1/k} \geq 1 + \frac{\eps_0}{2}. $$
By (possibly) increasing $k$ only slightly we may assume that $2^{1 - \eps_0 k/4} \leq 2^{-5} \eps_0.$
Let $\eps = \eps_0/4$ and apply \cref{local-witness} to get dense subsets $X \subseteq C $ and $Y \subseteq B_{\delta}$, both with relative density at least $2^{-O(kd)}$, such that
$$ \ip{X * Y}{f} \leq \frac{\eps}{8}, $$
where
$$ f(x) := \ind{\big(A \star A\big)(x) \leq 1 + \eps} .$$

Now, we invoke \cref{schoen-sisask} with sets $X,Y$ and with Bohr set $B_{\eta}$, for some $\eta \in [\frac{\delta^2}{2}, \delta^2]$, to get a final Bohr set
$$ B' = \Bohr(\Gamma', \rho') \subseteq B_{\eta}$$
full of translations which leave $X * Y$ approximately fixed, in the limited sense that they have little effect on the value $\ip{X * Y}{f}$.
We then pass to a large progression $P \subseteq B'$ with
$$ \ip{P * X * Y}{f} \leq \frac{\eps}{4} .$$
Since
$$ |Y + B_{\eta}| \leq |B_{\delta} + B_{\eta}| \leq |B_{\delta + \delta^2}| \leq 2 |B_{\delta}| \leq 2^{O(dk)} |Y|, $$
and
$$ |X + Y + B_{\eta}| \leq |(B + \theta) + B_{\delta + \delta^2}| = 
|B + B_{\delta + \delta^2}| \leq |B_{1 + \delta + \delta^2}| \leq 2 |B| \leq 2^{O(dk)} |X|,
$$
we obtain a Bohr set $B'$ with parameters
\begin{itemize}
    \item rank $r'  \leq r + O(d^4 k^4)$ and 
    \item radius $\rho' \geq \rho \cdot r^{-O(1)} \cdot 2^{-O(dk)}$,
\end{itemize}
and a progression $P \subseteq B' \subseteq B$ with parameters
\begin{itemize}
    \item rank $r' = r + O(d^4 k^4)$ and 
     \item size $|P| \geq   \rho^{r'} \cdot  r^{-O(r)} \cdot 2^{-O(dkr)} \cdot 2^{-O(d^5 k^5)} \cdot  N$,
\end{itemize}
Since $\ip{P * X * Y}{f} \leq \eps /4$, it must be the case that 
$ \ip{P'}{f} \leq \eps / 4$
for some translate $P'$ of $P$.
Recalling the definition of $f$, we obtain the lower-bound
$$ \ip{P'}{A \star A} \geq \left(1 - \frac{\eps}{4} \right) (1 + \eps) \geq 1 + \frac{\eps}{2} .$$
Again, it follows that
$$ \ip{P''}{A} \geq 1 + \frac{\eps}{2} = 1 + \frac{\eps_0}{8} ,$$
for some further translate $P''$ of $P'$,
which concludes the proof since $P''$ is itself a progression with the same size and rank as $P$. \qedhere
\end{proof}

\subsection{Embedding a nice configuration}

\begin{definition}
Fix a parameter $\delta > 0$.
For an interval $I = [a,b] \subseteq \Z$, the ``upper portion" of $I$ is the subset
$$ U := [a + 2 \delta (b-a), b], $$
and the ``middle slice" of $I$ is the subset
$$ M := \left[a + \tfrac{1}{2}(b-a) + 1, a + (\tfrac{1}{2} + \delta)(b-a)\right] .$$
\end{definition}

The point of these ad hoc definitions is that if we consider an interval $I = [N] \subseteq \Z$, if $x,y \in U$ and $z \in M$, then
$$ x + y = 2z \mod N$$
only if 
$$ x + y = 2z. $$
This can be checked by inspection: since $x+y,2z \in [2N]$, the only way we can have $N$ dividing $|x+y - 2z|$ while  $|x+y - 2z| \neq 0$ is that $|x + y - 2z| = N$. The sets $U$ and $M$ are designed specifically so that (for any choice of $\delta \in [0,\frac{1}{2}]$) this is not possible. 

\begin{definition}
Fix a parameter $\delta > 0$.
For some intervals $I_i \subseteq \Z$,
and a cube $P = \prod_{i=1}^r I_i \subseteq \Z^r$, the ``upper portion" of $P$ is the subset
$$ U := \prod_{i=1}^r U_i$$
and the ``middle slice" of $P$ is the subset
$$ M := \prod_{i=1}^r M_i .$$
\end{definition}

We note the density lower-bounds
$$ \frac{|U|}{|P|} \geq (1-2 \delta)^r \geq 1 - 2 \delta r, $$
$$ \frac{|M|}{|P|}  \geq \Omega(\delta)^r .$$

\begin{definition}[Nice configuration] \label{nice-configuration}
Consider a configuration $A \subseteq [N_1] \times [N_2] \times \cdots \times [N_r] \subseteq \Z^r$ with density
$$ \mu = \frac{|A|}{N_1 N_2 \cdots N_r} .$$
Consider a parameter $\delta \leq 1/2r$, and 
let $v,m$ denote the vectors
\begin{itemize}
    \item $v = \left( \lfloor (\frac{1}{2} + \frac{\delta}{2}) N_1 \rfloor, \lfloor (\frac{1}{2} + \frac{\delta}{2})N_2 \rfloor, \ldots, \lfloor (\frac{1}{2} + \frac{\delta}{2}) N_r \rfloor \right)$,
    \item $m = \left( \lfloor \frac{\delta}{3} N_1 \rfloor, \lfloor \frac{\delta}{3} N_2 \rfloor, \ldots, \lfloor \frac{\delta}{3} N_r \rfloor \right)$.
\end{itemize}
We say the configuration is ($\delta$)-nice if 
\begin{enumerate}
    \item  $N_1, N_2, \ldots, N_r$ are some distinct prime numbers not including $2$,
    \item $A$ is in fact contained within the upper-portion $U = \prod_{i=1}^r [2 \delta N_i, N_i]$, 
    \item Given $b, b' \in \Z^r$ drawn uniformly at random from $[-m_1, m_1] \times [-m_2, m_2] \times \cdots \times [-m_r, m_r]$, we have the following ``weighted" density lower-bound in the middle-slice:
$$  \E_{b,b'} \left[ \1_A(v + b - b') \right] \geq \frac{\mu}{2}. $$
    \item We have a uniform bound on the number of representations of any point $2z$:
    $$ \max_{z \in A} R_A(2z) \leq   \frac{\mu}{4} \cdot |A|.
    $$

\end{enumerate}
\end{definition}

\begin{prop}[Embedding a nice configuration] \label{embedding}
Suppose $A \subseteq [N_1] \times [N_2] \times \cdots \times [N_r] \subseteq \Z^r$ is a $\delta$-nice configuration with density
$$ \frac{|A|}{N_1 N_2 \cdots N_r} \geq 2^{-d} .$$
Let $\phi$ be the natural map from $\Z^r$ to the group $G = \Z_{N_1} \times \Z_{N_2} \times \cdots \times \Z_{N_r}$ given by $\phi(x) = x \mod (N_1, N_2, \ldots, N_r)$. 
Then, there is an even integer $k \leq O(d)$ and a Bohr set $B = \Bohr(\Gamma, \rho) \subseteq G$ such that
$$\| \phi(A) \star \phi(A) \|_{k, B \star B} \geq 1 + \tfrac{1}{4} $$
where the Bohr set $B$
\begin{itemize}
    \item has rank $|\Gamma| = r$,
    \item has radius $\rho \geq \Omega(\delta)$
    \item is regular, and 
    \item is $2$-safe with respect to the restricted map $\phi : A \rightarrow G$.
\end{itemize}
\end{prop}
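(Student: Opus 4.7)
I would take $B$ to be the product Bohr set in $G$ of rank $r$ whose $i$-th coordinate factor is the centered interval in $\Z_{N_i}$ of size approximately $m_i = \lfloor \delta N_i/3 \rfloor$ (matching the cube that appears in niceness condition 3). By \cref{bohr_interval} this corresponds to $\Bohr(\Gamma,\rho)$ for some $\rho = \Theta(\delta)$, and \cref{regularize} lets me replace it with a regular dilate of comparable radius. Since niceness condition 2 places $A$ inside $U = \prod_i [2\delta N_i, N_i]$ and $\delta \leq 1/(2r) \leq 1/4$ gives $t = 2 \leq 1/(2\delta)$, \cref{safe_bohr} applied with parameter $2\delta$ yields $2$-safety of $B$ with respect to $\phi : A \to G$. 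This satisfies the rank, radius, regularity, and safety requirements at once.

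\textbf{Reduction via local decoupling.} Both $B \star B$ and $\phi(A) \star \phi(A)$ are spectrally positive, so \cref{local-decoupling} with $D = B \star B$ and $f = g = \phi(A)$ gives, for every translate $T_\theta$,
\[ \|\phi(A) * \phi(A) - 1\|_{k, T_\theta(B \star B)} \;\leq\; \|\phi(A) \star \phi(A) - 1\|_{k, B \star B}. \]
Then \cref{local-upper-to-lower} converts a lower bound of $\tfrac{1}{2}$ on the right-hand side into $\|\phi(A) \star \phi(A)\|_{k', B \star B} \geq 1 + \tfrac{1}{4}$ for some $k' = O(k) = O(d)$. Hence the task reduces to exhibiting an even integer $k \leq O(d)$ and a $\theta \in G$ with
\[ \|\phi(A) * \phi(A) - 1\|_{k, T_\theta(B \star B)} \;\geq\; \tfrac{1}{2}. \]

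\textbf{Exploiting the niceness conditions.} Set $\theta = \phi(2v)$. For each $z \in A \cap M$ with $|z_i - v_i| \leq m_i$, the point $\phi(2z) = \phi(2v) + \phi(2(z-v))$ lies in the support $\phi(2v) + (B-B)$ of $T_{\phi(2v)}(B \star B)$. Niceness condition 4, combined with the middle-slice lift identity (that $x,y \in U$ and $z \in M$ force the congruence $x+y \equiv 2z \pmod{(N_i)}$ to lift to $x+y=2z$ in $\Z^r$), gives $R_{\phi(A)}(\phi(2z)) = R_A(2z) \leq (\mu/4)|A|$, so $(\phi(A) * \phi(A))(\phi(2z)) \leq \tfrac{1}{4}$ and the integrand is $\geq \tfrac{3}{4}$ at every such point. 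The remaining step is to lower-bound the $T_{\phi(2v)}(B \star B)$-mass of these good points: niceness condition 1 (the $N_i$ are odd primes) makes multiplication by $2$ a bijection on $G$, so each $y \in B - B$ identifies a unique $z = v + 2^{-1}y$ in $G$, and combining niceness condition 3 (the triangle-weighted density of $A$ near $v$ is at least $\mu/2$) with a parity/averaging argument on the integer lift of $2^{-1}y$ yields a mass bound of at least $2^{-O(d)}$. For an even $k = O(d)$ this gives $\|\phi(A) * \phi(A) - 1\|_{k, T_{\phi(2v)}(B \star B)}^k \geq 2^{-O(d)} \cdot (3/4)^k \geq (1/2)^k$, as required.

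\textbf{Main obstacle.} The delicate point is the parity/dilation bookkeeping: the integer lift of $2^{-1}(b'-b)$ depends on the coordinatewise parity of $b'-b$, and a naive pigeonhole over the $2^r$ coordinate-parity classes loses a factor of $2^{-r}$, which would be inadmissible since $r$ can grow polynomially in $d$ in the context of \cref{progressions-in-spread-sets}. Closing this gap seems to require either averaging $\theta$ over a family of translates (using that the $\theta$-averaged local $k$-norm equals the global $k$-norm, so some translate realizes it) or a sharper exploitation of condition 3 that directly bounds the all-even conditional density without incurring the $2^{-r}$ loss.
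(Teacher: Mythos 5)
Your reduction via \cref{local-decoupling} and \cref{local-upper-to-lower} to a local lower bound on $\|\phi(A)*\phi(A)-1\|_{k,T_\theta(B\star B)}$, together with the middle-slice lift identity and niceness condition 4 to force $(\phi(A)*\phi(A))(\phi(2z)) \leq \tfrac14$ for $z \in A \cap M$, exactly matches the paper. But the obstacle you flag at the end is a real gap, and it is created by your choice of $B$: you take $B$ to be the undilated image $\phi(C_m)$. A sample from $T_{\phi(2v)}(B\star B)$ then has the form $\phi(2v) + \phi(c_1) - \phi(c_2)$, and to use niceness condition 3 (which controls $\1_A(v + b - b')$) at this point you must interpret $2^{-1}(c_1 - c_2)$ as an integer near $0$, which fails whenever some coordinate of $c_1 - c_2$ is odd. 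Conditioning on the all-even event costs a factor of order $2^{-r}$, and since $r$ can be $\mathrm{poly}(d)$ this is inconsistent with $k \leq O(d)$. Neither of your proposed rescues closes the gap: averaging over $\theta$ produces the global $k$-norm $\|\phi(A)*\phi(A)-1\|_k$, but niceness condition 3 controls a Bohr-weighted density rather than the uniform density, so the global quantity is not lower-bounded without precisely the local control you are missing.

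The paper's fix is a small but essential twist in the construction. It sets $B := \{2\phi(c) : c \in C_m\}$, the image of $C_m$ dilated by $2$ in $G$, rather than $\phi(C_m)$. Then a sample from $T_{2\phi(v)}(B\star B)$ is literally $2\phi(v) + 2\phi(c_1) - 2\phi(c_2) = 2\phi(v + c_1 - c_2)$ with $c_1, c_2$ independent and uniform in $C_m$, so niceness condition 3 directly yields mass at least $\mu/2$ on the set where $\phi(A)*\phi(A) \leq \tfrac14$, with no parity loss at all. The dilated set is still a rank-$r$ Bohr set of radius $\Theta(\delta)$, because the dilation is absorbed into the frequency: one takes $\Gamma = \{\gamma_i\}$ with $\gamma_i(x) = e^{2\pi i \alpha_i x_i / N_i}$ and $\alpha_i := 2^{-1} \bmod N_i$ (here niceness condition 1, that the $N_i$ are odd primes, is what makes $2^{-1}$ exist). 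Regularity follows by a direct computation from the explicit product structure, and $2$-safety follows from \cref{safe_bohr} since $B \subseteq \phi(2C_m)$ is contained in a translate of a product of intervals of side at most $4m_i + 1 \leq 2\delta N_i$. With $B$ built this way the rest of your argument goes through, choosing $k$ so that $2^{(d+1)/k} \leq \tfrac32$.
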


\begin{proof}[Proof of \cref{embedding}]

In light of \cref{local-decoupling} and \cref{local-upper-to-lower}, it suffices to establish the following: the existence of a $2$-safe regular Bohr set $B$ (with adequate rank and radius) such that, for some translate $D = T_{\theta} (B \star B)$,
$$ \|\phi(A) * \phi(A) - 1\|_{k, D} \geq \tfrac{1}{2} $$
for some even integer $k \leq O(d)$. 
For any $z \in A \cap M$, where 
$$ M := \prod_{i=1}^{r} \left[ \frac{N_i}{2},\frac{N_i}{2} + \delta N_i \right] \subseteq \Z^r, $$
we have by construction that
$$ R_{\phi(A)}(2 \phi(z)) = R_{A}(2z) \leq \frac{|A|^2}{4|G|}.$$
That is, we have not increased the number of representations of $2z$ as a sum of two elements in $A$ by allowing equality mod $(N_1,N_2, \ldots, N_r)$. 
After re-normalizing to a density function we obtain
$$ \left(\phi(A) * \phi(A) \right)(2 \phi(z)) = \frac{|G|}{|A|^2} R_{\phi(A)}(2 \phi(z)) \leq \frac{1}{4} $$
for such $z$. 
Let $v,m \in \Z^r$ be the vectors in \cref{nice-configuration}, and let
$b,b'$ be uniformly random points drawn from the cube
$$ C_m :=  [-m_1,m_1] \times [-m_2,m_2] \times \cdots \times [-m_r,m_r] \subseteq \Z^r. $$
We note that by construction we always have $v + b - b' \in M$.
It follows (from this and the third criterion for niceness in \cref{nice-configuration}) that $2\phi(v + b - b')$ has a non-negligible chance of being a point where $\phi(A) * \phi(A)$ differs substantially from uniform, so
$$ \E_{b,b' \in C_m} \left| \Big(\phi(A) * \phi(A) \Big) \left(2 \phi(v) + 2 \phi(b) - 2 \phi(b') \right) - 1 \right|^{k} \geq \tfrac{1}{2} \cdot 2^{-d} \cdot \left(\tfrac{3}{4} \right)^{k},
$$
still for any choice of $k$. Define the set
$$ B := \set{2 \phi(b)}{b \in C_m} \subseteq \phi(2 C_m) \subseteq G .$$
By \cref{safe_bohr}, this set is $2$-safe with respect to the restriction
$$ \phi : U \rightarrow G, $$
and since $A \subseteq U$ it is also $2$-safe with respect to the further restriction
$ \phi : A \rightarrow G. $

We apply essentially the observation in \cref{bohr_interval} to say that $B$ is in fact describable as a Bohr set of the group $G = \Z_{N_1} \times \Z_{N_2} \times \cdots \times \Z_{N_r}$. Indeed, for $i = 1,2,\ldots,r$ let $\alpha_i \in \Z_{N_i}$ be the multiplicative inverse of $2 \mod N_i.$ Now consider the set of characters $\Gamma$ consisting of
$$ \gamma_i : x \mapsto e^{2 \pi i \alpha_i x_i / N_i} $$
for $i = 1,2,\ldots,r$. As in \cref{bohr_interval}, we can say that
$$ B = \Bohr(\Gamma, \rho) $$
for some choice of $\rho \geq \Omega(\delta).$ Additionally, due to the simple structure of $B$ we can verify directly that it is regular:
$$ |B_{1+\eta}| \leq \left(1 + \tfrac{2}{\pi}\eta \right)^{r} |B| \leq (1 +  \eta r) |B|$$
and
$$ |B_{1-\eta}| \geq \left(1 - \tfrac{2}{\pi}\eta)^{r} |B| \geq (1 - \eta r\right) |B|$$
for $\eta \leq 1/r$. 

Thus, we have that for the density $D = T_{-2 \phi(v)} (B \star B)$,
$$\ip{D}{|\phi(A) * \phi(A) - 1|^{k}} \geq 2^{-(d+1)} \cdot \left(\tfrac{3}{4} \right)^{k}. $$
Now choose $k$ large enough that $2^{(d+1)/k} \leq \frac{3}{2}$, so that we get
\begin{equation*}
  \|\phi(A) * \phi(A) - 1 \|_{k, D} = \ip{D}{|\phi(A) * \phi(A) - 1|^{k}}^{1/k} \geq 2^{-(d+1)/k} \cdot \tfrac{3}{4} \geq \tfrac{1}{2}. \qedhere
\end{equation*}

\end{proof}

\subsection{Obtaining a nice configuration}

\begin{prop}  \label{obtaining-nice-config}
Fix a constant\footnote{We allow the implied constants in the $O$-notation here to depend on $\eps$.} $\eps \in (0,\frac{1}{10}]$ and let $\delta := \eps/2r$. 
Suppose $A \subseteq [N_1] \times [N_2] \times \cdots \times [N_r]$ is a configuration with density $\mu$. Then, either $A$ has at least
$$ r^{-O(r)} \cdot \mu \cdot |A|^2
$$
solutions to $x + y = 2z$ with $(x,y,z) \in A^3$, or
we can obtain
a subset $A' \subseteq A$ and a Freiman homomorphism (of all orders) $\phi : A' \rightarrow P' \subseteq \Z^{r'}$ with
\begin{itemize}
    \item size $|A'| \geq r^{-O(r)} |A|$,
    \item rank $r' \leq r$,
\end{itemize}
and such that we obtain either: (i) a $(\delta)$-nice configuration with density 
$$ \mu':= \frac{|A'|}{|P'|} \geq \left(1 - 5 \eps \right) \mu, $$
or (ii) a configuration which is not necessarily nice but has increased density:
$$ \mu' \geq \left(1 + \frac{\eps}{2}\right) \mu. $$
\end{prop}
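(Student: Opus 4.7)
The plan is to establish the four clauses of $\delta$-niceness (\cref{nice-configuration}) one by one; at each stage we either incur a tolerable $(1-O(\eps))$ factor in density (aggregating to the claimed $(1-5\eps)\mu$ for option~(i)) or else produce a density increment of factor $\geq 1+\eps/2$ via a restriction composed with a Freiman homomorphism of all orders (option~(ii)).

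Clauses (4), (2) and (1) are the routine steps. For (4), set $A_{\text{bad}} := \{z \in A : R_A(2z) > \tfrac{\mu}{4}|A|\}$; since the 3-progression count is $\sum_{z\in A} R_A(2z) \geq |A_{\text{bad}}| \cdot \tfrac{\mu}{4}|A|$, either $|A_{\text{bad}}| \geq r^{-O(r)}|A|$ yields $\geq r^{-O(r)}\mu|A|^2$ 3-progressions directly, or else we pass to $A_0 := A \setminus A_{\text{bad}}$, which satisfies~(4) after only a $r^{-O(r)}|A|$-point loss. For (2), compare $|A_0 \cap U|$ with $\mu|P|$ where $U := \prod_i[2\delta N_i, N_i]$: if $|A_0 \cap U| \geq (1-4\eps)\mu|P|$, restrict to $A_1 := A_0 \cap U$ (keeping container $P$) at the cost of $O(\eps)$ in density; otherwise $|A_0 \setminus U| > 4\eps|A_0|$, and since $P \setminus U$ is the union of $r$ slabs $S_i := \{x_i < 2\delta N_i\}$ of relative volume $2\delta = \eps/r$, pigeonhole gives a slab of density $\geq 4\mu$, a density increment far exceeding $1+\eps/2$. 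For (1), pick distinct odd primes $p_i \in [N_i, (1+\tfrac{\eps}{2r})N_i]$ by Bertrand's postulate and view $A_1$ inside $\prod_i[p_i]$ via the natural inclusion (a Freiman homomorphism of all orders), paying a density factor $\prod_i N_i/p_i \geq (1+\tfrac{\eps}{2r})^{-r} \geq 1-\tfrac{\eps}{2}$.

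Condition (3) is the main step. Let $\sigma := \pi_{C_m} * \pi_{-C_m}$ and $g(\theta) := \ip{T_\theta \sigma}{A_2}$, where $A_2$ is the current configuration after the first three stages. If $g(v) \geq \mu_2/2$ already, output the nice configuration. Otherwise, seek a small translation $t \in \Z^r$ (a Freiman homomorphism of all orders) and a mild container extension $\prod_i[p_i] \to \prod_i[(1+\tfrac{\eps}{2r})p_i]$ for which the shifted configuration $A_2 + t$ satisfies $g(v'-t) \geq \mu_2/2$, with $v'$ the middle-slice position of the extended cube. To locate $t$, embed $A_2$ into $G := \Z_{p_1}\times\cdots\times\Z_{p_r}$ via the natural map $\phi$; condition~(2) together with the $2$-safety device from \cref{safe_bohr} ensures that the sums $\theta+b-b'$ (with $b,b' \in C_m$) stay inside the upper portion and are thus unaffected by $\phi$'s wrap-around, so the $\Z^r$-computation of $g(\theta)$ agrees with its $G$-counterpart for $\theta$ in the relevant range. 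Apply Schoen--Sisask translation invariance (\cref{schoen-sisask}) in $G$ with $X = Y$ a suitable Bohr set around $0$ (of radius comparable to that of $\phi(C_m)$) and $f := \1_{\phi(A_2)}$; this yields a Bohr set $B' \subseteq G$ of controlled rank and radius on which $g$ is essentially constant. Combined with the identity $\E_\theta g(\theta) = \mu_2$, this forces one of two outcomes: either $g \geq (1-O(\eps))\mu_2$ on a neighborhood of $v$ (in which case some $t$ with $\|t\|_\infty \leq O(\eps N_i/r)$ verifies~(3) for the shifted configuration), or else $g$ exceeds $(1+\eps/2)\mu_2$ on some region of $P_2$, which, restricted and re-originated, supplies a $(1+\eps/2)$-good increment for option~(ii).

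The principal obstacle is Stage~4: reconciling the $\Z^r$-setting (where $\sigma$, the middle-slice position $v$, and the niceness condition live) with the group-theoretic $G$-setting (where Schoen--Sisask invariance applies). The careful arrangement of condition~(2), the $2$-safety device (\cref{safe_bohr}), and the choice $\delta = \eps/(2r)$ is what permits these two perspectives to be matched without wrap-around corruption. The Schoen--Sisask parameters additionally must be tuned so that the resulting shift $t$ fits inside a $(1+O(\eps))$-factor container extension, preserving the density-loss budget; aggregating across all four stages yields $|A'| \geq (1-O(\eps))|A|$, easily within the stated $r^{-O(r)}|A|$ bound.
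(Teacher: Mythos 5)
Your approach diverges substantially from the paper's, and it has a gap at the central step. The paper chooses \emph{small} distinct odd primes $p_i \in [\delta N_i/2, \delta N_i]$ (so $p_i \approx \eps N_i/2r$) and considers a \emph{random subcube} $P_a = a + \prod_i[p_i]$ drawn from the full range $a \in \prod_i[-p_i, N_i]$; the smoothing lemma (\cref{one-sided}) then shows that both the upper-portion density $\mu_1''(a)$ and the middle-slice weighted density $\mu_2''(a)$ have expectation $\geq (1-\eps/2)\mu'$, and (under the otherwise-we-have-an-increment assumption $\mu_i''(a) \leq (1+\eps)\mu'$ for all $a$) a union-Markov over the two nonnegative random variables $X_i(a) = (1+\eps)-\mu_i''(a)/\mu'$ locates a single $a$ satisfying both simultaneously. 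There is no locality constraint on $a$: the subcube can land anywhere in $P$. Conditions (2) and (3) are thus handled \emph{together} by one averaging argument, with no Schoen--Sisask input. Your proposal instead keeps the container roughly full-size (primes $p_i \approx N_i$), restricts to the upper portion of the \emph{original} cube for clause (2), and then tries to satisfy clause (3) by a small translation $t$ with $\|t\|_\infty \leq O(\eps N_i/r)$.

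The gap is in Stage 4. With side-lengths $\approx N_i$, the middle slice is a tiny region of relative volume $\sim \delta^r = r^{-\Theta(r)}$ concentrated near the center of the cube, and there is no reason $A_2$ should have positive density there: $A_2$ could be entirely supported in a far corner of $P_2$, in which case $g(\theta) = 0$ on the entire $O(\eps N_i/r)$-neighborhood of $v$ that your small translation can reach, while $g$ nowhere exceeds, say, $(1+\eps/4)\mu_2$ -- so neither branch of your dichotomy fires. The Schoen--Sisask lemma cannot rescue this: it produces a Bohr set $B'$ of translations under which $g$ is approximately invariant, not a statement that $g$ is approximately constant across $G$, and ``invariance under small shifts plus $\E_\theta g = \mu_2$'' does not imply ``$g$ is near $\mu_2$ near $v$ or $g > (1+\eps/2)\mu_2$ somewhere.'' Two secondary issues: the prime-selection window $[N_i, (1+\eps/2r)N_i]$ is too narrow for Bertrand and in fact may contain no primes when $N_i \lesssim r\log r/\eps$ (the paper handles small $N_i$ by fixing those coordinates and projecting); and the pruning threshold $R_A(2z) \leq (\mu/4)|A|$ for clause (4) is too lax once later stages shrink $|A|$ and $\mu$ -- the paper instead thresholds at $(4/\eps)\E_{z\in A}R_A(2z)$ so that the dichotomy is tied directly to the actual 3-progression count.
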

\begin{proof}
Essentially the idea is to consider the restriction of $A$ to some subcube of the form $P' = \prod_{i=1}^r [a_i, a_i  + p_i - 1]$ for some distinct primes $p_i$ each roughly of size $\delta N_i$. From here it is easy enough to just translate the whole configuration $(A \cap P') \subseteq P'$ down to $\prod_{i=1}^r [p_i]$ -- such a translation is clearly a Freiman homomorphism of all orders. 

To ensure that we can make such a choice of primes we use the fact that the number of primes in any interval $[n/2, n]$ is at least $\Omega(n/\log n)$ -- this fact follows from the prime number theorem, but such a lower bound also follows from substantially more elementary arguments such as Erd\"os' proof of Bertrand's postulate. 

Without loss of generality suppose that $N_1 \geq N_2 \geq \cdots \geq N_r$. We wish to make a choice of distinct odd primes $p_i \in [\delta N_i /2, \delta N_i]$. Let $n_0$ be the smallest natural number such that, for any $n \geq n_0$, the number of primes $p \in [n/2,n]$ is at least $r + 1$. It is the case that $n_0$ is at most $O(\frac{r \log r}{\log \log r})$, although we note it would be fine for us even just to say that $n_0 \leq r^{O(1)}$. If $\delta N_r \geq n_0$ then clearly we can make such choice of primes $p_i$ -- otherwise, we will need to ``fix" $A$ in the coordinates $i$ corresponding to size-lengths $N_i$ which are too small. Specifically, we pass from $A$ to the subset 
$$ A' = \set{a \in A}{a_r = x}, $$
where we choose $x \in [N_r]$ so that the size of $A'$ is maximized. 
This results in no loss in density:
$$ \frac{|A'|}{N_1 N_2 \cdots N_{r-1}} \geq \frac{|A|}{N_1 N_2 \cdots N_r} $$
and only a small loss in size:
$$ |A'| \geq \left( \tfrac{\delta}{n_0} \right)  |A| \geq r^{-O(1)} |A|.
$$
With the last coordinate fixed, the projection map $\phi : \Z^r \rightarrow \Z^{r-1}$ is clearly a Freiman homomorphism of all orders on the set $A'$. Continuing in this way, we may assume (at the cost of only a factor $r^{-O(r)}$ loss in size, and no loss in density)
that we begin with a configuration $A \subseteq \prod_{i=1}^{r} [N_i]$ where $r$ has only decreased and that there are some distinct prime numbers $p_i \in [\delta N_i / 2, \delta N_i]$ for $i=1,2,\ldots,r$. 

At this point we pause to apply a pruning step to address niceness criterion number four. Let $A' \subseteq A$ be the subset of points $x \in A$ with
$$ R_A(2x) \leq \frac{4}{\eps} \cdot \E_{z \in A}R_A(2z).
$$
By a Markov inequality, $|A'| \geq (1 - \eps / 4) |A|$, 
and so $A'$ has density $\mu' \geq (1 - \eps/4) \mu$. 

We now consider the restriction of $A'$ to some subcube
$$ P_{a} := a + [p_1] \times [p_2] \times \cdots \times [p_{r}] \subseteq \Z^{r}.
$$ 
Let $U_a$ denote the corresponding upper-portion of this cube.
Our plan is to set 
$$A'' := A' \cap U_a$$
for some choice of $a \in \Z^{r}$, and our final configuration will be  (a translation of) $A'' \subseteq P_a$. 
Let 
\begin{itemize}
\item $v = \left( \lfloor (\frac{1}{2} + \frac{\delta}{2}) p_1 \rfloor, \lfloor (\frac{1}{2} + \frac{\delta}{2})p_2 \rfloor, \ldots, \lfloor (\frac{1}{2} + \frac{\delta}{2}) p_r \rfloor \right),$
\item $m = \left( \lfloor \frac{\delta}{3} p_1 \rfloor, \lfloor \frac{\delta}{3} p_2 \rfloor, \ldots, \lfloor \frac{\delta}{3} p_r \rfloor \right),
$
\end{itemize}
and define
$$ B = [-m_1,m_1] \times [-m_2,m_2] \times \cdots \times [-m_{r},m_{r}] \subseteq \Z^{r}.$$

We consider the quantities
$$ \mu_1''(a) := \frac{|A''|}{|U_a|} = \E_{u \in U_a} \1_{A'}(u) $$
and
$$ \mu_2''(a) := \E_{b,b' \in B} \1_{A'}(a + v + b - b'). $$
We note that we may assume that
$$ \mu_1''(a), \mu_2''(a) \leq (1 + \eps) \mu' $$
for all translations $a \in \Z^{r}$ -- otherwise we are done immediately as we would obtain a density increment either onto some $U_a$ or onto some translate of $B$:
$$ \mu'' \geq (1+\eps)(1-\eps/4) \mu \geq (1 + \eps/2) \mu. $$

We consider a uniformly-random translation $a$ drawn from the set 
$$ [-p_1, N_1] \times [-p_2, N_2] \times \cdots  \times [-p_r, N_r] \subseteq \Z^{r}. $$
Applying \cref{smoothing} (or more specifically, its \cref{one-sided}), it follows that both of the expectations $\E[\mu_1''(a)]$ and $\E[\mu_2''(a)]$ are at least 
$$(1 + \delta)^{-r} \mu' \geq (1 - \delta r) \mu' \geq (1 - \eps/2) \mu.$$

We can now apply a Markov inequality to the (nonnegative) random variables
$$ X_1(a) := (1 + \eps)- \frac{\mu_1''(a)}{\mu'} $$
and
$$ X_2(a) := (1 + \eps) - \frac{\mu_2''(a)}{\mu'} $$
to conclude that there is a nonzero probability that we choose a translation $a$ simultaneously satisfying
$$ X_1(a), X_2(a) \leq 3 \eps, $$
or rather
\begin{equation*}
 \mu_1''(a), \mu_2''(a) \geq (1 - 4 \eps) \mu'. 
\end{equation*}
Finally, we note that
\begin{equation*}
  \mu'' := \frac{|A''|}{|P_a|} = \frac{|U_a|}{|P_a|} \frac{|A''|}{|U_a|} \geq (1 - \eps/2) \mu_1''(a) \geq (1-\eps/2)(1-4\eps)(1-\eps/4) \mu \geq (1 - 5 \eps) \mu
\end{equation*}
and

\begin{equation*}
   R_{A''}(2x) \leq R_{A'}(2x) \leq \frac{4}{\eps} \frac{1}{|A|}\sum_{z \in A} R_{A}(2z) 
\end{equation*}
for all $x \in A''.$
To conclude, we note that the quantity on the right hand side is at most $\frac{\mu''}{4} |A''|$, 
except in the case that
\begin{equation*}\sum_{z \in A} R_{A}(2z) 
\geq  \frac{\eps}{16} \cdot  \mu''  \cdot |A| |A''| 
\geq r^{-O(r)} \cdot \mu \cdot |A|^2, 
\end{equation*}
i.e., the case that $A$ has many 3-progressions. \qedhere

\end{proof}

\bibliographystyle{alpha}
\bibliography{main}

\appendix

\section{Comparison with prior approaches and other measures of additive pseudorandomness} \label{comparison}

In this section, we compare our techniques for controlling the number of solutions to $a + b = c$ with some previous approaches, and we compare our conditions for quantifying ``additive pseudorandomness" with some analogous conditions considered by other works.

\textbf{Additive energy.}
We begin by discussing the notion of ``additive energy", which is a good central measure of additive structure to keep in mind as it can be easily compared with everything else we discuss. For a set $A \subset G$ the additive energy of $A$ is the quantity $E(A) = \sum_{x \in A} R_{A}^{-}(x)^2$, which is also the same as the number of solutions to $a_1 - a_2 = a_3 - a_4$ with $a_i \in A$. It is always between $|A|^2$ and $|A|^3$. It can be expressed in the density formulation as the quantity 
$$E(A) = \frac{|A|^4}{|G|} \|A \star A\|_{2}^2.$$
Thus, for $k = 2$ our ``self-regularity" condition corresponds simply to some bound on the additive energy.\footnote{Similarly, for $k \geq 2$ our self-regularity condition is simply a bound on the higher-order energy $\sum_{x} R_{A}^{-}(x)^k$ which is also a well-studied concept; see e.g.\ \cite{ss13}. 
}
For sufficiently dense sets, there is a different lower bound which improves on the trivial bound $E(A) \geq |A|^2$: it is always the case that $E(A)$ is at least $|A|^4/|G|$. The bounds 
$$|A|^4/|G| \leq E(A) \leq |A|^3$$
translate gracefully to the following equivalent bounds on $\|A \star A\|_2^2$. By Jensen's inequality and Young's inequality,
$$ 1 = \|A \star A\|_{1}^2 \leq \|A \star A\|_{2}^2 \leq \|A\|_2^2. $$
Note that set of size $|A| = 2^{-d}|G|$ corresponds to a density function with $\|A\|_{2}^2 = 2^{d}$. More generally, the bounds above apply to any density function $A(x)$. 

It is a typical sort of problem in additive combinatorics to prove something nontrivial about the ``additive structure" of $A$ when $E(A)$ is ``large". In some contexts this means that $E(A)$ only slightly less than maximal: for instance if $E(A) \geq \frac{1}{10}|A|^3$, or perhaps $E(A) \geq |A|^{3 - 1/10}$. In the present work, we are interested in what can be said about $A$ given that the additive energy is only slightly greater than minimal: for instance if $\|A \star A\|_{2}^2 \geq 1 + \frac{1}{10}$. Thus, the alternative normalization $\|A \star A\|_{2}^2$ of the additive energy is a natural choice for this setting.

\textbf{The Roth-Meshulam argument.} We review the classical Roth-Meshulam argument (\cite{roth53, meshulam95}) in the case of $\F_q^n$, and discuss its relation to the present work. For an additive character 
$$e_{\alpha} : \F_q^n \rightarrow \C,$$ we consider the corresponding Fourier coefficient\footnote{See \cref{prelims} for details regarding  definitions and normalization conventions related to the  Fourier expansion.}
$$ \hat{A}(\alpha) := \E_{x \in A} e_{\alpha}(x) = \ip{A}{e_{\alpha}}. $$
We note that, for any set $A$, $\hat{A}(0) =  1 $, and generally that $|\hat{A}(\alpha)| \leq 1$.

In the Roth-Meshulam argument, the key quantity for measuring the additive pseudorandomness of a set $A$ is the size of its largest nontrivial Fourier coefficient:
$$  \max_{\substack{\alpha \in \F_q^n \\ \alpha \neq 0}}  |\hat{A}(\alpha)|. $$ 
The overall argument can be summarized compactly as follows.
\begin{prop}
Let $A,B,C \subseteq \F_q^n$ be sets of size at least $2^{-d} |\F_q^n|$.
\begin{enumerate}
\item If $\|A\|_{\perp,1} \leq 1 + \eps$ then
$$ \max_{ \alpha \neq 0}   |\hat{A}(\alpha)| \leq 2 \eps.$$
\item $$|\ip{A}{B * C} - 1| \leq \max_{ \alpha \neq 0}  |\hat{A}(\alpha)| \cdot
 \|B\|_2 \|C\|_2 \leq \max_{ \alpha \neq 0}  |\hat{A}(\alpha)| \cdot  2^{d} . 
$$
\end{enumerate}
\end{prop}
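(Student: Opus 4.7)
My plan is to handle the two claims separately, both by standard Fourier-analytic manipulations tailored to the definition of $\|\cdot\|_{\perp,1}$.

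For part 1, the key observation is that the level sets of a nontrivial character $e_\alpha$ are precisely affine hyperplanes (cosets of the kernel of $\alpha$), and these are exactly the affine subspaces of codimension $1$ that appear in the definition of $\|A\|_{\perp,1}$. Concretely, I will let $m$ be the order of $\alpha$, enumerate the $m$-th roots of unity $\omega_1,\ldots,\omega_m$, and set $V_j := \{x \in \F_q^n : e_\alpha(x) = \omega_j\}$; each $V_j$ is a coset of $\ker(\alpha)$ of density $1/m$ in $\F_q^n$. Writing $p_j := |A \cap V_j|/|A|$, one has $\hat A(\alpha) = \sum_j p_j\, \omega_j$ and $\sum_j (1/m)\, \omega_j = 0$, so
\[
|\hat A(\alpha)| = \Bigl|\sum_j (p_j - 1/m)\, \omega_j\Bigr| \leq \sum_j |p_j - 1/m|.
\]
The spreadness assumption $\ip{V_j}{A} \leq 1+\eps$ translates to $p_j \leq (1+\eps)/m$, and since $\sum_j (p_j - 1/m) = 0$, the positive and negative deviations each sum to at most $\eps$, giving $\sum_j |p_j - 1/m| \leq 2\eps$.

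For part 2, I will use the Fourier identity $\ip{A}{B * C} = \sum_\alpha \hat A(-\alpha)\hat B(\alpha)\hat C(\alpha)$ from \cref{prelims}. The $\alpha=0$ term contributes exactly $1$ (since $\hat A(0) = \hat B(0) = \hat C(0) = 1$), so
\[
\bigl|\ip{A}{B * C} - 1\bigr| \leq \max_{\alpha \neq 0} |\hat A(\alpha)| \cdot \sum_{\alpha \neq 0} |\hat B(\alpha)|\,|\hat C(\alpha)|.
\]
A Cauchy--Schwarz followed by Parseval ($\sum_\alpha |\hat B(\alpha)|^2 = \|B\|_2^2$, and likewise for $C$) bounds the right-hand sum by $\|B\|_2\|C\|_2$. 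Finally, for a set of size $|B| \geq 2^{-d}|\F_q^n|$, the normalized indicator satisfies $\|B\|_2^2 = \E B(x)^2 = 2^d \cdot \E B(x) = 2^d$, so $\|B\|_2 \|C\|_2 \leq 2^d$.

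Neither part looks to have a real obstacle: the only mild care needed is in part 1, where one must remember that the level sets of $e_\alpha$ are affine (not linear) hyperplanes, so the hypothesis that $\|\cdot\|_{\perp,1}$ controls \emph{affine} subspaces (as defined in \cref{spread}) is what makes the argument work.
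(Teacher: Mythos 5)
Part 2 of your proof is essentially identical to the paper's, and your reading of part 1 is morally the same as the paper's argument: the paper phrases it via the projection $P_W A = W^\perp * A$ (which is constant on cosets of $W^\perp$, with value equal to the relative density of $A$ there) and the general fact $\|F - 1\|_1 \leq 2(\|F\|_\infty - 1)$ for densities $F$; you work directly with the coset densities, which amounts to the same computation.

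There is, however, a genuine gap in part 1 as written. You set $m$ to be the order of $\alpha$ and claim the level sets $V_j = \{x : e_\alpha(x) = \omega_j\}$ of the character $e_\alpha$ are the codimension-$1$ affine subspaces of $\F_q^n$. This is only true when $q$ is prime. For $q = p^k$ with $k \geq 2$, the order of $\alpha$ in the additive group is $p$, the image of $e_\alpha$ is the $p$-th roots of unity, and $\ker(e_\alpha) = \{x : \mathrm{Tr}(\ip{\alpha}{x}) = 0\}$ is an $\F_p$-hyperplane of density $1/p$ — not an $\F_q$-subspace of codimension $1$, which has density $1/q = 1/p^k$. So the spreadness hypothesis $\|A\|_{\perp,1} \leq 1+\eps$ does not control the density of $A$ on your sets $V_j$, and the inequality $p_j \leq (1+\eps)/m$ is unjustified. (The paper itself treats general $q$, e.g.\ handling $q$ even separately in the proof of \cref{3-progs-in-finite-fields}, so this is a setting the statement is meant to cover.)

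The fix is to replace the level sets of $e_\alpha$ by the $q$ cosets $V_c := \{x : \ip{\alpha}{x} = c\}$, $c \in \F_q$, of the honest codimension-$1$ subspace $\{\alpha\}^\perp$. The character $e_\alpha$ is still constant on each $V_c$ (call its value $\omega_c$, noting these need not be distinct), and $\sum_c \omega_c / q = \E_x e_\alpha(x) = 0$ still holds because $\alpha \neq 0$. Replacing $m$ by $q$ throughout, your centering and $\ell_1$ argument then goes through verbatim: each $p_c := |A \cap V_c|/|A|$ satisfies $p_c q = \ip{V_c}{A} \leq 1+\eps$, the signed deviations $p_c - 1/q$ sum to zero, and so $\sum_c |p_c - 1/q| \leq 2\eps$, giving $|\hat A(\alpha)| \leq 2\eps$.

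Two very minor points in part 2: the final equality should be an inequality ($\|B\|_2^2 = |\F_q^n|/|B| \leq 2^d$, since the hypothesis is only a lower bound on $|B|$), and your restriction of the inner sum to $\alpha \neq 0$ gives a marginally sharper bound than the paper's $\sum_\beta$; either works.
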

\begin{proof}
For a linear subspace $W$, let $P_W$ denote the projection operator 
$$ P_W : f \mapsto W^{\perp} * f.$$
We note that the quantity $\|A\|_{\perp,r}$ can be equivalently characterized as
\begin{equation}
    \|A\|_{\perp,r} = \max_{\substack{W \subseteq \F_q^n \\ \textnormal{dim}(W) \leq r }} \| P_W A \|_{\infty}. 
\end{equation}
We also note that for a function
$$ A(x) = \sum_{\alpha \in \F_q^n} \hat{A}(\alpha) e_{\alpha}(-x) $$
we have 
$$ (P_W A)(x) = \sum_{\alpha \in W} \hat{A}(\alpha) e_{\alpha}(-x). $$
Fix some $\alpha \neq 0$, and let $W$ be some one-dimensional subspace containing it. 
We have
$$ |\hat{A}(\alpha)| = |\ip{P_W A - 1}{e_{\alpha}}| \leq \|P_W A - 1\|_1. $$
To finish, we use the basic fact that for any density function $F$, 
$$ \|F - 1\|_{1} = 2 \| (F - 1)_{+} \|_{1}  \leq 2 \|(F - 1)_{+} \|_{\infty} = 2( \|F\|_{\infty} - 1). $$
This proves the first claim. For the second claim we make the following calculation.
\begin{align*}
|\ip{A}{B * C} - 1| &\leq \sum_{\alpha \neq 0} |\hat{A}(\alpha)| |\hat{B}(\alpha)| |\hat{C}(\alpha)| \\
&\leq \max_{\alpha \neq 0} |\hat{A}(\alpha)| \cdot \sum_{\beta} |\hat{B}(\beta)| |\hat{C}(\beta) | \\
&\leq \max_{\alpha \neq 0}|\hat{A}(\alpha)| \cdot  \sqrt{\sum_{\beta} |\hat{B}(\beta)|^2} \sqrt{\sum_{\beta} |\hat{C}(\beta) |^2 } \\
&= \max_{\alpha \neq 0} |\hat{A}(\alpha)| \cdot \|B\|_2 \|C\|_2 . \qedhere
\end{align*}
\end{proof}
Thus, the overall structure of the argument very much resembles our own: to begin, there is a ``primitive" pseudorandom condition (spreadness with respect to subspaces of codimension one), which can be ensured by a density increment argument. Then there is an intermediate pseudorandom condition (i.e.\ a bound on $ \max_{\alpha \neq 0} |\hat{A}(\alpha)|$), which firstly can be derived from spreadness, and secondly is sufficient to directly control the quantity $\ip{A}{B * C}$ when $B$ and $C$ are large.

We note another similarity to the present work. Consider the following Fourier-analytic interpretation of the additive energy of $A$:
$$ \|A \star A\|_{2}^2 = 1 +  \sum_{\alpha \neq 0} |\hat{A}(\alpha)|^4 = 1 + \|A \star A - 1\|_{2}^2. $$
It is well known that (for very large sets $A$) a bound on $\max_{\alpha \neq 0}  |\hat{A}(\alpha)|$ is roughly equivalent to a bound on $\|A \star A - 1\|_{2}^2$:
$$ \max_{\alpha \neq 0}  |\hat{A}(\alpha)|^4 \leq \sum_{\alpha \neq 0} |\hat{A}(\alpha)|^4
\leq \max_{\alpha \neq 0} |\hat{A}(\alpha)|^2 \cdot \sum_{\beta} |\hat{A}(\beta)|^2
= \max_{\alpha \neq 0} |\hat{A}(\alpha)|^2 \cdot \|A\|_2^2. 
$$
Thus, another reasonable summary of the Roth-Meshulam argument is that for very large sets $A,B,C$,
\begin{itemize}
\item $\|A \star A\|_{2}$ can be controlled by $\|A\|_{\perp,1}$, and
\item $\ip{A}{B * C}$ can be controlled by $\|A \star A\|_{2}$.
\end{itemize}

\textbf{The Bloom-Sisask physical space argument.}
In \cite{bs19}, Bloom and Sisask show (in particular) how to obtain parameters for the cap-set problem very near to those given by the Roth-Meshulam approach, but with arguments working almost exclusively in physical space (rather than Fourier space). 
This is a notable similarity to the present work, where we make very little use of any ``quantitative" Fourier analysis.\footnote{
We implicitly rely on a small amount of quantitative Fourier analysis through our use of \cref{sanders-invar-appendix}, whose proof relies on Chang's inequality.
Besides this, our remaining Fourier-analytic arguments (i.e., regarding the decoupling inequality and spectral positivity) are ``qualitative". 
}
In particular, the Bloom-Sisask physical space argument more closely resembles our approach than any other prior work.
We offer the following interpretation of their argument (as it applies to the problem of controlling the quantity $\ip{A * B}{C}$, for sets $A,B,C \subseteq \F_q^n$ of density $2^{-d}$, by a spreadness assumption).

There are three steps. Firstly: the plan is to control $\ip{A * B}{C}$ by obtaining a bound on the key quantity $\|A * B - 1\|_{k}$, for some $k \approx d$. This is precisely what is done also in our work, and already it distinguishes the approach from many others, which consider instead some key quantity describable in Fourier space. Secondly, Bloom and Sisask argue (by a combination of the Croot-Sisask lemma\footnote{
The version of the Croot-Sisask lemma used is notably more efficient in its dependence on the error parameter than, e.g.,\ \cref{croot-sisask}.
The parameters involved in this alternative version are more subtle and it can be convenient to handle the cases, say, $\|A * B\|_{k} \leq 10$ and $\|A * B\|_{k} \geq 10$ separately.
} and Chang's inequality) that $\|A * B - 1\|_{k}$ is similar to $\|V * A * B - 1\|_{k}$, for some large subspace $V$.
Thus, if $$ \|A * B - 1\|_{k} \geq \Omega(1) $$
then
$$ \|V * A * B - 1\|_{k} \geq \Omega(1) $$
for some $V$ of codimension
$r \leq 2^{d + O(\log d + \log k)}.$
The final point is that such deviation from $1$ is impossible if $A$ and $B$ are both spread. Specifically, we apply the claim given below with $D := V * A$ and $D' := V * B$ (noting that $V * V = V$). Overall, the Bloom-Sisask physical space argument shows that if $|\ip{A}{B * C} - 1| \geq \Omega(1)$, then either $\|A\|_{\perp, r} \geq 1 + \Omega(1)$ or $\|B\|_{\perp,r} \geq 1 + \Omega(1)$, for some
$$r \leq 2^{d + O(\log d)}.$$

\begin{prop}
Let $D$ and $D'$ be density functions on a finite abelian group $G$. For $\eps \geq 0$,
if
$$ \|D\|_{\infty} \leq 1 + \eps \textnormal{  and  } \|D'\|_{\infty} \leq 1 + \eps $$
then
$$ \|D * D' - 1\|_{\infty} \leq \eps $$
\end{prop}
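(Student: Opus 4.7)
The plan is to show, for each $x \in G$, the pointwise bounds $1 - \eps \leq (D * D')(x) \leq 1 + \eps$. The upper bound is immediate: since $D'$ is a density function, $\E_y D'(x - y) = 1$ for every $x$, so
\[
(D * D')(x) = \E_y D(y) D'(x - y) \leq \|D\|_{\infty} \cdot \E_y D'(x-y) \leq (1 + \eps) \cdot 1.
\]

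For the lower bound, the idea is to pass to the ``deficit'' densities $u(y) := (1+\eps) - D(y)$ and $v(y) := (1 + \eps) - D'(y)$, both of which are nonnegative (by the hypothesis on $\|D\|_\infty$, $\|D'\|_\infty$) and have mean $\eps$ (since $\E D = \E D' = 1$). Expanding bilinearly, using that a constant $c$ convolved with any density function $F$ equals $c \cdot \E F = c$, gives
\[
(u * v)(x) = (1+\eps)^2 - 2(1+\eps) + (D * D')(x) = (D * D')(x) - (1 - \eps^2).
\]
Since $u, v \geq 0$ pointwise, $(u * v)(x) \geq 0$, so $(D * D')(x) \geq 1 - \eps^2$. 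For $\eps \in [0, 1]$ this already yields $(D * D')(x) \geq 1 - \eps$. For $\eps > 1$ the desired lower bound $1 - \eps$ is negative, so it follows trivially from $D * D' \geq 0$.

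Combining the two bounds gives $|(D * D')(x) - 1| \leq \eps$ for every $x \in G$, which is the claim. I don't expect any obstacle here — each step is a direct computation using only nonnegativity, normalization, and the sup-norm hypothesis, so the proof should fit in a handful of lines.
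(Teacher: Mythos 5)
Your proposal is correct and follows the same approach as the paper: the upper bound by averaging against $\|D\|_\infty$, and the lower bound by noting that $(1+\eps-D)*(1+\eps-D') \geq 0$ pointwise, which rearranges to $D*D' \geq 1 - \eps^2$. The only cosmetic difference is that you name the nonnegative deficit functions $u,v$ before convolving them, whereas the paper writes the convolution inline.
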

\begin{proof}
    The pointwise upper bound $D * D' - 1 \leq \eps$ follows easily by averaging: $\|D * D'\|_{\infty} \leq \|D\|_{\infty} \leq 1 + \eps$.
    Now consider the pointwise lower bound. If $\eps > 1$, such a bound is trivial, so suppose $\eps \leq 1$. We have
    $$ 0 \leq (1 + \eps - D) * (1 + \eps - D') = (1+\eps)^2 - 2(1 + \eps) + D * D' = -1 + \eps^2 + D * D' ,$$
    and so
    \begin{equation*}
        D * D \geq 1 - \eps^2 \geq 1 - \eps. \qedhere
    \end{equation*}
\end{proof}

\textbf{Understanding the power of density increments.}
Let us say, only in the context of the present section, that a set $A \subseteq \F_q^n$ of size $|A| = 2^{-d}|\F_q^n|$ is simply ``spread" if
$$ \|A\|_{\perp, r} \leq 1 + O\left( \frac{r (d+1)}{n \log n} \right) $$
for all $r \leq n/2.$ One can check that this corresponds to the condition that $A$ has ``no strong increments" considered by Bateman and Katz \cite{bk12} within their regime of interest: namely $d = \Theta(\log n)$. We are interested in this specific choice of parameters essentially because it represents the limit of what can be reasonably obtained by a generic density increment argument.\footnote{We do not intend this as any kind of formal claim.} 
Given this definition, we consider the following substantially more concrete variant of \cref{central-question}.

\begin{question} \label{central-question-spread}
Suppose $A,B,C \subseteq \F_q^{n}$ are sets of size at least $2^{-d} |\F_q^n|$.
For what values of $d$ can we deduce that the number of solutions to $a + b + c = 0$
is within a factor $2$ of $|A||B||C|/|\F_q^n|$, 
from an assumption that
\begin{enumerate}
\item[(i)] one,
\item[(ii)] two, or
\item[(iii)] three
\end{enumerate}
of the sets are ``spread" (in the specific sense described above)? 
\end{question}

We note that Gowers asks essentially part (i) of this question in a blog post \cite{gowers-blog}. More accurately, he asks only about when we can deduce the existence of at least one solution.
The Roth-Meshulam argument described above is relevant here: it requires only a spreadness assumption on the single set $A$. Given that $A$ is spread, it tells us that
$$ |\ip{A}{B * C} - 1| \leq O\left(\frac{(d+1) 2^{d}}{n \log n} \right), $$
and so it provides the following positive answer regarding (i): to ensure that 
$$\ip{A}{B * C} \in \left[\tfrac{1}{2}, 2\right],$$
a spreadness assumption on just $A$ is sufficient, for values of $d$ up to
$$d = \lg n - c$$
where $c$ is some constant. 
We consider the following example which shows that for this question, the answer obtained from the Roth-Meshulam argument is actually best-possible, despite the fact that it makes use only of a bound on $\|A\|_{\perp,1}$.
Let $d = \lg n$, and consider\footnote{The same example can be constructed over other small fields by considering $\F_q^{t} \times \F_q^{n-t}$ with $q^t \approx n$.} 
$$A := \left( \F_2^d \setminus \{0\} \right) \times \F_2^{n-d} \subseteq \F_2^n$$
and
$$ B := \{0\} \times \F_2^{n-d} \subseteq \F_2^n. $$
We have $B + B = B$ and $\ip{A}{B * B} = 0$. 
One can check that $A$ is indeed spread simply because, for any $r$,
$$ \|A\|_{\perp,r} \leq \|A\|_{\infty} = \frac{1}{1 - 2^{-d}} \approx 1 + \frac{1}{n}.$$
That is, $A$ has no strong increment onto a large affine subspace in particular because it has \textit{no strong increment onto a set of any kind}.
Thus, for
$$ d \geq \lg n, $$
it is not possible to control $\ip{A}{B * C}$ by assuming spreadness for just $A$; this fully answers the question of Gowers. However, we point out that the situation changes dramatically if we assume spreadness for just $A$ but ask only for \textit{upper bounds}. Then the Roth-Meshulam argument is no longer optimal, and indeed, in \cref{II} we show that $k$-regularity follows from $r$-spreadness for $r \approx k^7 d$. In particular we have that $\ip{A}{B * C} \leq 2$ whenever $A$ is ``spread" for $d$ up to roughly
$$ d \approx n^{1/9}. $$
\textbf{Summary for \cref{central-question-spread}.} We summarize what we understand regarding \cref{central-question-spread}.
\begin{itemize}
\item For $d \leq \lg n - O(1)$, the Roth-Meshulam argument shows that a spreadness assumption on just $A$ is sufficient to ensure $\ip{A}{B * C} \in [\frac{1}{2}, 2]$. Moreover, only a bound on $\|A\|_{\perp,1}$ is needed.
\item For $d \geq \lg n$, a spreadness assumption on just $A$ does not ensure $\ip{A}{B * C} > 0$.
\item For $d \lessapprox n^{1/9}$, a spreadness assumption on just $A$ ensures $\ip{A}{B * C} \leq 2$.
\item For $d \lessapprox n^{1/9}$, a spreadness assumption on $A$ and $B$ ensures $\ip{A * B}{C} \in [\frac{1}{2}, 2]$.
\end{itemize}

\textbf{The Fourier sum of cubes measure.} 
Recall the estimate
$$ |\ip{A}{B * C} - 1| \leq \sum_{\alpha \neq 0} |\hat{A}(\alpha)| |\hat{B}(\alpha)| |\hat{C}(\alpha)| $$
which was used in the Roth-Meshulam argument. A sensible approach for part (iii) of \cref{central-question-spread}, and one which is very natural from a Fourier-analytic perspective, is to depart from the earlier argument and treat this quantity more symmetrically by considering 
$$ \sum_{\alpha \neq 0} |\hat{A}(\alpha)| |\hat{B}(\alpha)| |\hat{C}(\alpha)| \leq
\left(\sum_{\alpha \neq 0} |\hat{A}(\alpha)|^3 \right)^{1/3} \left(\sum_{\alpha \neq 0} |\hat{B}(\alpha)| ^3\right)^{1/3} \left(\sum_{\alpha \neq 0} |\hat{C}(\alpha)|^3 \right)^{1/3} .
$$
This shows that a bound of, say,
$$ \sum_{\alpha \neq 0} |\hat{A}(\alpha)|^3 \leq \tfrac{1}{2} $$
on the sum of cubes of nontrivial Fourier coefficients is sufficient to control $\ip{A * B}{C}$, if we assume such a bound it for all three sets. 
Indeed, this was the approach taken in the work of Bateman and Katz on the cap-set problem (\cite{bk12}), who show that
$$ \sum_{\alpha \neq  0} |\hat{A}(\alpha)|^3 \geq \Omega(1) $$
is sufficient to infer that $A$ has density increment onto a large affine subspace, for values of $d$ up to $d = (1 + c) \lg n$ for some small constant $c > 0$, notably breaking the technical barrier described above. 

It has on occasion been taken as the obvious starting point for any sort of analytic approach to the 3-progression problem to begin with the assumption $\sum_{\alpha \neq 0} |\hat{A}(\alpha)|^3 \geq \Omega(1)$ and then to see what we can conclude about $A$. In light of this, it is interesting to note that even in view of our work it seems still unclear whether that approach is viable for obtaining strong bounds.
Specifically, we ask the following technical question -- we don't know of any specific application related to its resolution, but it seems interesting from a technical perspective for trying to better understand the relation between the Fourier-analytic approach with various ``physical space" techniques such as the Croot-Sisask lemma and sifting. 
\begin{question}
Let $A \subseteq G$ be a set of size $|A| \geq 2^{-d} |G|$.
Suppose that
$$ \sum_{\alpha \neq 0} |\hat{A}(\alpha)|^3 \geq \Omega(1).
$$
Does it follow that
$$ \|A \star A - 1\|_{k} \geq \Omega(1) $$
for some $k \leq \textnormal{poly}(d)$?
\end{question}
We make two remarks regarding this. Firstly, the answer is affirmative if we instead start with, say, the stronger assumption
$$ \left| \sum_{\alpha \neq 0 } \hat{A}(\alpha)^3 \right| \geq \Omega(1).$$
In this case we can write $\sum_{\alpha \neq 0 } \hat{A}(\alpha)^3 = \ip{A * A - 1}{B}$, where $B(x) = A(-x)$, and bound this approximately by $\|A \star A - 1\|_{d}$.
Secondly, we note a connection in the opposite direction. For even $k \in \N$ we have the Fourier-analytic interpretation
$$ \|A \star A - 1\|_{k}^k = \sum_{\substack{\alpha_1 + \alpha_2 + \cdots + \alpha_k = 0 \\ \alpha_i \neq 0}} |\hat{A}(\alpha_1)|^2 |\hat{A}(\alpha_2)|^2  \cdots |\hat{A}(\alpha_k)|^2 . $$
Applying Young's inequality to this gives
$$ \|A \star A - 1\|_{k} \leq \left(\sum_{\alpha \neq 0} |\hat{A}(\alpha)|^{2 + \frac{2}{k-1}} \right)^{1 - \frac{1}{k}}. $$

Finally, we can remark (at least superficially) on a connection between our self-regularity condition and another key quantity considered both in the work of Bateman and Katz as well as the work of Bloom and Sisask (\cite{bk12},\cite{bs20}). For $k \in \N$ we have the alternative Fourier-analytic interpretation
$$\|A \star A\|_{2k}^{2k} = \ip{(A \star A)^{k}}{(A \star A)^{k}} 
= \sum_{\beta} \left(\sum_{\alpha_1 + \alpha_2 + \cdots + \alpha_{k} = \beta} |\hat{A}(\alpha_1)|^2 |\hat{A}(\alpha_2)|^2  \cdots |\hat{A}(\alpha_{k})|^2 \right)^2 .
$$
By comparison, the authors of the aforementioned works consider various sets of the form
$$ \Delta = \set{\alpha \in G}{\eta_1 \leq |\hat{A}(\alpha)| \leq \eta_2} $$
as well as the resulting quantity
$$ \sum_{\beta} \left(\sum_{\alpha_1 + \alpha_2 + \cdots + \alpha_{k} = \beta} 
 \1_{\Delta}(\alpha_1) \cdot  \1_{\Delta}(\alpha_2)  \cdots  \1_{\Delta}(\alpha_{k})
\right)^2 ,$$
for various choices of $k$.

\section{Detailed account of the proof of \cref{II}} \label{extended-proof-overview}

We proceed to explain our approach for proving \cref{II}.
Let us focus only on the second claim, which we recall states essentially the following. Suppose that a large set $A \subseteq \F_q^n$, $|A| \geq 2^{-d} |\F_q^n|$, fails to be self-regular:
$$ \|A \star A\|_{k} \geq 1 + \eps.$$
Then we can find an affine subspace $V \subseteq \F_q^n$ of codimension at most $\textnormal{poly}(d,k,1/\eps)$ giving the density increment
$$ \ip{V}{A} \geq 1 + \Omega(\eps).$$
For simplicity we also focus only on the case $k = 2$ and $\eps = 1$; 
this case is already nontrivial and sufficient to illustrate the main obstacles and how they will be overcome. 

Our starting point is to instead look for something stronger: a ``density increment" 
$$ \ip{V}{A \star A} \geq 1 + \Omega(1).$$
This is a key ``leap of faith" in the proof, in the sense that we cannot offer a reason to expect (a priori) that this should be possible, even assuming that our original task is possible. However, we note that, firstly, it is certainly sufficient by a simple averaging argument. Secondly, if this approach is indeed workable it would be quite nice from a mechanical perspective, as we start with an assumption on the density function $A \star A$ (i.e., \ that $\|A \star A\|_{2} \geq 2$), and we now seek a conclusion about the \textit{same object}: $\ip{V}{A \star A} \geq 1 + \Omega(1)$. This is in contrast to the typical structure of an \textit{inverse problem} -- considered generally to be fairly difficult -- where we make an assumption on $A \star A$ and seek a conclusion regarding $A$; by taking this approach, we can suppress the fact that we are secretly working on an inverse problem at the outset.
To elaborate on the averaging argument, we have
$$ \ip{V}{A \star A} = \ip{A * V}{A} = \E_{\substack{x \in V\\ a \in A}} A(x + a) = \E_{a \in A} \left[\E_{x \in V + a} A(x) \right], $$
so if $\ip{V}{A \star A} \geq 1 + \Omega(1)$ then we must have a density increment $\ip{V'}{A} \geq 1 + \Omega(1)$ onto some fixed affine subspace $V' = V + a$. 

We pause to consider whether our new goal is at least analytically plausible. By this, we mean that we ask if we can at least obtain 
$$ \ip{F}{A \star A} \geq 1 + \Omega(1)$$
for \textit{some} high-entropy density function $F$ -- one with $\|F\|_{\infty} \leq 2^{\textnormal{poly}(d)}$ -- without asking that $F$ have any particular additive structure. 
Ideally, we would also not need to appeal to the additive structure of our density $D = A \star A$, and instead, we see what can be said just from the basic analytic facts $D \geq 0$, $\|D\|_1 = 1$, $\|D\|_2 \geq 2$, and $\|D\|_{\infty} \leq 2^{d}$. 
One can check that the best choice of $F$ satisfying such an entropy constraint on $\|F\|_{\infty}$ will be (essentially) the uniform density over some super-level set of $D$. However, there is also a simple choice which is already quite satisfactory, $F = D$:
$$ \ip{D}{D} = \|D\|_{2}^2 \geq 4.$$
So, what we are asking for is at least analytically plausible -- there is a high entropy density function $F$ witnessing $D \gg 1$ -- and our task is to argue that we may take $F$ to have a high degree of additive structure.

\subsection{Sanders' invariance lemma}

To motivate our next step, we consider the following lemma due to Sanders regarding the translation-invariance of the convolution of two large sets $A * B$. Sanders obtains this lemma by combining the Croot-Sisask lemma \cite{cs10},  Chang's inequality \cite{chang02}, and some additional Fourier-analytic arguments. The specific formulation given below does not appear explicitly in his work, 
so we include a proof in the appendix.\footnote{See however \cite[Appendix: Proof of Theorem 11.1]{sanders12} or \cite{lovett15} for some similar statements.}

\begin{lem}[Sanders' invariance lemma \cite{sanders12}] \label{sanders-invariance}
Suppose $A,B \subseteq \F_q^n$ are sets of sizes $|A| \geq 2^{-d} |\F_q^n|$ and $|B| \geq 2^{-k} |\F_q^n|$.
Fix a bounded function $f : \F_q^n \rightarrow [-1,+1]$.
Then, for any $\eps \geq 2^{-k}$, there exists a linear subspace $V$ (possibly depending on $f$) with codimension at most $O(d k^3 / \eps^2)$ satisfying
$$ \left| \ip{V * A * B}{f} - \ip{A * B}{f} \right| \leq \eps. $$
\end{lem}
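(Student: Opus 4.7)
The plan is to combine two classical tools, both referenced elsewhere in the paper: the Croot--Sisask almost-periodicity lemma and Chang's theorem on the large spectrum of a dense set. The argument proceeds in three steps.

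First I would apply the Croot--Sisask lemma to extract a large set of approximate translation invariances of $A * B$. For an integer parameter $p \geq 2$ and error $\eta > 0$ to be chosen, this produces a set $T \subseteq \F_q^n$ of density at least $2^{-O(pk \log(1/\eta))}$ such that, for every $t \in T - T$,
\[ \| \tau_t (A * B) - (A * B) \|_p \leq \eta \cdot \|A\|_\infty^{1 - 1/p} = \eta \cdot 2^{d(1-1/p)}, \]
where $\tau_t$ denotes translation by $t$. Averaging this bound over the symmetric density $T \star T$ (which is supported on $T - T$) using Young's inequality preserves the $L^p$ control for $(T \star T) * A * B - A * B$.

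Next, apply Chang's inequality to $T$: the $\rho$-large spectrum $\Lambda_\rho := \{ \alpha \neq 0 : |\hat{T}(\alpha)| \geq \rho \}$ is contained in an $\F_q$-subspace $W$ of dimension at most $O(\rho^{-2} \log(|\F_q^n|/|T|)) = O(\rho^{-2} p k \log(1/\eta))$. Set $V := W^{\perp}$, which has codimension at most $\dim(W)$. To finish, I would dualize against $f$: since the uniform density on $V$ has Fourier transform $\hat{V}(\alpha) = \mathbf{1}_W(\alpha)$, the paper's Fourier identity for inner products gives
\[ \langle V * A * B, f \rangle - \langle A * B, f \rangle = -\sum_{\alpha \notin W} \hat{A}(\alpha)\, \hat{B}(\alpha)\, \hat{f}(-\alpha). \]
For $\alpha \notin W$ one has $|\hat{T}(\alpha)|^2 < \rho^2$, so these Fourier modes are damped by at least a factor $(1-\rho^2)$ upon convolution with $T \star T$. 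Combining this damping with the $L^p$ bound from Step 1 (pairing against $f$ via H\"older and using $\|f\|_\infty \leq 1$) controls the tail sum above by a quantity linear in $\eta$ and $\rho$.

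The hard part is the parameter balancing. The $2^{d(1-1/p)}$ penalty on the right-hand side of the Croot--Sisask estimate forces $p$ to be taken of order $d$ in order to absorb it before pairing with $f$, while the codimension bound $\rho^{-2} p k \log(1/\eta)$ must remain $O(dk^3/\eps^2)$. Optimizing $p$, $\eta$, and $\rho$ subject to these constraints (and using the hypothesis $\eps \geq 2^{-k}$ to prevent $\log(1/\eta)$ from blowing up) yields the claimed codimension. This bookkeeping is standard and essentially follows the proof in \cite{sanders12}.
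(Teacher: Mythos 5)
Your high-level ingredients — Croot--Sisask almost-periodicity followed by Chang's inequality, then $V := W^\perp$ for $W = \mathrm{Span}(\mathrm{Spec}_\rho(T))$ — match the paper's. But there is a genuine gap in the Fourier-damping step, which the paper patches with a device you omit: \emph{iterated} convolution.

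You damp the modes $\alpha\notin W$ by convolving once with $T\star T$, giving $|\widehat{T\star T}(\alpha)| < \rho^2$ for those $\alpha$. Carrying this through (say with the decomposition $g - V*g = (g - (T\star T)*g) + ((T\star T)*g - V*(T\star T)*g) + V*((T\star T)*g - g)$, where $g = A*B$), the middle term has Fourier support only outside $W$, and the best available bound is
\[
\bigl\| (T\star T)*g - V*(T\star T)*g \bigr\|_\infty \;\leq\; \rho^2 \sum_{\alpha}|\hat A(\alpha)||\hat B(\alpha)| \;\leq\; \rho^2\, 2^{(d+k)/2}.
\]
To make this $\lesssim \eps$ you are forced to take $\rho^2 \lesssim \eps\, 2^{-(d+k)/2}$, and then Chang's inequality yields $\dim W \gtrsim \rho^{-2}\gtrsim 2^{(d+k)/2}/\eps$ — exponential in $d+k$, not $O(dk^3/\eps^2)$. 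No amount of tuning $p$ and $\eta$ rescues this: the bottleneck is $\rho^{-2}$, which is independent of those parameters. (The "quantity linear in $\eta$ and $\rho$" claim in your last paragraph is not right; the exponential factors $\|A\|_\infty^{1-1/p}$ and $2^{(d+k)/2}$ multiply $\eta$ and $\rho^2$ respectively and must be absorbed.)

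The paper's fix is to hold $\rho = 1/2$ fixed (so Chang gives $\dim W = O(kd/\eta^2)$) and instead replace the single $T\star T$ by the $t$-fold convolution $X = (S\star S)^{*t}$. Then $|\hat X(\alpha)| \leq (1/2)^{2t} = 2^{-2t}$ for $\alpha\notin W$, which is \emph{exponential} damping in $t$; taking $t = O(d)$ absorbs the $2^{O(d)}$ factor. The only cost is that the almost-periodicity error degrades linearly, $\|g - X*g\|_k \leq t\eta$, so one takes $\eta = O(\eps/t) = O(\eps/d)$, landing on codimension $O(kd/\eta^2) = O(kd^3/\eps^2)$. This trick is the crux of the proof, not routine bookkeeping, and your write-up does not have it. A secondary issue: you apply Croot--Sisask to $A*B$ with $B$ as the averaging measure and $A$ as the function, so the error scales with $\|A\|_p \approx 2^{d(1-1/p)}$; the paper instead smooths $A\star f$ with $A$ averaging and $f$ bounded (so the error scales with $\|f\|_k \leq 1$), and only pairs with $B$ afterward via H\"older against the $L^k$-norm. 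Your arrangement would additionally require a refined Croot--Sisask lemma with $\log(1/\eta)$ rather than $\eta^{-2}$ dependence, which is not the version proved in the paper and would need to be stated and justified.
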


We suggest the following interpretation of this result. 
It is ``almost" saying that $A * B$ is close in statistical distance to $V * A * B$ for some large subspace $V$, in light of the dual characterization of the $1$-norm distance:
$$\|D - D'\|_{1} = \max_{f: \F_q^n \rightarrow [-1,+1]} \ip{D - D'}{f}.$$
However, it is not quite this strong since the subspace $V$ is allowed to depend on the dual witness $f$.
Under this interpretation, the lemma can be productively compared with Chang's inequality via the Fourier-analytic identity
$$ (V * A * B)(x) = \sum_{\alpha \in V^{\perp}} \widehat{A}(\alpha) \widehat{B}(\alpha) e_{\alpha}(-x).
$$
The importance of Sanders' lemma for us can be captured succinctly by the following immediate consequence. We state it in terms of the notation defined by \cref{perp-norm} and \cref{star-norm} in \cref{technical-introduction}, which we repeat here for the reader's convenience.
\begin{equation*} 
\|f\|_{\perp, r} := \max_{\substack{\textnormal{affine subspace }V \subseteq \F_q^n \\ \textnormal{Codim}(V) \leq r}} \ip{V}{f},
\end{equation*}
\begin{equation*}
\|f\|_{*,k} := \max_{\substack{B,C \subseteq G \\ \|B\|_{\infty}, \|C\|_{\infty} \leq 2^{k}}} \ip{B * C}{f}.
\end{equation*}

\begin{cor} \label{bootstrap}
Let $d \geq 1$ and $\eps \in [2^{-d},1]$. 
For any bounded function $f : \F_q^n \rightarrow [0,1]$, we have
$$ \|f\|_{\perp,d^4/\eps^2} \geq \|f\|_{*,d} - O(\eps). $$
\end{cor}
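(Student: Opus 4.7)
The plan is essentially to apply \cref{sanders-invariance} directly to a pair of sets witnessing $\|f\|_{*,d}$ and then convert the resulting convolution bound into an affine-subspace bound via a one-line averaging argument. Concretely, let $\eta := \|f\|_{*,d}$ and fix sets $B, C \subseteq \F_q^n$ of sizes at least $2^{-d}|\F_q^n|$ that witness $\ip{B * C}{f} = \eta$; such $B, C$ exist by definition of $\|\cdot\|_{*,d}$. I would then apply \cref{sanders-invariance} to these $B$ and $C$ (both with density parameter $d$) and the bounded function $f$, using error parameter $\eps$. The hypothesis $\eps \geq 2^{-d}$ is exactly what Sanders' lemma requires, and $f : \F_q^n \to [0,1] \subseteq [-1,+1]$ is within its scope. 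This produces a linear subspace $V \subseteq \F_q^n$ of codimension at most $O(d \cdot d^3 / \eps^2) = O(d^4/\eps^2)$ such that
$$ \ip{V * B * C}{f} \geq \ip{B * C}{f} - \eps = \eta - \eps. $$

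The second step converts the convolution-against-$V$ bound into an affine-subspace bound. Setting $g := B * C$, which is again a density on $\F_q^n$, I would use the identity
$$ \ip{V * g}{f} = \E_{y \sim g} \ip{V + y}{f}, $$
which follows immediately upon unfolding the convolution in the density normalization: the left-hand side equals $\E_{v \sim V, y \sim g} f(v + y)$, and the inner expectation over $v$ is precisely $\ip{V + y}{f}$ since $V + y$ is an affine subspace of the same size as $V$. Because the average over $y \sim g$ is at least $\eta - \eps$, there exists some particular $y_0 \in \F_q^n$ with $\ip{V + y_0}{f} \geq \eta - \eps$. The translate $V' := V + y_0$ is an affine subspace of the same codimension as $V$, so this exhibits a witness certifying $\|f\|_{\perp, O(d^4/\eps^2)} \geq \eta - \eps$.

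Finally, I would absorb the implicit multiplicative constant in the codimension bound into the $O(\eps)$ slack by a trivial rescaling: if the argument above produces codimension at most $C_0 \cdot d^4/\eps^2$, then rerunning it with $\eps$ replaced by $\eps/\sqrt{C_0}$ throughout yields codimension at most $d^4/\eps^2$ at an additive cost of only $O(\eps)$, and the adjusted lower bound $\eps/\sqrt{C_0} \geq 2^{-d}/\sqrt{C_0}$ is still implied by $\eps \geq 2^{-d}$ up to another constant absorbed into the $O(\cdot)$. I do not anticipate any real obstacle: the whole statement is a one-step consequence of the invariance lemma together with the averaging identity for $V * g$, and the only bookkeeping is verifying that the density threshold $\eps \geq 2^{-d}$ of \cref{sanders-invariance} matches the hypothesis of the corollary.
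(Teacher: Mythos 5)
Your approach --- apply Sanders' invariance lemma (\cref{sanders-invariance}, or equivalently \cref{sanders-invar-appendix}) to a witnessing pair $(B,C)$ with both densities $\geq 2^{-d}$ and norm parameter $d$, then average over translates of $V$ to extract an affine subspace --- is exactly the intended route; the paper presents this corollary as an immediate consequence of the invariance lemma, and your first two steps supply that argument. The averaging identity $\ip{V * g}{f} = \E_{y \sim g}\ip{V + y}{f}$ is right and correctly converts the convolution witness $V * B * C$ into an affine-subspace witness $V + y_0$.

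The only slip is in the final constant-absorption step, and it runs in the wrong direction. If the invariance lemma yields codimension at most $C_0 d^4/(\eps')^2$ with additive error $\eps'$, then to land at codimension $d^4/\eps^2$ you need $C_0 d^4/(\eps')^2 \leq d^4/\eps^2$, i.e.\ $\eps' \geq \sqrt{C_0}\,\eps$; so you should rerun the argument with the \emph{larger} error parameter $\eps' := \sqrt{C_0}\,\eps$, not $\eps/\sqrt{C_0}$. Substituting $\eps \mapsto \eps/\sqrt{C_0}$ as written makes the codimension \emph{grow} to $C_0^{2} d^4/\eps^2$, which goes the wrong way. The corrected choice also removes the concern you hand-waved about the density threshold: since $\sqrt{C_0}\,\eps \geq \eps \geq 2^{-d}$, the hypothesis $\eps' \geq 2^{-d}$ of Sanders' lemma holds automatically, whereas $\eps/\sqrt{C_0}$ could drop below $2^{-d}$, and that constraint is a hard precondition of the lemma, not something absorbable into an $O(\cdot)$. (In the edge case $\sqrt{C_0}\,\eps > 1$ the conclusion is vacuous anyway, since $\|f\|_{*,d} \leq 1$ while $\|f\|_{\perp,r} \geq 0$.) With that one correction the proof is complete.
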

That is, Sanders shows us how to bootstrap a high-entropy density function $F$ with mild additive structure witnessing $\ip{F}{f} \geq \gamma$ into a reasonably high-entropy density function $F'$ with strict additive structure witnessing $\ip{F'}{f} \geq \gamma - O(\eps)$. 
Sanders applies this fact to the difference-set indicator function $f := \1_{A-A}$ to obtain his solution to the Approximate Bogolyubov Problem (\cref{sanders-approx-bogolyubov}): indeed, we clearly have $\|\1_{A-A}\|_{\star,d} = 1$, as witnessed by $A \star A$ itself.

Inspired by this, we might hope (possibly naively) that we could apply such an argument to the function $f = A \star A$, as we have noted already that
$$ \ip{A \star A}{A \star A} \geq 4,$$
and so $\|A \star A \|_{*,d} \geq 4.$
Certainly this is not permitted by Sanders' lemma as stated, since $A \star A$ is not a bounded function, but we can for example try to see what can be learned by applying Sanders' lemma to various approximate level-set indicator functions
$$ f(x) = \ind{\eta_1 \leq (A \star A)(x) \leq \eta_2}. $$
For the sake of discussion, let us consider the following (purposefully slightly vague) notion.
Say that ``$F$ robustly witnesses $D \gg 1 + \eta$" if (firstly)
$$ \ip{F}{D} \geq 1 + \eta,$$
and additionally that $F$ witnesses a deviation of such magnitude typically, rather than just on average.
Certainly we would include the following exemplary case: if
$$\ip{F}{\1_S} \geq 1 - \eta/2,$$
where 
$$S = \set{x}{D(x) \geq 1 + 2\eta}$$ 
and $\eta \leq 1/2$; in this case we indeed have
$$ \ip{F}{D} \geq (1 + 2\eta)\ip{F}{\1_S} \geq 1 + \eta.$$

In light of Sanders' invariance lemma, we see that to reach our goal it suffices to find some large sets $B,C \subseteq |\F_{q}^n|$ of sizes $|B|,|C| \geq 2^{-\textnormal{poly}(d)}|\F_q^n|$ such that the convolution $B * C$ robustly witnesses $A \star A \gg 1 + \Omega(1)$. Indeed, if we can show that
$$ \|f\|_{*, k}  \geq 1 - \eta/2$$
where
$$f(x) = \ind{(A \star A)(x) \geq 1 + 2 \eta},$$
for some $k \leq \textnormal{poly}(d)$ and some $\eta \geq \Omega(1)$, then we can apply \cref{bootstrap} with $\eps = \Theta(\eta)$ to find a large affine subspace with
$$ \ip{V}{A \star A} \geq (1 + 2\eta) \ip{V}{f} \geq 1 + \Omega(1).$$

\subsection{Finding a robust witness}

We proceed to consider the problem of finding a robust witness to $A \star A \gg 1 + \Omega(1)$ of the form $B * C$. 
It is tempting to check if simply $F = A \star A$ is already a good robust witness for itself. For general density functions $D$ obeying only $\|D\|_2 \geq 2$ and $\|D\|_{\infty} \leq 2^d$ this is certainly not the case: for example it is not hard to arrange that for uniformly random $x$, $D(x)$ corresponds to some random variable $X \in [0,2^{d}]$ with $\E[X] = 1$ and $\E[X^2] = 4$ and yet 
$$ \E \left[ X \cdot \ind{X \geq 1} \right] \leq  \frac{O(1)}{2^{2d}}.$$
We can consider whether the additive structure of $D = A \star A$ rescues us from this example. 
Upon checking some extreme examples, such as when $A$ a subspace of density $2^{-d}$, or $A$ is a random set of density $2^{-d}$, this looks initially promising; in both cases the function $F = A \star A$ is a reasonably robust witness of $A \star A \gg \|A \star A\|_{2}$. However, the following third example, which we call the ``planted subspace" example, shows this is not the case in general.

\begin{example}[Planted subspace]
Consider a set $A \subseteq \F_2^n$ of the form
$$ A =  (W \cup C) \times \F_{2}^{n-d},$$
where $C \subseteq \F_2^d$ is a random set of size $|C| \leq |\F_2^d|^{1/10}$ and
$W \subseteq \F_2^d$ is a subspace of size $|C|^{3/4}$. \linebreak

It is likely that $|C + C| \approx |C|^2$, $|W + C| \approx |C|^{7/4}$,
and (very roughly speaking) that
$$ R_{W \cup C} \approx R_{W} + R_{C} + R_{W,C} \approx |W| \cdot \1_{W} + \1_{C + C} + \1_{W + C}.$$
Suppose this is the case, and 
consider the random variable 
$$Z := R_{W \cup C}(y),$$ where $y \sim R_{W \cup C}$.\footnote{That is, $y \in \F_2^d$ is a random variable with pdf proportional to $R_{W \cup C}$.} 
We have
$$ \E[Z] = \frac{\sum_{x} R_{W \cup C}(x)^2}{\sum_{x} R_{W \cup C}(x)} \approx \frac{|C|^{9/4} + |C|^2 + |C|^{7/4}}{|C|^{6/4} + |C|^2 + |C|^{7/4}} \approx \frac{|C|^{9/4}}{|C|^2} = |C|^{1/4}, $$ 
and yet 
$$ \prob{Z \leq O(1)} \approx 1 - \frac{|C|^{6/4}}{|C|^2} = 1 - |C|^{-1/2}.$$
\end{example}

This example shows that it is possible that the majority of the contribution to $\|A \star A\|_{2}$ is attributable to some additive structure within $A \star A$ which is strong but rare, and we will need to work somewhat harder in order to detect it.

Let us consider once again whether what we ask for is at least plausible analytically: that we can find a high-entropy density function robustly witnessing $D \gg 1 + \Omega(1)$ whenever $\|D\|_2 \geq 2$ and $\|D\|_{\infty} \leq 2^d$. Again, the best choice of witness $F$ given a constraint on $\|F\|_{\infty}$ would be some super-level set of $D$. However, we consider a simple choice which is already quite satisfactory. We denote by $D^{\wedge k}$ the density function which is proportional to $D^k$; namely
\begin{equation}
D^{\wedge k}(x) := \left( \frac{D(x)}{\|D\|_k} \right)^k.
\end{equation}
In what follows we refer to this density function as the ``degree-$k$ compression" of $D$. 
Since $\|D\|_k \geq \|D\|_1 = 1$, it follows that compressions satisfy the entropy-deficit bound
$$ \|D^{\wedge k}\|_{\infty} \leq \|D\|_{\infty}^k \leq 2^{dk}.$$
Compressions $D^{\wedge k}$ also robustly witness $D \gg (1-\eps)\|D\|_k$.
\begin{prop}[Compressions escape sub-level sets] \label{compressions-escape}
Let $D : \Omega \rightarrow \R_{\geq 0}$ be a density function on some arbitrary finite set $\Omega$, and let $k \geq 1$.
Consider the sub-level set
$$ S := \set{x \in \Omega}{D(x) \leq (1-\eps) \|D\|_k}.$$
We have
$$ \ip{D^{\wedge k}}{\1_{S}} \leq (1-\eps)^{k} \E[\1_S] \leq e^{-\eps k}.
$$
Alternatively, consider 
$$ S' := \set{x \in \Omega}{D(x) \leq c \cdot \|D\|_k^{1 + \frac{1}{k-1}}}.$$
We have
$$ \ip{D^{\wedge k}}{\1_{S'}} \leq c^{k-1}.$$
\end{prop}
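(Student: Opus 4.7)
The plan is to exploit the pointwise bound on $D$ that holds on each sub-level set, combined with the explicit formula $D^{\wedge k}(x) = (D(x)/\|D\|_k)^k$. Both inequalities reduce to applying the defining inequality of $S$ (resp.\ $S'$) inside the integrand and then integrating.

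For the first claim, on $S$ we have $D(x)/\|D\|_k \leq 1-\eps$ by definition, and raising both sides to the $k$-th power gives the pointwise bound $D^{\wedge k}(x) \leq (1-\eps)^k$ on $S$. Integrating against $\1_S$ yields
$$ \ip{D^{\wedge k}}{\1_S} = \E_x \left(\frac{D(x)}{\|D\|_k}\right)^k \1_S(x) \leq (1-\eps)^k \E[\1_S], $$
and the final estimate $(1-\eps)^k \leq e^{-\eps k}$ (together with $\E[\1_S] \leq 1$) comes from $1-\eps \leq e^{-\eps}$.

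For the second claim, the key algebraic step is to split $D^k = D^{k-1} \cdot D$ inside the integrand. Rewriting $\|D\|_k^{1 + 1/(k-1)} = \|D\|_k^{k/(k-1)}$, the defining inequality for $S'$ reads $D(x) \leq c\,\|D\|_k^{k/(k-1)}$ on $S'$, and raising to the $(k-1)$-st power gives the pointwise bound $D(x)^{k-1} \leq c^{k-1} \|D\|_k^k$ on $S'$. Plugging this into one factor, the $\|D\|_k^k$ in the denominator cancels exactly, leaving
$$ \ip{D^{\wedge k}}{\1_{S'}} = \E_x \frac{D(x)^{k-1}}{\|D\|_k^k}\,D(x)\,\1_{S'}(x) \leq c^{k-1} \E_x D(x)\,\1_{S'}(x) \leq c^{k-1}\,\E[D] = c^{k-1}, $$
where the final equality uses only that $D$ is a density, $\E[D] = 1$. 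There is no real obstacle here — both bounds are one-line consequences of the defining pointwise inequalities. The only mild insight is recognizing in the second case that the threshold $\|D\|_k^{1 + 1/(k-1)}$ is chosen precisely so that splitting off one factor of $D$ (which then integrates to $1$) makes the remaining $D^{k-1}$ cancel the full denominator $\|D\|_k^k$.
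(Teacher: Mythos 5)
Your proof is correct and matches the paper's one-line argument exactly: for the first claim, use the pointwise bound $D^k\1_S \leq (1-\eps)^k\|D\|_k^k\,\1_S$ and take expectations; for the second, the pointwise bound $D^k\1_{S'} \leq c^{k-1}\|D\|_k^k\,D$ (which is precisely your factorization $D^k = D^{k-1}\cdot D$ with $D^{k-1}\leq c^{k-1}\|D\|_k^k$ on $S'$) combined with $\E[D]=1$.
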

\begin{proof}
Bound $D^{k} \cdot \1_{S} \leq (1 - \eps)^k \|D\|_{k}^k \cdot \1_S$ pointwise and then take the expectation.
Alternatively, we may bound $D^{k} \cdot \1_{S'} \leq  c^{k-1} \cdot \|D\|_{k}^k \cdot D$ pointwise and use $\E[D] = 1$.
\end{proof}

We apply this to our current situation for some choice of $k \geq 2$, noting that $\|D\|_{k} \geq \|D\|_{2} \geq 2.$ We find that it suffices to choose some $k \leq O(1)$ to get a (high-entropy) robust witness to $D \gg 1 + \Omega(1)$, as desired.

Importantly for us, for integral values of $k$, degree-$k$ compressions of a convolution $D = A \star A$ also retain a certain amount of additive structure. More specifically, we will soon see that the fact that
$$ \ip{(A \star A)^{\wedge k}}{\1_S} \leq \eta, $$
entails the following consequence: that
there is some subset $A' \subseteq A$, of size $|A'| \geq 2^{-O(dk)} |A|$,
such that 
$$ \ip{A' \star A'}{\1_S} \leq O(\eta).$$
More specifically, $A'$ is of the form
$$ A' = A \cap (A + s_1) \cap (A + s_2) \cap \cdots \cap (A + s_{k-1})$$
for some choice of additive shifts $s_i$. 
Thus we can obtain a robust witness to $A \star A \gg 1 + \Omega(1)$, which has the form $A' \star A'$, as desired. 

We offer the following interpretation of this. Taking multiple additive perturbations of the set $A$ and considering their common intersection is sufficient to ``uncover" or ``reveal" the additive structure hidden within $A$. This technique has been used in some form by various works in the existing literature; we call it ``sifting", and in this work we develop some refinements to it.

\subsection{The Pre-BSG Lemma and the sifting lemma} \label{the-pre-BSG-lemma-and-sifting}

Let us consider the following lemma due to Schoen \cite{schoen15}, which we state in the density formulation.

\begin{lem}[Schoen's Pre-BSG Lemma]
Let $A$ be subset of a finite abelian group $G$ of size $|A| \geq \delta |G|$. 
Consider the sub-level set 
$$ S := \set{x \in G}{(A \star A)(x) \leq c \cdot \|A \star A\|_{2}^2}. $$
There is a subset $A' \subseteq A$ with
$$\ip{A' \star A'}{\1_S} \leq 16 c$$
and
$$ \frac{|A'|}{|G|} \geq \tfrac{1}{3} \delta^2 \cdot \|A \star A\|_2^2 \geq \tfrac{1}{3} \delta^2.$$
Specifically, $A'$ is of the form
$$ A' = A \cap (A + s)$$
for some $s \in G$.
\end{lem}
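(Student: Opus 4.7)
The plan is to specialize the density-formulation sifting lemma (\cref{sifting-lem-density-formulation}) to the case $k=2$, applied to the test function $f := \1_S$. When $k=2$, the sifting lemma produces $A'$ of the form $A \cap (A+s)$ for a single shift $s \in G$, which is precisely the structural form claimed in the Pre-BSG statement. So the existential and structural content of the conclusion will be delivered ``for free'' by sifting; all that remains is to match the two quantitative bounds.

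For the first bound, sifting guarantees $\ip{A' \star A'}{\1_S} \leq 2 \cdot \ip{(A \star A)^{\wedge 2}}{\1_S}$. To estimate the right-hand side, I would invoke the second form of the ``compressions escape sub-level sets'' proposition (\cref{compressions-escape}) applied with $D := A \star A$ and $k = 2$. The key check is that the exponent $1 + \tfrac{1}{k-1}$ appearing there equals $2$ when $k=2$, so the sub-level set described there (with threshold constant $c$) coincides exactly with the set $S$ in the statement: $S = \{x : (A\star A)(x) \leq c \|A \star A\|_2^2\}$. The proposition then yields $\ip{(A \star A)^{\wedge 2}}{\1_S} \leq c^{k-1} = c$, so combining with the sifting bound gives $\ip{A' \star A'}{\1_S} \leq 2c$, comfortably within the claimed $16c$.

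For the second (size) bound, the sifting lemma directly supplies $|A'|/|G| \geq \tfrac{1}{2}\delta^2 \|A \star A\|_2^2$, which is already stronger than the claimed $\tfrac{1}{3}\delta^2 \|A \star A\|_2^2$.

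There is no real obstacle here: the lemma is essentially a direct specialization of the sifting machinery developed above, and the proof is quantitatively tight enough that it recovers the statement with room to spare. The only ``ingredient-matching'' subtlety is verifying that the square in the definition of $S$ lines up with the exponent $1 + \tfrac{1}{k-1}$ in the compression inequality, which is exactly why the case $k = 2$ of sifting is the right tool.
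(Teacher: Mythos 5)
Your proof is correct and takes essentially the same route the paper itself uses: the paper cites this lemma to Schoen but proves the more general Extended Pre-BSG Lemma (\cref{extended-pre-bsg}) by exactly the argument you describe, namely applying the sifting lemma to the sub-level set indicator and bounding the ratio $\ip{(A\star A)^{\wedge k}}{\1_S} \leq c^{k-1}$ (the same calculation packaged in \cref{compressions-escape}), only in the counting formulation rather than the density formulation. Specializing to $k=2$ as you do recovers the statement with better constants, and the structural claim $A' = A\cap(A+s)$ is inherited from the counting-formulation sifting lemma from which the density formulation is derived.
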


We call this the ``Pre-BSG Lemma" as it can be considered as a sort of soft, analytic precursor to the Balog-Szemer{\'e}di-Gowers lemma -- indeed, Schoen shows that it readily implies the following.

\begin{lem}[Balog-Szemer{\'e}di-Gowers \cite{schoen15}]
Suppose that $A \subseteq G$ has 
$$E(A) = \sum_{x \in G} R_{A}(x)^2 = \kappa |A|^3.$$
Then there is a subset $B \subseteq A$ with
$$ |B - B| \leq O(\kappa^{-4} |B|) $$
and
$$ |B| \geq \Omega(\kappa |A|). $$
\end{lem}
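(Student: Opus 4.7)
My plan is to deduce the Balog-Szemer\'edi-Gowers conclusion from Schoen's Pre-BSG lemma in three steps: apply Pre-BSG to locate a good subset, average to find a clean base point, and use a path-counting plus Pl\"unnecke-Ruzsa argument to bound the difference set.

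First, I translate the hypothesis to the density formulation used by Pre-BSG. With $\delta = |A|/|G|$ and the normalization $(A \star A)(x) = |G|\,R_A^-(x)/|A|^2$, the assumption $E(A) = \kappa|A|^3$ becomes $\|A \star A\|_2^2 = \kappa/\delta$. Applying Pre-BSG with the absolute constant $c = 1/32$ to the sub-level-set indicator of $S := \{x : R_A^-(x) \le c\kappa|A|\}$ yields a set $A' = A \cap (A + s)$ with $|A'| \ge \kappa|A|/3$ and $\ip{A' \star A'}{\1_S} \le 16c \le \tfrac{1}{2}$. In words, at least half of the pairs $(a_1, a_2) \in A' \times A'$ have a \emph{popular} difference, meaning $R_A^-(a_1 - a_2) \ge c\kappa|A|$.

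Next, by averaging over the base point, I find an $a^* \in A'$ such that
$$
B := \{\,b \in A' : R_A^-(b - a^*) \ge c\kappa|A|\,\}
$$
has $|B| \ge |A'|/2 \ge \kappa|A|/6 = \Omega(\kappa|A|)$, which establishes the size bound. Observe also that $B - a^* \subseteq P$, where $P := \{p : R_A^-(p) \ge c\kappa|A|\}$ is the popular-difference set of $A$; since $\sum_p R_A^-(p) = |A|^2$, one has $|P| \le |A|/(c\kappa)$.

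Finally, a direct path-counting argument controls $|B - B|$: for each $x \in B - B$, writing $x = (b_1 - a^*) - (b_2 - a^*)$ and using that each of $b_1 - a^*, b_2 - a^* \in P$ admits at least $c\kappa|A|$ representations as $y - z$ in $A$, we see that $x$ has at least $(c\kappa|A|)^2$ representations of the form $(y_1 - z_1) - (y_2 - z_2)$ with $y_i, z_i \in A$. Comparing with the total $|A|^4$ of such quadruples gives the crude bound $|B - B| \le |A|^2/(c\kappa)^2$. The main obstacle is upgrading this quadratic-in-$|A|$ bound to the sharper, linear-in-$|B|$ bound $|B - B| \le O(\kappa^{-4})|B|$ promised by classical BSG. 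This final step is the technical heart of the Balog-Szemer\'edi-Gowers theorem itself: one leverages the many ``paths of length two'' available between pairs in $B \times B$ through elements of $A$, combined with Pl\"unnecke-Ruzsa sumset inequalities, e.g.\ by applying Ruzsa's triangle inequality $|B - B| \le |B + A|^2/|A|$ after bounding $|B + A|$ via the fact that each shift $b + A$ overlaps with $a^* + A$ in at least $c\kappa|A|$ elements.
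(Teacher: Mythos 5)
Your translation of the hypothesis, the application of Pre-BSG with $c=1/32$, and the averaging to find a base point $a^*$ with $|B|\geq |A'|/2\geq\Omega(\kappa|A|)$ are all correct. The gap is in the last step, and you are candid that the crude path-count $|B-B|\leq |A|^2/(c\kappa)^2$ falls short of the target $|B-B|\leq O(\kappa^{-4})|B|\approx O(\kappa^{-3})|A|$. Unfortunately, the route you sketch for closing that gap does not work as written. The observation that each $b+A$ with $b\in B$ overlaps $a^*+A$ in at least $c\kappa|A|$ elements is true, but it does not control $|B+A|$: each $b+A$ can have its remaining $(1-c\kappa)|A|$ elements disjoint from $a^*+A$ and disjoint from one another, so $|B+A|$ could be as large as roughly $|B||A|$. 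Thus the Ruzsa triangle inequality $|B-B|\leq |B-A|^2/|A|$ has no useful input, and the improvement never materializes.

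The missing idea is that one should not anchor everything at a single base point $a^*$. The correct deduction (this is Gowers' Lemma 7.3 in Schoen's setting) passes from $A'$ to a further subset $B\subseteq A'$ with the \emph{common-neighborhood} property: for every pair $b_1,b_2\in B$ there are $\Omega(|A'|)$ elements $a\in A'$ such that \emph{both} $b_1-a$ and $b_2-a$ are popular. One obtains such a $B$ by a dependent-random-choice argument from the fact that at least half the pairs in $A'\times A'$ have a popular difference. Then for $x=b_1-b_2\in B-B$, one represents $x=(b_1-a)-(b_2-a)$ for each of the $\Omega(|A'|)$ good $a$, and each such representation expands to at least $(c\kappa|A|)^2$ quadruples $(a_1,a_2,a_3,a_4)\in A^4$; the quadruples arising from distinct $a$ (and distinct $x$) are distinct because $a_1-a_2$ and $a_3-a_4$ recover $a$. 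This gives $|B-B|\cdot\Omega(|A'|)\cdot(c\kappa|A|)^2\leq|A|^4$, hence $|B-B|\leq O(\kappa^{-3}|A|)\leq O(\kappa^{-4}|B|)$. The extra factor $|A'|=\Omega(\kappa|A|)$ coming from summing over all intermediate base points is exactly the difference between your crude $\kappa^{-2}|A|^2$ bound and the required $\kappa^{-3}|A|$; a single $a^*$ cannot supply it.
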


The name ``Pre-BSG Lemma" is certainly ahistorical, however in a sense it is fairly appropriate in spirit: the technique used to prove it (sifting) is also the technique Gowers uses to prove his own combinatorial precursor-lemma (\cite[Lemma 7.4]{gowers01}) from which he deduces (a form of) the BSG Lemma.

We remark on a peculiar feature of the Pre-BSG Lemma. 
It is a tool that is useful for studying both sparse and dense subsets\footnote{Certainly it seems to have seen more applications to the study of sparse sets (via the BSG Lemma). However, see e.g.\ Sanders' work \cite{sanders10} where he applies a (chronologically earlier) variant to the study of dense sets.
} of a finite abelian group $G$, which we roughly delineate as the regimes $|A| \leq |G|^{1/2}$ and $|A| \geq |G|^{1/2}$.
This is in contrast to e.g.\ other tools such as Chang's inequality, which says nothing interesting regarding sparse sets, or \textit{the BSG Lemma itself} which says nothing interesting regarding dense sets except in some very extreme cases.\footnote{
Indeed, for a set of size $|A| = \delta |G|$, consider the ratio $\kappa = E(A)/|A|^3$.
We have $\kappa = \delta  \|A \star A\|_{2}^{2} \in [\delta,1].$
In our present context, we consider $A$ to have noticeable additive structure already when $\|A \star A\|_{2} \geq 1 + \Omega(1)$; in contrast the BSG Lemma above is not better than the trivial bound $|A - A| \leq |G| \leq \delta^{-1} |A|$ unless $\|A \star A\|_2 \geq \delta^{-3/8}$.
}
While the density-formulation of the Pre-BSG Lemma stated above is technically equivalent\footnote{That is, equivalent in the case of finite groups $G$.} to the
counting-measure formulation stated in \cite{schoen15}, translating between the two settings is a nontrivial exercise in bookkeeping and for studying sparse sets $A \subseteq G$ one would much prefer to work from Schoen's original formulation. It is possibly better for practical purposes even to consider the tool as ``conceptually different" as it applies to the two settings.

One can check that Schoen's Pre-BSG Lemma, together with the plan laid out above, is sufficient to obtain a density increment
$$\ip{V}{A} \geq 1 + \Omega(1)$$
from an assumption such as $$\|A \star A\|_{2}^2 \geq 2^{7}.$$
To obtain a density increment from milder assumptions of the form $\|A \star A\|_{k} \geq 1 + \eps$, we turn to the following.

\begin{lem}[Sifting lemma -- simplified version]
Suppose $A \subseteq G$ has size $|A| = \delta |G|$.
Fix a nonnegative function $f : G \rightarrow \R_{\geq 0}$ and an integer $k \geq 2$.
Suppose that
$$ \ip{(A \star A)^{\wedge k}}{f} = \eta.$$
Then there is a subset $A' \subseteq A$ with
$$ \ip{A' \star A'}{f} \leq 2 \eta $$ 
and
$$ \frac{|A'|}{|G|} \geq \tfrac{1}{2} \delta^{k}. $$
Specifically, $A'$ is of the form
$$ A' = A \cap (A + s_1) \cap (A + s_2) \cap \cdots \cap (A + s_{k-1}) $$
for some shifts $s_i \in G$.
\end{lem}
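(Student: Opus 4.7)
The plan is to re-derive the sifting lemma in the form already established earlier in the paper (its density formulation, \cref{sifting-lem-density-formulation}), since the simplified statement is just that result with the factor $\|A \star A\|_k^k$ relaxed to the trivial lower bound $1$. The key device is the family of ``compression'' sets $A'(s) := A \cap (A + s_1) \cap \cdots \cap (A + s_{k-1})$ indexed by $s = (s_1, \ldots, s_{k-1}) \in G^{k-1}$, together with two exact identities that I would establish first.

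The main combinatorial fact is that summing over shifts perfectly reproduces the $k$-th power of the representation function: using $R_A^-(a-b) = |A \cap (A + a-b)| = \sum_t \1_{A+t}(a)\1_{A+t}(b)$, raising to the $(k-1)$-st power, and exchanging the order of summation yields the pointwise identity $\sum_{s \in G^{k-1}} R_{A'(s)}^-(x) = R_A^-(x)^k$. Taking inner products with $f$ and with the constant $1$ therefore gives
\[
\sum_s \sum_{a,b \in A'(s)} f(a-b) \;=\; \sum_x R_A^-(x)^k f(x), \qquad \sum_s |A'(s)|^2 \;=\; \sum_x R_A^-(x)^k.
\]
The second fact I need is the trivial count $\sum_s |A'(s)| = |A|^k$, obtained by summing $\1_{A+s_j}(a) = \1_{A-a}(s_j)$ over $a \in A$ and over each coordinate $s_j$ independently.

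From here the argument is a first-moment / weighted pigeonhole step applied to the ratio $g(s)/h(s)$ with $g(s) := \sum_{a,b \in A'(s)} f(a-b)$ and $h(s) := |A'(s)|^2$. I would restrict attention to the shifts $s$ with $|A'(s)| \geq M$ for $M := \tfrac{1}{2}\sum_s|A'(s)|^2/|A|^k$; the bound $\sum_{s \,:\, |A'(s)| < M} |A'(s)|^2 \leq M \sum_s |A'(s)| = \tfrac{1}{2}\sum_s|A'(s)|^2$ shows that this restricted family carries at least half of $\sum_s h(s)$, so averaging yields some $s$ with simultaneously $|A'(s)| \geq M$ and
\[
\ip{A'(s) \star A'(s)}{f} \;=\; \frac{g(s)}{h(s)} \;\leq\; 2 \cdot \frac{\sum_x R_A^-(x)^k f(x)}{\sum_x R_A^-(x)^k} \;=\; 2 \cdot \ip{(A \star A)^{\wedge k}}{f} \;=\; 2\eta.
\]
Translating $M$ to the density normalization via $\sum_x R_A^-(x)^k = \delta^{2k}|G|^{k+1}\|A\star A\|_k^k$ and $|A|^k = \delta^k|G|^k$ gives $M/|G| = \tfrac{1}{2}\delta^k\|A\star A\|_k^k \geq \tfrac{1}{2}\delta^k$, matching the claimed size bound. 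There is no real obstacle here: the only step requiring any insight is recognizing the decomposition $R_A^-(x)^k = \sum_s R_{A'(s)}^-(x)$, after which the lemma essentially proves itself by Markov's inequality.
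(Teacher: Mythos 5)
Your proposal is correct and follows essentially the same argument as the paper's own proof of the sifting lemma: the same compression family $A'(s)$, the same combinatorial identity $\sum_s R_{A'(s)}^-(x) = R_A^-(x)^k$, the same use of $\sum_s |A'(s)| = |A|^k$, and the same weighted-pigeonhole step restricting to shifts with $|A'(s)| \geq M$.
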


Combining this with the plan laid out above suffices to prove \cref{II}; it remains only to prove the sifting lemma and to work out the quantitative details. We note that as stated this claim is incomparable with Schoen's Pre-BSG Lemma -- indeed, it says nothing interesting regarding sparse sets $A \subseteq G$ because of the size guarantee on $A'$ -- but it does suffice for the applications considered in this work. With some extra effort one can derive a version of the sifting lemma which leads to the following strict improvement which may be useful elsewhere; we state it below in the counting-measure formulation.

\begin{lem}[Extended Pre-BSG Lemma] \label{extended-pre-bsg}
Suppose $A$ is a finite subset of an abelian group $G$ with
$$E_{k}(A) = \sum_{x \in G} R_{A}^{-}(x)^k = \kappa |A|^{k+1}$$
for some integer $k \geq 2$. Consider the sub-level set
$$S := \set{x \in G}{R_{A}^-(x) \leq c \cdot \kappa^{\frac{1}{k-1}} \cdot |A|}. $$
There is a subset $A' \subseteq A$ with
$$ \frac{1}{|A'|^2} \sum_{a,b \in A'} \1_{S} (a - b) \leq 2 c^{k-1} $$
and
$$ |A'| \geq  \tfrac{1}{2} \kappa |A|. $$
\end{lem}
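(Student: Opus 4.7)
The plan is to reduce immediately to a single application of the counting-measure sifting lemma by choosing the test function $f := \1_S$, the sub-level set indicator. The statement has been calibrated so that the sifting lemma's two outputs --- two-sided control on the ratio $\sum_x R_A^-(x)^k f(x) / \sum_x R_A^-(x)^k$ and a lower bound of the form $\frac{1}{2} \sum_x R_A^-(x)^k / |A|^k$ --- specialize exactly to the two claimed conclusions, with essentially no further work required.

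First, the size bound is immediate: the sifting lemma produces $A' \subseteq A$ with
$$ |A'| \geq \tfrac{1}{2} \cdot \frac{\sum_x R_A^-(x)^k}{|A|^k} = \tfrac{1}{2} \cdot \frac{E_k(A)}{|A|^k} = \tfrac{1}{2} \kappa |A|,$$
using only the hypothesis $E_k(A) = \kappa |A|^{k+1}$.

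Second, to control the average of $\1_S(a-b)$ over $A' \times A'$, I need to show that $\sum_x R_A^-(x)^k \1_S(x) \leq c^{k-1} \cdot E_k(A)$, after which the sifting lemma produces a subset with
$$ \frac{1}{|A'|^2} \sum_{a,b \in A'} \1_S(a-b) \leq 2 c^{k-1}, $$
as desired. This is the step where the precise threshold defining $S$ matters: peeling off one copy of $R_A^-(x)^{k-1}$ and applying the defining bound $R_A^-(x) \leq c \kappa^{1/(k-1)} |A|$ valid on $S$ gives
$$ \sum_{x \in S} R_A^-(x)^k \leq c^{k-1} \cdot \kappa \cdot |A|^{k-1} \cdot \sum_{x} R_A^-(x) = c^{k-1} \kappa |A|^{k+1} = c^{k-1} E_k(A), $$
where I have used the trivial identity $\sum_x R_A^-(x) = |A|^2$. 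Dividing by the denominator $E_k(A)$ gives the claimed factor $c^{k-1}$.

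There is no real obstacle here; all the substance has been absorbed into the sifting lemma, and the only independent content of the Pre-BSG statement is the observation that a sub-level set indicator of $R_A^-$ is the right test function to extract a Balog--Szemer\'edi--Gowers-flavored conclusion from sifting. The threshold $c \kappa^{1/(k-1)} |A|$ is chosen precisely so that after factoring $k-1$ copies of $R_A^-$ out of the numerator, the remaining factor $\sum_x R_A^-(x) = |A|^2$ combines with the threshold to reproduce $\kappa |A|^{k+1} = E_k(A)$, so that the ratio in the sifting lemma telescopes down to $c^{k-1}$.
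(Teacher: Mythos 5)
Your proof is correct and matches the paper's own proof of this lemma essentially line for line: both apply the counting-formulation sifting lemma with $f = \1_S$, bound the numerator $\sum_x R_A^-(x)^k \1_S(x)$ by peeling off $k-1$ factors of $R_A^-$ and applying the threshold defining $S$ together with $\sum_x R_A^-(x) = |A|^2$, and read off the size bound directly from the hypothesis $E_k(A) = \kappa |A|^{k+1}$. No gaps.
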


\section{Sanders' invariance lemma}

\subsection{The Croot-Sisask lemma}

For a point $p \in G$, define the shift operator $T_p$ by 
$$ (T_p f)(x) := f(x - p) .$$
This operation can be equivalently described as convolution of $f$ with the point-mass density at $p$ (and hence it commutes with other convolutions). For a density $B$, we define the linear operator $T_B$ analogously: $(T_B f)(x) := (B * f)(x)$. 

\begin{lem}[Croot-Sisask lemma] \label{croot-sisask}
Let $G$ be a finite abelian group, and
suppose $A,A' \subseteq G$ are such that
$$ |A| \geq 2^{-d} |A + A'|, $$
which is in particular the case if simply $A' = G$ and  $|A| \geq 2^{-d} |G|$. 
Fix an even integer $k \geq 2$. and
a function $f : G \rightarrow \R_{\geq 0}$ such that $\|f\|_k \leq 1$ (which is in particular the case if $f$ maps into $[0,1]$).
For any $\eps \in [0,1]$, there exists a set $S \subseteq A'$ of size at least
$$ |S| \geq 2^{-O(k d/ \eps^2)} |A'| $$
such that for any points $p,p' \in S$,
$$ \norm{T_{p - p'} A * f - A * f}_k \leq \eps. $$
Consequently, for any integer $t$ and any point $p \in tS - tS$,
$$ \norm{T_{p} A * f - A * f}_k \leq t \eps. $$
\end{lem}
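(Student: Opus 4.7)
My plan is to follow the classical Croot--Sisask random-sampling strategy. Draw $\vec{x} = (x_1, \dots, x_m) \in A^m$ uniformly at random, with $m$ to be fixed shortly, and for each $p \in A'$ form the empirical shifted convolution $g_{\vec{x},p}(y) := \tfrac{1}{m}\sum_{i=1}^m f(y - x_i - p)$; pointwise, $\E_{\vec{x}} g_{\vec{x},p} = T_p(A*f)$. The key observation is that $g_{\vec{x},p}$ depends on $(\vec{x},p)$ only through the tuple $\vec{z} := (x_1 + p, \dots, x_m + p) \in (A+A')^m$. Consequently, if two pairs $(\vec{x},p)$ and $(\vec{x}',p')$ produce the same $\vec{z}$ and both empiricals are close to their respective targets, then $T_p(A*f) \approx T_{p'}(A*f)$ in $L^k$, yielding $T_{p-p'}(A*f) \approx A*f$ by the triangle inequality together with the shift-invariance of the $k$-norm.

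For the concentration step I would invoke a Marcinkiewicz--Zygmund / Rosenthal-type moment inequality. For the i.i.d.\ random variables $Y_i := f(y - x_i - p)$ with mean $T_p(A*f)(y)$, one has (for even $k$; odd values can be absorbed into the next even integer)
$$\E_{\vec{x}} \, \bigl| g_{\vec{x},p}(y) - T_p(A*f)(y) \bigr|^k \leq (Ck/m)^{k/2} \cdot (A * |f|^k)(y - p)$$
for some absolute constant $C$. Integrating over $y$ and using $\|A\|_1 = 1$ together with $\|f\|_k \leq 1$ gives $\E_{\vec{x}} \|g_{\vec{x},p} - T_p(A*f)\|_k^k \leq (Ck/m)^{k/2}$. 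Taking $m = \Theta(k/\eps^2)$ makes this at most $(\eps/4)^k/100$, so by Markov and Fubini the fraction of pairs $(\vec{x},p) \in A^m \times A'$ which are ``good'' (meaning $\|g_{\vec{x},p} - T_p(A*f)\|_k \leq \eps/4$) is at least $99/100$.

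Now the pigeonhole: the map $\Phi(\vec{x},p) := (x_1 + p, \dots, x_m + p)$ takes values in $(A+A')^m$, so some fiber $\Phi^{-1}(\vec{z}^*)$ contains at least $\tfrac{99}{100}|A|^m|A'|/|A+A'|^m$ good pairs. Since $p$ determines $\vec{x} = \vec{z}^* - p$ within this fiber, these good pairs are in bijection with a subset $S \subseteq A'$ of size
$$|S| \geq \tfrac{99}{100}|A'| \cdot (|A|/|A+A'|)^m \geq \tfrac{1}{2}|A'| \cdot 2^{-dm} = 2^{-O(kd/\eps^2)}|A'|.$$
For any $p,p' \in S$, the shared empirical $\tilde g(y) := \tfrac{1}{m}\sum_i f(y - z^*_i)$ lies within $\eps/4$ of both $T_p(A*f)$ and $T_{p'}(A*f)$, so by the triangle inequality and shift-invariance $\|T_{p-p'}(A*f) - A*f\|_k \leq \eps/2 \leq \eps$. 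The iterated conclusion for $p \in tS - tS$ then follows by writing $p = \sum_{j=1}^t (p_j - p_j')$ with $p_j,p_j' \in S$ and telescoping: $\|T_p(A*f) - A*f\|_k \leq \sum_j \|T_{p_j - p_j'}(A*f) - A*f\|_k \leq t\eps$.

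The only genuinely delicate step is the concentration inequality, since $f$ is constrained only in $L^k$ rather than $L^\infty$ so Hoeffding is not available and one must appeal to an MZ/Rosenthal-type bound; everything else is essentially forced once the sampling framework is set up correctly. A small care is needed to average the pointwise moment estimate over $y$ with an unbounded integrand, but this is handled cleanly by Fubini together with $\|f\|_k \leq 1$.
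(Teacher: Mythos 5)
Your proposal is correct and follows essentially the same Croot--Sisask random-sampling route as the paper: random empirical averages of shifts of $f$, a Marcinkiewicz--Zygmund/Khintchine-type $L^k$ concentration bound, and a pigeonhole on the fibers of $(\vec{x},p)\mapsto(x_1+p,\dots,x_m+p)$ landing in $(A+A')^m$, finishing by the triangle inequality and shift-invariance of $\|\cdot\|_k$. The only cosmetic differences are that you cite an off-the-shelf MZ/Rosenthal moment bound (pointwise, then integrate by Fubini) where the paper proves its own vector-valued Khintchine inequality via a tuple-counting argument, and that you phrase the pigeonhole as a direct fiber count rather than the paper's bipartite-graph degree-counting; these are the same computation.
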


\begin{proof}
Fix an integer $\ell \in \N$ which is somewhat larger than $k$.
We consider the idea of approximating $A * f$ by a random ``sketch"
$$ s(a_1, a_2, \ldots, a_\ell) :=  \frac{1}{\ell}\sum_{i=1}^\ell T_{a_i} f$$
If each $a_i$ is drawn independently from $A$, then $\E[ T_{a_i} f] = A * f$, and the vector-valued Khintchine inequality (proved below for even $k$) gives
$$ \E_{a} \| s(a) - A * f \|_k^k = \E_{a} \norm{ \frac{1}{\ell} \sum_{i=1}^\ell (T_{a_i} - T_A) f }_k^k \leq \left(\frac{k}{\ell}\right)^{k/2} \cdot \|f\|_k^k,
$$
So by a Markov inequality, the chance that $\| s(a) - A * f \|_k \geq 2  \sqrt{\frac{k}{\ell}}$ is at most $2^{-k}$. 

Now, we seek a simultaneous approximation of $T_p A * f = T_{A + p} f$ for all points $p \in A'$ by some sketches 
$$ s(y) = s(y_1, y_2 ,\ldots y_\ell) := \frac{1}{\ell}\sum_{i=1}^\ell T_{y_i} f,$$
this time allowing $y \in (A+A')^{\ell}$ instead of just $a \in A^\ell$. 
We say that $y$ is a \textit{plausible} sketch for $T_{A + p} f$ if $y_i \in (A + p)$ for all $i$. 
We say that $y$ is a \textit{good} sketch for $T_{A + p} f$ if $$\| s(y) - T_{A + p}  f \|_k \leq 2  \sqrt{\frac{k}{\ell}}.$$
By the same argument as above, for every $p$, at least a fraction $(1-2^{-k})$ of the sketches which are plausible for $T_{A + p} f$ are in fact good for $T_{A + p} f$. 

Now we form a bipartite graph on the vertex-set $A' \times (A \times A')^\ell$, including the edge $(p,y)$ whenever $s(y)$ is a good sketch for $T_{A + p} f$. We wish to find a sketch $s(y)$ on the right side which is simultaneously good for as many points $p$ as possible, and we will do so by a pigeonhole argument:
\begin{itemize}
    \item Vertices on the left side have degree at least $\tfrac{1}{2} |A|^\ell$.
    \item We have the following relationship between the average degrees of both sides ($D_L, D_R$), the sizes of the vertex-sets of both sides ($N_L, N_R$), and the total number of edges $E$:
    $$ D_L \cdot N_L = E = D_R \cdot N_R .$$
    \item Thus, $D_R \geq \frac{D_L}{N_R} \cdot N_L \geq \frac{1}{2} \left( \frac{|A|}{|A + A'|} \right)^\ell |A'|$.
\end{itemize}
    So, we can find a fixed sketch $s(y)$ on the right adjacent to a large set of points $S \subseteq A'$ on the left, meaning that $s(y)$ is a good sketch for every $p \in S$. Since $T_{A + p} f$ are all close to $s(y)$, they are all close to each other:
    $$ \norm{T_p A * f - T_{p'} A *f}_k =  \norm{T_{A + p} f - T_{A + p' f}}_k \leq \norm{T_{A + p} f - s(y)}_k + \norm{s(y) - T_{A + p'}f}_k   \leq 4 \sqrt{\frac{k}{\ell}}
    $$
    for all $p,p' \in S$. To conclude, note that since $\| T_x g \|_k = \| g \|_k$ for any function $g$ and any point $x$, we have
    $$ \norm{T_p A * f - T_{p'} A *f}_k = \norm{T_{-p'} (T_p A * f - T_{p'} A *f)}_k 
    = \norm{T_{p-p'} A * f -  A * f}_k .
    $$
    Setting $\ell = 16 \cdot k / \eps^2$ gives the desired parameters. \qedhere
 
\end{proof}

\begin{prop}[Vector-valued Khintchine inequality]
    Let $k \geq 2$ be an even integer.
    Let $v_1, v_2, \ldots, v_\ell \in \R_{\geq 0}^m$ be some independent vector-valued random variables with means $\mu_i := \E[v_i]$.
    Suppose that 
    $$\E \|v_i\|_k^k :=  \E \left[ \frac{1}{m}\sum_{j \in [m]} v_i(j)^k \right] \leq M$$
    for all $i$.
    Consider the average
    $$v := \frac{1}{\ell} \sum_{i=1}^{\ell} \left(v_i - \mu_i \right) $$
    of the (centered versions of) the $v_i$'s. 
    We have the following bound on the average value of $\|v\|_k^k$:
    $$ \E\left[ \|v\|_k^k\right] \leq \left( \frac{k}{\ell}\right)^{k/2}  M.
    $$
\end{prop}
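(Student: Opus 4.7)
The plan is to reduce to a scalar $L^k$ moment estimate coordinate-by-coordinate. By Fubini/linearity,
$$\E\|v\|_k^k \;=\; \frac{1}{m}\sum_{j=1}^m \E\bigl[v(j)^k\bigr],$$
and for each fixed coordinate $j$ the random variable $v(j) = \frac{1}{\ell}\sum_{i=1}^\ell X_{i,j}$ is an average of $\ell$ independent mean-zero real random variables $X_{i,j} := v_i(j) - \mu_i(j)$. It thus suffices to bound $\E[S^k]$ for a generic sum $S = \sum_{i=1}^\ell X_i$ of independent centered reals, and then sum the resulting estimate over~$j$.

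For even $k$, the scalar bound I would prove is
$$\E[S^k] \;\leq\; (Ck)^{k/2}\,\ell^{k/2 - 1}\sum_{i=1}^\ell \E|X_i|^k$$
for an absolute constant $C$. This follows by expanding $\E[S^k] = \sum_{(i_1,\ldots,i_k)\in[\ell]^k} \E[X_{i_1}\cdots X_{i_k}]$ and observing that independence together with $\E X_i = 0$ kills every tuple in which some distinct index appears exactly once; so only tuples using at most $k/2$ distinct indices survive. Grouping the survivors by partition type $(a_1,\ldots,a_r)$ of $k$ into parts $\geq 2$, bounding each $|\E[X_i^{a_i}]| \leq \E[|X_i|^k]^{a_i/k}$ by the power-mean inequality, and then applying AM--GM in the product, the multinomial coefficients times the count of admissible partitions package into the factor $(Ck)^{k/2}$, while the factor $\ell^{k/2 - 1}$ comes from the at most $k/2$ free choices of distinct indices out of $[\ell]$. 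Alternatively one may simply invoke the Marcinkiewicz--Zygmund inequality as a black box.

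Finally, I would convert $\E|X_{i,j}|^k$ back to the hypothesis. Since $v_i(j) \geq 0$ we have $|X_{i,j}| = |v_i(j) - \mu_i(j)| \leq v_i(j) + \mu_i(j)$, so by Jensen $\E|X_{i,j}|^k \leq 2^{k-1}(\E v_i(j)^k + \mu_i(j)^k) \leq 2^k\,\E v_i(j)^k$. Summing over $i$, averaging over $j$, and using $\frac{1}{m}\sum_j \E v_i(j)^k = \E\|v_i\|_k^k \leq M$ yields
$$\E\|v\|_k^k \;\leq\; \frac{(C'k)^{k/2}}{\ell^{k/2}}\,M,$$
matching the stated bound up to an absolute constant that can be harmlessly absorbed into the implied constant in the exponent $O(kd/\eps^2)$ of the Croot--Sisask lemma. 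The main obstacle is the combinatorial bookkeeping in the second step: verifying that the product of multinomial coefficients times the number of admissible partitions of $k$ into parts $\geq 2$ really does behave like $k^{k/2}$. The cleanest shortcut is to observe that the surviving combinatorial sum has exactly the shape of the analogous Gaussian moment, which evaluates to $\E[Z^k] = (k-1)!! \leq k^{k/2}$ for $Z$ a standard normal; one can then pass back to the $X_i$ by a Gaussian comparison on each partition type.
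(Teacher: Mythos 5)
Your proof is correct in substance, and it reaches the same combinatorial heart as the paper's argument — expand the $k$-th moment into a sum over index tuples, kill every tuple containing a uniquely-appearing index by independence and centering, and count the survivors — but the way you organize it and the constants you get are genuinely different, so a brief comparison is worthwhile. You first average over coordinates (Fubini in $j$), reducing to a scalar moment estimate for a single sum of independent centered reals, which you then dispatch by Marcinkiewicz--Zygmund plus H\"older. The paper instead never separates coordinates: it works directly with the ``$k$-wise dot product'' $v_{i_1}\cdot v_{i_2}\cdots v_{i_k} := \E_{j}\, v_{i_1}(j)\cdots v_{i_k}(j)$, bounds each surviving tuple by $1$ in a single generalized H\"older step over the joint probability-times-coordinate measure (after normalizing $M=1$), and then counts surviving tuples by the clean recursion $T(k,\ell) \leq k\ell\, T(k-2,\ell) \leq (k\ell)^{k/2}$. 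This buys the sharp constant $(k/\ell)^{k/2}$ with essentially no bookkeeping. Your route is more modular — you can cite MZ as a black box — but it loses an absolute factor $(C')^{k/2}$: one $(Ck)^{k/2}$ from the Khintchine/MZ constant (compared to the paper's bare $k^{k/2}$), and another $2^k$ from your triangle-inequality-plus-Jensen handling of the centering, whereas the paper uses the sharper (if not entirely obvious; it merits its own small justification) fact that $\E\|v_i-\mu_i\|_k^k \leq \E\|v_i\|_k^k$ for $v_i\geq 0$ and even $k\geq 2$. You are right that these lost constants are harmless downstream: they only inflate the implied constant in the $\ell=\Theta(k/\eps^2)$ choice inside Croot--Sisask, hence in the $O(kd/\eps^2)$ codimension. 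One caution: your ``cleanest shortcut'' via Gaussian moment comparison is looser than it reads — the survivors do not literally have the Gaussian $(k-1)!!$ pairing structure since parts of size $\geq 3$ contribute too, and ``pass back by Gaussian comparison on each partition type'' would need its own argument. The MZ black box (or the paper's recursion, which is the tidiest of all) is the safer way to close that step.
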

\begin{proof}
By re-scaling we may assume $M = 1$. We may also assume $\mu_i = 0$ for all $i$, since $ \E \|v_i - \mu_i\|_k^k \leq \E \|v_i\|_k^k$ for $k \geq 2$. 
Define the $k$-wise dot product of some $k$ vectors $u_i \in \R^m$ by
$$ u_1 \cdot u_2 \cdots u_k := \E_{j \in [m]} u_1(j) u_2(j) \cdots u_k(j) .$$
Now, we compute
$$ \| v \|_k^k = \E_{(i_1,i_2,\ldots,i_k) \in [\ell]^k} v_{i_1} \cdot v_{i_2} \cdots v_{i_k} $$
Upon taking the expectation of this quantity, any term which contains a certain random variable $v_i$ only once in the dot product becomes zero. For the remaining terms, we can bound
$$ \E[ v_{i_1} \cdot v_{i_2} \cdots v_{i_k} ]  \leq \prod_{j=1}^k \left( \E \|v_{i_j}\|_k^k \right)^{1/k} \leq 1.
$$
So we reduce to the combinatorial problem of counting the number of tuples ${\bf i} \in [\ell]^k$ which have no unique entries. Let $T(k,\ell)$ denote this quantity. We can bound this number recursively as follows: Consider the first entry $i_1$ of such a tuple. There must be some $i_j$ with $i_1 = i_j$; we consider all the possibilities for this, then remove the first and $j$-th entry from the tuple, and then count the number of tuples in $[\ell]^{k-2}$ which have no unique entries. This argument gives the bound
$$ T(k,\ell) \leq k \cdot \ell \cdot T(k-2, \ell) \leq \cdots \leq  \left(k \ell\right)^{k/2} .$$
So overall we have
\begin{equation*}
    \E \left[ \| v \|_k^k \right] \leq \frac{\left(k \ell\right)^{k/2} }{\ell^k} = \left( \frac{k}{\ell}\right)^{k/2}. \qedhere
\end{equation*} 
\end{proof}

\subsection{Sanders' invariance lemma}

\begin{lem}[Sanders' invariance lemma, restated] \label{sanders-invar-appendix}
Suppose $A,B \subseteq \F_q^n$ are sets of sizes $|A| \geq 2^{-d} |\F_q^n|$ and $|B| \geq 2^{-k} |\F_q^n|$.
Fix a bounded function $f : \F_q^n \rightarrow [0,1]$.
Then, for any $\eps \geq 2^{-d}$, there exists a linear subspace $V$ (possibly depending on $f$) with codimension at most $O(k d^3 / \eps^2)$ satisfying
$$ \left| \ip{V * A * B}{f} - \ip{A * B}{f} \right| \leq \eps. $$
More specifically, we have the pointwise bound
$$ \left| \E_{\substack{a \in A \\ b \in B}} f(x + v + a + b) - \E_{\substack{a \in A \\ b \in B}} f(x + a + b) \right|\leq \eps$$
for all $v \in V$ and $x \in \F_q^n$.
\end{lem}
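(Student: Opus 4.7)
The plan is to combine the Croot-Sisask lemma (\cref{croot-sisask}) with Chang's inequality in $\F_q^n$, followed by a boosting step that uses convolution with $B$ to convert $L^{k'}$-control into the pointwise bound demanded by the conclusion. First I would apply \cref{croot-sisask} to the set $A$ (with $A' = \F_q^n$) and the function $f$, for some even integer $k' \geq k$ and a Croot-Sisask error parameter $\eps_1$ to be chosen. This produces a set $S \subseteq \F_q^n$ of density at least $2^{-O(k'd/\eps_1^2)}$ such that every $p \in S - S$ satisfies $\|T_p(A*f) - (A*f)\|_{k'} \leq \eps_1$, and more generally every $p \in tS - tS$ satisfies $\|T_p(A*f) - (A*f)\|_{k'} \leq t\eps_1$.

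Next I would pass to a subspace via Chang's inequality applied to $S$. The large spectrum $\Lambda := \{\alpha \in \F_q^n : |\widehat{S}(\alpha)| \geq 1/2\}$ is contained in a linear subspace of dimension $O(d(S))$, where $d(S) := \log_2(|\F_q^n|/|S|) = O(k'd/\eps_1^2)$. Setting $V := \Lambda^\perp$, a standard Bogolyubov-type Fourier calculation (for $v \in V$ the iterated self-convolution $(S^{*t} * \widetilde{S}^{*t})(v) \geq 1 - (1/2)^{2(t-1)} \cdot 2^{d(S)}$ is positive once $t = O(d(S))$) gives $V \subseteq tS - tS$. Combined with the Croot-Sisask invariance this yields $\|T_v(A*f) - (A*f)\|_{k'} = O(d(S)\,\eps_1)$ for every $v \in V$, with $\mathrm{codim}(V) = O(d(S))$.

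Finally I would convolve with $B$ to lift $L^{k'}$-closeness to $L^\infty$-closeness. H\"older gives $\|T_v(A*B*f) - (A*B*f)\|_\infty \leq \|B\|_{k^{\ast}} \cdot \|T_v(A*f) - (A*f)\|_{k'}$, where $k^{\ast}$ is the conjugate exponent of $k'$, and the bounds $\|B\|_\infty \leq 2^k$ and $\|B\|_1 = 1$ give $\|B\|_{k^{\ast}} \leq 2^{k/k'}$, which is $O(1)$ once $k' = \Theta(k)$. The pointwise statement uniform in $x \in \F_q^n$ then follows because translation commutes with convolution, so the same bound holds after translating both sides by $x$.

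The main obstacle is the parameter balancing: the three constraints $d(S) \sim k'd/\eps_1^2$, the Bogolyubov radius $t \sim d(S)$, and the final error $t\eps_1 \leq \eps$ are in strong tension, and a direct combination forces $\eps_1 \gtrsim k'd/\eps$, which is larger than $1$ outside trivial regimes. Achieving the claimed codimension $O(kd^3/\eps^2)$ therefore requires a more careful argument --- either replacing the iterated-sumset wrapper $V \subseteq tS - tS$ by a direct $L^{k'}$ Fourier argument for shifts $v \in V$ that avoids the factor $t$ (using, for instance, that the CS bound in $L^2$ translates to $\sum_{\alpha \notin \Lambda} |\widehat{A}(\alpha) \widehat{f}(\alpha)|^2 \leq 4\eps_1^2$, giving $\|T_v(A*f) - (A*f)\|_2 = O(\eps_1)$ with no iteration cost), or invoking Sanders' quasi-polynomial refinement of Bogolyubov together with a tight $L^2 \to L^{k'}$ interpolation against the crude bound $\|A*f\|_\infty \leq 2^d$. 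This delicate bookkeeping is the principal technical challenge.
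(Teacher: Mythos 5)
Your outline matches the paper's: Croot--Sisask applied to $A$ and $f$, Chang's inequality applied to the resulting set $S$ to produce the subspace $V = \mathrm{Span}(\mathrm{Spec}_{1/2}(S))^\perp$, and a final H\"older step against $B$ (the paper does this step first, as a reduction, rather than last, but this is cosmetic). You have also correctly diagnosed exactly why the natural ``$V \subseteq tS - tS$'' wrapper fails: Bogolyubov forces $t \sim d(S) \sim kd/\eps_1^2$, and then $t\eps_1 \leq \eps$ cannot be met for $\eps_1 \leq 1$. The proposal stops there, so there is a genuine gap, and neither of the two sketched fixes closes it. The $L^2$-Fourier idea gives $\|T_v(A*f)-A*f\|_2 \leq O(\eps_1)$ for $v\in V$, but passing to $L^{k'}$ by interpolating against $\|A*f\|_\infty \leq 2^d$ gives $\|T_v(A*f)-A*f\|_{k'} \leq O(\eps_1)^{2/k'}(2\cdot 2^d)^{1-2/k'}$, which is useless for $k' \geq k$ unless $\eps_1$ is exponentially small; and invoking quasipolynomial Bogolyubov would still leave you covering $V$ with a long sumset and paying for it.

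The paper's resolution is to never ask for $V \subseteq tS - tS$ at all. Writing $g := A \star f$, $D := S \star S$, and $X := D^{*t}$, one has two bounds that are controlled by \emph{different} parameters: (i) a telescoping bound $\|g - X*g\|_k \leq t\eta$ coming from Croot--Sisask (since $\|T_{s-s'}g - g\|_k \leq \eta$ and convolution is a contraction), and (ii) the Fourier-tail bound $\|X*g - V*X*g\|_\infty \leq 2^{-2t}\cdot 2^d$, using $\widehat{X}(\alpha) = |\widehat{S}(\alpha)|^{2t} \leq 2^{-2t}$ for $\alpha \notin W := \mathrm{Span}(\mathrm{Spec}_{1/2}(S))$ together with $\sum_\alpha |\widehat{A}(\alpha)\widehat{f}(\alpha)| \leq \|A\|_2\|f\|_2 \leq 2^{d/2}$ (up to normalization). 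Combining by triangle inequality and using $T_v(V*g) = V*g$ for $v \in V$, one gets $\|T_v g - g\|_k \leq 2(2t\eta + 2^{d-2t})$ for all $v\in V$. Now $t$ only needs to satisfy $2^{d-2t} \ll \eps$, so $t = O(d)$, which is decoupled from $\mathrm{codim}(V) = O(kd/\eta^2)$; then $\eta = \Theta(\eps/t) = \Theta(\eps/d)$ gives error $O(\eps)$ and codimension $O(kd/\eta^2) = O(kd^3/\eps^2)$. This decoupling of the iteration length $t$ from the codimension of $V$ is the key idea missing from the proposal.
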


\begin{proof}
We seek a large linear subspace $V$ such that 
$$ \| T_v A \star f - A \star f\|_k \leq \frac{\eps}{2} $$
for all $v \in V$.
 We note that this suffices, since
$$  \left| \ip{T_x B}{T_v A \star f - A \star f} \right| \leq \norm{T_x B}_{1}^{1-1/k} \norm{T_x B}_{\infty}^{1/k}  \| T_v A \star f - A \star f\|_k \leq 2 \| T_v A \star f - A \star f\|_k 
$$
for any $x \in \F_q^n$. 
First we apply the Croot-Sisask lemma to obtain a large set $S$ of shifts $T_p$ which leave $A \star f$  approximately unchanged: $\| T_{p-p'} A \star f - A \star f\|_k \leq \eta$ for all $p,p' \in S$ where $\eta$ is some parameter we will pick later. We can obtain
$$ |S| \geq 2^{-O(kd/\eta^2)} |\F_q^n| .$$

Continuing on, let's write $g := A \star f$ for brevity. 
We now describe how to pick the linear subspace $V$. 
First, consider the large spectrum of $S$:
$$ \text{Spec}_{1/2}(S) := \set{\alpha \in \F_q^n}{|\widehat{S}(\alpha)| \geq \tfrac{1}{2}}. $$
We let $W := \text{Span}(\text{Spec}_{1/2}(S))$. 
By Chang's inequality for vector spaces over finite fields (proved below), the dimension of $W$ is at most $O(kd/ \eta^2)$. 
Finally, we let $V := W^\perp$. 

Consider the density function $D = S \star S$, and
let $X$ denote the $t$-iterated convolution $D * D * \cdots * D$. We have
$$ X(x) = \sum_{\alpha} |\widehat{S}(\alpha)|^{2t} e_\alpha(-x) ,$$
and in particular for any $\alpha \not\in W$, we have  $|\widehat{X}(\alpha)| \leq 2^{-2t}$.
We note that
$$ (V * X * g - X * g)(x) =  - \sum_{\alpha \not\in W} \widehat{X}(\alpha) \widehat{A}(\alpha)  \widehat{f}(\alpha)  e_{\alpha}(-x).$$
Thus 
$$ \| V * X * g - X * g \|_{\infty} \leq 2^{-2t} \sqrt{\sum_{\alpha} |\widehat{A}(\alpha)|^2} \sqrt{ \sum_{\alpha} |\widehat{f}(\alpha)|^2} \leq 2^{-2t} \cdot 2^{d} .
$$
Finally, we estimate 
\begin{align*}
 \norm{g - V * g}_k &\leq \norm{g - X * g}_k + \norm{X * g - V * X * g}_k + \norm{V * X * g - V * g}_k \\
 &= \norm{g - X * g}_k + \norm{X * g - V * X * g}_k + \norm{V * (X * g - g)}_k \\
 &\leq 2 \norm{g - X * g}_k + \norm{X * g - V * X * g}_\infty \\
 &\leq 2t\eta + 2^{d - 2t},
\end{align*}
and
$$ \norm{T_v g - V * g}_k = \norm{g - T_{-v} V * g}_k = \norm{g - V * g}_k \leq 2t\eta +   2^{d - 2t},$$
which together give a bound on $\norm{T_v g - g}_k$.
Choosing $t := O(d)$ and $1/\eta := O(t/\eps)$, we obtain the desired error bound.
The resulting bound on the codimension of $V$ is $O(kdt^2/\eps^2) = O(k d^3 /\eps^2)$. \qedhere
\end{proof}

\begin{lem}[Chang's inequality for vector spaces over $\F_q$]
Suppose $A \subseteq \F_q^n$ has size $|A| \geq 2^{-d} |\F_q^n|$, where $d \geq 1$.
Then any subset of linearly independent vectors $L \subseteq \text{Spec}_{\eps}(S)$ has size at most
$$|L| \leq  4 d / \eps^2 .$$
\end{lem}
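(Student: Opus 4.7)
The plan is to establish Chang's inequality by a higher-moment argument that exploits the linear independence of the characters in $L$, together with a Khintchine-type bound on the $L^q$ norm of a weighted character sum. I interpret $\textnormal{Spec}_\eps(S)$ as $\textnormal{Spec}_\eps(A) = \{\alpha \in \F_q^n : |\widehat{A}(\alpha)| \geq \eps\}$ since the lemma's hypothesis is about $A$, not $S$.

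\textbf{Setting up the test function.} Given a linearly independent set $L = \{\alpha_1,\ldots,\alpha_k\} \subseteq \textnormal{Spec}_\eps(A)$, for each $i$ I will choose a unit-modulus phase $c_i \in \C$ so that $c_i \widehat{A}(\alpha_i) = |\widehat{A}(\alpha_i)|$, and consider the character sum
$$ g(x) := \sum_{i=1}^k c_i \, e_{\alpha_i}(x). $$
Then by definition of the spectrum,
$$ \ip{A}{g} = \sum_{i=1}^k c_i \widehat{A}(\alpha_i) = \sum_{i=1}^k |\widehat{A}(\alpha_i)| \geq \eps k. $$

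\textbf{Dual exponent and Khintchine.} Next, I apply H\"older's inequality with an even exponent $2m$ (and its dual $2m/(2m-1)$):
$$ \eps k \;\leq\; \ip{A}{g} \;\leq\; \|A\|_{2m/(2m-1)} \cdot \|g\|_{2m}. $$
For a density function $A$ corresponding to a set of size $2^{-d}|\F_q^n|$, one computes directly $\|A\|_{2m/(2m-1)} = 2^{d/(2m)}$. To bound $\|g\|_{2m}$, I would use that the linear independence of $\alpha_1,\ldots,\alpha_k$ implies that, for $x$ uniform in $\F_q^n$, the tuple $\bigl(e_{\alpha_1}(x),\ldots,e_{\alpha_k}(x)\bigr)$ is distributed exactly as $k$ i.i.d.\ uniform $q$-th roots of unity. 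In particular, the $e_{\alpha_i}(x)$ are independent, centered, unit-modulus random variables, and a Khintchine-type moment estimate (obtained by expanding $|g|^{2m} = (g \bar g)^m$ and observing that the only surviving terms come from perfect pairings between the $X_i$'s and $\bar X_i$'s) gives
$$ \|g\|_{2m}^{2m} \;\leq\; (2m-1)!! \cdot k^m \;\leq\; (2m)^m k^m, \qquad \text{so} \qquad \|g\|_{2m} \leq \sqrt{2m}\cdot\sqrt{k}. $$

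\textbf{Optimizing the exponent.} Combining these bounds yields
$$ \eps k \;\leq\; 2^{d/(2m)} \cdot \sqrt{2m} \cdot \sqrt{k}. $$
Choosing the integer $2m := 2\lceil d\rceil$ forces $2^{d/(2m)} \leq \sqrt{2}$, giving $\eps k \leq 2\sqrt{d}\cdot\sqrt{k}$, hence $|L| = k \leq 4d/\eps^2$, as claimed.

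\textbf{Main obstacle.} The only subtlety lies in justifying the Khintchine bound when the field $\F_q$ is small relative to $2m$: in that regime the moments $\E X^j$ of a uniform $q$-th root of unity $X$ are nonzero for $j$'s divisible by $q$, so non-paired terms can contribute in the expansion of $|g|^{2m}$. However, these extra contributions add with controlled sign and the diagonal-pairing bound remains dominant up to the same order; alternatively, one can invoke a general martingale/Marcinkiewicz--Zygmund inequality for independent centered bounded variables, which gives the same asymptotic constant. Beyond this technicality the argument is a direct H\"older-plus-Khintchine computation.
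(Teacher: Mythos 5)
Your proof is correct and follows the paper's own argument nearly verbatim: a character-sum test function, a H\"older bound against $\|A\|_\infty^{1/(2m)} = 2^{d/(2m)}$, and a Khintchine bound exploiting the mutual independence of the $e_{\alpha_i}$'s (which is exactly where linear independence of $L$ enters); the only cosmetic difference is that you normalize the coefficients to unit modulus so that $\ip{A}{g}=\sum_i|\hat A(\alpha_i)|\geq\eps|L|$, whereas the paper takes $c_i=\hat A(\alpha_i)$, giving $\ip{A}{f}=\sum_i|\hat A(\alpha_i)|^2\geq\eps^2|L|$ and, as an intermediate, the standard Chang-type bound $\sum_i|\hat A(\alpha_i)|^2\leq 4d$. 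The Khintchine subtlety you flag for small characteristic is genuine and is also silently elided in the paper, which simply cites ``a Khintchine inequality''; your sub-Gaussian/Marcinkiewicz--Zygmund fallback is the right fix, and recovers the stated constant up to the harmless replacement of $d$ by $\lceil d\rceil$.
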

\begin{proof}
Suppose $\alpha_1 ,\ldots, \alpha_{\ell} \subseteq \text{Spec}_{\eps}(S)$ are linearly independent. 
Let $c_i := \widehat{A}(\alpha_i)$, and consider the  auxiliary function
$$ f(x) :=  \sum_{i=1}^{\ell} c_i \cdot e_{\alpha_i}(-x). $$
On one hand we have
$$ \ip{A}{f} = \sum_{i=1}^{\ell} c_i  \E_{x \in A} e_{\alpha_i}(-x) =   \sum_{i=1}^{\ell} |\widehat{A}(\alpha_i)|^2.
$$
On the other hand (choosing $k = d$) we can bound
$$\ip{A}{f} \leq \|A\|_{1}^{1-1/k} \|A\|_{\infty}^{1/k} \|f\|_{k} = 2^{d/k} \cdot \|f\|_{k} \leq 2 \cdot \|f\|_{k}.
$$
Now, since the vectors $\alpha_i \in \F_q^n$ are linearly independent, the complex random variables defined by $X_i := e_{\alpha_i}(x)$ (where $x$ is uniformly random in $\F_q^n$) are in fact mutually independent. 
So by a Khintchine inequality, $\|f\|_k \leq \sqrt{k} \cdot \|f\|_2  = \sqrt{k} \cdot \sqrt{ \sum_{i} |\widehat{A}(\alpha_i)|^2}.$ 
We conclude that
$$ \eps^2 \cdot \ell \leq \sum_{i=1}^{\ell} |\widehat{A}(\alpha_i)|^2 \leq 4 k = 4 d,
$$
and so $\ell \leq 4d/\eps^2$. \qedhere
\end{proof}

\section{Odd moments}\label{sec:odd-moments}

\begin{prop}[\cref{odd-moments}, restated] \label{append-odd-moments}
For a real-valued random variable $X$ and $k \geq 1$, we use the notation
$$ \|X\|_{k} := \left( \E |X|^k \right)^{1/k}. $$
Suppose $X$ is such that 
\begin{itemize}
    \item $\E (X - 1)^{k} \geq 0$ for all odd $k \in \N$, and
    \item $\|X - 1\|_{k_0} \geq \eps$ for some even $k_0 \geq 2$ and some $\eps \in [0,1]$.
\end{itemize}
Then, for any integer $k' \geq 2 k_0/\eps$,
$$ \|  X  \|_{k'} \geq 4^{-1/k'} (1 + \eps).  $$
Furthermore, if $\eps \leq 1/2$, then
$$ \|  X  \|_{k'} \geq 4^{-1/k'} (1 + \eps) \geq 1 + \frac{\eps}{2}.  $$

\end{prop}

\begin{proof}[Proof of \cref{append-odd-moments}]
We wish to show that $\E X^{k'} \geq \frac{1}{4} (1+\eps)^{k'}.$
We express
\begin{align*}
    \E X^{k'} &= \E (1 + (X-1))^{k'} \\
    &= \sum_{k=0}^{k'} \binom{k'}{k} \cdot \E (X - 1)^{k} \\
    &\geq \sum_{\substack{k \textnormal{ even}\\ k_0 \leq k \leq k'}} \binom{k'}{k} \cdot \eps^{k}.
 \end{align*}
Towards understanding this quantity we apply the following trick. We introduce a uniform random variable $\theta \in \{+1,-1\}$ and consider the value $\E [(1 + \theta \eps)^{k'}]$.
On one hand this value is of course
$$ \E_\theta  (1 + \theta \eps)^{k'} = \tfrac{1}{2}(1 + \eps)^{k'} + \tfrac{1}{2} (1 - \eps)^{k'}. $$
It can be expressed also as
\begin{align*}
    \E_\theta  (1 + \theta \eps)^{k'} &= \sum_{k=0}^{k'} \binom{k'}{k} \E [\theta^{k}] \eps^{k} = \sum_{\substack{k \textnormal{ even}\\ 0 \leq k \leq k'}}  \binom{k'}{k}  \eps^{k} ;
\end{align*}
we conclude that
$$ \sum_{\substack{k \textnormal{ even}\\ 0 \leq k \leq k'}}  \binom{k'}{k}  \eps^{k} \geq \tfrac{1}{2}(1 + \eps)^{k'}.$$

At this point, let us normalize by $(1 + \eps)^{k'}$ so that we can proceed via a probabilistic interpretation: we are interested in the probability of a certain event concerning a random variable $k \in \N$ drawn according to a binomial distribution with $k'$ trials and success-probability
$p = \frac{\eps}{1+\eps}$: 
$$ k \sim \textnormal{Bin}\left(k', \frac{\eps}{1+\eps} \right). $$
In particular, we are interested in the chance that both
\begin{enumerate}
    \item $k$ is even, and
    \item $k \geq k_0$. 
\end{enumerate}
So far, we have calculated that the chance that $k$ is even is at least a half. 
In addition, it is known that a median of a binomial distribution $\textnormal{Bin}(k',p)$ is at least $\lfloor p k'\rfloor$ \cite{kaas-buhrman}.
Since $k_0 \leq \eps k' / 2 \leq \eps k' / (1 + \eps)$, we also have that the chance that $k \geq k_0$ is at least a half.
One might then suspect that the probability that both events occur simultaneously should be roughly at least $1/4$; the following calculation confirms this. Suppose $t \leq \eps k'/2 $ is an odd natural number. We have
\begin{align*}
    \prob{k = t} &= \frac{k'-(t-1)}{t} \cdot \eps \cdot \prob{k = t - 1} \\
    &\geq \frac{k'-t}{t} \cdot \eps \cdot \prob{k = t - 1} \\
    &\geq \left( \frac{2}{\eps} - 1 \right) \cdot \eps \cdot \prob{k = t - 1} \\
    &\geq \prob{k = t - 1},
\end{align*}
and so
$$ \prob{k \textnormal{ is even and } k < k_0} \leq \tfrac{1}{2} \cdot \prob{k < k_0 } \leq \tfrac{1}{4}.
$$
We infer that
$$ \prob{k \textnormal{ is even and } k \geq k_0} \geq \tfrac{1}{2} - \tfrac{1}{4} = \tfrac{1}{4} $$
and so
$$ \E X^{k'} \geq \tfrac{1}{4} \cdot \left(1 + \eps\right)^{k'}. $$
Equivalently,
$$ \left( \E X^{k'} \right)^{1/k'} \geq 4^{-1/k'} \cdot (1 + \eps) .
$$
To conclude, one can check that
$$ 4^{-\eps / 4} \cdot (1 + \eps) \geq 1 + \frac{\eps}{2} $$
for $\eps \in [0,\frac{1}{2}]$. \qedhere
\end{proof}

\end{document}